\documentclass[reqno]{amsart}


%

\usepackage{mathabx} 
\usepackage{graphicx}
\usepackage{amssymb,amsmath,amsthm,amscd}
\usepackage{mathrsfs}
\usepackage{enumerate}
\usepackage{enumitem}
\usepackage{verbatim}
\usepackage[usenames,dvipsnames]{color}
\usepackage[colorlinks=true, pdfstartview=FitV,linkcolor=blue,citecolor=blue,urlcolor=blue]{hyperref}
\usepackage[all]{xy}
\usepackage{tikz}
\usepackage{tikz-cd}
\usetikzlibrary{matrix, arrows}
\usepackage{stmaryrd} 
\usepackage{dsfont}
\usepackage{bm} 
\usepackage{array}
\usepackage{adjustbox}

\allowdisplaybreaks[4]



\allowdisplaybreaks[1]

\setlist[itemize,1]{leftmargin=.4in}
\setlist[enumerate,1]{leftmargin=.4in,label=(\alph*)} 
\setlist[description,1]{leftmargin=.4in,font=\normalfont\itshape}


\DeclareSymbolFont{sansops}{OT1}{\sfdefault}{m}{n}
\SetSymbolFont{sansops}{bold}{OT1}{\sfdefault}{b}{n}

\makeatletter
\renewcommand\operator@font{\mathgroup\symsansops}
\makeatother

\newcommand{\nc}{\newcommand}
\newcommand{\rnc}{\renewcommand}

\numberwithin{equation}{section}

\nc{\eqrefs}[2]{\text{(\ref{#1}-\ref{#2})}}

\usepackage[colorinlistoftodos]{todonotes}
\usepackage{mathtools}
\mathtoolsset{showmanualtags}
\mathtoolsset{showonlyrefs,showmanualtags}
\usepackage{empheq}

\theoremstyle{plain}
\newtheorem{lemma}{Lemma}[subsection]
\newtheorem{prop}[lemma]{Proposition}
\newtheorem{theorem}[lemma]{Theorem}

\newcommand{\Prop}{\begin{prop}}
	\newcommand{\enprop}{\end{prop}}
\newcommand{\Lemma}{\begin{lemma}}
	\newcommand{\enlemma}{\end{lemma}}
\newcommand{\Th}{\begin{theorem}}
	\newcommand{\enth}{\end{theorem}}
\newtheorem{corollary}[lemma]{Corollary}
\newcommand{\Cor}{\begin{corollary}}
	\newcommand{\encor}{\end{corollary}}
\newtheorem{definition}[lemma]{Definition}

\newcommand{\Def}{\begin{definition}}
	\newcommand{\edf}{\end{definition}}
\newtheorem{sublemma}[lemma]{Sublemma}
\newcommand{\Sublemma}{\begin{sublemma}}
	\newcommand{\ensub}{\end{sublemma}}

\theoremstyle{definition}
\newtheorem{remark}[lemma]{Remark}
\newtheorem{remarks}[lemma]{Remarks}
\newtheorem{example}[lemma]{Example}
\newtheorem{Convention}[lemma]{Convention}
\newcommand{\Conv}{\begin{Convention}}
	\newcommand{\enconv}{\end{Convention}}
\nc{\Rem}{\begin{remark}}
	\nc{\enrem}{\end{remark}}


\nc{\rmkend}{\hfill$\triangledown$}
\nc{\defend}{\hfill$\triangle$}



\nc{\be}{\begin{enumerate}}
	\nc{\ee}{\end{enumerate}}
\newcommand{\eq}{\begin{eqnarray}}
	\newcommand{\eneq}{\end{eqnarray}}
\nc{\bc}{\begin{cases}}
	\nc{\ec}{\end{cases}}
\newcommand{\eqn}{\begin{eqnarray*}}
	\newcommand{\eneqn}{\end{eqnarray*}}
\newcommand{\ba}{\begin{array}}
	\newcommand{\ea}{\end{array}}


\newcommand{\arxiv}[1]{\href{http://arxiv.org/abs/#1}{\tt arXiv:\nolinkurl{#1}}}


\nc{\cA}{{\mathcal A}}
\nc{\cB}{{\mathcal B}}
\nc{\cC}{{\mathcal C}}
\nc{\cD}{{\mathcal D}}
\nc{\cE}{{\mathcal E}}
\nc{\cF}{{\mathcal F}}
\nc{\cG}{{\mathcal G}}
\nc{\cH}{{\mathcal H}}
\nc{\cI}{{\mathcal I}}
\nc{\cJ}{{\mathcal J}}
\nc{\cK}{{\mathcal K}}
\nc{\cL}{{\mathcal L}}
\nc{\cM}{\mathcal{M}}
\nc{\cN}{{\mathcal N}}
\nc{\cO}{{\mathcal O}}
\nc{\cP}{{\mathcal P}}
\nc{\calQ}{{\mathcal Q}}
\nc{\cR}{{\mathcal R}}
\nc{\cS}{\mathcal{S}}
\nc{\cT}{{\mathcal T}}
\nc{\cU}{\mathcal{U}}
\nc{\cV}{{\mathcal V}}
\nc{\cX}{{\mathcal X}}
\nc{\cY}{\mathcal{Y}}
\nc{\cW}{\mathcal{W}}
\nc{\cZ}{{\mathcal Z}}


\nc{\bbA}{{\mathbb{A}}}
\nc{\bbB}{{\mathbb{B}}}
\nc{\bbC}{{\mathbb{C}}}
\nc{\bbD}{{\mathbb{D}}}
\nc{\bbE}{{\mathbb{E}}}
\nc{\bbF}{{\mathbb{F}}}
\nc{\bbG}{{\mathbb{G}}}
\nc{\bbH}{{\mathbb{H}}}
\nc{\bbI}{{\mathbb{I}}}
\nc{\bbJ}{{\mathbb{J}}}
\nc{\bbK}{{\mathbb{K}}}
\nc{\bbL}{{\mathbb{L}}}
\nc{\bbM}{{\mathbb{M}}}
\nc{\bbN}{{\mathbb{N}}}
\nc{\bbO}{{\mathbb{O}}}
\nc{\bbP}{{\mathbb{P}}}
\nc{\bbQ}{{\mathbb{Q}}}
\nc{\bbR}{{\mathbb{R}}}
\nc{\bbS}{{\mathbb{S}}}
\nc{\bbT}{{\mathbb{T}}}
\nc{\bbU}{{\mathbb{U}}}
\nc{\bbV}{{\mathbb{V}}}
\nc{\bbX}{{\mathbb{X}}}
\nc{\bbY}{{\mathbb{Y}}}
\nc{\bbW}{{\mathbb{W}}}
\nc{\bbZ}{{\mathbb{Z}}}

\usepackage{mathrsfs}

\nc{\scrA}{{\mathscr A}}
\nc{\scrB}{{\mathscr B}}
\nc{\scrC}{{\mathscr C}}
\nc{\scrD}{{\mathscr D}}
\nc{\scrE}{{\mathscr E}}
\nc{\scrF}{{\mathscr F}}
\nc{\scrG}{{\mathscr G}}
\nc{\scrH}{{\mathscr H}}
\nc{\scrI}{{\mathscr I}}
\nc{\scrJ}{{\mathscr J}}
\nc{\scrK}{{\mathscr K}}
\nc{\scrL}{{\mathscr L}}
\nc{\scrM}{{\mathscr M}}
\nc{\scrN}{{\mathscr N}}
\nc{\scrO}{{\mathscr O}}
\nc{\scrP}{{\mathscr P}}
\nc{\scrQ}{{\mathscr Q}}
\nc{\scrR}{{\mathscr R}}
\nc{\scrS}{{\mathscr S}}
\nc{\scrT}{{\mathscr T}}
\nc{\scrU}{{\mathscr U}}
\nc{\scrV}{{\mathscr V}}
\nc{\scrX}{{\mathscr X}}
\nc{\scrY}{{\mathscr Y}}
\nc{\scrW}{{\mathscr W}}
\nc{\scrZ}{{\mathscr Z}}

\nc{\sfA}{{\mathsf A}}
\nc{\sfB}{{\mathsf B}}
\nc{\sfC}{{\mathsf C}}
\nc{\sfD}{{\mathsf D}}
\nc{\sfE}{{\mathsf E}}
\nc{\sfF}{{\mathsf F}}
\nc{\sfG}{{\mathsf G}}
\nc{\sfH}{{\mathsf H}}
\nc{\sfI}{{\mathsf I}}
\nc{\sfJ}{{\mathsf J}}
\nc{\sfK}{{\mathsf K}}
\nc{\sfL}{{\mathsf L}}
\nc{\sfM}{{\mathsf M}}
\nc{\sfN}{{\mathsf N}}
\nc{\sfO}{{\mathsf O}}
\nc{\sfP}{{\mathsf P}}
\nc{\sfQ}{{\mathsf Q}}
\nc{\sfR}{{\mathsf R}}
\nc{\sfS}{{\mathsf S}}
\nc{\sfT}{{\mathsf T}}
\nc{\sfU}{{\mathsf U}}
\nc{\sfV}{{\mathsf V}}
\nc{\sfX}{{\mathsf X}}
\nc{\sfY}{{\mathsf Y}}
\nc{\sfW}{{\mathsf W}}
\nc{\sfZ}{{\mathsf Z}}

%

\nc{\sfa}{{\mathsf a}}
\nc{\sfb}{{\mathsf b}}
\nc{\sfc}{{\mathsf c}}
\nc{\sfd}{{\mathsf d}}
\nc{\sfe}{{\mathsf e}}
\nc{\sff}{{\mathsf f}}
\nc{\sfg}{{\mathsf g}}
\nc{\sfh}{{\mathsf h}}
\nc{\sfi}{{\mathsf i}}
\nc{\sfj}{{\mathsf j}}
\nc{\sfk}{{\mathsf k}}
\nc{\sfl}{{\mathsf l}}
\nc{\sfm}{{\mathsf m}}
\nc{\sfn}{{\mathsf n}}
\nc{\sfo}{{\mathsf o}}
\nc{\sfp}{{\mathsf p}}
\nc{\sfq}{{\mathsf q}}
\nc{\sfr}{{\mathsf r}}
\nc{\sfs}{{\mathsf s}}
\nc{\sft}{{\mathsf t}}
\nc{\sfu}{{\mathsf u}}
\nc{\sfv}{{\mathsf v}}
\nc{\sfx}{{\mathsf x}}
\nc{\sfy}{{\mathsf y}}
\nc{\sfw}{{\mathsf w}}
\nc{\sfz}{{\mathsf z}}

\nc {\bfA}{{\mathbf A}}
\nc {\bfB}{{\mathbf B}}
\nc {\bfC}{{\mathbf C}}
\nc {\bfD}{{\mathbf D}}
\nc {\bfE}{{\mathbf E}}
\nc {\bfF}{{\mathbf F}}
\nc {\bfG}{{\mathbf G}}
\nc {\bfH}{{\mathbf H}}
\nc {\bfI}{{\mathbf I}}
\nc {\bfJ}{{\mathbf J}}
\nc {\bfK}{{\mathbf K}}
\nc {\bfL}{{\mathbf L}}
\nc {\bfM}{{\mathbf M}}
\nc {\bfN}{{\mathbf N}}
\nc{\bfO}{{\mathbf O}}
\nc {\bfP}{{\mathbf P}}
\nc {\bfQ}{{\mathbf Q}}
\nc {\bfR}{{\mathbf R}}
\nc {\bfS}{{\mathbf S}}
\nc {\bfT}{{\mathbf T}}
\nc {\bfU}{{\mathbf U}}
\nc {\bfV}{{\mathbf V}}
\nc {\bfX}{{\mathbf X}}
\nc {\bfY}{{\mathbf Y}}
\nc {\bfW}{{\mathbf W}}
\nc {\bfZ}{{\mathbf Z}}


\nc {\fka}{{\mathfrak a}}
\nc {\fkb}{{\mathfrak b}}
\nc {\fkc}{{\mathfrak c}}
\nc {\fkd}{{\mathfrak d}}
\nc {\fke}{{\mathfrak e}}
\nc {\fkf}{{\mathfrak f}}
\nc {\fkg}{{\mathfrak g}}
\nc {\fkh}{{\mathfrak h}}
\nc {\fki}{{\mathfrak i}}
\nc {\fkj}{{\mathfrak j}}
\nc {\fkk}{{\mathfrak k}}
\nc {\fkl}{{\mathfrak l}}
\nc {\fkm}{{\mathfrak m}}
\nc {\fkn}{{\mathfrak n}}
\nc {\fko}{{\mathfrak o}}
\nc {\fkp}{{\mathfrak p}}
\nc {\fkq}{{\mathfrak q}}
\nc {\fkr}{{\mathfrak r}}
\nc {\fks}{{\mathfrak s}}
\nc {\fkt}{{\mathfrak t}}
\nc {\fku}{{\mathfrak u}}
\nc {\fkv}{{\mathfrak v}}
\nc {\fkx}{{\mathfrak x}}
\nc {\fky}{{\mathfrak y}}
\nc {\fkw}{{\mathfrak w}}
\nc {\fkz}{{\mathfrak z}}

\nc {\fkA}{{\mathfrak A}}
\nc {\fkB}{{\mathfrak B}}
\nc {\fkC}{{\mathfrak C}}
\nc {\fkD}{{\mathfrak D}}
\nc {\fkE}{{\mathfrak E}}
\nc {\fkF}{{\mathfrak F}}
\nc {\fkG}{{\mathfrak G}}
\nc {\fkH}{{\mathfrak H}}
\nc {\fkI}{{\mathfrak I}}
\nc {\fkJ}{{\mathfrak J}}
\nc {\fkK}{{\mathfrak K}}
\nc {\fkL}{{\mathfrak L}}
\nc {\fkM}{{\mathfrak M}}
\nc {\fkN}{{\mathfrak N}}
\nc {\fkO}{{\mathfrak O}}
\nc {\fkP}{{\mathfrak P}}
\nc {\fkQ}{{\mathfrak Q}}
\nc {\fkR}{{\mathfrak R}}
\nc {\fkS}{{\mathfrak S}}
\nc {\fkT}{{\mathfrak T}}
\nc {\fkU}{{\mathfrak U}}
\nc {\fkV}{{\mathfrak V}}
\nc {\fkX}{{\mathfrak X}}
\nc {\fkY}{{\mathfrak Y}}
\nc {\fkW}{{\mathfrak W}}
\nc {\fkZ}{{\mathfrak Z}}

\rnc{\a}{\fka}
\rnc{\b}{\fkb}
\rnc{\c}{\fkc}
\rnc{\d}{\fkd}
\nc{\e}{\fke}
\nc{\f}{\fkf}
\nc{\h}{\fkh}
\rnc{\i}{\fki}
\rnc{\j}{\fkj}
\rnc{\k}{\fkk}
\rnc{\l}{\fkl}
\nc{\m}{\fkm}
\nc{\n}{\fkn}
\rnc{\o}{\fko}
\nc{\p}{\fkp}
\nc{\q}{\fkq}
\rnc{\r}{\fkr}
\nc{\s}{\fks}
\rnc{\t}{\fkt}
\rnc{\u}{\fku}
\rnc{\v}{\fkv}
\nc{\x}{\fkx}
\nc{\y}{\fky}
\nc{\w}{\fkw}
\nc{\z}{\fkz}


\newcommand{\C}{{\mathbb C}}
\newcommand{\Q}{\mathbb {Q}}
\newcommand{\Z}{{\mathbb Z}}

\newcommand{\g}{{\mathfrak{g}}}
\nc{\Ad}{\operatorname{Ad}}
\nc{\ad}{\operatorname{ad}}
\nc{\sym}{\mathfrak{S}} 
\nc{\weyl}{\mathfrak{W}}
\nc{\Serre}{\operatorname{Serre}}


\newcommand{\Hom}{\operatorname{Hom}}
\newcommand{\End}{\operatorname{End}}
\nc{\Aut}{\operatorname{Aut}}

\nc{\coker}{{\operatorname{coker}}}
\newcommand{\Ker}{{\operatorname{Ker}}}
\newcommand{\id}{{\operatorname{id}}}
\nc{\Id}{\operatorname{Id}}
\nc{\aff}{{\sf aff}}
\nc{\Sp}{\operatorname{Sp}}


\nc{\rank}{{\operatorname{rank}}}
\nc{\Rep}{{\operatorname{Rep}}}
\nc{\Repfd}{{\operatorname{Rep}_{\sf fd}}}

\nc{\Mod}{{\operatorname{Mod}}}
\nc{\Modfd}{{\operatorname{Mod}_{\sf fd}}}


\nc {\ul}{\underline}
\nc {\ol}{\overline}
\nc {\wtil}{\widetilde}
\nc {\wh}{\widehat}
\nc {\wb}{\widebar}
\nc{\scs}{\scriptscriptstyle}
\nc{\scsop}{\scriptscriptstyle\operatorname} 

\nc {\ie}{\emph{i.e.}, } 
\nc {\eg}{\emph{e.g.}, } 
\nc {\cf}{{\emph{cf.}} } 
\nc {\loccit}{{\emph{loc. cit. }} } 

\nc {\aand}{\qquad\mbox{and}\qquad}

\nc{\al}{\alpha}
\nc{\ga}{\gamma}
\nc{\del}{\delta}
\nc{\eps}{\epsilon}
\nc{\ze}{\zeta}
\nc{\ka}{\kappa}
\nc{\la}{\lambda}
\nc{\si}{\sigma}
\nc{\om}{\omega}
\nc{\ups}{\upsilon}

\nc{\Del}{\Delta}

\nc{\ext}{\mathsf{ext}}

\nc{\fksl}{\mathfrak{sl}}
\nc{\fkso}{\mathfrak{so}}
\nc{\fksp}{\mathfrak{sp}}

\nc{\qu}{\quad}
\nc{\qq}{\qquad}

\rnc{\eq}[1]{\begin{equation} #1 \end{equation}}


\nc{\drc}[1]{\delta_{#1}}
\nc{\der}{\partial}
\nc{\sfad}{\mathsf{ad}}
\nc{\ten}{\otimes}

\nc{\ourcomment}[1]{}
\nc{\andreacomment}[1]{}
\nc{\bartcomment}[1]{}
\nc{\Omit}[1]{}
\nc{\summary}[1]{}
\nc{\tm}[1]{{\color{magenta}{#1}}}



\nc{\bsF}{\bbF} 

\nc{\Ons}{\mathbf{O}_q}
\nc{\aOns}{\mathbf{O}_q^{\mathsf{a}}}
\nc{\bOns}{\mathbf{O}_q^{\mathsf{b}}}

\nc{\km}{\mathbf{k}}
\nc{\qkm}{\mathfrak{k}}

\nc{\texp}[1]{\wt{s}_{#1}} 
\nc{\Lus}[1]{T_{#1}} 
\nc{\mLus}[1]{\mathbf{T}_{#1}} 

\nc{\fin}{{0}}

\nc{\Ifin}{{I_\fin}}
\nc{\Afin}{{A_\fin}}

\rnc{\aff}[1]{\wh{#1}}

\nc{\de}[1]{\epsilon_{#1}} 

\nc{\fIS}{I} 
\nc{\aIS}{\aff{I}} 
\nc{\IS}{I}

\nc{\veps}{\varepsilon}
\nc{\vtheta}{\vartheta}

\nc{\rt}[1]{\alpha_{#1}} 
\nc{\cort}[1]{h_{#1}}
\nc{\fwt}[1]{\varpi_{#1}}
\nc{\fcwt}[1]{\Lambda_{#1}^\vee}

\nc{\rootsys}{{\Phi}} 

\nc{\hrt}{\vartheta} 

\nc{\cp}[2]{#1(#2)}
\nc{\iip}[2]{\left( #1 , #2\right)} 

\nc{\drv}[1]{\delta_{#1}} 
\nc{\codrv}[1]{d_{#1}} 

\nc{\bsfld}{\mathbb{K}} 

\nc{\central}[1]{c_{#1}}

\nc{\Qlat}{\mathsf{Q}}
\nc{\Qpm}{\Qlat^\pm}
\nc{\Qp}{\Qlat^+}
\nc{\Qm}{\Qlat^-}
\nc{\Qlatv}{\Qlat^\vee}
\nc{\Qvpm}{\Qlat^{\vee,\pm}}
\nc{\Qvp}{\Qlat^{\vee,+}}
\nc{\Qvm}{\Qlat^{\vee,-}}
\nc{\Qextp}{\Qlat^+_{\sf ext}}
\nc{\Qvext}{\Qlat^{\vee}_{\sf ext}}
\nc{\Qvextt}{\Qlat^{\vee}_{{\sf ext},\tau}}
\nc{\Qvextp}{\Qlat^{\vee,+}_{\sf ext}}
\nc{\aQ}{\aff{\Qlat}}
\nc{\aQv}{\aff{\Qlat}^{\vee}}
\nc{\aQp}{\aff{\Qlat}^+}
\nc{\aQextp}{\aff{\Qlat}^+_{\sf ext}}
\nc{\aQvp}{\aff{\Qlat}^{\vee,+}}
\nc{\aQvext}{\wt{\Qlat}^{\vee}} 
\nc{\aQvextt}{\aff{\Qlat}^{\vee}_{{\sf ext},\tau}}
\nc{\aQvextp}{\aff{\Qlat}^{\vee,+}_{\sf ext}}
\nc{\Plat}{\mathsf{P}}
\nc{\Platv}{\mathsf{P}^\vee}
\nc{\Ppm}{\mathsf{P}^\pm}
\nc{\Pp}{\mathsf{P}^+}
\nc{\Pm}{\mathsf{P}^-}
\nc{\Pvpm}{\mathsf{P}^{\vee,\pm}}
\nc{\Pvp}{\mathsf{P}^{\vee,+}}
\nc{\Pvm}{\mathsf{P}^{\vee,-}}
\nc{\aP}{\wh{\Plat}}
\nc{\Pext}{\Plat}
\nc{\Pextpm}{\Plat_{\sf ext}^{\pm}}
\nc{\aPext}{\wh{\Plat}_{{\sf ext}}}
\nc{\aPpm}{\aP^{\pm}}
\nc{\aPextpm}{\aP^{\pm}_{\sf ext}}
\nc{\aPd}{{\aP/\bbZ\drv{}}} 

\nc{\PZ}{{\Plat_{\Z}}}
\nc{\PvZ}{{\Platv_{\Z}}}

\nc{\cl}[1]{\operatorname{cl}(#1)}

\nc{\Pcl}{\Plat_{\cl}}
\nc{\Pvcl}{\Plat^{\vee}_{\cl}}

\nc{\Pclz}{\Plat_{\cl,0}}

\nc{\fr}{_\mathsf{fr}}
\nc{\Qfr}{\Qlat\fr}
\nc{\Pfr}{\Plat\fr}

\nc{\Qvfr}{\Qlatv\fr}
\nc{\Pvfr}{\Platv\fr}

\nc{\wgt}[1]{\operatorname{wt}(#1)}

\nc{\hp}{\h'}
\nc{\gp}{\g'}
\nc{\kp}{\k'}


\nc{\Wfin}{{W_{\fin}}}
\nc{\Waff}{W}
\nc{\Wext}{\widetilde{W}}

\nc{\rfl}[1]{{s_{#1}}}


\nc{\UqgKM}{U_q(\g_{\scsop{KM}})}
\nc{\gKM}{\g_{\scsop{KM}}}

\nc{\Uqg}{U_q(\g)} 
\nc{\Uqgp}{U_q(\g')} 

\nc{\Uqh}{U_q(\h)} 
\nc{\Uqhp}{U_q(\h')} 

\nc{\Uqb}{U_q{(\b)}} 
\nc{\Uqbp}{U_q{(\b^+)}} 
\nc{\Uqbm}{U_q{(\b^-)}} 
\nc{\Uqbpm}{U_q{({\b}^{\pm})}} 
\nc{\Uqn}{U_q{(\n)}} 
\nc{\Uqnm}{U_q{({\n}^-)}} 
\nc{\Uqnp}{U_q{({\n}^+)}} 
\nc{\Uqnpm}{U_q{({\n}^{\pm})}} 

\nc{\Uqag}{U_q(\wt{\g})} 
\nc{\Uqagp}{U_q(\wh{\g})} 

\nc{\Uqan}{U_q(\wh{\n})} 
\nc{\Uqanp}{U_q(\wh{\n}^+)} 
\nc{\Uqanm}{U_q(\wh{\n}^-)} 
\nc{\Uqanpm}{U_q(\wh{\n}^{\pm})} 

\nc{\Uqah}{U_q(\wt{\h})} 
\nc{\Uqahp}{U_q(\wh{\h})} 

\nc{\Uqab}{U_q(\wt{\b})} 
\nc{\Uqabp}{U_q(\wt{\b}^+)} 
\nc{\Uqabm}{U_q(\wt{\b}^-)} 
\nc{\Uqabpm}{U_q(\wt{\b}^{\pm})} 
\nc{\Uqabpp}{U_q(\wh{\b}^+)} 

\nc{\UqgX}{U_q(\g_X)} 
\nc{\UqgZ}{U_q(\g_Z)}
\nc{\UqbpX}{U_q(\b_X^+)}
\nc{\UqnmX}{U_q(\n_X^-)}
\nc{\Uqht}{U_q(\h^{\theta})}

\nc{\CUqag}[1]{\Uqag^{#1}^{\cO^{\sf int}}}

\nc{\CUqg}[1]{\Uqg^{#1}^{\cO^{\sf int}}}

\nc{\Lg}{L\g} 
\nc{\ag}{\wt{\g}} 
\nc{\agp}{\wh{\g}} 
\nc{\ah}{\wt{\h}} 
\nc{\ahp}{\wh{\h}} 

\nc{\UqLg}{U_q(\Lg)} 

\nc{\CUqLg}[1]{(\UqLg^{#1})^{\sf fd}} 

\nc{\Kg}[1]{K_{#1}}
\nc{\Kgpm}[1]{K_{#1}^{\pm}}
\nc{\Kgp}[1]{K_{#1}^+}
\nc{\Kgm}[1]{K_{#1}^-}
\nc{\Eg}[1]{E_{#1}}
\nc{\Fg}[1]{F_{#1}}
\nc{\egp}[1]{e^{+}_{#1}}
\nc{\egm}[1]{e^{-}_{#1}}
\nc{\egpm}[1]{e^{\pm}_{#1}}
\nc{\egmp}[1]{e^{\mp}_{#1}}

\nc{\Ce}{\cC}

\nc{\xpm}[1]{x^{\pm}_{#1}}
\nc{\xmp}[1]{x^{\mp}_{#1}}
\nc{\xp}[1]{x^{+}_{#1}}
\nc{\xm}[1]{x^{-}_{#1}}
\nc{\xz}[1]{\xi_{#1}}


\nc{\phipm}[1]{\phi^{\pm}_{#1}}
\nc{\phip}[1]{\phi^{+}_{#1}}
\nc{\phim}[1]{\phi^{-}_{#1}}


\nc{\chev}{\omega}


\nc{\weightspace}[2]{{{#1}_{#2}}}
\nc{\wsp}[2]{\weightspace{#1}{#2}}
\nc{\wts}[1]{\mathsf{wt}(#1)}
\nc{\hwL}[1]{L(#1)}
\nc{\catO}[3]{\O_{#1}^{#2}{#3}}

\nc{\qstr}[2]{\Sigma_{#1,#2}}

\nc{\evrep}[2]{V_{#1}(#2)}
\nc{\shrep}[2]{{#1}(#2)}
\nc{\Lshrep}[2]{\Lfml{#1}{#2}}
\nc{\Pshrep}[2]{\Pfml{#1}{#2}}

\nc{\qstrep}[2]{V(\qstr{#1}{#2})}
\nc{\qsrep}[1]{V(#1)}

\nc{\HL}[1]{\mathcal{C}_{#1}}


\nc{\Oint}{{\cO^{\sf int}}}
\nc{\Ointp}{\cO_{\sf int}^{+}}
\nc{\Ointm}{\cO_{\sf int}^{-}}
\nc{\Ointpm}{\cO_{\sf int}^{\pm}}


\nc{\shift}[1]{\Sigma_{#1}}
\nc{\tshift}[1]{\Sigma^{\tau}_{#1}}
\nc{\shiftm}[2]{{#1}_{#2}}

\nc{\shifta}[1]{{\chi}_{#1}}

\nc{\Deltaop}{\Delta^{\sf op}}

\nc{\fml}[2]{{#1}[\negthinspace[#2]\negthinspace]} 
\nc{\Lfml}[2]{{#1}(\negthinspace(#2)\negthinspace)} 
\nc{\Pfml}[2]{{#1}\{#2\}} 


\nc{\qsl}[1]{U_q({\mathfrak{sl}}_{#1})}
\nc{\qasl}[1]{U_q(\wh{\mathfrak{sl}}_{#1})}
\nc{\qlsl}[1]{U_q(L\mathfrak{sl}_{#1})}


\nc{\brS}[1]{S_{#1}} 
\nc{\brSg}[1]{\widetilde{s}_{#1}} 

\nc{\Br}[1]{\mathscr{B}_{#1}} 
\nc{\RBr}[1]{\mathscr{RB}_{#1}} 

\nc{\qWS}[1]{S_{#1}}
\nc{\LT}[1]{T_{#1}}


\nc{\tcorr}[1]{\ka_{#1}} 
\nc{\bt}[1]{\mathcal{S}_{#1}} 

\nc{\adt}[1]{\mathcal{T}_{#1}} 
\nc{\adbt}[1]{\mathcal{T}_{#1}} 

\nc{\Rcorr}[1]{\eta_{#1}}


\nc{\intg}{\mathbf{W}^{\mathsf{int}}}
\nc{\Vect}[1]{\operatorname{Vect}_{#1}}


\nc{\corank}{\operatorname{cork}}

\nc{\gsat}[1]{\mathsf{GSat}(#1)} 
\nc{\auxgsat}[2]{\mathsf{Sat}(#1;#2)} 
\nc{\sat}[1]{{\mathbf{S}}} 
\nc{\tsat}{\theta} 
\nc{\zsat}{\zeta} 
\nc{\tsatq}{\tsat_{q}} 
\nc{\zsatq}{\zsat_{q}} 

\nc{\tinv}[1]{\theta_{#1}}

\nc{\oi}{\mathsf{oi}} 

\nc{\Parsetc}{\bm{\Gamma}} 
\nc{\Parsets}{\bm{\Sigma}}

\nc{\Parc}{\bm{\gamma}} 
\nc{\Pars}{\bm{\sigma}}

\nc{\parc}[1]{\bm{\gamma}_{#1}} 
\nc{\pars}[1]{\bm{\sigma}_{#1}}

\nc{\ctheta}{\theta} 

\nc{\qtheta}{\theta_q} 
\nc{\qthetat}{\widetilde{\theta}_q} 

\nc{\Uqk}{U_q(\mathfrak{k})}
\nc{\Uqkp}{U_q({\mathfrak{k}'})}

\nc{\wt}{\widetilde}

\nc{\Ieq}{I_{\sf eq}} 
\nc{\Idiff}{I_{\sf diff}} 
\nc{\Ins}{I_{\sf ns}} 
\nc{\aIeq}{\aIS_{\sf eq}} 
\nc{\aIdiff}{\aIS_{\sf diff}} 
\nc{\aIns}{\aIS_{\sf ns}} 

\nc{\bg}[1]{b_{#1}} 
\nc{\Bg}[1]{B_{#1}} 


\nc{\QK}[1]{\Upsilon_{#1}} 
\nc{\RM}[1]{{R}_{#1}} 
\nc{\QR}[1]{{\Xi}_{#1}} 
\nc{\sRM}[2]{{{R}}_{#1}(#2)} 
\nc{\sRMv}[2]{{\widecheck{R}}_{#1}(#2)} 
\nc{\rRM}[2]{R^{\operatorname{trig}}_{#1}(#2)} 
\nc{\rRMC}[3]{{\mathbf{R}}^{#1}_{#2}(#3)} 
\nc{\rRMv}[2]{\widecheck{\mathbf{R}}_{#1}(#2)} 
\nc{\rRMt}[2]{\widetilde{\mathbf{R}}_{#1}(#2)} 
\nc{\KM}[1]{K_{#1}} 
\nc{\sKM}[2]{K_{#1}(#2)} 
\nc{\rKM}[2]{K_{#1}^{\operatorname{trig}}(#2)} 
\nc{\trKM}[2]{\wt{\mathbf{K}}_{#1}(#2)} 
\nc{\TKM}[1]{{\bbK}_{#1}} 

\nc{\zeroKM}[2]{K^0_{#1}(#2)} 
\nc{\inftyKM}[2]{K^{\infty}_{#1}(#2)} 

\nc{\tsRM}[2]{{{R}}^{\tau}_{#1}(#2)} 
\nc{\tsRMv}[2]{{\widecheck{R}}^{\tau}_{#1}(#2)} 

\nc{\auxsat}[1]{\bfT} 

\nc{\hext}[2]{{#1}[\negthinspace[#2]\negthinspace]} 

\nc{\binomb}[2]{\genfrac{[}{]}{0pt}{}{#1}{#2}}

\nc{\GQSP}{\cG_{\tsat,\parc{}}}


\nc{\Supp}{\mathsf{Supp}}
\nc{\twistrep}[2]{{#1}^{#2}}  

\nc{\wtbeta}{\wt{\beta}}

\nc{\Gg}{\cG} 
\nc{\gau}{{\bf g}} 

\nc{\op}{\operatorname{op}}
\nc{\cop}{\operatorname{cop}}

\nc{\sclQK}{\mathbf{Y}} 

\nc{\tproj}[1]{[#1]_\tsat}

\nc{\hgt}{{\mathsf{ht}}}


\nc{\Vpsi}{V^{\psi}}
\nc{\Wpsi}{W^{\psi}}
\nc{\VWpsi}{(V\ten W)^{\psi}}

\nc{\sint}{{\operatorname{int}}}
\nc{\pint}[1]{{{#1}\negthinspace\operatorname{-int}}}
\nc{\Xint}{{{{X}\negthinspace\operatorname{-int}}}}
\nc{\WUqg}{\cW} 
\nc{\OUqg}{\cO_\infty} 
\nc{\OintUqg}{\cO_\infty^{\sint}} 
\nc{\POUqg}[1]{\cO_\infty^{\pint{#1}}} 
\nc{\vect}{\operatorname{Vect}}

\nc{\FF}[2]{\sff_{#1}^{#2}}

\nc{\CO}{\mathscr{E}} 
\nc{\COUqg}[2]{\End(\FF{#1}{#2})}
\nc{\COintUqg}[1]{\End(\FF{\sint}{#1})}
\nc{\CPOUqg}[2]{\End(\FF{\pint{#1}}{#2})} 
\nc{\CWUqg}[1]{\End(\FF{\cW}{#1})}
\nc{\WO}[1]{\operatorname{C}^{#1}}
\nc{\WUqk}{\cW_{\theta}}
\nc{\CWUqk}[1]{\End(\sff_\theta^{#1})}
\nc{\Dr}{\cD}
\nc{\Drt}{\cE_{\theta}}
\nc{\DrX}{\cE^{\op}_X}
\nc{\TQK}[1]{\Xi_{#1}}

\nc{\PROPB}{\mathsf{PROP/B}}
\nc{\PROB}{\mathsf{PROB}}
\nc{\PROP}{\mathsf{PROP}}
\nc{\bfb}{\mathbf{b}}
\nc{\G}{\cG}
\nc{\Fun}{\operatorname{Fun}}

\nc{\monactp}{\vartriangleleft}
\nc{\monactm}{\blacktriangleleft}

\rnc{\UqLg}{U_q(\mathfrak{L})} 
\nc{\gfin}{\mathring{\g}}
\nc{\hfin}{\mathring{\h}}

\nc{\fdUqL}{\cC}
\nc{\fdUqk}{\cC_{\theta}}

\nc{\rnRM}[2]{R^{\operatorname{norm}}_{#1}(#2)}
\nc{\rTKM}[2]{\mathbb{K}^{\operatorname{trig}}_{#1}(#2)}
\nc{\rnTKM}[2]{\mathbb{K}^{\operatorname{norm}}_{#1}(#2)}
\nc{\rnKM}[2]{K^{\operatorname{norm}}_{#1}(#2)}

\nc{\bmb}{{\bm \beta}}

\nc{\Ct}{\mathbf{t}}
\nc{\Cb}{\mathbf{b}}
\nc{\FSeq}{\operatorname{Seq}}
\nc{\BSeq}{\operatorname{BSeq}}

\AtBeginDocument{%
   \def\MR#1{}
}

\title[Tensor K-matrices for quantum symmetric pairs]{Tensor K-matrices for quantum symmetric pairs}

\author[A. Appel]{Andrea Appel} 
\address{Dipartimento di Scienze Matematiche, 
	Fisiche e Informatiche, Universit\`a di Parma, 
	INdAM - GNSAGA,
	and INFN - Gruppo Collegato di Parma, 
	Parco Area delle Scienze 53/A, 
	43124 Parma (PR), Italy}
\email{\href{mailto:andrea.appel@unipr.it}{andrea.appel@unipr.it}}

\author[B. Vlaar]{Bart Vlaar}
\address{
Beijing Institute of Mathematical Sciences and Applications, No.\ 544, Hefangkou Village, Huaibei Town, Huairou, Beijing, China}
\email{\href{b.vlaar@bimsa.cn}{b.vlaar@bimsa.cn}}

\thanks{The first author is partially supported by the Programme {FIL 2022} of the University of Parma and co-sponsored by Fondazione Cariparma, and by an INdAM Project Grant 2024.
Both authors are partially supported by the International Scientists Program of the Beijing Natural Sciences Foundation (grant number {IS24003}).}

\keywords{Reflection equation; quantum Kac-Moody algebras; quantum symmetric pairs}

\subjclass[2020]{
	Primary: 81R50. 
	Secondary: 16T25, 
	 17B37, 
	81R12. 
}

\begin{document}
	

\begin{abstract}
Let $\g$ be a symmetrizable Kac-Moody algebra, $\Uqg$ its quantum group, and $\Uqk\subset\Uqg$ a quantum symmetric pair subalgebra determined by a Lie algebra automorphism $\theta$. 
We introduce a category $\WUqk$ of \emph{weight} $\Uqk$-modules, which is acted on by the category of weight $\Uqg$-modules via tensor products. 
We construct a universal tensor K-matrix $\TKM{}$ (that is, a solution of a reflection equation) in a completion of $\Uqk \ten \Uqg$.
This yields a natural operator on any tensor product $M\ten V$, where $M\in\WUqk$ and $V\in\cO_\theta$, \ie $V$ is a $\Uqg$-module in category $\cO$ satisfying an integrability property determined by $\theta$. 
Canonically, $\WUqk$ is equipped with a structure of a bimodule category over $\cO_\theta$ and the action of $\TKM{}$ is encoded by a new categorical structure, which we call a \emph{boundary} structure on $\WUqk$.
This generalizes a result of Kolb which describes a braided module structure on finite-dimensional $\Uqk$-modules when $\g$ is finite-dimensional.


We also consider our construction in the case of the category $\fdUqL$ of finite-dimensional modules over a quantum affine algebra, providing the most comprehensive universal framework to date for large families of solutions of parameter-dependent reflection equations. 
In this case the tensor K-matrix gives rise to a formal Laurent series with a well-defined action on tensor products of any module in $\WUqk$ and any module in $\fdUqL$. 
This series can be normalized to an operator-valued rational function, which we call trigonometric tensor K-matrix, if both factors in the tensor product are in $\fdUqL$.
\end{abstract}


\maketitle

\setcounter{tocdepth}{1}
\tableofcontents


\section{Introduction}

\subsection{} \label{ss:overview} 
The reflection equation was introduced by Cherednik \cite{Che84} and Sklyanin \cite{Skl88} in the context of quantum integrable systems with compatible boundary conditions. 
The underlying algebraic theory was first outlined by Sklyanin and Kulish in \cite{KS92}. 

In \cite{AV22a} we proved that the Drinfeld--Jimbo quantum group $\Uqg$, where $\g$ is a symmetrizable Kac--Moody algebra, is a natural source of 
solutions, which are referred to as K-matrices, of a generalized version of the reflection equation, introduced by Cherednik \cite{Che92}.
Our result is a generalization of previous constructions by Bao and Wang \cite{BW18b} and Balagovi\'{c} and Kolb \cite{BK19}.
In particular, it relies on a more refined initial datum, which is essential for extending the theory to arbitrary quantum Kac-Moody algebras, and subsequently to finite-dimensional modules over quantum affine algebras in \cite{AV22b}.\\

In \cite{Skl88}, Sklyanin observed that a K-matrix on a vector space $V$ can be promoted to a solution of the reflection equation acting on a larger tensor product. 
This is achieved by left- and right-multiplying the initial K-matrix with twisted R-matrices, \ie solutions of twisted Yang-Baxter equations.
In this paper, we apply this principle to the universal K-matrices obtained in \cite{AV22a}, leading to a natural construction of \emph{tensor} K-matrices.
This is a generalization of the result of Kolb \cite{Kol20} for quantum groups of finite type. 
Furthermore, in the case of quantum affine algebras, we obtain (a family of) \emph{trigonometric} tensor K-matrices, \ie operators depending rationally on a formal parameter $z$, acting on any tensor product of irreducible finite-dimensional representations. 
Thus we obtain a ``tensor analogue'' of \cite{AV22b}.

\subsection{}
We recall the construction of universal K-matrices from \cite{AV22a} for the quantum Kac-Moody algebra $\Uqg$ (for more detail also see Sections \ref{ss:aux-datum-k}-\ref{ss:gauged-kmx} of the current work).
It depends on two data.
The first one is a quantum symmetric pair subalgebra, \ie a right coideal subalgebra $\Uqk\subset\Uqg$, which quantizes a \mbox{(pseudo-)}fixed-point Lie subalgebra with respect to a (pseudo-)involution $\theta$ of the second kind, see \cite{Let02,Kol14, RV22, AV22a}. 
We refer to $\Uqk$ as a \emph{QSP subalgebra}. 
The second one is a distinguished algebra automorphism $\psi$ of $\Uqg$, called \emph{twist automorphism}, which naturally appears in the axioms for $K$ and causes a twist of the $\Uqg$-action on modules.
In the simplest case $\psi$ arises essentially as a lift of the automorphism $\theta$ to $\Uqg$.

For any such pair $(\Uqk,\psi)$ there exists a solution $K$ of a reflection equation acting on $\Uqg$-modules in the category $\OUqg$, which is (a quantum analogue of) the category $\cO$ considered in \cite{Kac90} but without finite-dimensionality of weight spaces. 
The reflection equation takes the following form:
\eq{ \label{intro:generalRE}
	R^{\psi \psi}_{21} \cdot (1 \ten K) \cdot R^\psi \cdot (K \ten 1) = (K \ten 1) \cdot R^\psi_{21} \cdot (1 \ten K) \cdot R,
}
where $R$ is the universal R-matrix of $\Uqg$, $R^\psi = (\psi \ten \id)(R)$ and $R^{\psi\psi} = (\psi \ten \psi)(R)$. 
Moreover, $K$ is a $\psi$-twisted centralizer of the subalgebra $\Uqk$:  
\eq{ \label{intro:linear}
	K \cdot b = \psi(b) \cdot K \qq \text{for all } b\in\Uqk.
}
In fact, $K$ is obtained as the unique solution of \eqref{intro:linear} among operators with a prescribed form.
This is consistent with the examples of explicit K-matrices, depending on a spectral parameter, provided in \cite{MN98,DG02,DM03}.  

More precisely, every universal K-matrix obtained in \cite{AV22a} is eventually built out of the \emph{quasi}-K-matrix $\Upsilon$, an operator originally introduced in \cite{BW18b} as a canonical solution in a completion of the positive part of $\Uqg$ of a certain intertwining equation for a QSP subalgebra of $U_q(\fksl_N)$ and generalized in \cite{BK19,AV22a} to almost arbitrary\footnote{There is a constraint on $\theta$ (automatically satisfied for $\fkg$ of finite and affine type) guaranteeing the natural requirement that $\theta$ extend to a map on the extended weight lattice, see \cite{Kol14,AV22a}.} QSP subalgebras of $\Uqg$. 
Although it may appear different at first, this equation reduces to the form \eqref{intro:linear} after a simple algebraic manipulation. 
Relying on this observation, we proved in \cite{AV22a} that $\Upsilon$ satisfies a generalized reflection equation of the form \eqref{intro:generalRE}. 

The equations \eqrefs{intro:generalRE}{intro:linear} are preserved under the following natural group action, called \emph{gauge transformation}:
\eq{
\psi \mapsto \Ad(g) \circ \psi, \qq \qq K \mapsto g \cdot K, 
}
where $g$ is an invertible element of some completion of $\Uqg$ and $\Ad$ denotes conjugation.
The general idea behind gauging is that one can, by modifying $\Upsilon$ and its twist automorphism, obtain a more convenient K-matrix or a more convenient twist automorphism.
Depending on the choice of $g$, this may lead to convergence issues, which requires the use of a smaller category of $\Uqg$-modules. 
For instance, when $\g$ is of finite type, we obtain a family of universal K-matrices interpolating between $\QK{}$, which acts on category $\OUqg$ modules, and the universal K-matrix $\KM{\sf BK}$ emphasized by Balagovi\'{c} and Kolb in \cite{BK19}, which acts on integrable modules in this category (in particular, on finite-dimensional modules).

\subsection{}
In the present paper, we describe a 2-tensor version of the universal K-matrix, which we denote by $\TKM{}$ and refer to as a \emph{tensor K-matrix}. 
This generalizes the results from \cite{Kol20} in the same way that the construction of the universal K-matrix given in \cite{AV22a} generalizes the results from \cite{BK19}.

The idea behind tensor K-matrices is the following.
Note that, owing to the coideal property, it is natural to consider a subcategory $\cM \subseteq \Mod(\Uqk)$ which is a module category over monoidal categories such as $\cO_\infty$, whose objects are $\Uqg$-modules.
In many cases, the latter has a natural braided structure, determined by the universal R-matrix $R$. 
For suitable $\cM$, a tensor K-matrix allows one to extend this braiding to a natural compatible braiding on $\cM$.

The general bialgebraic framework for tensor K-matrices is that of \emph{reflection bialgebras}, see Appendix \ref{ss:reflection-bialgebras} and references therein, where the axioms for $\TKM{}$ are natural conditions independent of the basic K-matrix $\KM{}$. 
In this setting, see Lemma \ref{lem:refl-to-cyl}, one naturally obtains $\KM{}$ from $\TKM{}$ and observes the following factorization
\eq{ \label{intro:factorization}
	\TKM{} = R^\psi_{21} \cdot (1 \ten \KM{}) \cdot R.
} 
Such factorizations appeared first in \cite{Skl88,KS92} and more recently in \cite{DKM03,Enr07,Bro13,BZBJ18,Kol20,LBG23}.

It is tempting to \emph{define} $\TKM{}$ using this formula, starting from a basic K-matrix $\KM{}$. 
However, \emph{a priori}, the expression on the right-hand side of \eqref{intro:factorization} acts only on certain tensor products of $\Uqg$-modules (if $\KM{} = \QK{}$, tensor products in category $\OUqg$). 
Hence, it is necessary to show that $\TKM{}$ is \emph{supported on $\Uqk$}, \ie 
\eq{ \label{intro:support}
	R^\psi_{21} \cdot (1 \ten \KM{}) \cdot R \in \Uqk\wh{\ten}\Uqg
}
where $\Uqk\wh{\ten}\Uqg$ is a completion of $\Uqk\ten\Uqg$. 

In particular, Kolb showed in \cite{Kol20} for quantum groups of finite type with $\KM{} = \KM{\sf BK}$ and $\psi$ a diagram automorphism that \eqref{intro:support} holds.
Furthermore, Kolb's tensor K-matrix $\TKM{}$ acts on tensor products of the form $M \ten V$ with $M$ and $V$ finite-dimensional modules over $\Uqk$ and $\Uqg$, respectively.\footnote{In contrast with our construction, Kolb's tensor K-matrix does not act on a tensor product of non-integrable objects in $\OUqg$, for instance Verma modules.} 
In this case the support property implies a concise categorical statement. 
Restricting to the respective categories of finite-dimensional modules, $\Mod_{\sf fd}(\Uqk)$ is a module category over $\Mod_{\sf fd}(\Uqg)$, by virtue of the coideal property. 
Then, the action of $\TKM{}$ is easily seen to define a braided module structure on $\Mod_{\sf fd}(\Uqk)$ over $\Mod_{\sf fd}(\Uqg)$.\footnote{
More precisely, in order to have a genuine braided module structure, for certain QSP subalgebras $\Mod_{\sf fd}(\Uqg)$ needs to be replaced by its $\bbZ/2\bbZ$-{\em equivariantization}, see \cite[Sec.~3.2]{Kol20}.
}
For arbitrary $\g$, however, this fails, since twisting by $\psi$ does not preserve $\OUqg$.

\subsection{}

We develop instead an alternative approach to the support condition \eqref{intro:support}.
In the rest of the introduction, let $\KM{}$ be the quasi K-matrix $\QK{}$ and $\psi$ the corresponding twist automorphism.
Let $\CO$ be the completion of $\Uqg$ with respect to $\OUqg$, \ie the algebra of endomorphisms of the forgetful functor from category $\OUqg$ to vector spaces, see Section~\ref{ss:completions}.
By \cite{ATL24b}, $\Uqg$ embeds in $\CO$.
Since $\KM{}\in\CO$, $\Ad(\KM{})$ is an endomorphism of $\CO$ which does not preserve $\Uqg$ but which acts on $\Uqk$ as the restriction of $\psi \in \Aut_{\sf alg}(\Uqg)$.\footnote{In the same way, $\Ad(R)$ is not an automorphism of $\Uqg^{\ten 2}$, although it preserves the subalgebra $\Delta(\Uqg)$, acting as the simple transposition of tensor factors.} 
Then we consider the automorphism
\eq{
\xi \coloneqq \Ad(\KM{})^{-1}\circ\psi: \CO \to \CO
}
and the subalgebra\footnote{In the main text we rather define a subalgebra $B'_\xi$ of the quantized derived subalgebra $\Uqgp$. 
In the introduction we give the gist of the argument, working in $\Uqg$ for simplicity.}
\eq{
	B_\xi = \{ x\in \Uqg \,\vert\, (\xi \ten \id)(\Delta(x)) = \Delta(x)\}.
}
Here, the identity $(\xi \ten \id)(\Delta(x)) = \Delta(x)$ is to be understood in the completion of $\Uqg^{\ten 2}$ with respect to category $\OUqg$.
Note that an automorphism similar to $\xi$ was considered in \cite{KY20} and shown to exist for Nichols algebras of diagonal type.

By construction, $B_\xi$ is the maximal coideal subalgebra contained in the fixed-point subalgebra $\Uqg^\xi$, see Lemma \ref{lem:Bxi:maximal}.
In fact, by our first main result, Theorem \ref{thm:QSP-maximal-xi}, we have
\eq{ \label{intro:keyidentity}
	\Uqk=B_\xi
}
(note that $\Uqk\subseteq B_\xi$ follows immediately from \eqref{intro:linear}).
The proof of \eqref{intro:keyidentity} is given in Section \ref{ss:proof-QSP-maximal-xi} and relies on three ingredients: a refined study of the classical limit of $\KM{}$, see Section \ref{ss:classlimitK}, the maximality property of $\Uqk$ proved by Kolb in \cite{Kol14}, and the \emph{Iwasawa decomposition} of $\fkg$ with respect to any pseudo-fixed-point subalgebra $\fkk$, established by the second author and V.~Regelskis in \cite{RV22}.\\

We obtain the desired support property \eqref{intro:support} as follows. 
In Section \ref{ss:weightQSPmodules} we introduce a full subcategory $\cW_\theta \subset \Mod(\Uqk)$. 
We prove in Theorem \ref{thm:reflectionalgebra:QSP} the key properties of $\TKM{}$ in the QSP setting.
Namely, it satisfies the linear equation and coproduct formulas appropriate for a tensor K-matrix. 
Moreover, it naturally acts on any tensor product $M\ten V$, where $M\in\WUqk$ and $V$ belongs to a full subcategory of $\OUqg$ satisfying an integrability property dictated by $\Uqk$.\footnote{
We denote this category by $\POUqg{X}$. 
Its objects are modules in category $\OUqg$ whose restriction to $\UqgX$ is integrable. Here, $X$ is the maximal Dynkin subdiagram such that $\theta$ acts trivially on $\fkg_X$.
} 
The proof of the last property relies on \eqref{intro:keyidentity}.

The objects of $\cW_\theta$ are called \emph{QSP weight modules}; see \cite{BW18a,Wat24} for similar categories.
Note that, up to twisting by an automorphism of $\Uqk$, every irreducible finite-dimensional $\Uqk$-module lies in $\cW_\theta$, see Appendix \ref{ss:findimQSPmodules}.
However, $\cW_\theta$ is large, see \eg Remark \ref{rmk:weightspace:dimension}.
In particular, even in finite type we obtain a much more general result than \cite{Kol20}.
	
\subsection{}
Let $\g$ be a Kac-Moody algebra of untwisted affine type. 
Let $\fdUqL$ be the category of finite-dimensional type-${\bf 1}$ $\Uqgp$-modules.
In \cite{AV22b} we proved that, for suitable twist automorphisms $\psi$, the universal K-matrix $\KM{}$ gives rise to a formal Laurent series with a well-defined action on finite-dimensional modules of $\Uqg$. 
Furthermore, on irreducible modules this operator-valued series simplifies to a scalar multiple of an operator-valued rational function, called \emph{trigonometric K-matrix}.

In Section \ref{ss:trigtensorK} we extend these results to tensor K-matrices. 
Specifically, in Theorem \ref{thm:spectraltensorK} we observe that the action of $\TKM{}$ descends to tensor products of the form $M \ten V$ with $M \in \cW_\theta$ and $V$ a finite-dimensional $\Uqg$-module, yielding linear operators with a formal dependence on $z$. 

In order to study when this dependence simplifies to a rational dependence on $z$, we consider the full subcategory $\cC_\theta \subset \cW_\theta$ whose objects are finite-dimensional.
By standard arguments involving the one-dimensionality of the relevant space of $\Uqk$-intertwiners, see \eg \cite[Sec. 5]{AV22b}, we obtain trigonometric tensor K-matrices whenever the $\Uqk$-module $M\ten V \in \cC_\theta$ is generically irreducible. 
Furthermore, relying on the factorization \eqref{intro:factorization}, we can prove directly, if $V$ is irreducible and $M \in \cC_\theta$ is the restriction of an irreducible $\Uqg$-module, that the action of $\TKM{}$ is proportional to a rational operator called \emph{trigonometric tensor K-matrix}, see Theorem \ref{thm:trigtensorK:1}.

For general quantum symmetric pairs of affine type, the description of (generically) irreducible tensor products in $\cC_\theta$ is open and hence it is unclear whether there exist trigonometric tensor K-matrices beyond those coming from Theorem \ref{thm:trigtensorK:1}.
In the special case where $\Uqgp = U_q(\wh\fksl_2)$ and $\Uqk$ is the q-Onsager algebra, \ie the QSP subalgebra corresponding to the Chevalley involution, Ito and Terwilliger proved that every irreducible object in $\cC_\theta$ is obtained by restriction of an irreducible $\Uqgp$-module \cite{IT09}. 
Thus, \emph{a posteriori}, in this case we obtain a trigonometric K-matrix acting on any tensor product $M\ten V$ with $M$ irreducible in $\cC_\theta$ and $V$ irreducible in $\fdUqL$. 

\subsection{}
The tensor K-matrix provides a platform for the resolution of several open problems in the representation theory of affine quantum symmetric pairs with applications in quantum integrable systems with compatible boundary conditions. 
Below, we provide a brief overview, which will be the focus of forthcoming works.

\subsubsection{}
%
An important application of the universal formalism of transfer matrices built out of R-matrices is the study of q-characters for finite-dimensional modules of quantum affine algebras $\Uqgp$ \cite{FR99,FM01}.
In the original setting one considers the grading-shifted universal R-matrix of a quantum affine algebra $\Uqgp$, and evaluates only one factor on a finite-dimensional representation $V$ of $\Uqgp$. 
By taking the trace, one obtains a commuting family of formal series $\sft_V(z)$ valued in the quantum Borel subalgebra $U_q(\fkb^+)\subset\Uqg$, and hence a morphism of rings:
\begin{equation}
	\begin{tikzcd}
		\left[ \fdUqL \right] \arrow[r, "\sft"] & U_q(\fkb^+)
	\end{tikzcd}
\end{equation}	
where $\left[ \fdUqL \right]$ is the Grothendieck ring of $\fdUqL$.

For the tensor K-matrix, an analogous result follows from the generalization of Sklyanin's formalism of two-boundary transfer matrices \cite{Skl88}, which we describe in \cite{AV24}.
This involves a gauge transformation of the universal tensor K-matrix of an affine quantum symmetric pair $\Uqk\subset\Uqgp$ by a solution of a ``dual'' reflection equation. 
Then, evaluation of the second factor on $V$ and taking the trace yields a commuting family of series $\sfb_V(z)$ valued in $\Uqk$, and a morphisms of rings:
\begin{equation}
	\begin{tikzcd}
		\left[ \fdUqL \right] \arrow[r, "\sfb"] & \Uqk.
	\end{tikzcd}
\end{equation}	

It is natural to expect that $\sfb$ extends to a \emph{boundary} analogue of the q-character map of \cite{FR99,FM01}, providing refined tools in the finite-dimensional representation theory of $\Uqgp$. 
This requires the composition with an Harish-Chandra map associated to a Drinfeld (loop) presentation of $\Uqk$.
The latter was recently found by Lu, Wang and Zhang for various
quasi-split types in \cite{LW21, Zha22, LWZ23, LWZ24}.

For split types, Li and Prze\'{z}dziecki consider instead an \emph{opposite} q-character map
\begin{equation}
	\begin{tikzcd}
		\left[\cC_\theta\right]\arrow[r] & U_q(L\hfin)  
	\end{tikzcd}
\end{equation}
where $L\hfin\subset\gp$ is the loop Cartan subalgebra \cite{Prz24, LP24}. This is a \emph{module} map, directly defined in terms of the spectra of the abelian loop generators of $\Uqk$.
We expect that even this version can be obtained as above, by evaluating
the \emph{first} factor of the gauged tensor K-matrix on finite-dimensional $\Uqk$-modules.

\subsubsection{}
A representation-theoretic approach to Baxter Q-operators for quantum integrable systems and their functional relations was first set out in the 1990s \cite{BLZ, AF97}, by evaluating one leg of the universal R-matrix on a certain infinite-dimensional module of $U_q(\fkb^+)$ called \emph{asymptotic representation}, and taking a trace.
In \cite{HJ12,FH15} the corresponding representation theory of $U_q(\fkb^+)$ was developed, an approach to Baxter's functional relations was established (for all untwisted quantum affine algebras) and a conjecture from \cite{FR99} about expressions of the eigenvalues of transfer matrices in terms of q-characters was proved.

In quasi-split type, the tensor K-matrices $\TKM{}$ considered here lie in a completion of $\Uqk\ten U_q(\fkb^+)$. 
Hence it is natural to attempt a similar approach to functional relations for QSP subalgebras, with applications to systems with boundaries described by trigonometric K-matrices.
This should recover various results known in special cases, such as \cite{YNZ06,BT18,VW20,Tsu21,CVW24}.

\subsubsection{}
Transfer matrices built out of R-matrices are closely related to Yang's scattering matrices and hence to the operators appearing in quantum Knizhnik-Zamolodchikov (KZ) equations. 
Two-boundary analogues of scattering matrices and quantum KZ operators built from solutions of Yang-Baxter and reflection equations were first considered in \cite{Che91,Che92}. 
In the untwisted case, the general connection between two-boundary transfer and scattering matrices was given in \cite{Vla15}.
Solutions of various types for quantum KZ equations with K-matrices coming from QSP subalgebras of $U_q(\wh\fksl_2)$ were given in \cite{JKKMW95,RSV,SV15,HL21}.

As mentioned above, the quantum affine algebra $\Uqgp$ has two important categories of representations: $\OintUqg$ and finite-dimensional modules. 
In \cite{FR92} quantum KZ equations built out of R-matrices were shown to be satisfied by matrix entries of an intertwiner from a suitable irreducible object in $\OintUqg$ to a grading-shifted tensor product of another irreducible in $\OintUqg$ and a finite-dimensional module.
We expect that the universal tensor K-matrix of a QSP algebra $\Uqk \subset \Uqgp$ will allow for 2-boundary quantum KZ equations to be related to analogous intertwiners for certain infinite-dimensional $\Uqk$-modules and finite-dimensional $\Uqgp$-modules, with the construction from \cite{JKKMW95} appearing as a special case.
It will also be interesting to connect such constructions to recent works on differential operators (asymptotic Knizhnik-Zamolodchikov-Bernard operators) defined in terms of classical dynamical r- and k-matrices for symmetric spaces \cite{RS}.

\subsubsection{}
The last application is a universal K-matrix formalism for QSP subalgebras.
In \cite{FRT90}, Faddeev, Reshetikhin, and Takhtajan provided an alternative approach to Drinfeld-Jimbo quantum groups, which is commonly referred to as the R-matrix (or FRT) formalism. 
This yields a new presentation given in terms of a fixed matrix solution of the Yang-Baxter equation. 
The R-matrix formalism was extended to quantum affine algebras of classical Lie type in \cite{RSTS90}.
With the same approach, affine QSP subalgebras in type $\sf A$ have been described by 
Molev, Ragoucy, and Sorba \cite{MRS03} and by Chen, Guay, and Ma \cite{CGM14} in terms of an analogue K-matrix formalism.

More recently, Gautam, Rupert, and Wendlandt provided in \cite{GRW23, RW23} a unified approach to the R-matrix formalism for $\Uqg$ when $\g$ is finite-dimensional,
similar in spirit to the main result of \cite{RSTS90}.
They study the R-matrix algebra $U_{R}$ arising from the evaluation of the universal R-matrix of $\Uqg$ on the tensor square of an arbitrary finite-dimensional irreducible representation. 
Then, they prove that $\Uqg$ can be realized as a Hopf subquotient of $U_R$.
The most interesting feature of the construction is its generality, \ie it works uniformly in every type and for every irreducible representation.

Relying on the construction of the universal tensor K-matrix $\TKM{}$, we expect such a unified approach to extend to quantum symmetric pairs. 
First, for every finite-dimensional irreducible representation $V$, we naturally obtain a coideal subalgebra $U_K\subset U_R$ associated to the basic K-matrix $K_V$ corresponding to a quantum symmetric pair subalgebra $\Uqk\subset\Uqg$. 
Then, through the evaluation of the universal tensor K-matrix $\TKM{}$, we identify $\Uqk$ with a subquotient of $U_K$.
This is expected to extend to the affine case, recovering \cite{MRS03, CGM14}.

\subsection{}
Finally, we conclude with some remarks about the algebraic and the categorical frameworks underlying our construction, which we develop in Appendices~\ref{ss:cylindrical-algebraic} and \ref{ss:bdrybimodcat}, respectively. 

The algebraic framework revolves around the notion of a \emph{reflection bialgebra}, and it is a natural extension of the notion of a cylindrical bialgebra introduced in \cite{AV22a}. 
As the latter can be regarded as a generalization of the original definition given by tom Dieck and H\"aring-Oldenburg \cite{tD98, tDHO98}, and later by Balagovi\'{c} and Kolb \cite{BK19}, the former is a straightforward adaptation of the original definitions given by Enriquez \cite{Enr07}, and later by Kolb \cite{Kol20}. 
It also appears as a special case of a weakly quasitriangular comodule algebra as defined in \cite{KY20}.

Recently, Baseilhac, Gainutdinov and Lemarthe independently proposed this axiomatic scheme for comodule algebras, and 
applied it to the case of the alternating central extension $\cA_q$ of the q-Onsager algebra \cite{LBG23,Lem23}. 
Specifically, they study fusion relations of a certain explicit element of $\fml{\cA_q \ten \End(V)}{z}$, where $V$ is an evaluation representation of $U_q(\wh\fksl_2)$, and conjecture that this element is given by the action of a universal tensor K-matrix for $\cA_q$.
It is natural to expect a connection\footnote{
Note that in \cite{LBG23,Lem23} a twist automorphism is used which is a priori inaccessible for the q-Onsager algebra in our framework: $\psi = \omega \circ \tau$ with $\tau$ the nontrivial diagram automorphism.
This obstacle can be mitigated by noting that in \emph{ibid.} only modules $V$ are considered where $\tau$ acts as conjugation by an antidiagonal matrix in any basis of $V$ consisting of $U_q(\fksl_2)$-weight vectors.
} with our results specialized to the q-Onsager algebra.\\

Our main results are concisely summarized in terms of the categorical framework developed in Appendix~\ref{ss:bdrybimodcat}.
Specifically, the action of $\TKM{}$ yields a {\em boundary} structure on $\WUqk$ with respect to its {\em bimodule} structure over $\POUqg{X}$ determined by the coideal property and $\psi$.
Even in finite type, this yields a generalization of the braided module structure described by Kolb \cite{Kol20}.

\subsection{Outline}

In Section \ref{ss:qKM} we recall the definition and fundamental properties of a quantized universal enveloping algebra $\Uqg$ of a Kac-Moody algebra $\fkg$ and its universal R-matrix.
Next, in Section \ref{ss:qsp} we recall the definition of the subalgebra $\Uqk$ and the corresponding universal K-matrix in the general setting determined by pseudo-involutions of $\fkg$.
The quantum group analogue of the key identity \eqref{intro:keyidentity} is established in Section \ref{ss:class}, see Theorem \ref{thm:QSP-maximal-xi}, via a maximality property of $\Uqk$ and by taking a classical limit.
To promote this to the existence of the tensor K-matrix $\TKM{}$, in Section \ref{ss:tensorK} we describe the appropriate categories of $\Uqk$-modules, notably the category $\cW_\theta$, and $\Uqg$-modules and operators acting on them; the main result is Theorem \ref{eq:tensor-K-def}.
In Section \ref{ss:trigtensorK} we give the tensor K-matrix version of our previous work \cite{AV22b} on formal and trigonometric K-matrices for finite-dimensional modules of quantum loop algebras. 

In Appendix \ref{ss:findimQSPmodules} we show that finite-dimensional irreducible $\Uqk$-modules are objects in $\cW_\theta$, in support of work in Section \ref{ss:tensorK}.
We describe the algebraic tensor K-matrix formalism for general quasitriangular bialgebras in Appendix \ref{ss:cylindrical-algebraic}.
In Appendix \ref{ss:bdrybimodcat} the corresponding categorical-topological framework is developed, resulting in the notion of a boundary bimodule category, which gives rise to actions of cylindrical braid groupoids on tensor products. 
 
\subsection{Acknowledgments}

The authors would like to thank Pascal Baseilhac, Azat Gainutdinov, and Stefan Kolb for useful discussions and their interest in this work.


\section{Quantum Kac-Moody algebras} \label{ss:qKM}

Throughout this paper we will make use of the following notation, for any $k$-linear space $V$:
\[
X^\xi=(\xi\ten\id)(X)\,, \qq X^\xi_{21}=(X^\xi)_{21}\,,\aand X^{\xi\xi}=(\xi\ten\xi)(X)
\]
for any $X\in V\ten V$ and $k$-linear map $\xi: V\to V$. Here the field $k$ should be clear from the context.
 
Also, given a lattice $\mathsf{\Lambda}$ whose $\Z$-basis is clear from the context, we shall denote its non-negative part by $\mathsf{\Lambda}^+$. 


\subsection{Kac-Moody algebras}\label{ss:KM}

Let $\g$ be the Kac-Moody algebra defined over\footnote{
Or over any algebraically closed field of characteristic zero.
} $\bbC$ in terms of a symmetrizable generalized Cartan matrix $A=(a_{ij})_{i,j\in\IS}$ \cite{Kac90}. 
Let $\h\subset\g$ be the Cartan subalgebra.
Denoting the simple roots by $\al_i$ and the simple coroots by $h_i$ ($i \in \IS$), we consider the root and coroot lattices:
\eq{
\Qlat = \Sp_\Z \{ \al_i \, | \, i \in \IS \} \subset\h^*, \qq \Qlat^{\vee} = \Sp_\Z \{ h_i \, | \, i \in \IS \} \subset\h.
}
Let $\rootsys \subset \Q$ be the root system and $\rootsys^+\subset\Qp$ the positive subsystem. 
Finally, set $\g' = [\g,\g]$ and $\h' = \h \cap \g' = \Qlatv\ten_\bbZ\bbC$. 

Let $\iip{\cdot}{\cdot}$ be the invariant bilinear form on $\g$ depending on the choice of a linear complement $\h''$ of $\h'$ in $\h$.
We shall choose a basis $\{d_{1},\dots, d_{\corank(A)}\}$ of $\h''$ such that $\al_i(d_r) \in \Z$ for all $i \in \IS$, $1 \le r \le \corank(A)$. 
Now consider the \emph{extended} coroot lattice and the weight lattice, respectively given by 
\[
\Qvext=\Qlatv\oplus\bigoplus_{k=1}^{\corank(A)}\bbZ d_k\subset{\h}
\aand
\Pext=\{\lambda\in{\h}^*\;|\;\lambda(\Qvext)\subset\bbZ\}.
\]

The bilinear form on $\fkh^*$ dual to $\iip{\cdot}{\cdot}$ will be denoted by the same symbol.
Note that the restriction of this to $\Pext \times \Pext$ takes values in $\frac{1}{m}\Z$ for some $m \in \Z_{>0}$.

\subsection{Drinfeld-Jimbo quantum groups} \label{ss:quantumaffine}

Denote by $\bsF$ the algebraic closure of $\bbC(q)$ where $q$ is an indeterminate.\footnote{
The results of this paper remain valid if $\bsF$ is any algebraically closed field of characteristic 0 and $q\in\bsF^\times$ is not a root of unity.
}
Fix non-negative integers $\{\de{i}\;|\; i\in\IS\}$ such that the matrix $(\de{i}a_{ij})_{i,j\in\IS}$ is symmetric.
The quantum Kac-Moody algebra \cite{Dri86,Jim85} associated to ${\g}$ is the unital associative algebra $\Uqg$ defined over $\bsF$ with generators $\Eg{i}$ and $\Fg{i}$ ($i\in\IS$), and $\Kg{h}$ ($h\in\Qvext$) subject to the following defining relations:
\begin{gather}
	\Kg{h}\Kg{h'}=\Kg{h+h'}, \qq \Kg{0}=1, \\	
	\Kg{h}\Eg{i}=q^{\cp{\rt{i}}{h}}\Eg{i}\Kg{h}, \qq \Kg{h}\Fg{i}=q^{-\cp{\rt{i}}{h}}\Fg{i}\Kg{h},\\
 [\Eg{i},\Fg{j}]=\drc{ij}\frac{\Kg{i}-\Kg{i}^{-1}}{q_i-q_i^{-1}}, \label{Uqag:relns3} 
\end{gather}
for any $i,j\in\IS$ and $h,h'\in\Qvext$, where 
\eq{
q_i = q^{\de{i}}, \qq \qq \Kg{i}^{\pm1}=\Kg{\pm \de{i}\cort{i}}, 
}
together with the quantum Serre relations (see \eg \cite[Cor.~33.1.5]{Lus94})
\eq{ 
\Serre_{ij}(E_i,E_j) = \Serre_{ij}(F_i,F_j)=0 \qq \text{for all } i \ne j
}
where $\Serre_{ij}$ denotes the following polynomial in two noncommuting variables:
\eq{ \label{Serredef}
\Serre_{ij}(x,y) = \sum_{r=0}^{1-a_{ij}} (-1)^n \binom{1-a_{ij}}{r}_{q_i} x^{1-a_{ij}-r} y x^r
}
with the q-deformed binomial coefficient defined in \eg \cite[1.3.3]{Lus94}.\\

We consider the Hopf algebra structure on $\Uqg$ uniquely determined by the following coproduct formulae
\eq{
\Delta(\Eg{i}) = \Eg{i}\ten1+\Kg{i}\ten\Eg{i},
\qq \Delta(\Fg{i}) = \Fg{i}\ten\Kg{i}^{-1}+1\ten\Fg{i},	
\qq \Delta(\Kg{h}) = \Kg{h}\ten\Kg{h}, 
}
for any $i\in\IS$ and $h\in\Qvext$.
The \emph{Chevalley involution} is the algebra automorphism $\chev$ of $\Uqg$ defined by
\begin{equation}\label{eq:chevalley}
\chev(\Eg{i})=-\Fg{i}, \qq \chev(\Fg{i})=-\Eg{i}, \qq \chev(\Kg{h})=\Kg{-h}.
\end{equation}
for any $i\in\IS$ and $h\in\Qvext$.
It is a Hopf algebra isomorphism from $\Uqg$ to its \emph{co-opposite} $\Uqg^{\sf cop}$.\\

We denote by $\Uqnp$, $\Uqnm$, $\Uqh$, and $\Uqhp$ the subalgebras generated by the elements $\{\Eg{i}\}_{i\in\IS}$, $\{\Fg{i}\}_{i\in\IS}$, $\{ \Kg{h} \}_{h\in\Qvext}$, and $\{\Kg{\pm h_i} \}_{i\in\IS}$, respectively. 
We set 
\eq{
\Uqgp=\Uqnp \cdot \Uqhp \cdot \Uqnm, \qq \Uqbpm=\Uqnpm \cdot \Uqh.
} 
The adjoint action of the $K_h$ endows the subalgebras $\Uqnpm$ with a grading by the monoids $\pm \Qlat^+$, respectively; we denote by $\Uqnpm_{\pm \beta}$ the component of degree $\pm \beta$, for $\beta \in \Qlat^+$.


\subsection{Categories of $\Uqg$-modules}\label{ss:cat-O}

If $V$ is an $\Uqh$--module and $\mu\in\Pext$, we denote the corresponding weight space of $V$ by
\[
V_\mu = \{v\in V\,\vert\,\forall h\in\Qvext,\,\,\Kg{h} \cdot v=q^{\mu(h)}v\}.
\]
A $\Uqg$--module $V$ is

\vspace{1mm}
\begin{enumerate}[label=(C\arabic*)]\itemsep2mm
\item \label{cond:weight-dec} 
a (type $\bf 1$) {\it weight module} if $V=\bigoplus_{\mu\in\Pext} V_\mu$;

\item \label{cond:int} 
{\it integrable} if it is a weight module and all $E_i$, $F_i$ ($i\in\IS$) act locally nilpotently;

\item \label{cond:O}
in {\it category $\OUqg$} if it is a weight module and the action of $\Uqnp{}$ is locally finite.\end{enumerate}

We denote by $\WUqg$, $\OUqg$, $\OintUqg$ the full subcategories of weight, category $\OUqg$, integrable category $\OUqg$ modules in $\Mod(\Uqg)$, respectively. 
These three categories are monoidal; furthermore, $\OUqg$ is braided \cite{Dri86, Lus94}, see Section \ref{ss:R-matrix}, and $\OintUqg$ is a semisimple\footnote{
	Note that the integrability condition implies that $V$ is completely reducible as a (possibly infinite) direct sum of simple finite-dimensional modules
	over the subalgebra of $\Uqg$ generated by $E_i$, $F_i$, $K_{\pm h_i}$, for each $i \in \IS$.
} 
braided subcategory of $\OUqg$, whose simple modules are classified by dominant integral weights \cite[Thm~6.2.2, Cor.~6.2.3]{Lus94}.

Finally, let $Z\subseteq \IS$ be any subset and $\UqgZ\subseteq\Uqg$ the subalgebra generated by $\{E_i, F_i, \Kg{h_i}^{\pm1}\}_{i\in Z}$. 
We denote by $\POUqg{Z}\subseteq\OUqg$ the full subcategory of modules whose restriction to $\UqgZ$ is integrable. 
In particular, $\POUqg{\emptyset}=\OUqg$ and $\POUqg{\IS}=\OintUqg$.

\begin{remark}
Note that for $V \in \OUqg$ and any $v\in V$, we have $\Uqnp_{\beta} v=0$ for all but finitely many $\beta\in\sfQ^+$. 
Therefore, $\OUqg$ coincides with the category $\cC^{\operatorname{hi}}$ used in \cite[Sec.~3.4.7]{Lus94}.

We emphasize that we do not impose on objects in $\OUqg$ the condition that they are finitely generated (the resulting category would not be monoidal). 
It is also inconvenient to impose $\dim(V_\mu) < \infty$ as in \cite{Kac90}: the resulting categories would not be preserved under restrictions to subdiagrams $Z \subset \IS$.
Indeed in Section \ref{ss:2-Drinfeld-algebra} we will consider the case $\dim(\fkg_Z)<\infty$.
If in addition $\dim(\fkg)=\infty$, a nontrivial irreducible object in $\OintUqg \subset \OUqg$ is infinite-dimensional and as a $\UqgZ$-module must decompose as an infinite direct sum of irreducible objects in $\OintUqg(\UqgZ)$, which would fail the condition of finite-dimensional weight spaces, see \cite[Rmk.~15.1]{ATL24a}. \hfill \rmkend
\end{remark}

\subsection{Completions}\label{ss:completions}
Let $A$ be an algebra, $\cC\subset\Mod(A)$ a (full) subcategory, and $\End(\FF{\cC}{})$ the
algebra of endomorphisms of the forgetful functor $\FF{\cC}{}\colon\cC\to\vect$. By definition, an
element of $\End(\FF{\cC}{})$ is a collection
\[\varphi=\{\varphi_V\}_{V\in\cC}\in\prod_{V\in\cC}\End(V)\]
which is natural, \ie such that $f\circ\varphi_V=\varphi_W\circ f$ for any $f:V\to W$ in $\cC$. 
The action of $A$ on any $V\in\cC$ yields a morphism of algebras $A\to\End(\FF{\cC}{})$, and factors through the action of $\End(\FF{\cC}{})$ on $V$. 
We refer to $\End(\FF{\cC}{})$ as the completion of $A$ with respect to the category $\cC$. 
If $A$ is a bialgebra and $\cC$ is closed under tensor products, the completion $\End(\FF{\cC}{})$ is naturally endowed with a cosimplicial structure, see, \eg \cite{Dav07, ATL19}.

For any subcategory $\cC'\subset\cC$, there is a canonical morphism
\begin{equation}\label{eq:end-restriction}
\eta\colon\End(\FF{\cC}{})\to\End(\FF{\cC'}{})
\end{equation}
given by the restriction to $\cC'$. A \emph{lift} of $x\in\End(\FF{\cC'}{})$ is any element in $\eta^{-1}(x)\subset\End(\FF{\cC}{})$. 
Since $\eta$ is in general neither surjective nor injective, a lift may not exist or be unique.
However, in the following, we shall often consider operators whose action is given in terms of elements of $A$. 
Hence, if it exists, the lift is canonical (see, \eg~Section \ref{ss:R-matrix}). 
In these cases, we shall make no distinction between an element and its canonical lift.

\subsection{Completions of $\Uqg^{\ten n}$}
Here, we consider various representation-theoretic completions of $\Uqg^{\ten n}$.
Namely, for any $Z\subseteq \IS$ and $n>0$, let $\FF{\pint{Z}}{n}\colon(\POUqg{Z})^{ \boxtimes n}\to\vect$
be the $n$-fold forgetful functor given by 
\[
\FF{\pint{Z}}{n}(V_1,\dots, V_n)=V_1 \ten \cdots \ten V_n.
\]
We set $\FF{}{n}=\FF{\emptyset}{n}$,  $\FF{\pint{\IS}}{n}=\FF{\sint}{n}$, and $\FF{\pint{Z}}{}=\FF{\pint{Z}}{1}$. 
Similarly, let $\FF{\cW}{n}:\WUqg^{\boxtimes n}\to\vect$ be the $n$-fold forgetful functor with respect to the category of weight $\Uqg$-modules. 
Note that the chain of inclusions $\POUqg{Z}\subset\WUqg\subset\Mod(\Uqg)$ induces a chain of morphisms of algebras $\Uqg^{\ten n}\to\CWUqg{n}\to\CPOUqg{Z}{n}$.
By \cite[Thm.~3.1]{ATL24b}, $\Uqg^{\ten n}$ embeds in $\CPOUqg{Z}{n}$ and therefore in $\CWUqg{n}$.

For any monoidal category $\cC \subset \Mod(\Uqg)$, the morphisms
\[
\Delta, \Delta^{\op}\colon\End(\FF{\cC}{})\to\End(\FF{\cC}{}\boxtimes\FF{\cC}{}),
\]
are defined as follows. 
For any $a = (a_V)_{V \in \cC} \in \End(\FF{\cC}{})$, we set
\[
\Delta(a)_{VW}=a_{V\ten W}
\quad\mbox{and}\quad
\Delta^{\op}(a)_{VW}=(1\,2)\circ a_{W\ten V}\circ(1\,2)
\]
where $V,W \in \cC$. 
The morphisms $\Del \ten \id$ and $\id\ten\Delta$ are similarly defined.

\subsection{Tensor weight operators} \label{ss:2-weight}

Let $f\colon\Pext\to\bsF$ be a $\bbZ$-linear functional which is self-adjoint with respect to the pairing on $\Pext$, \ie 
$(f(\lambda),\mu)=(\lambda,f(\mu))$ for any $\lambda,\mu\in\Pext$.
Recall from \cite[Sec.~4.8]{AV22a} the symmetric functional $\WO{f}: \Pext \times \Pext \to \bsF$ given by
\begin{equation} \label{weightop:def}
\WO{f}(\la,\mu) = q^{(f(\la),\mu)}.
\end{equation}
For any $V,W\in\WUqg$, we denote by the same symbol the diagonal operator on $V\ten W$, acting on the subspace $V_\lambda\ten W_\mu$ as multiplication by $\WO{f}(\lambda,\mu)$. 
Then, one readily checks that $\WO{f}\in\End(\sff_\cW^2)$ and in $\End(\sff_\cW^3)$ the following identities hold:
\eq{ \label{C:coproduct}
(\Del \ten \id)(\WO{f}) = \WO{f}_{13} \WO{f}_{23}, \qq (\id \ten \Del)(\WO{f})= \WO{f}_{13} \WO{f}_{12}.
}
In the case $f = \id$, this element appears in the factorized formula for the universal R-matrix, which we review now. 

\subsection{The R-matrix}\label{ss:R-matrix}

By \cite{Dri86, Lus94}, the universal R-matrix of $\Uqg$ is a distinguished invertible element $\RM{}\in\End(\FF{}{}\boxtimes\FF{}{})$, whose action induces a braiding on the category $\OUqg$. 
More precisely, $\RM{}$ satisfies the intertwining identity in $\End(\FF{}{}\boxtimes\FF{}{})$
\begin{gather}
	\label{eq:R-intw-QG} R \cdot \Delta(x) = \Delta^{\op}(x) \cdot R,
\end{gather}
where $x\in \Uqg$,
and the coproduct identities in $\End(\FF{}{}\boxtimes\FF{}{}\boxtimes\FF{}{})$
\eq{ \label{eq:R-coprod-QG} 
(\Del \ten \id)(R) = R_{13}\cdot R_{23} \qq\mbox{and} \qq (\id \ten \Del)(R) = R_{13}\cdot R_{12}, 
}
where $\Delta$ denotes the coproduct and $\Delta^{\op} = (12) \circ \Delta$. 
From \eqref{eq:R-intw-QG} and \eqref{eq:R-coprod-QG} it readily follows that $R$ is a solution of the {Yang-Baxter equation}
\eq{ \label{eq:YBE-QG}
R_{12}\cdot R_{13}\cdot R_{23}=R_{23}\cdot R_{13}\cdot R_{12}.
}
in $\End(\FF{}{}\boxtimes\FF{}{}\boxtimes\FF{}{})$.

The universal R-matrix arises from the Drinfeld double construction of $\Uqg$ as the canonical tensor of a Hopf pairing between $\Uqbm$ and $\Uqbp$. 
More explicitly, we have
\eq{ \label{eq:R-matrix}
R=\WO{\id}\cdot\Theta
}
where the element $\Theta$, commonly referred to as the {quasi} R-matrix \cite[Ch.~4]{Lus94}, is of the form
\eq{
\Theta=\sum_{\beta\in\Qlat^+}\Theta_\beta\in\Uqnm\widehat{\ten}\Uqnp, \qq \Uqnm\widehat{\ten}\Uqnp=\prod_{\beta}\Uqnm_{-\beta}\ten\Uqnp_\beta.
}

Relying on the condition \ref{cond:O}, every element in $\Uqnm\widehat{\ten}\Uqnp$ has a well-defined and natural action on any tensor product of the form $V\ten W$ with $V\in\Mod(\Uqg)$ and $W\in\OUqg$.
More precisely, let $\wt{\FF{}{}}\colon\Mod(\Uqg)\to\vect$ be the forgetful functor. Following \cite[Sec.~3]{ATL24b}, one proves that the canonical map
\begin{equation}\label{eq:R-completion-embedding}
\begin{tikzcd}
\Uqnm\widehat{\ten}\Uqnp\arrow[r] & \End(\wt{\FF{}{}}\boxtimes\FF{}{})
\end{tikzcd}
\end{equation}
is an embedding. 
In particular, $\Theta\in\End(\wt{\FF{}{}}\boxtimes\FF{}{})$.  
Since the action of $\WO{\id}$ is defined only on weight modules, it follows that $R\in\End(\FF{\cW}{}\boxtimes\FF{}{})$. Then, it defines a braiding on $\OUqg$ through the morphism 
\begin{equation}\label{eq:weight-to-O}
\begin{tikzcd}
\End({\FF{\cW}{}}\boxtimes\FF{}{})\arrow[r]&\End(\FF{}{2})
\end{tikzcd}
\end{equation}
induced by the inclusion $\OUqg\subset\WUqg$.


\section{Quantum symmetric pairs} \label{s:qsp}

We recall the definition of quantum symmetric pairs for quantum Kac-Moody algebras following \cite{Kol14, RV20, AV22a}. 

\subsection{Pseudo-involutions} \label{ss:pseudo}

Let $\Aut({A})$ be the group of diagram automorphisms, \ie the group of permutations $\tau$ of $\IS$ such that $a_{ij}=a_{\tau(i)\tau(j)}$.
Any $\tau \in \Aut(A)$ canonically defines an automorphism of ${\g'}$, given on the generators by $\tau(\cort{i}) = \cort{\tau(i)}$, $\tau(e_i)=e_{\tau(i)}$, and $\tau(f_i)=f_{\tau(i)}$. 
In \cite[Sec.~4.9]{KW92}, Kac and Wang describe a procedure (depending on the choice of the subspace $\h''\subset\h$) to extend a diagram automorphism from ${\h'}$ to ${\h}$. 
Then, we assume that the basis $\{d_1, \dots, d_{\corank(A)}\}$ of $\h''$ is chosen in such a way that the Kac-Wang extension of $\tau$ on $\h$ preserves the extended coroot lattice $\Qvext$ and, by duality, the weight lattice $\Pext$.

\begin{remark}
The problem of the existence of a basis of $\h''$ compatible with a given $\tau \in \Aut(A)$ is discussed in \cite[Prop.~3.2]{AV22a} in the case $\corank(A)=1$. 
If $A$ is of affine type, such a basis exists for all $\tau$, see also \cite[Prop.\ 2.12]{Kol14}. \rmkend
\end{remark}

Let $X \subset\IS$ be a subset of indices such that the corresponding Cartan matrix $A_X$ is of finite type.
We denote by $\g_X\subset\g$ the corresponding semisimple finite-dimensional Lie subalgebra.
Let $\tau$ be an involutive diagram automorphism stabilizing $X$ such that $\tau|_X$ equals $\oi_X$, the \emph{opposition involution} of $X$.
This is the involutive diagram automorphism of $X$ induced by the action of the longest element $w_X$ of the Weyl group $W_X$ on $\Qlat_X$: $w_X(\al_i) = -\al_{\oi_X(i)}$ for all $i \in X$.

We consider the Lie algebra automorphism of $\fkg$ given by
	\begin{align}\label{theta:def}
		\theta = \Ad(w_X) \circ \omega \circ \tau
	\end{align}
where $\omega$ denotes the Chevalley involution on $\g$ and $\Ad(w)$ for $w \in W$ is the corresponding braid group action realized in terms of triple exponentials. 
Note that $\Ad(w_X)^2$ acts on the root space $\g_\la$ ($\la \in \Qlat$) as multiplication by $(-1)^{\la(2\rho^\vee_X)}$, with $\rho^\vee_X$ the half-sum of the positive coroots associated to $\g_X$.
Then $\theta$ fixes $\g_X$ pointwise and exchanges positive and negative root spaces not contained in $\g_X$.
Further, $\theta$ restricts to $\fkh$ as the involutive map $-w_X \circ \tau$; we use the notation $\theta$ also to denote the map on $\h^*$ dual to $\theta|_\h$, so that $\theta\big(\g_\al \big) = \g_{\theta(\al)}$ for all $\al \in \rootsys$.

We call $\theta$ a \emph{pseudo-involution (of the second kind)}.
The datum $(X,\tau)$ can be recovered from $\theta$ since $X=\{ i \in \IS \, | \, \theta(h_i)=h_i \}$ and hence we will use the subscript $\tsat$ for objects explicitly defined in terms of $(X,\tau)$. 
%

Any ${\bm y} \in (\C^\times)^\IS$ may be viewed as a multiplicative character ${\bm y}: \Qlat \to \C^\times$ according to ${\bm y}(\al_i) = y_i$ for $i \in \IS$. 
In turn, it gives rise to a Lie algebra automorphism $\Ad(\bm y)$ of $\fkg$ which acts on the root space $\fkg_\al$ as multiplication by $\bm y(\al)$.
For ${\bm y} \in (\C^\times)^\IS$ such that $y_i = 1$ if $i \in X$, consider the following modification of $\theta$, which coincides with $\theta$ on the Lie subalgebras $\g_X$ and $\h$:
\eq{ \label{phi:def}
\phi = \theta\circ\Ad({\bm y})^{-1}. 
}

\subsection{Pseudo-fixed-point subalgebras} \label{ss:Liesubalgebra}
Let $\k\subset\g$ be the Lie subalgebra generated by $\n^+_X$, $\h^\theta$ and the elements 
\[
f_i + \phi(f_i) = f_i+ y_i \theta(f_i)
\]
for $i\in \IS$. 
We set $\fkk' = \fkk \cap \fkg'$.

Henceforth we assume that $(X,\tau)$ is a generalized Satake diagram, \ie for all $i,j \in \IS$ such that $\theta(\al_i) = -\al_i-\al_j$ we have $a_{ij} \ne -1$.
Further, we assume that, for all $i \in \IS$,
\eq{ \label{y:condition}
y_i = y_{\tau(i)} \qq \text{if} \qq (\al_i,\theta(\al_i)) = 0.
}
With these assumptions, the Lie subalgebra $\k$ resembles the fixed-point subalgebra of an involution in the sense that $\k \cap \h = \h^\theta$, see \cite{RV20,RV22,AV22a}, and we call it a {\em pseudo-fixed-point subalgebra}.

\subsection{Involutions and fixed-point subalgebras}
The genuinely involutive case is described by the following refinement, cf. \cite[Sec.~2.4]{Kol14}.

\begin{lemma} \label{lem:phi:involution}

The following conditions are equivalent:

\vspace{1mm}
\begin{enumerate}\itemsep2mm
\item 
$\phi$ is an involution;

\item \label{parameter:condition:involutivity}
$y_i y_{\tau(i)}^{-1} = (-1)^{\al_i(2\rho^\vee_X)}$ for all $i \in \IS \setminus X$;

\item 
$\k$ is the fixed-point subalgebra $\g^\phi$.

\end{enumerate}

\end{lemma}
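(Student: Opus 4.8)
The plan is to compute $\phi^2$ in closed form: from it the equivalence (a)$\Leftrightarrow$(b) and the inclusion $\k\subseteq\g^\phi$ both drop out at once, leaving only the reverse inclusion $\g^\phi\subseteq\k$, which I would deduce from the Iwasawa decomposition of \cite{RV22}. First, since $\Ad(\bm y)$ is the identity on $\h$ we have $\phi|_\h=\theta|_\h=-w_X\circ\tau$, which is an involution, so $\phi^2|_\h=\id$ unconditionally. On a root space $\phi$ maps $\g_\al$ to $\g_{\theta(\al)}$ and acts there by $x\mapsto\bm y(\al)^{-1}\theta(x)$, hence $\phi^2|_{\g_\al}=\bm y(\al)^{-1}\bm y(\theta(\al))^{-1}\cdot\theta^2|_{\g_\al}$. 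Now $\theta^2=\Ad(w_X)^2$: the commuting involutions $\omega$ and $\tau$ both commute with $\Ad(w_X)$ — $\tau$ because it fixes the longest element $w_X$, and $\omega$ because the triple-exponential lift of each simple reflection is $\omega$-invariant — so $\theta^2=\Ad(w_X)\,\omega\tau\,\Ad(w_X)\,\omega\tau=\Ad(w_X)^2$, and this acts on $\g_\al$ as $(-1)^{\al(2\rho^\vee_X)}$ by the observation recorded after \eqref{theta:def}. Therefore
\eq{ \label{proofplan:phisq} \phi^2|_{\g_\al}=\bm y(\al)^{-1}\,\bm y(\theta(\al))^{-1}\,(-1)^{\al(2\rho^\vee_X)}\cdot\id\qq(\al\in\rootsys). }

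Since $\g$ is generated by $\h$ and the $e_i,f_i$, and $\phi^2$ fixes $\h$ pointwise and preserves each line $\g_{\pm\al_i}$, we have $\phi^2=\id$ precisely when the scalar in \eqref{proofplan:phisq} equals $1$ at every $\al=\al_i$. As $\al\mapsto\bm y(\al)\bm y(\theta(\al))$ and $\al\mapsto(-1)^{\al(2\rho^\vee_X)}$ are both characters of $\Qlat$, this need only be tested on simple roots: it is automatic for $i\in X$ (where $y_i=1$, $\theta(\al_i)=\al_i$ and $\al_i(2\rho^\vee_X)=2$), while for $i\in\IS\setminus X$, using $\theta(\al_i)=-w_X(\al_{\tau(i)})\equiv-\al_{\tau(i)}\pmod{\Qlat_X}$ together with the triviality of $\bm y$ on $\Qlat_X$, it becomes exactly \ref{parameter:condition:involutivity}. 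This proves (a)$\Leftrightarrow$(b). The identical bookkeeping shows that $\k\subseteq\g^\phi$ is equivalent to \ref{parameter:condition:involutivity}: the generators $\n^+_X\subset\g_X$ and $\h^\theta=\h^\phi$ of $\k$ are always $\phi$-fixed (both $\theta$ and $\Ad(\bm y)$ fix $\g_X$ pointwise), whereas $f_i+\phi(f_i)$ is $\phi$-fixed iff $\phi^2(f_i)=f_i$, which by \eqref{proofplan:phisq} is again \ref{parameter:condition:involutivity}. In particular (c) forces $\k\subseteq\g^\phi$, hence (b), hence (a), giving (c)$\Rightarrow$(a).

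It remains to prove (a)$\Rightarrow$(c). By the previous paragraph (a) already gives $\k\subseteq\g^\phi$, so the point is $\g^\phi\subseteq\k$. Here I would invoke the Iwasawa decomposition $\g=\k\oplus\h^{-\theta}\oplus\fku$ of \cite{RV22}, with $\h^{-\theta}$ the $(-1)$-eigenspace of $\theta|_\h$ and $\fku\subseteq\n^+$ spanned by root vectors $e_\al$ for $\al$ in a suitable subset of $\rootsys^+\setminus\rootsys_X$; since $\theta$ exchanges positive and negative root spaces not contained in $\g_X$, we have $\phi(\fku)\subseteq\n^-$. Given $y\in\g^\phi$, write $y=k+t+n$ accordingly ($k\in\k$, $t\in\h^{-\theta}$, $n\in\fku$); then $v\coloneqq t+n=y-k$ again lies in $\g^\phi$. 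Since $\phi(v)=-t+\phi(n)$ with $\phi(n)\in\n^-$, comparing the $\h$- and $\n^+$-components of $\phi(v)=v$ in $\g=\n^-\oplus\h\oplus\n^+$ forces $t=0$ and $n=0$; hence $y=k\in\k$.

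The step I expect to be the main obstacle is precisely this last inclusion $\g^\phi\subseteq\k$: that $\k$ exhausts $\g^\phi$ rather than being a proper subalgebra is a maximality (``size'') statement invisible at the level of generators, and it is where one genuinely needs the structure theory, concretely the Iwasawa decomposition of \cite{RV22} (cf. \cite[Sec.~2.4]{Kol14} for the split involutive case). A lesser technical point, used in deriving \eqref{proofplan:phisq}, is the identity $\theta^2=\Ad(w_X)^2$, which hinges on the compatibility of the Chevalley involution with the braid-group operators realized by triple exponentials.
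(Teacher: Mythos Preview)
Your proof is correct. The equivalence (a)$\Leftrightarrow$(b) and the implication (c)$\Rightarrow$(a) follow the paper's reasoning almost exactly, though you streamline the latter: the paper deduces $\phi^2(f_i)=f_i$ and then invokes \cite[Lemma~1.5]{Kac90} to bootstrap to $\phi^2(e_j)=e_j$, whereas you short-circuit this by reading (b) directly off your explicit formula \eqref{proofplan:phisq} for $\phi^2|_{\g_{-\al_i}}$.

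The genuine difference is in (a)$\Rightarrow$(c). The paper first observes that (b) forces $(X,\tau)$ to be a Satake diagram in the sense of \cite{Kol14} and then cites \cite[Lem.~2.8]{Kol14} for $\k=\g^\phi$. You instead use the Iwasawa decomposition $\g=\k\oplus\h^{-\theta}\oplus\n^+_\theta$ of \cite{RV22} together with $\phi(\n^+_\theta)\subseteq\n^-$ to show directly that $(\h^{-\theta}\oplus\n^+_\theta)^\phi=0$. This is a clean alternative: it avoids the detour through Satake diagrams and is self-contained within the paper's cited sources. In fact, your argument is precisely the $\sclQK=0$ specialization of the proof of Lemma~\ref{lem:same-fixed-points} later in the paper, where the same Iwasawa argument is used to establish the more general identity $\k=\g^{\cl{\xi}}$ for pseudo-involutions. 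So you have anticipated that argument; what your route buys is uniformity with the later structure theory, while the paper's citation to \cite{Kol14} keeps the present lemma lightweight.
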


Note that condition \ref{parameter:condition:involutivity} above implies that $\alpha_i(\rho^\vee_X)\in\bbZ$ whenever $i = \tau(i) \in \IS \backslash X$, \ie $(X,\tau)$ is a \emph{Satake diagram} as defined in \cite[Def.~2.3]{Kol14}.

\begin{proof}[Proof of Lemma \ref{lem:phi:involution}.]
Since $\theta^2 = \Ad(w_X)^2$, by direct inspection, $\phi$ is an involutive automorphism if and only if \ref{parameter:condition:involutivity} holds.
In this case, \cite[Lem.~2.8]{Kol14} implies that $\k = \g^\phi$. 
Conversely, if $\k$ is contained in $\g^\phi$ then we must have $\phi^2(f_i)=f_i$ for all $i \in \IS$ so that $\phi^2$ fixes $\n^-$ pointwise.
Noting that $\phi|_\h = -w_X \circ \tau$ is an involution, we deduce that $\ad(f_i)$ annihilates $e_j-\phi^2(e_j) \in \n^+$ for all $i,j \in \IS$.
Using \cite[Lemma 1.5]{Kac90} we conclude that $\phi^2(e_j)=e_j$ for all $j \in \IS$.
Hence $\phi$ is an involution.
\end{proof}


\subsection{Quantum pseudo-involutions}\label{ss:qtheta}

We now describe a convenient algebra automorphism $\tsatq$ of $\Uqg$ which quantizes $\tsat$.
This is obtained by choosing a lift for each of the three factors in $\tsat$. 
First, we consider the standard Chevalley involution on $\Uqg$ given by \eqref{eq:chevalley}.
The diagram automorphism $\tau$ extends  to an automorphism of $\Uqg$ given on the generators by $\tau(\Eg{i})=\Eg{\tau(i)}$, $\tau(\Fg{i})=\Fg{\tau(i)}$, and $\tau(\Kg{h})=\Kg{\tau(h)}$.

The action of the Weyl group operator $w_X\in W_X$ is lifted to $\Uqg$ as follows. 
Let $\qWS{X}$ be the braid group operator on modules in $\OintUqg$ corresponding to $w_X$ \cite[Sec.~5]{Lus94}.
More precisely, given a reduced expression $s_{i_1}\cdots s_{i_{\ell}}$ of $w_X$ in terms of fundamental reflections, one sets $\qWS{X}= \qWS{i_1}\cdots\qWS{i_{\ell}}$, where $\qWS{j}=T''_{j,1}$ 
in the notation from \cite[5.2.1]{Lus94}. 
It follows from the braid relations that $\qWS{X}$ is independent of the 
chosen reduced expression.

In \cite[Sec.~4.9]{AV22a}, we introduced a Cartan correction of $\qWS{X}$ given by
\begin{align} \label{braidtheta:def}
	\bt{\tsat} = \tcorr{\tsat}\cdot\qWS{X}
\end{align}
Here, $\tcorr{\tsat}$ is the operator defined on any weight vector of weight $\lambda$ as multiplication by $q^{\iip{\tsat(\lambda)}{\lambda}/2+\iip{\lambda}{\rho_X}}$, with $\rho_X$ the half-sum of the positive roots in $\rootsys_X$, the root system of the finite-dimensional semisimple Lie algebra $\fkg_X$. 
Thus, $\bt{\tsat}$ can be thought of as an element of $\CPOUqg{X}{}$.

In \cite[Sec.~6.6]{AV22a}, we also introduced a Cartan correction of the universal R-matrix $R_X$ of $\UqgX$ given by 
\begin{equation}\label{Rtheta:def}
	R_\theta=\WO{\theta}\cdot\Theta_X\in\End(\FF{}{2})
\end{equation}
where we recall $\WO{\theta} \in \End(\FF{\cW}{2})$ as defined by \eqref{weightop:def} and $\Theta_X$ is the quasi R-matrix of $\UqgX$. 
Then, by \cite[Lemma~6.8]{AV22a}, in $\End(\FF{\pint{X}}{2})$ we have
\begin{equation}\label{Rtheta:factorization}
	R_\theta=(\bt{\tsat} \ten \bt{\tsat} )^{-1}\Delta(\bt{\tsat}).
\end{equation}

Further, we obtain an algebra automorphism $\adbt{\tsat}=\Ad{(\bt{\tsat})}$ of $\COUqg{\pint{X}}{}$, which preserves $\Uqg$ \cite[Lem.~4.3 and 5.2]{AV22a}.
Set 
\begin{equation} \label{thetaq:def}
	\tsatq = \adbt{\tsat}\circ\omega\circ\tau
\end{equation}
As an algebra automorphism of $\Uqg$, $\tsatq$ is the identity on $\UqgX$, satisfies $\tsatq(K_h) = K_{\theta(h)}$ for all $h \in \Qvext$ and satisfies $\tsatq\big(\Uqg_\la \big) = \Uqg_{\theta(\la)}$ for all $\la \in \Qlat$, see, \eg \cite[Sec.~6.7]{AV22a}.

We consider the tuple $\Parc \in (\bsF^\times)^\IS$ as a multiplicative character $\Parc: \Qlat \to \bsF^\times$, yielding a Hopf algebra automorphism $\Ad(\Parc)$ of $\Uqg$ which acts on the root space $\fkg_\al$ as multiplication by $\Parc(\al)$.
In analogy with \eqref{phi:def}, we set
\eq{ \label{phiq:def}
\phi_q = \tsat_{q} \circ \Ad(\Parc^{-1})
} 
so that $\phi_q(F_i) = \Parc_i \tsat_q(F_i)$.


\subsection{QSP subalgebras}\label{ss:qsp}
Let $\Parsetc\subset(\bsF^\times)^{\IS}$ and $\Parsets\subset\bsF^{\IS}$ be the parameter sets defined in \cite[Def.~6.11]{AV22a}.
The {QSP subalgebra} associated to $\tsat$ with parameters $(\Parc,\Pars)\in \Parsetc \times \Parsets$ is the subalgebra $\Uqk\subset\Uqg$ generated by the subalgebra $U_q(\fkn^+_X) \coloneqq \bsF\langle E_j \rangle_{j \in X}$, the commutative subalgebra $U_q(\h^\tsat) \coloneqq \bsF\langle \Kg{h}\;\vert\; h\in(\Qvext)^\tsat\rangle$, and the elements $\Bg{i}$ ($i\in\IS$), given by 
\eq{ \label{Bi:def}
	\Bg{i} =
	\left\{
	\begin{array}{cl}
		\Fg{i} & \mbox{if }\, i\in X ,\\
		\Fg{i} + \phi_q(\Fg{i}) + \pars{i} \, \Kg{i}^{-1}  & \mbox{if }\, i\not\in X.
	\end{array}
	\right.
}
We set 
\eq{
\Uqkp=\Uqk\cap\Uqgp.
} 

\begin{remarks} \label{rmks:QSPdef} \hfill
	\begin{enumerate}\itemsep2mm
	\item 
	Up to reparametrization, the definition of $\Uqk$ coincides with that given in \cite{Let02,Kol14},
	see \cite[Rmk.~6.13]{AV22a}.
	\item It is known that $\Uqk$ is a right coideal, \ie $\Delta(\Uqk)\subset\Uqk\ten\Uqg$, with the additional property $\Uqk\cap\Uqh=U_q(\h^\tsat)$; $\Uqkp$ satisfies analogous properties with respect to $\Uqgp$ and $U_q((\h')^\tsat)$, see \cite{Kol14,AV22a}.
	\item
	If $\corank(A)\leqslant 1$ then $\Uqkp=\Uqk$, see \cite[Sec.~6.2]{AV22a}. 
	\rmkend
	\end{enumerate}
\end{remarks}


\subsection{Auxiliary R-matrices}\label{ss:aux-datum-k}
Fix an extension of $\Parc : \Qlat \to \bbF^\times$ to a character
$\Parc: \Pext\to\bbF^\times$. We consider the diagonal operator $\Parc\in\CWUqg{}$
acting on any weight space $V_\lambda$ as multiplication by $\Parc(\lambda)$.
Moreover, one has $\Del(\Parc) = \Parc \ten \Parc$ in $\End(\FF{\cW}{}\boxtimes\FF{\cW}{})$.
Finally, $\Ad(\Parc)$ is an algebra automorphism  of $\CWUqg{}$, which restricts to a Hopf algebra automorphism of $\Uqg$.\\

For any $V\in\Mod(\Uqg)$, and any algebra automorphism $\psi :\Uqg \to \Uqg$, let ${\Vpsi} = \psi^*(V) \in\Mod(\Uqg)$ be the pullback of $V$ under $\psi$.

\begin{lemma}\label{lem:twistpair:properties}
If $\psi = \phi_q^{-1}$, the following statements hold. 
\begin{enumerate}\itemsep2mm
\item \label{W:psitwist} For any $V\in\WUqg$, ${\Vpsi}\in\WUqg$.
\item \label{W:psitwist2} For any $V\in\POUqg{X}{}$, ${\Vpsi}$ is in the opposite category, \ie the weight module ${\Vpsi}$ is locally finite over $\Uqnm$ and integrable over $\UqgX$.
\item \label{twistpair:coproduct}
For any $V,W\in\OUqg$, the operator 
\[
	J=(\Parc\ten\Parc)\cdot R_\theta\cdot \Delta(\Parc)^{-1}=R_\theta\in\End(\FF{}{2})
\]
yields an intertwiner
\[
	J_{VW}\colon\VWpsi\to {\Vpsi}\ten^{\op}{\Wpsi},
\]
where $\ten^{\op}$ indicates that the $\Uqg$-action is given by the opposite coproduct $\Delta^{\op}$.
\item \label{twistpair:Rmatrix} For any $V,W\in\OUqg$, the action of the universal R-matrix on ${\Vpsi}\ten{\Wpsi}$ is well-defined and it is given by
\begin{equation}\label{eq:twisted-R}
	R_{{\Vpsi}{\Wpsi}}=J_{VW}\circ (R_{21})_{VW}\circ (J_{21})_{VW}^{-1},
\end{equation}
where $(R_{21})_{VW}=(1\,2)\circ R_{WV}\circ (1\,2)$ and $(J_{21})_{VW}=(1\,2)\circ J_{WV}\circ (1\,2)$.
\end{enumerate}
\end{lemma}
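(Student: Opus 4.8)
The plan is to treat the four statements in order, since later ones build on earlier ones, and to reduce everything to two ingredients already assembled in the excerpt: the action of $\phi_q^{-1}$ on weights and on the triangular decomposition, and the factorization \eqref{Rtheta:factorization} of $R_\theta$ in terms of $\bt{\tsat}$.

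For \ref{W:psitwist}, first I would record that $\phi_q = \tsat_q \circ \Ad(\Parc^{-1})$ with $\tsat_q(K_h) = K_{\theta(h)}$ and $\Ad(\Parc^{-1})(K_h) = K_h$, so $\phi_q^{\pm 1}(K_h) = K_{\theta^{\mp 1}(h)} = K_{\theta(h)}$ (using $\theta^2 = \Ad(w_X)^2$ acts trivially on $K_h$, or directly that $\theta|_{\fkh}$ is involutive). Hence for a weight vector $v \in V_\mu$ the element $K_h$ acts on $v$, viewed in $V^\psi$, as $\psi(K_h) = K_{\theta(\mu)(h)\text{-scalar}}$... more precisely $K_h \cdot_\psi v = \psi(K_h)\cdot v = q^{\mu(\theta(h))} v = q^{(\theta\mu)(h)} v$, so $V^\psi$ has weight decomposition with $(V^\psi)_\mu = V_{\theta^{-1}\mu} = V_{\theta\mu}$. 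Since $\theta$ preserves $\Pext$ (assumed in Section \ref{ss:pseudo}), $V^\psi$ is again a weight module, giving \ref{W:psitwist}.

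For \ref{W:psitwist2}, I would use the stronger information that $\tsat_q\big(\Uqg_\la\big) = \Uqg_{\theta(\la)}$ and $\tsat_q$ is the identity on $\UqgX$, combined with $\Ad(\Parc^{-1})$ preserving each $\Uqg_\la$; hence $\phi_q^{-1}$ sends $\Uqnp_{\beta}$ into $\Uqg_{\theta(-\beta)}$, and since $\theta$ exchanges positive and negative root spaces not in $\g_X$ while fixing those in $\g_X$, one checks $\phi_q^{-1}(E_i) \in \Uqnm$ for $i \notin X$ and $\phi_q^{-1}(E_i) = E_i$ for $i \in X$ (up to the scalar from $\Parc$), and dually for the $F_i$. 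Therefore the $\Uqnm$-action on $V^\psi$ is pulled back from an action built out of the images $\phi_q^{-1}(F_i)$, which lie in $\Uqnp$ for $i \notin X$; local finiteness of $\Uqnp$ on $V$ then gives local finiteness of $\Uqnm$ on $V^\psi$. The $\UqgX$-integrability is preserved because $\phi_q^{-1}|_{\UqgX} = \id$ and integrability is insensitive to the weight relabelling by $\theta$, which stabilizes $X$. I would phrase this carefully since $\phi_q^{-1}(F_i)$ for $i \notin X$ is not literally an element of $\Uqnp$ but lies in $\Uqg_{-\theta(\al_i)} \subset \Uqg_{\Qp}$-degree piece — the point is it raises weight, so the argument is really about the $\bbZ$-grading by $\mathrm{ht}$.

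For \ref{twistpair:coproduct}, the first equality $(\Parc\ten\Parc)\cdot R_\theta \cdot \Delta(\Parc)^{-1} = R_\theta$ is immediate from $\Delta(\Parc) = \Parc \ten \Parc$ in $\End(\FF{\cW}{}\boxtimes\FF{\cW}{})$, which is stated in Section \ref{ss:aux-datum-k}. The substantive claim is that $R_\theta$ intertwines $\Delta$ on $V^\psi \ten W^\psi$ appropriately; I would deduce this from \eqref{Rtheta:factorization}, $R_\theta = (\bt{\tsat}\ten\bt{\tsat})^{-1}\Delta(\bt{\tsat})$, valid in $\End(\FF{\pint{X}}{2})$, together with the fact that $\adbt{\tsat} = \Ad(\bt{\tsat})$ is precisely the Weyl-group-correction factor in $\tsat_q$. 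Concretely, for $x \in \Uqg$, the action of $x$ on $V^\psi \ten W^\psi$ with coproduct $\Delta$ is the action of $(\psi\ten\psi)\Delta(x) = (\phi_q^{-1}\ten\phi_q^{-1})\Delta(x)$ on $V \ten W$; unravelling $\phi_q^{-1} = \Ad(\Parc)\circ\omega^{-1}\circ\tau^{-1}\circ\adbt{\tsat}^{-1}$ and using that $\omega\circ\tau$ is a coalgebra anti-homomorphism (coproduct-reversing: $\omega$ is an iso onto $\Uqg^{\mathrm{cop}}$, $\tau$ is a Hopf map) while $\Ad(\bt{\tsat})$ conjugates $\Delta$ into $\Delta^{\op}$-twisted-by-$R_\theta$ via \eqref{Rtheta:factorization} — this is the computation already carried out in \cite[Sec.~6.7]{AV22a} for the K-matrix axioms, and I would cite/adapt it. The conclusion is that $R_\theta = J \colon \VWpsi \to V^\psi \ten^{\op} W^\psi$ is a $\Uqg$-module map. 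Well-definedness of the action of $R_\theta = \WO{\theta}\cdot\Theta_X$ on $V^\psi \ten W^\psi$ follows because $\Theta_X \in \UqnmX \wh\ten \UqnpX$ and by \ref{W:psitwist2} these modules are $\UqgX$-integrable, while $\WO{\theta}$ only needs weight modules, supplied by \ref{W:psitwist}.

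For \ref{twistpair:Rmatrix}: this is the standard "twisting a quasitriangular structure by a coalgebra isomorphism" computation. Since $\psi = \phi_q^{-1}$ is an algebra automorphism, the universal R-matrix acting on $V^\psi \ten W^\psi$ is $(\psi\ten\psi)(R)$ acting on $V \ten W$ — but one must be careful: $\psi$ does not preserve the coproduct, rather $(\psi\ten\psi)\circ\Delta = (\text{some twist of }\Delta)\circ\psi$. Using part \ref{twistpair:coproduct}, $\Delta \circ \psi$-action equals $J \cdot (\Delta^{\op}\text{-action}) \cdot$, i.e. $J$ conjugates the $\psi$-twisted $\Delta$-action into the $\psi$-twisted $\Delta^{\op}$-action. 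The $\psi$-twisted $\Delta^{\op}$-action is intertwined by $R_{21}$ the way the plain $\Delta$-action is intertwined by $R$; running the uniqueness characterization of the R-matrix (as the unique intertwiner of the prescribed quasi-R-matrix form) through the conjugation by $J$ then forces $R_{V^\psi W^\psi} = J_{VW}\circ (R_{21})_{VW} \circ (J_{21})_{VW}^{-1}$. I would make the uniqueness argument precise by noting both sides lie in $\End(\FF{}{2})$ restricted to the relevant subcategory, both satisfy the intertwining identity \eqref{eq:R-intw-QG} for the $\psi$-twisted modules, and both have leading term $\WO{\id}$ in the appropriate sense (the $\Parc$ and $\bt{\tsat}$ conjugations contribute only Cartan/weight operators and quasi-R-matrix-type tails in $\UqnmX \wh\ten \UqnpX \subset \Uqnm\wh\ten\Uqnp$).

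The main obstacle I expect is \ref{twistpair:Rmatrix}, specifically making rigorous the "twist a quasitriangular structure" step in the completed setting: the identity $(\psi\ten\psi)(R) = J \cdot R_{21} \cdot J_{21}^{-1}$ must be read inside $\End(\FF{}{2})$ (or a suitable sub/super-category of pairs of modules), not inside $\Uqg\ten\Uqg$, because $\psi$ does not act on the completion $\CO$ in a way compatible with all of $\OUqg$ (twisting by $\psi$ leaves $\OUqg$), so one has to track exactly on which category of tensor products each operator is defined — by \ref{W:psitwist2} it is the category of pairs $(V^\psi, W^\psi)$ with $V, W \in \OUqg$, i.e. pairs of modules that are $\Uqnm$-locally-finite and $\UqgX$-integrable, and checking that $\WO{\id}$, $\Theta$, $J = R_\theta$ and their various transposes and inverses all act there and the algebraic identity descends. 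The bookkeeping is routine given \cite{ATL24b} and \cite{AV22a} but it is the place where the proof has genuine content rather than formal manipulation.
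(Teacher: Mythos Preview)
Your proposal is correct and aligns with the paper's approach: part \ref{W:psitwist} via $\psi(K_h)=K_{\theta(h)}$, and parts \ref{twistpair:coproduct}--\ref{twistpair:Rmatrix} via the twist-pair identities already established in \cite[Prop.~8.7]{AV22a}, which the paper simply cites rather than re-deriving as you plan to. The one noticeable difference is in \ref{W:psitwist2}: the paper argues by factoring $\tsat_q = \adbt{\tsat}\circ\omega\circ\tau$ and invoking that $\adbt{\tsat}$ preserves $X$-integrable category $\OUqg$ modules (so pullback by $\omega\circ\tau$ is what flips to the opposite category), whereas you track the images of the generators $F_i$ directly --- both work, but the factorization route avoids the bookkeeping you flag about $\phi_q^{-1}(F_i)$ not literally lying in $\Uqnp$.
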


\begin{proof} \mbox{}

\begin{enumerate}\itemsep2mm
\item 
It suffices to note that, for any $h\in\Qvext$, $\psi(K_h)=K_{\theta(h)}$. 
Hence, ${\Vpsi}\in\WUqg$.

\item
Since $\tsat_{q}=\adbt{\tsat}\circ\omega\circ\tau$, it is enough to observe that 
the restriction under $\adbt{\tsat}$ preserves $X$-integrable category $\OUqg$ modules, see \eg \cite[Lemma~5.2]{AV22a}.

\item[(c)-(d)] See \cite[Prop.~8.7]{AV22a}. 
In particular, note that
\begin{align}
(\Parc\ten\Parc)\cdot R_\theta = R_\theta\cdot (\Parc\ten\Parc) = R_\theta\cdot \Delta(\Parc)
\end{align}
since $R_\theta$ is a weight preserving operator and $\Parc$ is grouplike.\hfill \qedhere

\end{enumerate}

\end{proof}

\begin{remark}
The identity \eqref{eq:twisted-R} is somewhat surprising.
Whenever $V\in\POUqg{X}{}$, $\Uqnm$ acts locally finitely on $\Vpsi$ by \ref{W:psitwist2}. Thus $R_{{\Vpsi}{\Wpsi}}$ is obviously well-defined.
However, it is not clear \emph{a priori} why the same operator is well-defined on modules which are not $X$-integrable. 
\hfill\rmkend   
\end{remark}

\begin{prop}\label{prop:aux-R-matrices} 
Let $(\Uqg,\Uqk)$ be a quantum symmetric pair. 
Set $\psi = \phi_q^{-1}$.

\begin{enumerate}\itemsep2mm
\item \label{Rpsi:def} 
There is a well-defined operator $\RM{}^{\psi}\in\End(\FF{\cW}{}\boxtimes\FF{}{})$ given by the collection of maps
\eq{
\hspace{12pt} \RM{VW}^{\psi} =\RM{{\Vpsi}W}\in\End(V\ten W), \hspace{70pt} V \in \WUqg, W \in \OUqg. 
}	

\item \label{Rpsi21:def} 
There is a well-defined operator $\RM{21}^{\psi}\in\End(\FF{}{}\boxtimes\FF{\cW}{})$ given by the collection of maps
\eq{
(\RM{21}^{\psi})_{VW} = (1\,2)\circ\RM{WV}^{\psi}\circ(1\,2)\in\End(V\ten W), \hspace{13pt} V \in \OUqg, W \in \WUqg. \hspace{5pt}
}	

\item 
There is a well-defined operator $\RM{}^{\psi\psi}\in\End(\FF{}{}\boxtimes\FF{}{})$ given by the collection of maps
\eq{
\hspace{1pt} \RM{VW}^{\psi\psi} = \RM{{\Vpsi}{\Wpsi}}\in\End(V\ten W), \hspace{64pt} V,W \in \OUqg. \hspace{14pt}
}

\end{enumerate}	

\end{prop}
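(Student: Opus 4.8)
The plan is to obtain all three operators by evaluating the universal R-matrix $\RM{}\in\End(\FF{\cW}{}\boxtimes\FF{}{})$ on modules twisted by the pullback functor $\psi^{*}=(\phi_q^{-1})^{*}$, and to read off from Lemma~\ref{lem:twistpair:properties} which categories this functor preserves. First I would record the following formal point: $\psi^{*}$ acts as the identity on underlying vector spaces and on morphisms, so pre-composing the natural transformation $\RM{}$ with $\psi^{*}\boxtimes\id$ (resp.\ $\id\boxtimes\psi^{*}$) again produces a natural transformation of the same forgetful bifunctor. Consequently, in each of the three cases the only thing to verify is that the pair of modules on which $\RM{}$ is being evaluated lies in its domain $\WUqg\boxtimes\OUqg$; naturality is then automatic.

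For part (a): by Lemma~\ref{lem:twistpair:properties}\ref{W:psitwist} the functor $\psi^{*}$ restricts to an endofunctor of $\WUqg$, so for $V\in\WUqg$ and $W\in\OUqg$ the pair $(\Vpsi,W)$ lies in $\WUqg\boxtimes\OUqg$ and $\RM{{\Vpsi}W}\in\End(\Vpsi\ten W)=\End(V\ten W)$ is well defined; the family $\RM{}^{\psi}=(\RM{{\Vpsi}W})_{V,W}$ is natural by the observation above, so $\RM{}^{\psi}\in\End(\FF{\cW}{}\boxtimes\FF{}{})$. Part (b) is then a formal consequence: for $V\in\OUqg$ and $W\in\WUqg$ the operator $\RM{WV}^{\psi}=\RM{{\Wpsi}V}$ is well defined and natural by (a) with the two slots exchanged, and conjugating by the flip $(1\,2)$ — which is itself a natural transformation — yields $\RM{21}^{\psi}\in\End(\FF{}{}\boxtimes\FF{\cW}{})$.

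Part (c) is where the real content lies, and I expect it to be the only genuine obstacle. For $V,W\in\OUqg$ the twisted module $\Wpsi$ is, by Lemma~\ref{lem:twistpair:properties}\ref{W:psitwist2}, locally finite over $\Uqnm$ rather than over $\Uqnp$; in particular $\Wpsi\notin\OUqg$, and so the quasi R-matrix $\Theta\in\Uqnm\widehat{\ten}\Uqnp$, hence $\RM{}$, has no \emph{a priori} well-defined action on $\Vpsi\ten\Wpsi$ through the embedding~\eqref{eq:R-completion-embedding}. This is exactly what Lemma~\ref{lem:twistpair:properties}\ref{twistpair:Rmatrix} supplies (ultimately \cite[Prop.~8.7]{AV22a}): for $V,W\in\OUqg$ the action of $\RM{}$ on $\Vpsi\ten\Wpsi$ is nonetheless well defined, and is given by the factorization $\RM{{\Vpsi}{\Wpsi}}=J_{VW}\circ(\RM{21})_{VW}\circ(J_{21})_{VW}^{-1}$ with $J=R_{\theta}\in\End(\FF{}{2})$. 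I would then note that each factor on the right-hand side is natural in $V,W\in\OUqg$ — $J_{VW}$ and $(J_{21})_{VW}$ are values of the completion element $R_{\theta}\in\End(\FF{}{2})$ and its flip, while $(\RM{21})_{VW}=(1\,2)\circ\RM{WV}\circ(1\,2)$ is a value of the honest R-matrix on $\OUqg\boxtimes\OUqg$ — so their composite is natural and $\RM{}^{\psi\psi}=(\RM{{\Vpsi}{\Wpsi}})_{V,W\in\OUqg}$ defines an element of $\End(\FF{}{}\boxtimes\FF{}{})$. Thus the entire weight of the proposition rests on the well-definedness statement of Lemma~\ref{lem:twistpair:properties}\ref{twistpair:Rmatrix}, as the remark following that lemma anticipates; parts (a), (b), and all the naturality claims are routine bookkeeping.
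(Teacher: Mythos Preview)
Your proposal is correct and follows essentially the same approach as the paper: part (a) via Lemma~\ref{lem:twistpair:properties}\ref{W:psitwist} and the fact that $R\in\End(\FF{\cW}{}\boxtimes\FF{}{})$, part (b) by swapping the two slots and conjugating by the flip (the paper records this as the identity $(\RM{21}^{\psi})_{VW}=(\RM{21})_{V{\Wpsi}}$ together with $R_{21}\in\End(\FF{}{}\boxtimes\FF{\cW}{})$, which is the same observation), and part (c) by invoking Lemma~\ref{lem:twistpair:properties}\ref{twistpair:Rmatrix}. Your explicit remark that naturality of $\RM{}^{\psi\psi}$ follows from the naturality of each factor $J$, $R_{21}$, $J_{21}^{-1}$ in the factorization~\eqref{eq:twisted-R} spells out a detail the paper leaves implicit.
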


\begin{proof} \mbox{}

\begin{enumerate}\itemsep2mm

\item
By Section \ref{ss:R-matrix}, $R\in\End(\FF{\cW}{}\boxtimes\FF{}{})$. 
Thus, by Lemma~\ref{lem:twistpair:properties} \ref{W:psitwist}, it follows that $\RM{{\Vpsi}W}$ is well-defined and natural in both $V$ and $W$.

\item
As for \ref{Rpsi:def}, it is enough to observe that 
	\begin{align}
	(\RM{21}^{\psi})_{VW}=(1\,2)\circ\RM{WV}^{\psi}\circ(1\,2)
	=(1\,2)\circ\RM{{\Wpsi}V}\circ(1\,2)
	=(\RM{21})_{V{\Wpsi}}
\end{align}
and $R_{21}\in\End(\FF{}{}\boxtimes\FF{\cW}{})$.

\item
It follows readily from Lemma~\ref{lem:twistpair:properties} \ref{twistpair:Rmatrix}. 
\hfill \qedhere

\end{enumerate}

\end{proof}

\subsection{Basic K-matrices for quantum symmetric pairs} \label{ss:basicKmatrices}

In \cite{AV22a} we pointed out that the QSP subalgebra $\Uqkp$ gives rise to a family of \emph{cylindrical structures} $(\psi,J,K)$ on $\Uqg$; for a purely algebraic review of cylindrical structures, see Section \ref{ss:cyl-bialg}.
In the quantum group setting a cylindrical structure amounts to an invertible element $K$ of a completion of $\Uqg$, called \emph{basic K-matrix}, an algebra automorphism $\psi: \Uqg \to \Uqg$ and an invertible element $J$ of a completion of $\Uqg \ten \Uqg$, such that the intertwining identity
\begin{align}\label{eq:k-intertwiner}
\KM{}\cdot b=\psi(b)\cdot\KM{}\, \qq \text{for all } b \in \Uqkp,
\end{align}
and the coproduct formula
\begin{align}\label{eq:coprod-id}
\Delta(\KM{}) = J^{-1 }\cdot (1\ten\KM{}) \cdot \RM{}^{\psi} \cdot (\KM{} \ten 1)
\end{align}
are satisfied.
Additionally, $(\psi,J)$ should be a \emph{twist pair} for $\Uqg$ (see \ref{ss:qt-hopf}) up to completion, which amounts to the statements of Lemma \ref{lem:twistpair:properties} \ref{twistpair:coproduct}-\ref{twistpair:Rmatrix}.
As a consequence, we obtain the \emph{generalized reflection equation} (we can follow the purely algebraic proof of Proposition \ref{prop:basicK:RE}):
\begin{equation}\label{eq:K-RE}
R^{\psi \psi}_{21} \cdot (1 \ten \KM{}) \cdot R^\psi \cdot (\KM{} \ten 1) = (\KM{} \ten 1) \cdot R^\psi_{21} \cdot (1 \ten \KM{}) \cdot R.
\end{equation}
We now follow the approach of \cite{AV22a}, in which we first construct a distinguished cylindrical structure and then generate new examples by gauge transformations.

\subsection{The quasi K-matrix}\label{ss:univ-kmx}

One solution of \eqrefs{eq:k-intertwiner}{eq:coprod-id} is distinguished by the fact that $K$, in this case also called \emph{quasi K-matrix}, lies in a completion of $U_q(\fkn^+)$. 
Its construction, originally due to Bao and Wang \cite{BW18b}, was subsequently generalized by Balagovi\'c and Kolb in \cite{BK19} to the symmetrizable Kac-Moody setting with constraints on the parameters $(\Parc,\Pars)\in\Parsetc\times\Parsets$. 
In \cite{AV22a} the parameter constraints were lifted and \eqref{eq:k-intertwiner} was rewritten in the indicated form.

\begin{theorem} \cite[Lemma 8.1 \& Prop. 8.3]{AV22a} \label{thm:av-k-mx}
Let $(\Uqg,\Uqk)$ be a quantum symmetric pair and set 
\eq{
\psi = \phi_q^{-1}, \qq J=R_\theta.
} 
Then there is a unique $K = \QK{} \in\COUqg{}{}$ of the form
\eq{
\QK{}=\sum_{\mu \in (\Qp)^{-\tsat}}\QK{\mu} \in\prod_{\mu\in\Qp}\Uqnp_\mu, \qq \QK{0}=1
}
which satisfies \eqrefs{eq:k-intertwiner}{eq:coprod-id}.
\end{theorem}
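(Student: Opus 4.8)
The statement is an existence-and-uniqueness result for an element $\QK{}$ lying in the completion $\prod_{\mu\in\Qp}\Uqnp_\mu$ of $U_q(\fkn^+)$, characterized by the intertwining relation \eqref{eq:k-intertwiner} and the coproduct formula \eqref{eq:coprod-id} (with $\psi=\phi_q^{-1}$ and $J=R_\theta$). My plan is the classical Bao--Wang / Balagovi\'c--Kolb strategy, recast in the form used in \cite{AV22a}: translate the axioms into a recursion in the $\Qp$-grading and solve it. First I would observe that, since $\QK{}$ is required to lie in $\prod_{\mu}\Uqnp_\mu$ with $\QK{0}=1$, the two conditions \eqref{eq:k-intertwiner} and \eqref{eq:coprod-id} are not independent: granted the twist-pair properties of $(\psi,J)=(\phi_q^{-1},R_\theta)$ established in Lemma~\ref{lem:twistpair:properties}, the coproduct formula is a formal consequence of the intertwining equation together with the coideal property of $\Uqkp$ and the intertwining/coproduct identities for $R$ from Section~\ref{ss:R-matrix} (this is exactly the ``simple algebraic manipulation'' alluded to in the introduction; it is the content of \cite[Prop.~8.3]{AV22a}). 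So the core of the proof reduces to solving \eqref{eq:k-intertwiner}.

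\textbf{Reduction of the intertwining equation to a recursion.}
The equation $\QK{}\cdot b=\psi(b)\cdot\QK{}$ is trivial for $b\in U_q(\fkn^+_X)$ and for $b\in U_q(\fkh^\tsat)$ (here one uses that $\psi(K_h)=K_{\theta(h)}$ and $\theta|_{\fkh^\tsat}=\id$, together with the fact that $\QK{\mu}\in\Uqnp_\mu$ with $\mu\in(\Qp)^{-\tsat}$ so the weight bookkeeping closes up). The real constraints come from $b=\Bg{i}$ for $i\notin X$. Writing $\Bg{i}=\Fg{i}+\phi_q(\Fg{i})+\pars{i}\Kg{i}^{-1}$ and using $\psi=\phi_q^{-1}$ so that $\psi(\Bg{i})=\phi_q^{-1}(\Fg{i})+\Fg{i}+\pars{i}K_{\theta^{-1}(i)}^{-1}$-type terms, I would expand $\QK{}\cdot\Bg{i}=\psi(\Bg{i})\cdot\QK{}$ and collect by $\Qp$-degree. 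The key input is Lusztig's skew-derivation description of the left/right adjoint $\Fg{i}$-actions on $\Uqnp$: commuting $\Fg{i}$ past an element $x\in\Uqnp_\mu$ produces $\frac{r_i(x)-{}_ir(x)}{q_i-q_i^{-1}}$ terms in degree $\mu-\rt{i}$ (in the notation of \cite[Ch.~1]{Lus94} or \cite[\S6]{Kol14}). After this expansion the equation in degree $\beta$ becomes an identity of the schematic shape
\eq{
(q_i-q_i^{-1})\bigl(r_i(\QK{\beta+\rt{i}-\theta(\rt{i})})-{}_ir(\QK{\beta+\rt{i}})\bigr)
\;=\;(\text{explicit expression in }\QK{\mu}\text{ with }\mu\prec\beta),
}
where the right-hand side involves only lower-degree components of $\QK{}$ together with the structure constants coming from $\phi_q(\Fg{i})$ (which is a weight vector in $\Uqnp$ of weight $-\theta(\rt{i})$, up to a Cartan factor) and from $\WO{\theta}$, $\qWS{X}$, $\tcorr{\tsat}$. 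One then checks that $r_i-{}_ir$ (equivalently, the span of the twisted skew-derivations) determines an element of $\Uqnp_\beta$ uniquely once its image is prescribed and a compatibility (cocycle) condition holds; this is where the \emph{generalized Satake} hypothesis on $(X,\tau)$ and the parameter conditions \eqref{y:condition} are used, exactly as in \cite[\S4]{Kol14} and \cite[\S8]{AV22a}. Uniqueness is then immediate by induction on $\hgt(\beta)$ starting from $\QK{0}=1$; existence is the statement that the inductively defined $\QK{\beta}$ actually satisfy \emph{all} the relations $b=\Bg{i}$ simultaneously (not just the one used to define them).

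\textbf{Where the work sits, and what I would cite.}
Since this theorem is quoted verbatim from \cite[Lemma~8.1 \& Prop.~8.3]{AV22a}, the honest plan is simply: for existence and uniqueness of $\QK{}$ as a solution of \eqref{eq:k-intertwiner} invoke \cite[Lemma~8.1]{AV22a} (which in turn rests on the Balagovi\'c--Kolb construction \cite{BK19}, with the parameter constraints removed as in \loccit); and for the fact that such $\QK{}$ automatically satisfies the coproduct formula \eqref{eq:coprod-id} invoke \cite[Prop.~8.3]{AV22a}, using the twist-pair properties of $(\phi_q^{-1},R_\theta)$ recorded in Lemma~\ref{lem:twistpair:properties}\ref{twistpair:coproduct}--\ref{twistpair:Rmatrix}. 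If instead a self-contained argument is wanted, the main obstacle is the ``simultaneous compatibility'' step: showing that the $\QK{\beta}$ produced by the recursion from a single defining relation genuinely intertwine $\QK{}\cdot b=\psi(b)\cdot\QK{}$ for every generator $b$. The standard way around this is to \emph{not} define $\QK{}$ by a recursion at all, but to realize it intrinsically — e.g.\ as the unique element implementing the isomorphism between two bar-type involutions, or via the $\rho$-function / quasi-$R$-matrix factorization of \cite{BW18b,BK19,AV22a} — so that the intertwining property holds by construction and only well-definedness in the completion $\prod_\mu\Uqnp_\mu$ needs checking; the latter follows from the locally-finite action of $U_q(\fkn^+)$ on category $\OUqg$ modules, i.e.\ from the embedding \eqref{eq:R-completion-embedding}-type argument of \cite{ATL24b}. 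Either way, no genuinely new idea beyond \cite{AV22a,BK19} is needed here; the statement is recalled for use in the sequel.
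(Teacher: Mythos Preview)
The paper does not give its own proof of this theorem; it is stated with the citation \cite[Lemma 8.1 \& Prop.~8.3]{AV22a} and used as input for the rest of the paper. Your proposal correctly identifies this and gives an accurate sketch of the argument in \cite{AV22a}: existence and uniqueness of $\QK{}$ satisfying \eqref{eq:k-intertwiner} via the recursive/skew-derivation approach of \cite{BW18b,BK19} (their Lemma~8.1), followed by the separate verification of the coproduct formula \eqref{eq:coprod-id} (their Prop.~8.3). Your ``honest plan'' paragraph is exactly what the paper does.
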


\begin{remarks}
	\hfill
	\begin{enumerate}\itemsep2mm
	\item 
	By the weight decomposition of $\QK{}$, one has $u\QK{}=\QK{}u$ for any $u\in U_q(\h^\tsat)$. 
	Thus, the identity \eqref{eq:k-intertwiner} for $\Uqkp$ implies the same identity for $\Uqk$, which appears in \cite[Lemma 8.1]{AV22a}.
	\item By Proposition~\ref{prop:aux-R-matrices}, \eqref{eq:coprod-id} and \eqref{eq:K-RE} lie in $\End(\FF{}{}\boxtimes\FF{}{})$.
	\hfill\rmkend
	\end{enumerate}
\end{remarks}

\subsection{Gauged basic K-matrices} \label{ss:gauged-kmx}
Having obtained one solution of \eqrefs{eq:k-intertwiner}{eq:coprod-id} for a given quantum symmetric pair $(\Uqg,\Uqk)$, we now explain how others can be obtained.
In \cite{AV22a,AV22b}, we described a procedure to obtain new cylindrical structures by \emph{gauge-transformations}, see \cite[Rmk.~8.11]{AV22a} and \cite[Cor.~3.6]{AV22b}.

We will set 
\eq{
(\psi,J,K) = (\phi_q^{-1},R_\theta,\QK{})
} 
and use it as the initial datum in this procedure, carefully keeping track of the category of modules on which the new datum acts.

\begin{prop} \cite[Thm.~8.8]{AV22a} \label{prop:gauge-k-matrix}
Fix a subset $Z\subseteq \IS$. 
Let $\Gg_Z$ be the group of invertible elements $\gau\in \CPOUqg{Z}{}$ such that $\Ad(\gau)$ preserves $\Uqg\subset\CPOUqg{Z}{}$. 
For any fixed $\gau\in\Gg_Z$, set
\begin{equation}
	\psi_\gau = \Ad(\gau) \circ \psi,
	\quad
	J_\gau = (\gau\ten\gau) \cdot J \cdot \Delta(\gau)^{-1}
	\quad\mbox{and}\quad
	\KM{\gau} = \gau \cdot \KM{}.
\end{equation}
Then the identities \eqref{eq:k-intertwiner} and \eqref{eq:coprod-id} with $(\psi,J,\KM{})$ replaced by $(\psi_\gau,J_\gau,\KM{\gau})$, are satisfied in $\End(\FF{\pint{Z}}{})$ and $\End(\FF{\pint{Z}}{} \boxtimes \FF{\pint{Z}}{})$, respectively.
\end{prop}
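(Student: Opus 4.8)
The plan is to verify the two identities directly by substituting the gauge-transformed data $(\psi_\gau, J_\gau, \KM{\gau})$ into \eqref{eq:k-intertwiner} and \eqref{eq:coprod-id}, reducing each to the already-known identity for $(\psi, J, \KM{})$ from Theorem \ref{thm:av-k-mx}, while tracking at each stage that the operators involved act on $\POUqg{Z}$ and its tensor squares. First I would record the elementary preliminary facts: since $\gau \in \Gg_Z$, the conjugation $\Ad(\gau)$ is an algebra automorphism of $\CPOUqg{Z}{}$ preserving $\Uqg$, so $\psi_\gau = \Ad(\gau) \circ \psi$ is again an algebra automorphism of $\Uqg$ (this uses that $\psi = \phi_q^{-1}$ already preserves $\Uqg$, by Section \ref{ss:qtheta}); moreover $\KM{\gau} = \gau \cdot \KM{}$ is invertible in $\CPOUqg{Z}{}$ because both factors are, and $J_\gau = (\gau\ten\gau)\cdot J \cdot \Delta(\gau)^{-1}$ is invertible in $\CPOUqg{Z}{}\boxtimes\CPOUqg{Z}{}$ for the same reason, using that $\Delta(\gau) \in \End(\FF{\pint{Z}}{}\boxtimes\FF{\pint{Z}}{})$ is well-defined since $\POUqg{Z}$ is monoidal. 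One must also note that $\RM{}^{\psi}$ restricts to a well-defined operator on $\POUqg{Z}\boxtimes\POUqg{Z}$: this follows from Proposition \ref{prop:aux-R-matrices}\ref{Rpsi:def} together with the fact (Lemma \ref{lem:twistpair:properties}\ref{W:psitwist2}) that $\psi$-twisting sends $\POUqg{Z}$ into the opposite category, combined with $\POUqg{Z}\subseteq\OUqg$.

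For the linear identity \eqref{eq:k-intertwiner}, I would compute, for $b \in \Uqkp$,
\[
\KM{\gau}\cdot b = \gau \cdot \KM{} \cdot b = \gau \cdot \psi(b) \cdot \KM{} = \gau \cdot \psi(b) \cdot \gau^{-1} \cdot \gau \cdot \KM{} = (\Ad(\gau)\circ\psi)(b)\cdot\KM{\gau} = \psi_\gau(b)\cdot\KM{\gau},
\]
where the second equality is \eqref{eq:k-intertwiner} for the original datum; the only subtlety is that this chain of equalities takes place in $\CPOUqg{Z}{}$, where all the factors live, so the manipulation is legitimate. For the coproduct identity \eqref{eq:coprod-id}, I would start from $\Delta(\KM{\gau}) = \Delta(\gau)\cdot\Delta(\KM{})$, insert the original formula $\Delta(\KM{}) = J^{-1}\cdot(1\ten\KM{})\cdot\RM{}^{\psi}\cdot(\KM{}\ten 1)$, and then reorganize:
\[
\Delta(\KM{\gau}) = \Delta(\gau)\cdot J^{-1}\cdot(1\ten\KM{})\cdot\RM{}^{\psi}\cdot(\KM{}\ten 1).
\]
The target is $J_\gau^{-1}\cdot(1\ten\KM{\gau})\cdot\RM{\gau}^{\psi_\gau}\cdot(\KM{\gau}\ten 1)$, where $\RM{\gau}^{\psi_\gau} = (\id\ten\Ad(\gau)\circ\psi)(\RM{})$; one needs the identity $\RM{\gau}^{\psi_\gau} = (1\ten\gau)\cdot\RM{}^{\psi}\cdot(1\ten\gau)^{-1}$, which holds because applying $\id\ten\Ad(\gau)$ to $\RM{}^{\psi} = (\psi\ten\id)(\RM{})$ acts only on the second tensor factor. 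Substituting $J_\gau^{-1} = \Delta(\gau)\cdot J^{-1}\cdot(\gau\ten\gau)^{-1}$ and $\KM{\gau}\ten 1 = (\gau\ten 1)(\KM{}\ten 1)$, $1\ten\KM{\gau} = (1\ten\gau)(1\ten\KM{})$, the right-hand side becomes
\[
\Delta(\gau)\cdot J^{-1}\cdot(\gau\ten\gau)^{-1}\cdot(1\ten\gau)\cdot(1\ten\KM{})\cdot(1\ten\gau)^{-1}\cdot(1\ten\gau)\cdot\RM{}^{\psi}\cdot(1\ten\gau)^{-1}\cdot(\gau\ten 1)\cdot(\KM{}\ten 1),
\]
and the telescoping of $\gau$-factors $-\ (\gau\ten\gau)^{-1}(1\ten\gau)\cdots(\gau\ten 1) = \id$ after cancellation $-$ reduces this exactly to the expression for $\Delta(\KM{\gau})$ above. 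The bookkeeping of which factors commute past which is routine, as all $\gau$-insertions act on a single tensor leg.

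The main obstacle, and the point that deserves the most care, is \emph{not} the algebra but the topology: one must check that every intermediate expression is a well-defined operator on objects of $\POUqg{Z}$ (respectively on $V\ten W$ for $V, W \in \POUqg{Z}$), so that the rearrangements are valid in the completions $\End(\FF{\pint{Z}}{})$ and $\End(\FF{\pint{Z}}{}\boxtimes\FF{\pint{Z}}{})$ rather than merely formally. This is exactly where the hypothesis $\gau \in \Gg_Z$ is used in an essential way: $\gau$ and $\gau^{-1}$ lie in $\CPOUqg{Z}{}$ by assumption, $\Delta(\gau)$ and its inverse lie in $\End(\FF{\pint{Z}}{}\boxtimes\FF{\pint{Z}}{})$ by monoidality of $\POUqg{Z}$, and $\RM{}^{\psi}$, $J = R_\theta$, $\KM{} = \QK{}$ all restrict to this category by Proposition \ref{prop:aux-R-matrices} and the discussion of $R_\theta$ in Section \ref{ss:qtheta} (note $R_\theta \in \End(\FF{}{2})$ already, and $\QK{}\in\COUqg{}{}$ restricts along $\POUqg{Z}\subseteq\OUqg$). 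Once these membership statements are in place the computations above are literally identical to the purely bialgebraic ones, and indeed one can simply cite the algebraic version \cite[Thm.~8.8]{AV22a} or, equivalently, the reflection-bialgebra formalism of Appendix \ref{ss:cylindrical-algebraic}; the content of the proposition is precisely the assertion that the formal identities remain valid after restriction to the category $\POUqg{Z}$, which is guaranteed by the defining property of $\Gg_Z$.
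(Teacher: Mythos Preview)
Your overall strategy --- verify \eqref{eq:k-intertwiner} and \eqref{eq:coprod-id} directly for the gauged data and check that every factor lives in the correct completion --- is sound and is essentially what the paper's one-line proof is pointing to (the paper just says that Lemma~\ref{lem:twistpair:properties} and Proposition~\ref{prop:aux-R-matrices} remain valid after replacing $\FF{}{}$ by $\FF{\pint{Z}}{}$, i.e.\ that $(\psi_\gau,J_\gau)$ is again a twist pair on the restricted category). However, your verification of the coproduct identity contains an error.

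By the paper's convention (see the introduction and Proposition~\ref{prop:aux-R-matrices}\ref{Rpsi:def}), $R^{\psi}=(\psi\ten\id)(R)$, so the twist is on the \emph{first} tensor factor, not the second. Hence
\[
R^{\psi_\gau}=(\psi_\gau\ten\id)(R)=(\Ad(\gau)\ten\id)\big((\psi\ten\id)(R)\big)=(\gau\ten 1)\cdot R^{\psi}\cdot(\gau^{-1}\ten 1),
\]
not $(1\ten\gau)\cdot R^{\psi}\cdot(1\ten\gau)^{-1}$ as you wrote. With your formula the $\gau$-factors do not telescope: after simplifying $(\gau\ten\gau)^{-1}(1\ten\gau)=(\gau^{-1}\ten 1)$ you are left with $(\gau^{-1}\ten 1)(1\ten K)\,R^{\psi}(1\ten\gau^{-1})(\gau\ten 1)(K\ten 1)$, and the stray $(\gau^{-1}\ten 1)$ on the left and $(1\ten\gau^{-1})$ on the right cannot be absorbed. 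With the correct formula the cancellation is immediate: $(\gau^{-1}\ten 1)(1\ten K)(\gau\ten 1)R^{\psi}(\gau^{-1}\ten 1)(\gau\ten 1)(K\ten 1)=(1\ten K)R^{\psi}(K\ten 1)$, using only that $(1\ten K)$ commutes with $(\gau\ten 1)$. Once this is fixed your argument goes through, and your careful discussion of why each factor lies in $\End(\FF{\pint{Z}}{})$ or $\End(\FF{\pint{Z}}{}\boxtimes\FF{\pint{Z}}{})$ is exactly the content the paper is alluding to.
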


\begin{proof}
It is enough to observe that the results of Lemma~\ref{lem:twistpair:properties} and Proposition~\ref{prop:aux-R-matrices} remain valid for this choice of $\psi$ and $J$, up to replacing $\FF{}{}$ with $\FF{\pint{Z}}{}$.
\end{proof}

%
%


\section{The classical limit of universal K-matrices} \label{ss:class}

In this section we discuss specialization. 
In the formal ($\hbar$-adic) setting, it is clear that the specializations below are well-defined.
We will use the formalism for specialization as discussed in \cite[Sec.~2.4]{Wat21} via subrings of $\bsF$ defined in terms of certain Puiseux series.
The main result of \cite[Sec.~10]{Kol14}, characterizing $\Uqkp$ as a maximal subspace of $\Uqgp$ whose specialization is $U(\fkk')$, directly carries over to this setting.

In this section we rely on this result to characterize $\Uqkp$ as the maximal right coideal subalgebra of $\Uqgp$ contained in a fixed-point subalgebra of the automorphism $\phi_q \circ \Ad(\QK{})$, where we recall $\phi_q = \tsat_{q} \circ \Ad(\Parc^{-1})$. 

\subsection{Maximality for QSP subalgebras}

We denote by $\cl{x}$ the specialization of $x$ at $q=1$ whenever well-defined.
Henceforth, we assume that the QSP parameters $(\Parc,\Pars)\in\Parsetc\times\Parsets$ are specializable.
Note that, unlike in \cite{Kol14}, we do not assume that $\cl{\gamma_i} = 1$.

Note that in Section \ref{ss:Liesubalgebra} we defined subalgebras $\fkk \subseteq \fkg$ and $\fkk' = \fkk \cap \fkg'$ depending on a tuple $\bm y \in (\C^\times)^\IS$ satisfying \eqref{y:condition}.
We recall the following characterization of $\Uqkp$.

\begin{theorem}\cite{Kol14, RV20}\label{thm:kolb-maximality}
Let $(\Uqg,\Uqk)$ be a quantum symmetric pair.
Then $\Uqkp\subset\Uqgp$ is a maximal subspace such that $\cl{\Uqkp}=U(\kp)$ with $\cl{\Parc} = \bm y$.
\end{theorem}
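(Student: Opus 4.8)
The plan is to reduce the statement to the specialization theorem of \cite[Sec.~10]{Kol14} and its extension to pseudo-involutions of the second kind in \cite{RV20}, transposed into the Puiseux-series formalism of \cite[Sec.~2.4]{Wat21} and with the normalization $\cl{\Parc}=\bm{1}$ relaxed to $\cl{\Parc}=\bm{y}$. First I would fix the subring $\bbA\subset\bsF$ of Puiseux series in $q-1$ regular at $q=1$ together with the Lusztig $\bbA$-form $\Uqgp_\bbA$ satisfying $\Uqgp_\bbA/(q-1)\Uqgp_\bbA\cong U(\gp)$, and set $\Uqkp_\bbA=\Uqkp\cap\Uqgp_\bbA$. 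The first step is then to check that $\Uqkp_\bbA$ is an $\bbA$-free $\bbA$-subalgebra with $\Uqkp=\Uqkp_\bbA\otimes_\bbA\bsF$; this follows from the explicit spanning set of $\Uqkp$ recalled from \cite{Kol14,AV22a}, since the relevant computations are insensitive to the field in which the parameters lie.

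The second step is to identify $\cl{\Uqkp}\coloneqq\Uqkp_\bbA/(q-1)\Uqkp_\bbA$ with $U(\kp)$. Since $(\Parc,\Pars)$ is specializable with $\cl{\Parc}=\bm{y}$, one computes the classical limit of each generator of $\Uqkp$: for $i\in X$ one has $\cl{\Bg{i}}=f_i$, while for $i\notin X$, using $\phi_q(\Fg{i})=\parc{i}\,\tsat_q(\Fg{i})$ (cf.\ \eqref{phiq:def}--\eqref{phi:def}) and $\cl{\Kg{i}^{-1}}=1$, one gets $\cl{\Bg{i}}=f_i+y_i\theta(f_i)+\cl{\pars{i}}$, which generates the same subalgebra of $U(\gp)$ as $f_i+y_i\theta(f_i)$; finally $\cl{U_q(\fkn^+_X)}=U(\fkn^+_X)$ and $\cl{U_q(\h^\tsat)}=U((\h')^\theta)$. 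Comparing with the definition of $\k$ in Section \ref{ss:Liesubalgebra} and the parameter condition \eqref{y:condition} gives $\cl{\Uqkp}\supseteq U(\kp)$; the reverse inclusion, i.e.\ that no collapsing of dimensions occurs as $q\to 1$, is exactly the specialization statement of \cite{RV20}, which in turn rests on the Iwasawa decomposition of $\g$ relative to $\k$ from \cite{RV22}.

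For maximality, suppose $M$ is a specializable $\bsF$-subspace with $\Uqkp\subseteq M\subseteq\Uqgp$ and $\cl{M}=U(\kp)$. Here I would invoke the quantum Iwasawa/PBW decomposition $\Uqgp\cong\Uqkp\otimes V$, with $V$ the span of an explicit monomial set, together with a compatible $\bbA$-form $\Uqgp_\bbA\cong\Uqkp_\bbA\otimes V_\bbA$ for which $\cl{V_\bbA}$ is a vector-space complement of $U(\kp)$ in $U(\gp)$. Any $m\in M\setminus\Uqkp$ then has a nonzero $V$-component $c=m-k$ with $k\in\Uqkp$; after rescaling $c$ by a unit of $\bsF$ so that $c\in\Uqgp_\bbA\setminus(q-1)\Uqgp_\bbA$, one obtains $0\neq\cl{c}\in\cl{V_\bbA}$, whereas $c\in M$ forces $\cl{c}\in\cl{M}=U(\kp)$ --- a contradiction. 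Hence $M=\Uqkp$.

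The step I expect to be the main obstacle is establishing the reverse inclusion in the second step, together with the existence of the compatible $\bbA$-form of the Iwasawa complement used in the last step: one must show that passing to $q=1$ neither enlarges nor shrinks $U(\kp)$ and that the quantum Iwasawa complement specializes precisely to the classical one, uniformly in the generalized Satake diagram $(X,\tau)$ and the specializable parameters. This is where \cite{Kol14}, \cite{RV20} and \cite{RV22} do the real work; the only genuinely new point over \cite{Kol14} is tracking the character $\bm{y}=\cl{\Parc}$ through the classical limits, which replaces the target fixed-point subalgebra with the pseudo-fixed-point subalgebra $\kp$.
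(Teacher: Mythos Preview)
Your proposal is correct and follows essentially the same route as the paper: both reduce the statement to \cite[Thms.~10.8, 10.11]{Kol14} (for the involutive case) together with the observation from \cite{RV20} that those proofs carry over verbatim to generalized Satake diagrams, the only new point being that for general specializable $\Parc$ the classical limit lands in $U(\kp)$ with $\bm y=\cl{\Parc}$ rather than $\bm y=\bm 1$. The paper's proof is simply the citation plus this remark; your proposal unpacks the argument inside those references (the $\bbA$-form, the generator-by-generator specialization, and the quantum Iwasawa complement for maximality), which is accurate but more than the paper writes out.
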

	
\begin{proof} 
In the involutive case, we refer to \cite[Rmk.~6.13]{AV22a} for the equality between $\Uqkp$ and Kolb's subalgebra $B_{{\bf c},{\bf s}}$.
In this case, the statement is proved in \cite[Thms.~10.8, 10.11]{Kol14}. 
The proofs of \emph{loc.~cit.~}naturally extend to generalized Satake diagrams, as was pointed out in \cite{RV20}. 
In the same way, the proofs are still valid for general specializable $\Parc \in \Gamma$, the difference being that the procedure yields the universal enveloping algebra of a Lie subalgebra $\fkk' \subset \fkg'$ depending on $\bm y = \cl{\Parc}$. 
\end{proof}

We consider the automorphism of the completion $\COUqg{\sint}{}$ given by\footnote{
There is \emph{a priori} no reason why $\Uqg$ should be $\Ad(\QK{})$-stable. 
Indeed, it is false even for $\g=\fksl_2$, as is easy to show using the expression given in \cite[Lemma~3.8]{DK19}.
}
\begin{equation}\label{eq:def-xi}
\xi = \Ad(\QK{})^{-1} \circ \phi_q^{-1}.
\end{equation}
Then, we consider 
\eq{ \label{Bprimexi:def}
B'_\xi = \{ x \in \Uqgp \,\vert\, (\xi\ten\id)(\Delta(x)) = \Delta(x) \} .
}

\begin{lemma}\label{lem:Bxi:maximal}
The subalgebra $B'_\xi$ is the maximal right coideal subalgebra of $\Uqgp$ contained in $\Uqgp^\xi$. 
\end{lemma}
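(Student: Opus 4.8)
The plan is to prove this by elementary Hopf-algebraic bookkeeping, with no deep input: I would check directly from \eqref{Bprimexi:def} that $B'_\xi$ is a right coideal subalgebra of $\Uqgp$ contained in the fixed-point set $\Uqgp^\xi = \{x\in\Uqgp : \xi(x)=x\}$, and then observe that any right coideal subalgebra of $\Uqgp$ contained in $\Uqgp^\xi$ is automatically contained in $B'_\xi$. All the manipulations below take place in the representation-theoretic completions of $\Uqgp^{\ten n}$ from Section~\ref{ss:completions} and \cite{ATL24b}, on which $\xi\ten\id$, $\Delta\ten\id$, $\id\ten\Delta$, $\id\ten\epsilon$ are defined and where coassociativity and the counit axiom persist; keeping this framework straight is the only point requiring care.

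First, since $\Delta(\Uqgp)\subseteq\Uqgp\ten\Uqgp$ and both $\Delta$ and $\xi\ten\id$ are algebra homomorphisms of the relevant completions, $B'_\xi$ --- being the equaliser of $(\xi\ten\id)\circ\Delta$ and $\Delta$ --- is a unital subalgebra of $\Uqgp$. Applying $\id\ten\epsilon$ to the relation $(\xi\ten\id)(\Delta(x))=\Delta(x)$ and using $(\id\ten\epsilon)\circ(\xi\ten\id)=\xi\circ(\id\ten\epsilon)$ together with $(\id\ten\epsilon)\circ\Delta=\id$ yields $\xi(x)=x$, so $B'_\xi\subseteq\Uqgp^\xi$.

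Next I would verify the right coideal property. Fix $x\in B'_\xi$ and write $\Delta(x)=\sum_j u_j\ten v_j$ with the $v_j$ linearly independent. Using coassociativity and the elementary identity $(\xi\ten\id\ten\id)\circ(\id\ten\Delta)=(\id\ten\Delta)\circ(\xi\ten\id)$, one gets
\begin{align*}
(\xi\ten\id\ten\id)\big((\Delta\ten\id)(\Delta(x))\big)
&=(\xi\ten\id\ten\id)\big((\id\ten\Delta)(\Delta(x))\big)\\
&=(\id\ten\Delta)\big((\xi\ten\id)(\Delta(x))\big)
=(\id\ten\Delta)(\Delta(x))
=(\Delta\ten\id)(\Delta(x)),
\end{align*}
the third equality using $x\in B'_\xi$. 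As the left-hand side equals $\sum_j(\xi\ten\id)(\Delta(u_j))\ten v_j$ and the $v_j$ are linearly independent, I would conclude $(\xi\ten\id)(\Delta(u_j))=\Delta(u_j)$, i.e. $u_j\in B'_\xi$, for all $j$; hence $\Delta(x)\in B'_\xi\ten\Uqgp$.

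Finally, for maximality, let $C\subseteq\Uqgp$ be a right coideal subalgebra with $C\subseteq\Uqgp^\xi$. For $x\in C$ we have $\Delta(x)\in C\ten\Uqgp\subseteq\Uqgp^\xi\ten\Uqgp$, and since $\xi$ restricts to the identity on $\Uqgp^\xi$ this forces $(\xi\ten\id)(\Delta(x))=\Delta(x)$; thus $C\subseteq B'_\xi$. Together with the previous two paragraphs this shows $B'_\xi$ is the maximal right coideal subalgebra of $\Uqgp$ contained in $\Uqgp^\xi$. The step I expect to need the most care is the passage from $\sum_j(\xi\ten\id)(\Delta(u_j))\ten v_j=\sum_j\Delta(u_j)\ten v_j$ with the $v_j$ linearly independent to the conclusion $(\xi\ten\id)(\Delta(u_j))=\Delta(u_j)$ inside the threefold completion: this is a routine consequence of the compatibility of the completions of $\Uqgp^{\ten n}$ for varying $n$ (one pairs the last tensor leg with matrix coefficients of modules in the defining category, using \cite{ATL24b}). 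Everything else is purely formal.
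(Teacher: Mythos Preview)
Your proof is correct and follows essentially the same approach as the paper's: both apply $\id\ten\veps$ to obtain $B'_\xi\subseteq\Uqgp^\xi$, use coassociativity to derive $(\xi\ten\id\ten\id)((\Delta\ten\id)(\Delta(x)))=(\Delta\ten\id)(\Delta(x))$ for the coideal property, and argue maximality by noting that any right coideal $C\subseteq\Uqgp^\xi$ satisfies $\Delta(C)\subseteq\Uqgp^\xi\ten\Uqgp$. You supply more detail than the paper does, in particular making explicit the equalizer argument for the subalgebra property and the linear-independence extraction of the $u_j$ (which the paper leaves implicit).
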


\begin{proof} 
Applying $\id \ten \veps$ to the relation $(\xi\ten\id)(\Delta(x))=\Delta(x)$, we obtain $B'_\xi \subseteq \Uqgp^\xi$.
For any $x\in B'_\xi$, we have 
\begin{align}
(\xi\ten\id\ten \id) \big( (\Delta\ten\id)(\Delta(x)) \big) &=(\xi\ten\id\ten \id) \big( (\id \ten \Delta)(\Delta(x)) \big)  \\
&=(\id\ten\Delta)\circ (\xi\ten\id)(\Delta(x))\\
&=(\id\ten\Delta)(\Delta(x))=(\Delta\ten\id)(\Delta(x))\,,
\end{align}
\ie $\Delta(x)\in B'_\xi\ten \Uqgp$. 
Let now $C\subseteq \Uqgp^\xi$ be an arbitrary right coideal subalgebra of $\Uqgp$. 
Then $\Delta(C) \subseteq \Uqgp^\xi \ten \Uqgp$ and therefore $C \subseteq B'_\xi$. 
\end{proof}

By \eqref{eq:k-intertwiner}, $\Uqkp \subseteq B'_\xi$.
By the following key result, this is an equality.

\begin{theorem}\label{thm:QSP-maximal-xi}
Let $(\Uqg,\Uqk)$ be a quantum symmetric pair.
Then 
\eq{
\Uqkp= B'_\xi.
}
\end{theorem}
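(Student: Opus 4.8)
The plan is to prove the reverse inclusion $B'_\xi \subseteq \Uqkp$ by a classical-limit argument, leveraging Theorem \ref{thm:kolb-maximality}. Since $\Uqkp \subseteq B'_\xi$ is already established from the intertwining identity \eqref{eq:k-intertwiner}, and since $\Uqkp$ is a \emph{maximal} subspace with specialization $U(\fkk')$, it suffices to show that $B'_\xi$ is specializable with $\cl{B'_\xi} \subseteq U(\fkk')$. Indeed, if $\cl{B'_\xi}$ equals $U(\fkk')$ and $B'_\xi$ contains $\Uqkp$, then maximality of $\Uqkp$ forces $B'_\xi = \Uqkp$. So the core of the argument is a statement about the classical limit of the automorphism $\xi = \Ad(\QK{})^{-1} \circ \phi_q^{-1}$ and of the fixed-point equation defining $B'_\xi$.

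First I would analyze $\cl{\xi}$. By the refined study of the classical limit of the quasi-K-matrix $\QK{}$ (promised in Section \ref{ss:classlimitK}), one expects $\cl{\QK{}} = 1$, or at least that $\Ad(\cl{\QK{}})$ acts trivially — so that $\cl{\xi} = \cl{\phi_q^{-1}}$, which is the classical automorphism $\phi^{-1}$ of $\fkg$ (extended to $U(\fkg)$) associated to the pseudo-involution, with the parameter specialization $\cl{\Parc} = \bm y$. Then the defining equation $(\xi \ten \id)(\Delta(x)) = \Delta(x)$ specializes to $(\phi^{-1} \ten \id)(\Delta(\ol x)) = \Delta(\ol x)$ for $\ol x = \cl{x} \in U(\fkg')$, i.e. $\ol x$ lies in the maximal right coideal subalgebra of $U(\fkg')$ contained in the $\phi^{-1}$-fixed points. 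Now I would invoke the \emph{Iwasawa decomposition} of $\fkg$ with respect to $\fkk$ from \cite{RV22}: this should give $U(\fkg') = U(\fkk') \cdot (\text{something})$ in a way that identifies the maximal coideal subalgebra inside $U(\fkg')^{\phi} = U(\fkk')$ (using that $\fkk'$ is exactly the $\phi$-fixed subalgebra of $\fkg'$ when $\phi$ is involutive, or the pseudo-fixed-point subalgebra in general) as precisely $U(\fkk')$ itself — the classical analogue of Lemma \ref{lem:Bxi:maximal}. Hence $\cl{B'_\xi} \subseteq U(\fkk')$.

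The technical bridge I would need to handle carefully is that specialization does not commute naively with the completion $\COUqg{\sint}{}$ in which $\xi$ lives, nor with the infinite sums in $\QK{} = \sum_\mu \QK{\mu}$. The equation $(\xi \ten \id)(\Delta(x)) = \Delta(x)$ is understood in a completion of $\Uqgp^{\ten 2}$, so I would want to rephrase membership in $B'_\xi$ via matrix coefficients on integrable modules, or — more robustly — use the Puiseux-series formalism of \cite[Sec.~2.4]{Wat21} referenced in the text, in which the relevant subring $\bsF_\circ \subset \bsF$ makes the specialization of $\Ad(\QK{})$ on finite-dimensional modules well-defined. Concretely, for $x \in B'_\xi$ I would pair both sides against weight vectors in a finite-dimensional $\UqgX$-integrable module (or a suitable integrable module in $\Oint$), observe that only finitely many $\QK{\mu}$ contribute, pass to the limit, and conclude that $\cl{x}$ satisfies the classical fixed-point equation. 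The fact that $B'_\xi$ is stable under specialization (every element has a well-defined $q=1$ limit) follows because $B'_\xi \subseteq \Uqgp$ and $\Uqgp$ is defined over the base ring; the only content is that the limit lands in $U(\fkk')$.

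The main obstacle, I expect, is pinning down $\cl{\Ad(\QK{})}$ precisely enough — i.e. showing that the classical limit of conjugation by the quasi-K-matrix is trivial (or at worst inner in a controlled way) so that $\cl{\xi} = \phi^{-1}$ on the nose. This is exactly what Section \ref{ss:classlimitK} is presumably devoted to, and it is delicate because $\QK{}$ itself does not obviously specialize to $1$ in every normalization; one may need to track how the leading $q\to 1$ behavior of the nontrivial components $\QK{\mu}$ ($\mu \ne 0$) contributes, possibly using that the "intertwining" characterization of $\QK{}$ degenerates at $q=1$ to an identity that is solved by $\QK{} = 1$. A secondary subtlety is the non-involutive (pseudo-involution) and non-normalized-parameter ($\cl{\gamma_i} \ne 1$) generality: here one must use the extension of Kolb's and Regelskis--Vlaar's results to generalized Satake diagrams and arbitrary specializable $\bm y$, as recorded in the proof of Theorem \ref{thm:kolb-maximality}, to ensure that the classical maximal coideal subalgebra is still $U(\fkk')$ with $\fkk'$ the parameter-$\bm y$ pseudo-fixed-point subalgebra.
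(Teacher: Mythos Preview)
Your overall strategy — pass to the classical limit and invoke Kolb's maximality (Theorem \ref{thm:kolb-maximality}) to force $B'_\xi = \Uqkp$ once $\cl{B'_\xi} = U(\fkk')$ — is exactly what the paper does. The Iwasawa decomposition from \cite{RV22} is also the right ingredient. But there is a genuine gap in how you expect the classical limit of $\QK{}$ to behave.

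You assume (or hope) that $\cl{\QK{}} = 1$, hence $\cl{\xi} = \phi^{-1}$, and then plan to identify $(\fkg')^{\phi}$ with $\fkk'$. Both steps fail in the general pseudo-involutive setting. The paper shows (Proposition \ref{prop:cl-QK}) that $\cl{\QK{}} = \exp(\sclQK)$ for a \emph{generally nonzero} element $\sclQK \in \prod_{\mu \in \rootsys^+} \fkn^+_\mu$; by Proposition \ref{prop:cl-QK-2}, $\sclQK = 0$ if and only if $\phi$ is an involution. Correspondingly, by Lemma \ref{lem:phi:involution}, $\fkk = \fkg^\phi$ also holds only in the involutive case. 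So in the genuinely pseudo-involutive situation your proposed identifications are both wrong, and there is no hope of arguing via $(\fkg')^{\phi^{-1}}$. The point the paper makes is subtler: the nontrivial unipotent factor $\Ad(\exp(\sclQK))$ exactly compensates for the non-involutivity of $\phi$, so that one still has $\fkg^{\cl{\xi}} = \fkk$ (Lemma \ref{lem:same-fixed-points}). It is in the proof of \emph{that} lemma — not in any computation of $(\fkg')^\phi$ — that the Iwasawa decomposition $\fkg = \fkk \oplus \fkh^{-\theta} \oplus \fkn^+_\theta$ is used, to show that a $\cl{\xi}$-fixed element of the complement $\fkh^{-\theta} \oplus \fkn^+_\theta$ must vanish.

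A second, smaller gap: you do not explain why $\cl{B'_\xi}$ is of the form $U(\f)$ for a Lie subalgebra $\f \subseteq \fkg'$. The paper handles this with a short Hopf-algebraic argument (Lemma \ref{lem:Hopfcoideal}: any coideal subalgebra of a Hopf algebra generated by primitives is a Hopf subalgebra) combined with Milnor--Moore. Once $\cl{B'_\xi} = U(\f)$ with $\fkk' \subseteq \f \subseteq (\fkg')^{\cl{\xi}} = \fkk'$, the conclusion follows. Your worries about completions and Puiseux-series specialization are legitimate bookkeeping but are not where the difficulty lies.
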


The proof is carried out in Section~\ref{ss:proof-QSP-maximal-xi}. 
It relies on the classical limit of the inclusion $\Uqkp\subseteq B'_\xi$ and properties of the classical limit of $\QK{}$.

\subsection{Classical limit of the quasi K-matrix} \label{ss:classlimitK}

We shall now study the classical limit of $\QK{}$. 
Note that $\cl{\tsat_{q}}=\theta$.
Since $\Ad(\Parc) \in \Aut_{\sf alg}(\Uqg)$ only depends on the values $\Parc_i$, not on the choice of extension of $\Parc: \Qlat \to \bsF^\times$ to a group homomorphism with domain $\Pext$, we obtain $\cl{\phi_q} = \phi$, defined by \eqref{phi:def} in terms of $\bm y = \cl{\Parc}$.

\begin{prop}\label{prop:cl-QK}
Let $(\Uqg,\Uqk)$ be a quantum symmetric pair.
\vspace{1mm}
\begin{enumerate}\itemsep2mm
	\item There is a unique $\sclQK = \sum_{\mu \in \rootsys^+} \sclQK_\mu \in \prod_{\mu \in \rootsys^+} \n^+_{\mu}$ such that 
	$\cl{\QK{}} = \exp(\sclQK)$.
	\item For any $b\in\k'$, $\exp(\ad(\sclQK))(b) = \phi^{-1}(b)$.
	\item Let $\alpha\in\rootsys^+$ such that $\tsat(\alpha)\neq-\alpha$. 
Then	$\sclQK_\alpha = 0$.
\end{enumerate}

\end{prop}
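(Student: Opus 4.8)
The plan is to establish the three statements in order, using the defining properties of $\QK{}$ from Theorem \ref{thm:av-k-mx} together with the classical-limit machinery recalled above.

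\emph{Part (a).} Since $\QK{} = \sum_{\mu\in(\Qp)^{-\tsat}}\QK{\mu}$ with $\QK{0}=1$ and each $\QK{\mu}\in\Uqnp_\mu$, specialization at $q=1$ (which is well-defined by our assumptions on the parameters) yields an element $\cl{\QK{}}\in\prod_{\mu}U(\n^+)_\mu$ of the form $1 + (\text{higher weight terms})$. The subalgebra $U(\n^+)$ of the universal enveloping algebra is a connected graded Hopf algebra with the weight grading, so the exponential map $\exp\colon\prod_{\mu\in\rootsys^+}\widehat{U(\n^+)}_\mu^{\ge 1}\to 1+\prod_{\mu}\widehat{U(\n^+)}_\mu^{\ge 1}$ is a bijection (its inverse is the formal logarithm, both defined by termwise-finite power series since each graded piece is finite-dimensional and nonnegatively graded). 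Hence there is a unique $\sclQK = \log(\cl{\QK{}}) = \sum_{\mu\in\rootsys^+}\sclQK_\mu$ with $\sclQK_\mu\in\n^+_\mu$; I should check that $\sclQK$ is \emph{primitive}, i.e.\ genuinely Lie-algebra valued rather than merely in the completed enveloping algebra, which follows from the fact that $\cl{\QK{}}$ is grouplike. Grouplikeness of $\cl{\QK{}}$ in turn can be deduced from the coproduct formula \eqref{eq:coprod-id}: taking $q\to 1$ in $\Delta(\QK{})=J^{-1}(1\ten\QK{})R^\psi(\QK{}\ten 1)$ and using $\cl{R}=1$, $\cl{J}=\cl{R_\theta}=1$ (the quasi-R-matrix $\Theta$ specializes to $1$ and the weight operator $\WO{\theta}$ specializes to $1$) gives $\Delta(\cl{\QK{}})=(1\ten\cl{\QK{}})(\cl{\QK{}}\ten 1) = \cl{\QK{}}\ten\cl{\QK{}}$, so $\cl{\QK{}}$ is grouplike and $\sclQK$ is primitive.

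\emph{Part (b).} Specialize the intertwining identity \eqref{eq:k-intertwiner}, $\QK{}\cdot b = \psi(b)\cdot\QK{}$ for $b\in\Uqkp$ with $\psi=\phi_q^{-1}$. Taking $q\to 1$, using $\cl{\psi}=\cl{\phi_q^{-1}}=\phi^{-1}$ and $\cl{\Uqkp}=U(\kp)$ (Theorem \ref{thm:kolb-maximality}), we obtain $\cl{\QK{}}\cdot b = \phi^{-1}(b)\cdot\cl{\QK{}}$ for all $b\in U(\kp)$, hence $\Ad(\cl{\QK{}})(b) = \phi^{-1}(b)$. Since $\cl{\QK{}}=\exp(\sclQK)$ is grouplike, conjugation by it is the algebra automorphism $\exp(\ad(\sclQK))$ of $\widehat{U(\g')}$; restricting to $\kp\subset\g'$ gives $\exp(\ad(\sclQK))(b)=\phi^{-1}(b)$ for all $b\in\kp$, as claimed. (One minor point: one must argue that $\Ad(\exp(\sclQK))$ on primitive elements is genuinely $\exp(\ad(\sclQK))$ on the Lie algebra, which is standard.)

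\emph{Part (c).} This is the substantive step and the main obstacle. The idea is to use part (b) to pin down the leading (lowest-weight) part of $\sclQK$. Write $\sclQK=\sum_{\mu\in\rootsys^+}\sclQK_\mu$; I expect to argue by induction on the height of $\mu$. For $\mu=\al_i$ a simple root with $i\notin X$: the degree-$\al_i$ part of the identity $\exp(\ad(\sclQK))(b)=\phi^{-1}(b)$ applied to $b = \cl{\Bg{i}} = f_i + \phi(f_i) + y_i\,(\text{Cartan correction})$ — or more cleanly to $b=f_i$ when $i\in X$, forcing $[\sclQK_{\al_i'}, f_i]$-type constraints — extracts the commutator $[\sclQK_{\al_i},f_j]$ and matches it against $\phi^{-1}(f_j)-f_j$, whose $\n^+$-component is $\phi^{-1}(f_j)\in\n^-$ plus corrections; the point is that $\phi^{-1}(b)-b$ lands in $\n^+\oplus\h$ for $b=f_i+\phi(f_i)+\cdots$, and comparing weight-graded pieces shows that $\sclQK_\alpha$ can be nonzero only when $\phi$ (equivalently $\tsat$, since $\phi$ and $\tsat$ agree on roots up to the character $\bm y$ which does not affect which root $\tsat(\alpha)$ is) sends $\alpha$ to $-\alpha$. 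Concretely: the lowest-weight term $\sclQK_{\mu_0}$ with $\mu_0$ minimal must satisfy $[\sclQK_{\mu_0}, b]_{\text{leading}} = (\phi^{-1}(b)-b)_{\text{leading}}$ for the generators $b$, and an analysis of which generators $\phi^{-1}$ moves — it fixes $\g_X$ and negates roots $\alpha$ with $\tsat(\alpha)=-\alpha$, i.e.\ the "black/compact" directions — forces $\mu_0$ to be such a root. For the inductive step one subtracts off $\exp(\ad(\sum_{\tsat(\alpha)=-\alpha}\sclQK_\alpha))$ and repeats. The hard part will be bookkeeping the Cartan-type corrections in $\cl{\Bg{i}}$ and the precise form of $\phi^{-1}(f_i)$ when $(\al_i,\theta(\al_i))\ne 0$, and ensuring the weight-grading argument is watertight when $\tsat(\alpha)$ is a different positive root (so that $\phi^{-1}$ does not simply negate the direction); in that case the relevant graded component of the intertwining identity has no $\sclQK_\alpha$ source term on one side, forcing $\sclQK_\alpha=0$. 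I would also cross-check the conclusion against the known explicit quasi-K-matrix for $\g=\fksl_2$ (where $\QK{}$ is supported on powers of $E$ and $\tsat(\alpha)=-\alpha$ automatically) as a sanity test.
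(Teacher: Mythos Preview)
Your treatments of parts (a) and (b) are essentially the paper's: specialize the coproduct identity \eqref{eq:coprod-id} to see that $\cl{\QK{}}$ is grouplike, take its logarithm, then specialize the intertwining identity \eqref{eq:k-intertwiner}.

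For part (c) you take a different and unnecessarily circuitous route. You try to extract $\sclQK_\alpha$ from the intertwining relation $\exp(\ad(\sclQK))(b)=\phi^{-1}(b)$ applied to generators $b=\cl{\Bg{i}}$, and you acknowledge that the weight bookkeeping is the ``main obstacle''. The paper bypasses this entirely: it does not use part (b) at all for part (c). Instead it uses directly the support condition already built into Theorem~\ref{thm:av-k-mx}, namely $\QK{\lambda}=0$ for $\lambda\in\Qp\setminus(\Qp)^{-\tsat}$. Specializing gives $\cl{\QK{}}_\lambda=0$ for such $\lambda$, and expanding $\exp(\sclQK)$ yields
\[
\sum_{k\ge 1}\frac{1}{k!}\sum_{\substack{\beta_1,\dots,\beta_k\in\rootsys^+\\\beta_1+\cdots+\beta_k=\lambda}}\sclQK_{\beta_1}\cdots\sclQK_{\beta_k}=0.
\]
Now induct on $\hgt(\alpha)$: if $\tsat(\alpha)\ne -\alpha$ and $\alpha=\beta_1+\cdots+\beta_k$ with $k\ge 2$, then some $\beta_j$ also satisfies $\tsat(\beta_j)\ne -\beta_j$ (else $\tsat(\alpha)=-\alpha$), so by induction $\sclQK_{\beta_j}=0$ and the $k\ge 2$ terms vanish, leaving $\sclQK_\alpha=0$. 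This is a two-line combinatorial argument once you remember the support of $\QK{}$; your approach via the adjoint action on $\k'$ can be made to work (e.g.\ applying it to $h\in\h^\theta$ rather than to $\cl{\Bg{i}}$ cleans things up considerably), but it duplicates the same induction while adding a layer you do not need.
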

\begin{proof} \hfill
\begin{enumerate}\itemsep2mm
\item 
Since the classical limit of $\RM{}$ and $\RM{\tsat}$ is trivial, it follows from the coproduct identity \eqref{eq:coprod-id} 
that $\cl{\QK{}}\in\prod_{\mu\geqslant0} U(\n^+)_\mu$ is a grouplike element with $\cl{\QK{}}_0=1$. 
Thus its logarithm is well-defined and primitive.
\item 
The classical limit from the intertwining identity \eqref{eq:k-intertwiner} is
\begin{equation}\label{eq:QK-class-intw}
	\exp(\sclQK) \cdot b \cdot \exp(\sclQK)^{-1} = \phi^{-1}(b) \qq \text{for all } b \in \k'
\end{equation}
from which the desired equation readily follows.
\item
First consider an arbitrary element $\la \in \Qlat^+ \backslash (\Qlat^+)^{-\tsat}$. By Theorem~\ref{thm:av-k-mx}, $\QK{\la}=0$ and therefore $\cl{\QK{}}_{\la}=0$. Since $\cl{\QK{}}=\exp(\sclQK)$, it follows that
\eq{ \label{eq:QK-class-intw:3}
	\sum_{k \ge 1} \frac{1}{k!} \sum_{\substack{\beta_1,\ldots,\beta_k \in \rootsys^+ \\ \beta_1 + \ldots + \beta_k = \la}} \sclQK_{\beta_1} \cdots \sclQK_{\beta_k} = 0.
}
In particular, if $\alpha\in\rootsys^+$ is a simple root such that
$\tsat(\alpha)\neq-\alpha$, then  for $\lambda=\alpha$ the identity \eqref{eq:QK-class-intw:3} reduces to $\sclQK_\alpha=0$, as required.

We now proceed by induction on $\hgt(\alpha)$. 
Let $\alpha\in\rootsys^+$ be a positive root such that $\tsat(\alpha)\neq-\alpha$ and $\hgt(\alpha)>1$.
For any decomposition $\alpha=\beta_1+\cdots+\beta_k$ with $k>1$ and $\beta_1,\dots, \beta_k\in\rootsys^+$ we have $\tsat(\beta_j)\neq-\beta_j$ for some $j \in \{1,\ldots,k\}$.
Since $\hgt(\beta_j)<\hgt(\alpha)$, by the induction hypothesis one has $\sclQK_{\beta_j}=0$. 
Thus, for $\lambda=\alpha$ the identity \eqref{eq:QK-class-intw:3} reduces to
\eq{ 
	\sclQK_{\al} = -\sum_{k \ge 2} \frac{1}{k!} \sum_{\substack{\beta_1,\ldots,\beta_k \in \rootsys^+ \\ \beta_1 + \ldots + \beta_k = \al}} \sclQK_{\beta_1} \cdots \sclQK_{\beta_k} = 0
}
and the result follows. \hfill \qedhere
\end{enumerate}
\end{proof}

The following statement is a generalization, to symmetric pairs of non-diagonal type, of the fact that the classical limit of the quasi R-matrix is $1 \in U(\fkg \oplus \fkg)$.

\begin{prop}\label{prop:cl-QK-2}
$\sclQK=0$ if and only if $\phi$ is an involution.
\end{prop}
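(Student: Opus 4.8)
The plan is to establish the two implications separately, using throughout Proposition~\ref{prop:cl-QK}, by which $\cl{\QK{}} = \exp(\sclQK)$ with $\sclQK \in \prod_{\mu \in \rootsys^+} \n^+_\mu$ and $\exp(\ad(\sclQK))(b) = \phi^{-1}(b)$ for all $b \in \kp$.

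Suppose first that $\sclQK = 0$. Then $\cl{\QK{}} = 1$, so Proposition~\ref{prop:cl-QK}(b) gives $\phi^{-1}(b) = b$, hence $\phi(b) = b$, for every $b \in \kp$. Applying this to the generators $f_i + \phi(f_i) \in \kp$ for $i \in \IS \setminus X$ yields $\phi^2(f_i) = f_i$, and this also holds for $i \in X$ since $\phi$ fixes $\g_X$ pointwise. Hence $\phi^2$ fixes $\n^-$ pointwise, and, arguing exactly as in the proof of Lemma~\ref{lem:phi:involution} (using that $\phi|_{\h}$ is an involution together with \cite[Lemma~1.5]{Kac90}), we conclude that $\phi^2 = \id$, \ie $\phi$ is an involution.

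Conversely, suppose $\phi$ is an involution. By Lemma~\ref{lem:phi:involution}, $\fkk = \g^\phi$, so $\phi^{-1}$ fixes $\kp$ pointwise and Proposition~\ref{prop:cl-QK}(b) reads $\exp(\ad(\sclQK))(b) = b$ for all $b \in \kp$; equivalently, $\exp(\sclQK)$, regarded as an element of the relevant completion of $U(\g)$, centralizes $\kp$. Fix a Verma module $M(\lambda)$ of $\g$ with highest weight vector $v_\lambda$. Since $\sclQK$ strictly raises weights, $\exp(\sclQK) v_\lambda = v_\lambda$. The Iwasawa decomposition of $\g$ with respect to $\fkk$ from \cite{RV22} provides a vector space decomposition $\g = \fkk \oplus \mathfrak{a} \oplus \mathfrak{n}$ with $\mathfrak{a} \subseteq \h$ and $\mathfrak{n} \subseteq \n^+$; as $\mathfrak{n}$ annihilates $v_\lambda$ and $\mathfrak{a}$ acts by scalars, it follows that $M(\lambda) = U(\kp) v_\lambda$. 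Since $\exp(\sclQK)$ commutes with $\kp$ and fixes $v_\lambda$, it therefore acts as the identity on all of $M(\lambda)$. Now assume $\sclQK \ne 0$ and let $\alpha \in \rootsys^+$ be of minimal height with $\sclQK_\alpha \ne 0$. For generic $\lambda$ the contravariant form on $M(\lambda)$ is nondegenerate, so we may choose $u \in U(\n^-)_{-\alpha}$ with $\sclQK_\alpha \cdot u v_\lambda \ne 0$ in $M(\lambda)_\lambda$. Since $\sclQK_\mu = 0$ whenever $\hgt(\mu) < \hgt(\alpha)$, the weight-$\lambda$ component of $\exp(\sclQK)(u v_\lambda)$ is exactly $\sclQK_\alpha \cdot u v_\lambda \ne 0$, contradicting $\exp(\sclQK)(u v_\lambda) = u v_\lambda$. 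Hence $\sclQK = 0$.

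I expect the converse implication to be the main obstacle: it is equivalent to the assertion that $1$ is the only element of the completion of $U(\n^+)$ centralizing $\kp$, and the input that makes this work is precisely the Iwasawa decomposition of \cite{RV22}, entering through the cyclicity statement $M(\lambda) = U(\kp) v_\lambda$ — the analogue of the role played by the triangular decomposition in the classical fact that the quasi R-matrix specializes to $1$.
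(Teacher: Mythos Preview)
Your forward direction ($\sclQK=0 \Rightarrow \phi$ involutive) matches the paper's argument.

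For the converse you take a genuinely different route. The paper argues directly: from $[\exp(\sclQK), f_i + \phi(f_i)] = 0$ (and $[\exp(\sclQK),f_i]=0$ for $i\in X$) it extracts a recursion for the principal-grading components $\sclQK(m)$ and shows $\sclQK(m) = 0$ for all $m$ by induction, the key input at each step being \cite[Lemma~1.5]{Kac90} (a nonzero element of $\n^+$ is not annihilated by every $\ad(f_i)$). Your approach passes through representation theory instead: the Iwasawa decomposition gives cyclicity of the Verma module over $\fkk$, whence $\exp(\sclQK)$ acts as the identity on every $M(\lambda)$, and then faithfulness (via the contravariant form or a direct root-vector computation) forces $\sclQK = 0$. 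Both are correct; the paper's argument is more self-contained (no modules, no Shapovalov form), while yours is more conceptual and makes the analogy with the quasi R-matrix specializing to $1$ more transparent.

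One small point to tighten: the Iwasawa decomposition $\g = \fkk \oplus \h^{-\theta} \oplus \n^+_\theta$ yields $M(\lambda) = U(\fkk)v_\lambda$, not $U(\kp)v_\lambda$, whereas Proposition~\ref{prop:cl-QK}(b) gives the centralizing property only for $\kp$. You should close this gap, \eg by noting that $\exp(\sclQK)$ also commutes with $\h^\theta$ --- immediate from Proposition~\ref{prop:cl-QK}(c), since $\sclQK$ is supported on $(-\theta)$-fixed roots, which pair to zero with $\h^\theta$ --- and hence with all of $\fkk=\kp+\h^\theta$. Also, the Shapovalov-form invocation can be replaced by a one-line computation: for $f \in \g_{-\alpha}$ with $(\sclQK_\alpha, f) \ne 0$ one has $\sclQK_\alpha \cdot f v_\lambda = (\sclQK_\alpha, f)\,\lambda(\nu^{-1}(\alpha))\, v_\lambda \ne 0$ whenever $(\lambda,\alpha)\ne 0$.
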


\begin{proof}
If $\sclQK=0$ then \eqref{eq:QK-class-intw} implies that $\k'$ is contained in the fixed-point subalgebra of $\phi$.
Re-running the proof of Lemma \ref{lem:phi:involution}, we deduce that $\phi$ is an involution.\\

Conversely, suppose $\phi$ is an involution.
Now Lemma \ref{lem:phi:involution} and \eqref{eq:QK-class-intw} imply that $[\exp(\sclQK),b] = 0$ for all $b \in \fkk'$. 
Since $\fkk'$ contains the elements $f_i$ for all $i \in X$ and $f_i + \phi(f_i)$ for all $i \in I \backslash X$, this in turn implies 
\begin{equation}\label{eq:QK-class-intw:4}
\sum_{k \ge 1} \frac{1}{k!} [\sclQK^k, f_i] = \begin{cases} 
0 & \text{if } i \in X, \\ 
\displaystyle\sum_{k \ge 1} \frac{1}{k!} [\phi(f_i),\sclQK^k] & \text{if } i \in \IS\backslash X, 
\end{cases}
\end{equation}
which induces a recursive relation among the components of $\sclQK$ with respect to the principal grading:
\eq{
\sclQK = \sum_{h \ge 1} \sclQK(h), \qq \sclQK(h) \in \bigoplus_{\substack{\la \in \rootsys^+ \\ \hgt(\la) = h}} \fkn^+_\la \subseteq \fkn^+.
}
To specify this relation, for $\bm \ell = (\ell_1,\ldots,\ell_k) \in \Z_{> 0}^k$ we denote $\sclQK(\bm \ell) = \sclQK(\ell_1) \cdots \sclQK(\ell_k)$, so that $\sclQK^k = \sum_{\bm \ell \in \Z_{> 0}^k} \sclQK(\bm \ell)$.
For $m \in \Z$ consider the set of ordered partitions of $m$ in $k$ parts:
\[
\cP_k(m) = \{ \bm \ell = (\ell_1,\ldots,\ell_k) \in \Z_{> 0}^k \, | \, \ell_1 + \ldots + \ell_k = m \}.
\]
Finally, for $i \in \IS \backslash X$, note that $\phi(f_i)$ is homogeneous of degree $m_i = \hgt(w_X(\al_{\tau(i)})) \in \Z_{>0}$.
With this notation, the component of \eqref{eq:QK-class-intw:4} of degree $m-1$ ($m \in \Z_{> 0}$) is given by
\begin{equation}\label{eq:QK-class-intw:5}
\sum_{k \ge 1} \frac{1}{k!} \sum_{\bm \ell \in \cP_k(m)} [\sclQK(\bm \ell), f_i] = \begin{cases} 0 & \text{if } i \in X, \\ \displaystyle \sum_{k \ge 1} \sum_{\bm \ell \in \cP_k(m - m_i - 1)} \frac{1}{k!} [\phi(f_i),\sclQK(\bm \ell)] & \text{if } i \in \IS\backslash X. \end{cases}
\end{equation}
We now show by induction that $\sclQK(m) = 0$ for all $m \in \Z_{> 0}$, yielding $\sclQK = 0$ as desired.
The case $m=1$ follows directly from \eqref{eq:QK-class-intw:5}: the left-hand side equals $[\sclQK(1),f_i]$; the right-hand side equals 0 since $\cP_k(-m_i) = \emptyset$ for all $k$. 
Hence we obtain $[\sclQK(1),f_i]=0$ for all $i \in \IS$ so that $\sclQK(1) =0$ by \cite[Lemma 1.5]{Kac90}.

Now let $m \in \Z_{\ge 2}$ and suppose that $\sclQK(m') = 0$ for all $m' \in \{ 1, \ldots, m-1 \}$.
Since for all $k > 1$ the elements of $\cP_k(m)$ are tuples whose entries are less than $m$, the induction hypothesis implies that the left-hand side of \eqref{eq:QK-class-intw:5} equals its $k=1$ term, namely $[\sclQK(m),f_i]$.
Also, for all $k \ge 1$ the elements of $\cP_k(m-m_i-1)$ are tuples whose entries are less than $m$. 
Hence the right-hand side of \eqref{eq:QK-class-intw:5} vanishes by the induction hypothesis. 
We obtain that $[\sclQK(m),f_i] = 0$ for all $i \in \IS$ and by \cite[Lemma 1.5]{Kac90} we deduce $\sclQK(m) =0$, which completes the proof.
\end{proof}

\begin{example} 
Examples of nonzero elements $\sclQK$ can be obtained by letting $q \to 1$ in quasi K-matrices for symmetric pairs of finite type AIV, found in \cite[(3.43)]{DK19}. 
For instance, for $n=2$, using \cite[Rmk.~6.13, Eqn.~(8.1)]{AV22a} to present the expression in our conventions, we have
\eq{
\QK{} = 
\bigg( \sum_{k \ge 0} \frac{q^{(k-2)k/2}}{[k]_q!} \overline{\Parc_1^k} [E_1,E_2]_q^k \bigg) 
\bigg( \sum_{k \ge 0} \frac{q^{(k-2)k/2}}{[k]_q!} \overline{\Parc_2^k} [E_2,E_1]_q^k \bigg).
}
Here $\overline{\cdot}$ is the unique field automorphism of $\bsF$ which sends $q^{1/m}$ to $q^{-1/m}$ for all $m\in \Z$.
Clearly, $\QK{}$ specializes to 
\eq{
\exp( y_1 [e_1,e_2]) \exp(y_2 [e_2,e_1]) = \exp\big( (y_1-y_2) [e_1,e_2]\big),
}
where we recall that $y_i = \cl{\Parc_i}$.
In this case we obtain $\mathbf{Y} = (y_1-y_2)[e_1,e_2]$. 
Note that this explicit formula is precisely in line with Lemma \ref{lem:phi:involution} and Proposition \ref{prop:cl-QK-2}. \hfill \rmkend 
\end{example}

\subsection{Proof of Theorem~\ref{thm:QSP-maximal-xi}}\label{ss:proof-QSP-maximal-xi}
Since $B'_\xi$ is a right coideal subalgebra which contains $\Uqkp$, it is enough to prove that $\cl{B'_\xi}=U(\kp)$. 
Indeed, in this case Kolb's maximality theorem, see Theorem~\ref{thm:kolb-maximality}, implies $B'_\xi=\Uqkp$. 
We start with the following basic result.

\begin{lemma} \label{lem:Hopfcoideal}
Let $H$ be a Hopf algebra generated over a field $k$ by primitive elements.
Then every coideal subalgebra $H'$ of $H$ is a Hopf subalgebra.	
\end{lemma}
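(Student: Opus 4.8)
The plan is to show that a coideal subalgebra $H'$ of $H$ is automatically closed under the antipode $S$, since it is already a subalgebra and (being a coideal) $\Delta(H') \subseteq H' \ten H$; the only missing Hopf-subalgebra axiom is $S(H') \subseteq H'$. The natural strategy is to reduce to the case of a single primitive generator and then induct on a suitable notion of length (word length in the primitive generators, or a PBW-type filtration), using the fact that on a primitive element $x$ one has $S(x) = -x$.

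First I would recall the standard fact that for a right coideal subalgebra $H' \subseteq H$ the counit restricts well and $\Delta(H') \subseteq H' \ten H$; combined with the antipode axiom $m \circ (S \ten \id) \circ \Delta = \eta \circ \veps$ this gives, for $h \in H'$ with $\Delta(h) = \sum h_{(1)} \ten h_{(2)}$, the identity $\sum S(h_{(1)}) h_{(2)} = \veps(h) 1$, where all $h_{(1)} \in H'$. The idea is to solve this recursively for $S(h)$. Writing $\Delta(h) = h \ten 1 + \sum' h_{(1)} \ten h_{(2)}$ where the primed sum runs over terms with $h_{(2)}$ of strictly smaller length (this is where one uses that $H$ is generated by primitives: on such $H$ one has a coradical-type filtration $H_{(0)} \subseteq H_{(1)} \subseteq \cdots$ with $\Delta(H_{(n)}) \subseteq \sum_{i+j=n} H_{(i)} \ten H_{(j)}$, and $H' \cap H_{(n)}$ inherits this), the identity becomes $S(h) = \veps(h) 1 - \sum' S(h_{(1)}) h_{(2)}$. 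Here each $h_{(1)} \in H' \cap H_{(n-1)}$ (smaller filtration degree), so by induction $S(h_{(1)}) \in H'$, and $h_{(2)} \in H' \cap H_{(1)}$ — the degree-$\le 1$ part of a coideal subalgebra of a Hopf algebra generated by primitives consists of scalars plus primitives lying in $H'$, hence lies in $H'$. Therefore $S(h) \in H'$, completing the induction; the base case $H' \cap H_{(0)} = k \cdot 1$ is trivial.

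The step I expect to be the main obstacle is pinning down precisely the filtration argument: one must verify that $H$ generated by primitives admits a Hopf-algebra filtration in which $H'$ sits compatibly, and in particular that $H' \cap H_{(1)} \subseteq H'$ lands among primitives (plus scalars), so that the "tail" terms $h_{(2)}$ in $\Delta(h)$ can indeed be taken in $H'$. In characteristic zero this is cleanest: $H$ generated by primitives is (by Milnor--Moore / the structure theory of connected Hopf algebras) the universal enveloping algebra $U(\fkp)$ of its Lie algebra of primitives $\fkp$, carrying the PBW filtration, and for a subalgebra $H'$ closed under $\Delta$ one checks $H' = U(\fkp \cap H')$ — at which point closure under $S$ (which is $-\id$ on $\fkp$ extended antimultiplicatively) is immediate. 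I would present the argument in this $U(\fkp)$ language if the ambient field has characteristic zero, which is the relevant case for the paper, and otherwise fall back on the explicit recursive computation above; either way the conclusion is that $H'$ is a sub-bialgebra closed under the antipode, i.e.\ a Hopf subalgebra.
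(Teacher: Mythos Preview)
Your argument has a genuine gap at the step where you claim $h_{(2)} \in H'$. From the right-coideal property $\Delta(H') \subseteq H' \ten H$ you only get $h_{(1)} \in H'$; the second tensor factors lie a priori only in $H$. Your justification (``the degree-$\le 1$ part of a coideal subalgebra\ldots'') is circular: it presupposes that the $h_{(2)}$ already land in $H'$. There is also a bookkeeping slip in the filtration: after isolating the term $h \ten 1$ (which is what yields $S(h)$ on the left of your recursion), the remaining terms have $h_{(1)}$ of strictly smaller degree --- good for the induction on $S(h_{(1)})$ --- but $h_{(2)}$ can have any degree up to $n$; the term $1 \ten h$ has $h_{(2)} = h$, so the claim $h_{(2)} \in H_{(1)}$ is simply false.

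The missing observation, which is the paper's first sentence, is that $H$ is \emph{cocommutative} (its generators are primitive, hence have symmetric coproduct, and $\Delta$ is an algebra map). Cocommutativity turns a one-sided coideal into a two-sided one: $\Delta(H') \subseteq (H' \ten H) \cap (H \ten H') = H' \ten H'$ over a field, so $H'$ is a sub\emph{bialgebra}. With this in hand both your recursion and your Milnor--Moore fallback go through, since now all $h_{(2)} \in H'$. The paper packages the same recursion into the closed formula
\[
S = \sum_{n \ge 0} m^{(n)} \circ (\eta \circ \veps - \id_H)^{\ten n} \circ \Delta^{(n)},
\]
which terminates pointwise because any product of primitives is annihilated by $(\eta \circ \veps - \id_H)^{\ten n} \circ \Delta^{(n)}$ for large $n$, and which visibly preserves any subbialgebra.
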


\begin{proof}
Since $H$ is cocommutative, $H'$ is a subbialgebra.
It remains to prove that every subbialgebra $H' \subseteq H$ is preserved by the antipode $S$, which is a consequence from the following argument (see e.g. \cite[Proof of Prop.~4.2]{EE05} and \cite[Prop.~5.2]{FGB05}). 
Let $\eta$, $\veps$, $m^{(n)}$ and $\Delta^{(n)}$ denote the unit, counit, $n$-fold product and $n$-fold coproduct, respectively.
With respect to the convolution product on $\End_k(H)$, the antipode is the inverse of $\id_H$, the neutral element being $\eta \circ \veps$. 
Since any product of primitive elements is annihilated by $(\eta \circ \veps - \id_H)^{\ten n} \circ \Del^{(n)} \in \Hom_k(H,H^{\ten n})$ for sufficiently large	$n$, the following identity is valid in $\End_k(H)$:
\[
S= \sum_{n \ge 0} m^{(n)} \circ (\eta \circ \veps - \id_H)^{\ten n} \circ \Del^{(n)}.
\]
Hence $S(h)$ for all $h \in H'$ can be expressed in terms of the bialgebra maps of $H'$ and the result follows.
\end{proof}

By \cite[Thm.~5.18, Prop.~6.13]{MM65} we conclude that the coideal subalgebra $\cl{B'_\xi}$ of $U(\gp)$ is equal to $U(\f)$ for some Lie subalgebra $\f\subseteq\gp$.
Moreover, since $\cl{\Uqkp}=U(\kp)$ and $\cl{(\Uqgp)^\xi}=U(\gp)^{\cl{\xi}}$, we have
\begin{align}
\kp\subseteq\f\subseteq(\gp)^{\cl{\xi}}.
\end{align}
Note that $\cl{\xi}=\Ad(\cl{\QK{}})\circ\phi$.
By Proposition~\ref{prop:cl-QK}, $\cl{\QK{}}=\exp(\sclQK)$ for some $\sclQK=\sum_{\mu \in \rootsys^+}\sclQK_\mu\in\prod_{\mu \in \rootsys^+}\n^+_\mu$. Thus,
\eq{ \label{pseudofixedpoint:inclusion}
(\gp)^{\cl{\xi}} = \{x\in\gp\,\vert\, \Ad(\cl{\QK{}})(x)=\phi^{-1}(x)\}=\{x\in\gp\,\vert\, \exp(\ad(\sclQK))(x)=\phi^{-1}(x)\}.	
}

\begin{lemma}\label{lem:same-fixed-points}
$\k=\g^{\cl{\xi}}$.	
\end{lemma}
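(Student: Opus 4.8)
The plan is to prove the two inclusions separately: Proposition~\ref{prop:cl-QK} will give $\k\subseteq\g^{\cl{\xi}}$, and the Iwasawa decomposition of \cite{RV22} the reverse one. By the argument giving \eqref{pseudofixedpoint:inclusion} (with $\gp$ replaced by $\g$), together with $\cl{\QK{}}=\exp(\sclQK)$ from Proposition~\ref{prop:cl-QK}(a), one has $\g^{\cl{\xi}}=\{x\in\g\,\vert\,\exp(\ad(\sclQK))(x)=\phi^{-1}(x)\}$; note that $\g^{\cl{\xi}}$ is a Lie subalgebra of $\g$, being the intersection of $\g$ with the fixed-point set of the automorphism $\cl{\xi}$ of a suitable completion. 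The key tool will be the $\Z$-grading $\g=\bigoplus_{d\in\Z}\g^{(d)}$ with $\g^{(0)}=\h\oplus\bigoplus_{\al\in\rootsys_X}\g_\al$ and, for $d\neq0$, $\g^{(d)}=\bigoplus_\al\g_\al$, the sum over those roots $\al$ whose expansion in simple roots has total coefficient $d$ on the simple roots outside $X$. The Chevalley involution reverses this grading while $\tau$, $\Ad(w_X)$ and $\Ad({\bm y})$ preserve it, so $\phi$ and $\phi^{-1}$ send $\g^{(d)}$ to $\g^{(-d)}$; and by Proposition~\ref{prop:cl-QK}(c), $\sclQK_\mu\neq0$ forces $\theta(\mu)=-\mu$, hence $\mu\in\rootsys^+\setminus\rootsys_X$, so $\sclQK$ has components only in degrees $\geq1$ and $\ad(\sclQK)$ strictly raises the grading.

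For the inclusion $\k\subseteq\g^{\cl{\xi}}$, since $\g^{\cl{\xi}}$ is a subalgebra it suffices to check the generators $\n^+_X$, $\h^\theta$ and $f_i+\phi(f_i)$ of $\k$. The families $\n^+_X$ and $f_i+\phi(f_i)$ lie in $\k'$, hence satisfy $\exp(\ad(\sclQK))(x)=\phi^{-1}(x)$ by Proposition~\ref{prop:cl-QK}(b). For $h\in\h^\theta$ and any $\mu$ with $\sclQK_\mu\neq0$ one has $\theta(\mu)=-\mu$, whence $\mu(h)=(\theta\mu)(h)=-\mu(h)$ and so $\mu(h)=0$; thus $[\sclQK,h]=0$ and $\exp(\ad(\sclQK))(h)=h=\theta(h)=\phi^{-1}(h)$.

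For the reverse inclusion I will use the Iwasawa decomposition $\g=\k\oplus\fka\oplus\fkn$ of \cite{RV22}, in which $\fka$ is a complement of $\h^\theta$ in $\h$ and $\fkn$ is the span of the root spaces $\g_\al$ with $\al$ a positive root not in $\rootsys_X$. Set $\fkm=\fka\oplus\fkn$; then $\fkm$ is concentrated in degrees $\geq0$ and $\fkm\cap\g^{(0)}=\fka\subseteq\h$. As $\k\subseteq\g^{\cl{\xi}}$, we have $\g^{\cl{\xi}}=\k\oplus(\g^{\cl{\xi}}\cap\fkm)$, so it is enough to prove $\g^{\cl{\xi}}\cap\fkm=0$. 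Let $0\neq m\in\g^{\cl{\xi}}\cap\fkm$ and let $d_{\min}\leq d_{\max}$ be the smallest and largest degrees occurring in $m$. Since $\ad(\sclQK)$ strictly raises the grading, the lowest-degree component of $\exp(\ad(\sclQK))(m)$ lies in degree $d_{\min}\geq0$; since $\phi^{-1}$ reverses the grading, that of $\phi^{-1}(m)$ lies in degree $-d_{\max}\leq0$. Hence $d_{\min}=-d_{\max}$, forcing $d_{\min}=d_{\max}=0$, so $m\in\fkm\cap\g^{(0)}=\fka\subseteq\h$. Then $\phi^{-1}(m)=\theta(m)\in\h$, and matching the degree-$0$ parts of $\exp(\ad(\sclQK))(m)=\theta(m)$ gives $m=\theta(m)$, \ie $m\in\h^\theta\cap\fka=0$, a contradiction. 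Therefore $\g^{\cl{\xi}}\cap\fkm=0$, and together with the first inclusion this proves $\k=\g^{\cl{\xi}}$.

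I expect the reverse inclusion to be the main obstacle. Since $\exp(\ad(\sclQK))$ is only an automorphism of a completion of $\g$, the relation defining $\g^{\cl{\xi}}$ equates a formally infinite series with a finite element of $\g$; the crux is to choose a grading adapted at once to $\sclQK$ (Proposition~\ref{prop:cl-QK}(c)) and to the Iwasawa complement, so that a single leading-term comparison forces everything into degree $0$. The remaining points — the precise form and gradedness of the decomposition of \cite{RV22}, and the fact that $\n^+_X$ and the $f_i+\phi(f_i)$ lie in $\k'$ while $\h^\theta$ need not — are routine.
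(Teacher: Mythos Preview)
Your proof is correct and follows essentially the same strategy as the paper: establish $\k\subseteq\g^{\cl{\xi}}$, invoke the Iwasawa decomposition $\g=\k\oplus\h^{-\theta}\oplus\n^+_\theta$ from \cite{RV22} (your $\fkn$ coincides with $\n^+_\theta=\bigoplus_{\al\in\rootsys^+\setminus\rootsys_X}\g_\al$, and any complement $\fka$ of $\h^\theta$ in $\h$ works), and then use a grading argument to force any fixed point in the complement into $\h$, where a weight-zero comparison finishes.

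The one notable difference is in the grading used. You introduce the $\Z$-grading by total coefficient on simple roots outside $X$ and invoke Proposition~\ref{prop:cl-QK}(c) to ensure $\sclQK$ lives in strictly positive degree, so that $\ad(\sclQK)$ strictly raises the grading and $\phi^{-1}$ reverses it. The paper instead works directly with the dichotomy $\b^+$ versus $\b^-$: since $\sclQK\in\prod_{\mu\in\rootsys^+}\n^+_\mu$, one has $\exp(\ad(\sclQK))(x)$ in a completion of $\b^+$ for any $x\in\h^{-\theta}\oplus\n^+_\theta$, while $\phi^{-1}(x)\in\b^-$; their intersection is $\h$, which immediately gives $x\in\h^{-\theta}$. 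This bypasses Proposition~\ref{prop:cl-QK}(c) entirely and is slightly more economical. Your more explicit verification of $\k\subseteq\g^{\cl{\xi}}$ on generators is fine but also not strictly needed: the inclusion is immediate from the classical limit of the intertwining identity~\eqref{eq:k-intertwiner}, which holds for all of $\Uqk$.
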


\begin{proof}
Clearly, $\k \subseteq \g^{\cl{\xi}}$.
Denote $\n^+_\theta = \n^+\cap\theta(\n^-) = \n^+ \cap \phi(\n^-)$. 
By \cite[Cor.~3.10]{RV22}, one has the \emph{Iwasawa decomposition}:
\eq{ \label{Iwasawa}
\g=\k\oplus\h^{-\theta}\oplus\n^+_\theta
} 
(as linear spaces). 
Now suppose $x \in \h^{-\theta}\oplus\n^+_\theta$ is fixed by $\cl{\xi}$, \ie
\eq{ \label{lem:same-fixed-points:eq1}
\exp(\ad(\sclQK))(x)=\phi^{-1}(x).
}
Denote by $\b^\pm = \h \oplus \n^\pm$ the standard Borel subalgebras.
Note that $\exp(\ad(\sclQK))(x) \in \prod_{\mu \in \Qlat^+} \b^+_\mu$ whereas $\phi^{-1}(x) \in \b^-$.
Hence \eqref{lem:same-fixed-points:eq1} implies $x \in \h$, so that $x \in \h^{-\theta} = \h^{-\phi}$.
Rewriting \eqref{lem:same-fixed-points:eq1} as $\exp(\sclQK) x = -x \exp(\sclQK)$ and comparing weight-0 components, we obtain $x=0$.
Now $(\h^{-\theta}\oplus\n^+_\theta)^{\cl{\xi}} = \{ 0 \}$ implies the desired result.
\end{proof}

\noindent
Intersecting with $\gp$, we obtain $\kp = (\gp)^{\cl{\xi}}$ so that $\f=\kp$. 
Theorem~\ref{thm:kolb-maximality} now implies Theorem~\ref{thm:QSP-maximal-xi}.

\begin{remark}
	 If $\phi$ is an involution, Lemma~\ref{lem:same-fixed-points} follows immediately from Proposition~\ref{prop:cl-QK-2}. \rmkend
\end{remark}


\section{Tensor K-matrices} \label{ss:tensorK}

In this Section we discuss the main result of the paper: the construction of tensor K-matrices for quantum symmetric pairs $(\Uqg,\Uqk)$ of Kac-Moody type.
Recall that $\Uqk$ is a coideal subalgebra of $\Uqg$, so that full subcategories of $\Mod_{\Uqk}$ are naturally module categories over (subcategories of) $\Mod_{\Uqg}$.
We are interested in solutions $\TKM{}$ of reflection equations such that $\TKM{}$ lies in a completion of $\Uqk \ten \Uqg$, allowing us to consider a braided structure on such a module category that extends the natural braided structure on certain monoidal subcategories of $\Mod(\Uqg)$ such as $\OUqg$.
The tensor K-matrices have a natural factorization in terms of the datum $(R,\psi,K)$, see \eqref{intro:factorization} and the following comments.
In concrete terms, we obtain a tensor K-matrix whose $\Uqk$-leg can be evaluated on a large category of $\Uqk$-modules (if $\theta=\omega$, on $\Mod(\Uqk)$ itself) and whose $\Uqg$-leg can be evaluated on a category between $\OUqg$ and $\OintUqg$, depending on the choice of gauge transformation.

As in the previous sections, we will give a treatment tailored to the QSP setting, involving completions with respect to certain categories of modules. 
The purely algebraic formalism can be read in parallel in Appendix \ref{ss:cylindrical-algebraic}.

\subsection{The QSP weight lattice $\Pext_\theta$}
The involution $\theta: \fkh \to \fkh$ determines several lattices 
in $\fkh$ and $\fkh^*$ equipped with a natural pairing, see \eg ~\cite{BW18a,Wat24}. 
Adopting a slightly different approach, we consider the quotient lattice
\begin{equation}\label{eq:QSP-lattice}
	\Pext_\theta = \Pext / \Pext^{-\theta},
\end{equation}
with canonical projection $\tproj{\cdot}: \Pext \twoheadrightarrow \Pext_\theta$.
The fixed-point sublattice of the $\Z$-linear automorphism $-\theta$ satisfies
\eq{
	\Pext^{-\theta} = \Pext \cap \operatorname{span}_{\tfrac{1}{2}\Z} \{ \la - \theta(\la)\,\vert\, \la\in\Pext \}.
} 
Given the bilinear pairing $\Pext \times \Qvext \to \Z$, define a pairing $\Pext_\theta \times \Qvext \to \tfrac{1}{2} \Z$ as follows: $\zeta(h) = \tfrac{1}{2} \lambda(h+\theta(h))$ for $\zeta = \tproj{\lambda} \in \Pext_\theta$ with $\la \in \Pext$ and $h \in \Qvext$ (note that it is independent of the choice of $\la$).
This restricts to a nondegenerate pairing $\Pext_\theta \times (\Qvext)^\theta \to \Z$.

\begin{remark} \label{rmk:Watanabe}
	Set $\IS_{\sf ns} = \{ i \in \IS \, | \, \theta(\al_i) = -\al_i \}$ so that $\tproj{\al_i} = 0$ if and only if $i \in \IS_{\sf ns}$. 
	Note that $\Pext_\theta$ identifies with a subgroup of the lattice $X^\imath$ considered in \cite[Sec.~3.2]{Wat24} of index $2^{|\IS_{\sf ns}|}$. \hfill \rmkend
\end{remark}

\subsection{QSP weight modules} \label{ss:weightQSPmodules}

Recall that we have
\[
\Uqht= \bsF\langle \Kg{h}\;\vert\; h\in(\Qvext)^\tsat\rangle=\Uqk \cap \Uqh.
\]

\begin{definition}
A $\Uqk$-module $M$ is a (type $\bf 1$) {\em QSP weight module} if, as a 
 $\Uqht$-module,
\begin{equation}
M=\bigoplus_{\zeta\in\Pext_\theta}M_\zeta
\end{equation}
where the {\em QSP weight spaces} are defined by
\begin{equation}\label{QSPweight:decomposition}
	M_\zeta = \{ m \in M \, | \, \forall h \in (\Qvext)^\theta, \, K_h \cdot m = q^{\zeta(h)} m \}.
\end{equation}
We denote the full subcategory of QSP weight modules 
by $\WUqk \subseteq \Mod(\Uqk)$. 
\end{definition}

\begin{remark} \label{rmk:weightspace:dimension}
	Consider the case of a split quantum symmetric pair, \ie $\theta=\omega$. 
	Then $\Pext_\theta = \{ 0 \}$ and $\WUqk = \Mod(\Uqk)$.
	In particular, every $\Uqk$-module $M$ is a QSP weight module which consists of a single QSP weight space: $M=M_0$.
	\hfill \rmkend
\end{remark}

In the following proposition we collect some elementary properties of $\WUqk$.

\begin{prop}\label{prop:weight-QSP-cat}
Let $(\Uqg,\Uqk)$ be a quantum symmetric pair.
\vspace{1mm}
\begin{enumerate}\itemsep2mm 

\item \label{prop:actiononweightspaces} For any $M\in\WUqk$, $\zeta\in\Pext_\theta$, $i\in \IS$ and $j\in X$, one has
\[
E_j \cdot M_\ze \subseteq M_{\ze + \tproj{\al_j}}.
\qquad \qquad 
B_i \cdot M_\ze \subseteq M_{\ze - \tproj{\al_i}},
\]

\item \label{prop:findimreps} 
All finite-dimensional irreducible $\Uqk$-modules are objects in $\WUqk$ up to twisting by an automorphism of $\Uqk$.

\item \label{prop:modulecategory} 
$\WUqk$ is a right module category over the monoidal category $\WUqg$. 
Further, every weight $\Uqg$-module is a QSP weight module under restriction to $\Uqk$.

\item \label{prop:separatespoints} 
The action of $\Uqk$ on $\WUqk$ is faithful, \ie $\Uqk$ embeds as a subalgebra into $\CWUqk{}$, where $\sff_{\theta}:\WUqk\to\vect$ is the forgetful functor.

\end{enumerate}

\end{prop}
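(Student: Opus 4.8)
The plan is to establish the four items in the order (a), (b), (c), (d), since (d) will use the restriction statement proved in (c). Throughout I would use that the pairing $\Pext_\theta \times (\Qvext)^\theta \to \Z$ satisfies $\tproj{\mu}(h) = \mu(h)$ for $h \in (\Qvext)^\theta$, because $\theta(h) = h$. For (a) I would compute directly with the defining relations of $\Uqg$. If $m \in M_\zeta$ and $h \in (\Qvext)^\theta$, then $\Kg{h}\cdot(\Eg{j} m) = q^{\al_j(h)}\Eg{j}\cdot(\Kg{h} m) = q^{(\zeta + \tproj{\al_j})(h)}\Eg{j} m$, and nondegeneracy of the pairing on $(\Qvext)^\theta$ gives $\Eg{j}\cdot M_\zeta \subseteq M_{\zeta + \tproj{\al_j}}$ for $j \in X$. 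For $B_i$ I would examine the three possible summands of \eqref{Bi:def}. The term $\Fg{i}$ has $\Uqg$-weight $-\al_i$ and hence maps $M_\zeta$ into $M_{\zeta - \tproj{\al_i}}$. The term $\phi_q(\Fg{i})$ has $\Uqg$-weight $-\theta(\al_i)$ (using $\theta_q(\Uqg_\la) = \Uqg_{\theta(\la)}$ and that $\Ad(\Parc^{-1})$ preserves $\Uqg$-weights); since $\theta$ is an involution of $\fkh^*$ preserving $\Qlat$, the element $\theta(\al_i) - \al_i$ is fixed by $-\theta$, so it lies in $\Pext^{-\theta}$, $\tproj{-\theta(\al_i)} = -\tproj{\al_i}$, and this summand also maps $M_\zeta$ into $M_{\zeta - \tproj{\al_i}}$. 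Finally the term $\pars{i}\,\Kg{i}^{-1}$ commutes with $\Uqht$ and so preserves $M_\zeta$; but the support properties of the parameter set $\Parsets$ force $\pars{i} = 0$ unless $\theta(\al_i) = -\al_i$, i.e. unless $i \in \IS_{\sf ns}$, in which case $\tproj{\al_i} = 0$ and $M_\zeta = M_{\zeta - \tproj{\al_i}}$. Summing the three cases yields $B_i\cdot M_\zeta \subseteq M_{\zeta - \tproj{\al_i}}$.

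For (b) I would invoke Appendix~\ref{ss:findimQSPmodules}: by (a) the commutative subalgebra $\Uqht$ normalizes the remaining generators of $\Uqk$, hence acts semisimply on any finite-dimensional $\Uqk$-module, and a suitable rescaling automorphism of $\Uqk$ normalizes its eigenvalues to the form $q^{\zeta(h)}$ with $\zeta \in \Pext_\theta$. For (c), since $\Uqk$ is a right coideal subalgebra, $\Delta$ restricts to $\Uqk \to \Uqk\ten\Uqg$, making $M\ten V$ a $\Uqk$-module for $M \in \WUqk$, $V \in \WUqg$; using $\Delta(\Kg{h}) = \Kg{h}\ten\Kg{h}$, the operator $\Kg{h}$ ($h \in (\Qvext)^\theta$) acts on $M_\zeta \ten V_\mu$ by the scalar $q^{\zeta(h)+\mu(h)} = q^{(\zeta + \tproj{\mu})(h)}$, so $M \ten V = \bigoplus_\eta (M\ten V)_\eta$ with $(M\ten V)_\eta = \bigoplus_{\zeta + \tproj{\mu} = \eta} M_\zeta\ten V_\mu$, and thus $M\ten V \in \WUqk$. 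The associativity and unit isomorphisms are the identity maps on underlying vector spaces and satisfy the pentagon and triangle axioms by coassociativity and counitality of $\Delta$, giving the right module category structure over $\WUqg$. The same $\Kg{h}$-eigenvalue computation, applied with $M$ the trivial module, shows that any weight $\Uqg$-module restricted to $\Uqk$ is a QSP weight module.

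For (d) I would deduce faithfulness from the faithfulness of $\Uqg$ on weight modules: by \cite[Thm.~3.1]{ATL24b} the natural map $\Uqg \to \CWUqg{}$ is injective, so no nonzero element of $\Uqg$ annihilates every object of $\WUqg$. If $x \in \Uqk$ annihilates every $M \in \WUqk$, then by the last statement of (c) it annihilates $V|_{\Uqk}$ for every $V \in \WUqg$, hence $x = 0$ in $\Uqg$ and a fortiori in $\Uqk$; therefore $\Uqk \hookrightarrow \CWUqk{}$. The only step with genuine content is (a), and the main obstacle there is combining the bookkeeping of $\theta$-twisted weights with the precise support property of the parameter set $\Parsets$ (that $\pars{i}$ can be nonzero only when $\al_i$ is fixed by $-\theta$); once (a) is in place, items (b)–(d) are essentially formal.
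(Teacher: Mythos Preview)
Your proof is correct and follows essentially the same route as the paper: part (a) via direct commutation with $K_h$ for $h\in(\Qvext)^\theta$ (the paper packages your termwise analysis into the single relation $K_h B_i=q^{-\al_i(h)}B_i K_h$), part (b) by deferring to Appendix~\ref{ss:findimQSPmodules}, and parts (c) and (d) by the same coideal/restriction and injectivity-via-\cite{ATL24b} arguments. One caveat in your sketch of (b): the implication ``$\Uqht$ normalizes the remaining generators, hence acts semisimply on any finite-dimensional $\Uqk$-module'' is false without irreducibility (see Example~\ref{exam:augqOnsmodule} in the paper for a reducible finite-dimensional module on which $\Uqht$ is not semisimple), so the appeal to the appendix is genuinely needed there and not merely cosmetic.
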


\begin{proof} \mbox{}

\begin{enumerate}\itemsep2mm 

\item
Note that $\pars{i} = 0$ if $\theta(\al_i) \ne -\al_i$. 
Therefore, the result follows from the relations 
\eq{ \label{QSPrelations1}
K_h B_i =  q_i^{-\al_i(h)} B_i K_h, \qq \qq K_h E_j = q_j^{\al_j(h)} E_j K_h
}
for any $i \in \IS$, $j \in X$, $h \in (\Qvext)^\tsat$.

\item 
This is a consequence of Theorem \ref{thm:QSP:findemreps} which we prove in the self-contained appendix \ref{ss:findimQSPmodules}.

\item
Since $\Uqk\subset\Uqg$ is a (right) coideal subalgebra, $\Mod(\Uqk)$ is a right module category over $\Mod(\Uqg)$ with monoidal action given by the tensor product, cf.~Section \ref{ss:module-category}. 
In particular, for any $M \in \WUqk$ and $V \in \WUqg$, one has $M\ten V\in\Mod(\Uqk)$. Then,
\[
M \ten V = \bigoplus_{\ze \in \Pext_\theta} (M \ten V)_\ze, \qq (M \ten V)_\ze = \left(\bigoplus_{\mu \in \Pext} M_{\ze-\tproj{\mu}} \ten V_\mu\right).
\]
Let $h \in (\Qvext)^\tsat$ and $\ze\in\Pext_{\theta}$.
Then, $K_h$ acts on $(M \ten V)_\ze$ as multiplication by $q^{\ze(h)}$.
Namely, write  $\ze = \tproj{\la}$ for some $\la \in \Pext$. Then, it is enough to observe that $K_h$ is grouplike and it acts on $M_{\ze-\tproj{\mu}}$ and $V_\mu$ via multiplication by $q^{(\la-\mu)(h)}$ and $q^{\mu(h)}$, respectively.

Finally, note that the restriction of $V$ to $\Uqk$ is given by the action of $V$
on the trivial representation $\Uqk\subset\Uqg\stackrel{\veps}{\longrightarrow}\bsF$, thus yielding a restriction functor $\WUqg\to\WUqk$.

\item
By Proposition~\ref{prop:weight-QSP-cat} \ref{prop:modulecategory}, every module in $\OUqg$ restricts to $\WUqk$. 
This yields a morphism $\CWUqk{}\to\COUqg{}{}$ and a commutative diagram
\begin{equation}
\begin{tikzcd}
	\Uqk\arrow[r,hookrightarrow]\arrow[d]&\Uqg\arrow[d,hookrightarrow]\\
	\CWUqk{}\arrow[r]&\COUqg{}{}
\end{tikzcd}
\end{equation}
where the right vertical arrow is injective by \cite{ATL24b}.
It follows that the map $\Uqk\to\CWUqk{}$ is also injective. \hfill \qedhere
\end{enumerate}

\end{proof}

\subsection{Tensor QSP weight operators}\label{ss:2-QSP-weight}

Let $f\colon\Pext\to\bsF$ be a $\bbZ$-linear functional, self-adjoint with respect to the pairing on $\Pext$. Suppose that $f$ satisfies $f \circ \theta = f$.
Then, $f(\Pext^{-\theta})=0$ and $\WO{f}$ descends to a function $\Pext_\theta \times \Pext \to \bsF$. 
Proceeding as in Section \ref{ss:2-weight}, $\WO{f}$ determines a diagonal operator on any tensor product $M\ten V$ with $M\in\WUqk$ and $V\in\WUqg$, which is natural in both $M$ and $V$. Let
\[
\sff_\theta\boxtimes\sff_\cW\colon\WUqk\boxtimes\WUqg\to\vect
\]
be the 2-fold forgetful functor $\sff_\theta\boxtimes\sff_\cW(M,V)=M\ten V$.
Then, $\WO{f}\in\End(\sff_\theta\boxtimes\sff_{\cW})$.

\subsection{Tensor Drinfeld algebras}\label{ss:2-Drinfeld-algebra}
For any $\beta\in\Qlat^+$, let $\cB_{\beta}$ be a basis of $\Uqnp_{\beta}$
and set $\cB=\bigsqcup_{\beta\in\Qlat^+}\cB_\beta$. 
Consider the \emph{Drinfeld algebra}
\begin{equation}\label{eq:Drt-def}
	\Dr=\left\{\left.\sum_{x\in\cB}c_xx\,\right\vert\, c_x\in\Uqbm\right\}=\prod_{\beta\in\Qlat^+}\Uqbm\Uqnp_{\beta} \qq \supseteq\Uqg.
\end{equation}
One readily checks that $\Dr$ has a unique algebra structure which extends that of $\Uqg$. 
Moreover, every element in $\Dr$ naturally acts on objects in $\OUqg$, since it reduces to a finite sum due to condition \ref{cond:O}. 
By \cite[Sec.~3.1]{ATL24b}, $\Dr$ embeds as a subalgebra in both $\COUqg{}{}$ and $\COUqg{\sint}{}$, where $\FF{}{}:\OUqg\to\vect$ and $\FF{\sint}{}\colon \OintUqg \to\vect$ are the forgetful functors. 

\begin{remark}
By construction, the quasi K-matrix $\QK{}$ is an element of $\Dr$, see Theorem~\ref{thm:av-k-mx}. \hfill\rmkend
\end{remark}

In a similar way, we equip
\begin{align}
\Drt &= \left\{\left.\sum_{x\in\cB}b_x\ten c_xx\,\right\vert\, b_x\in\Uqk\,, \,c_x\in\Uqbm\right\}\\
&= \prod_{\beta\in\Qlat^+}\Uqk\ten\Uqbm\Uqnp_{\beta} \qq \supseteq \Uqk\ten\Uqg
\end{align}
with the unique algebra structure extending that of $\Uqk\ten\Uqg$. 
Every element in $\Drt$ acts in particular on tensor products of the form $M\ten V$ with $M\in\WUqk$ and $V\in\OUqg$. 
The action is natural in $M$ and $V$, thus yielding a morphism of algebras $\Drt\to\End(\sff_\theta\boxtimes\sff)$. 
Proceeding as in \cite[Thm.~3.1]{ATL24b}, one further shows that the action of $\Drt$ is faithful on objects in $\OintUqg$. 
Thus, we obtain a commutative diagram
\begin{equation}\label{eq:Drt-End}
	\begin{tikzcd}
	&\Drt \arrow[dl, hook] \arrow[d, hook] \arrow[dr, hook]&\\
	\End(\FF{}{}\boxtimes\FF{}{}) \arrow[r]& \End(\FF{\theta}{}\boxtimes\FF{}{})  \arrow[r]& \End(\FF{\theta}{}\boxtimes\FF{\sint}{})
	\end{tikzcd}
\end{equation}
where the first and second horizontal arrows are induced by the restriction functor $\OUqg\to\WUqk$ and the inclusion $\OintUqg\hookrightarrow\OUqg$, respectively.\\

Finally, set $\cB_X=\bigsqcup_{\beta\in\Qlat_X^+}\cB_\beta$. We consider the \emph{opposite} algebra
\begin{align}
	\DrX &=\left\{\left.\sum_{x\in\cB_X}a_x\ten c_x\omega(x)\,\right\vert\, a_x\in\UqgX\,, \,c_x\in\UqbpX\right\}\\
	&=\prod_{\beta\in\Qlat_X^+}\UqgX\ten\UqbpX\UqnmX_{-\beta} \qq \supseteq \UqgX\ten\UqgX
\end{align}

We observe that $\DrX$ acts on any tensor product $V\ten W$ with $V\in\Mod(\UqgX)$ and $W\in\POUqg{X}$. 
Namely, by \cite[Thm.~6.2.2]{Lus94}, any object in $\OintUqg$ is completely reducible. 
Since $X\subset \IS$ is a subdiagram of finite type, $W$ decomposes as a (possibly infinite) direct sum of finite-dimensional irreducible $\UqgX$-modules. 
Thus, the action of $\DrX$ on $V\ten W$ is well-defined.
Through the restriction functor $\POUqg{X}\to\cO_\infty^{\sf int}(\UqgX)$ and the inclusion $\UqgX\subset\Uqk$, we obtain an embedding
\begin{equation}\label{eq:DrX-End}
	\begin{tikzcd}
		\DrX\arrow[r,hook] & \End(\FF{\theta}{}\boxtimes\FF{\pint{X}}{}).
	\end{tikzcd}
\end{equation}


\subsection{Tensor K-matrices for quantum symmetric pairs} \label{ss:tensorKmatrices}
We now extend the discussion of Sections \ref{ss:basicKmatrices}-\ref{ss:gauged-kmx} about cylindrical structures and basic K-matrices to reflection structures and tensor K-matrices. 
Again, the datum is a quantum symmetric pair $(\Uqg,\Uqk)$.
In Appendix \ref{ss:reflection-bialgebras} the algebraic formalism is given, generalizing formalisms from \cite{Enr07,Kol20} to make them applicable in the quantized Kac-Moody setting; the corresponding formalism for comodule algebras appeared independently in \cite{LBG23,Lem23}.\\

The objects of interest are as follows: we keep the same twist pair $(\psi,J)$ as in Section \ref{ss:qsp} and replace the basic K-matrix (lying in a completion of $\Uqg$) by a tensor K-matrix (lying in a completion of $\Uqk \ten \Uqg$). 
Instead of the identities \eqrefs{eq:k-intertwiner}{eq:coprod-id} we are interested in the intertwining identity
\begin{equation}\label{eq:2K-intw-QSP} 
\TKM{} \cdot\Delta(b) = (\id\ten\psi)(\Delta(b)) \cdot \TKM{} \qq \text{for all } b \in \Uqk
\end{equation}
and the coproduct identities 
\begin{align}
\label{eq:2K-coprod-QSP-1} (\Delta\ten\id)(\TKM{}) &= R^\psi_{32} \cdot \TKM{13} \cdot R_{23},\\
\label{eq:2K-coprod-QSP-2} (\id\ten\Delta)(\TKM{}) &= J^{-1}_{23} \cdot \TKM{13} \cdot R^\psi_{23} \cdot \TKM{12}.
\end{align}
Such a triple $(\psi,J,\TKM{})$ is called a \emph{reflection structure} on $(\Uqg,\Uqk)$
As before, we obtain a version of \eqref{eq:K-RE} in a completion of a triple tensor product (the proof of Proposition \ref{prop:tensorK:RE} applies here):
\eq{ \label{eq:2K-RE-QSP}	
R^{\psi\psi}_{32} \cdot \TKM{13} \cdot R^\psi_{23} \cdot \TKM{12} = \TKM{12} \cdot  R^\psi_{32} \cdot \TKM{13} \cdot R_{23}.
}

\subsection{The tensor K-matrix associated to the quasi K-matrix}
From the distinguished cylindrical structure $(\psi,J,K)$ constructed in Theorem~\ref{thm:av-k-mx}, we construct our first example of a tensor K-matrix.
Recall the operator $R_{21}^\psi\in\End(\FF{}{}\boxtimes\FF{}{})$ defined in Proposition~\ref{prop:aux-R-matrices} \ref{Rpsi21:def}. 

\begin{theorem} \label{thm:reflectionalgebra:QSP}
Let $(\Uqg,\Uqk)$ be a quantum symmetric pair and set 
\begin{equation}
	\psi=\phi_q^{-1}, \qq J=R_\theta, \qq \KM{}=\QK{}.
\end{equation}
Then the operator 
\begin{equation}\label{eq:tensor-K-def}
	\TKM{} \coloneqq R_{21}^\psi\cdot (1\ten \KM{}) \cdot R \in \End(\FF{}{}\boxtimes\FF{}{})
\end{equation} 
has the following properties.
\vspace{1mm}
\begin{enumerate}\itemsep2mm
\item \label{axioms:O} It satisfies the intertwining identity \eqref{eq:2K-intw-QSP} in $\End(\FF{}{}\boxtimes\FF{}{})$ and the coproduct identities \eqref{eq:2K-coprod-QSP-1} and \eqref{eq:2K-coprod-QSP-2} in $\End(\FF{}{}\boxtimes\FF{}{}\boxtimes\FF{}{})$.
\item \label{support:WthetaW} There are elements $\TQK{X}\in\DrX$ and $\TQK{}\in\Drt$ such that 
\[
\TKM{}=\WO{\id+\tsat}\cdot\TQK{X}\cdot\TQK{}.
\]
In particular, $\TKM{}$ admits a canonical lift\footnote{
By construction, $\TKM{}$ acts on any tensor product $V\ten W$ with $V,W\in\OUqg$. 
This means that the action of $\TKM{}$ is \emph{extended} to any tensor product $M\ten W$ with $M\in\WUqk$ but $W\in\POUqg{X} \subseteq \OUqg$.} in $\End(\FF{\theta}{}\boxtimes\FF{\pint{X}}{})$, the identity \eqref{eq:2K-intw-QSP} is valid in $\End(\FF{\theta}{}\boxtimes\FF{\pint{X}}{})$, and\footnote{Since $\WUqk$ is a module category over $\OUqg$, the coproduct $\Delta$ naturally lifts to a morphism $\End(\FF{\theta}{})\to\End(\FF{\theta}{}\boxtimes\FF{}{})$, cf.~Section \ref{ss:R-matrix}.} the identities \eqref{eq:2K-coprod-QSP-1} and \eqref{eq:2K-coprod-QSP-2}, as well as \eqref{eq:2K-RE-QSP}, are valid in $\End(\FF{\theta}{}\boxtimes\FF{\pint{X}}{}\boxtimes\FF{\pint{X}}{})$.
\end{enumerate}
\end{theorem}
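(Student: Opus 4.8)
The plan is to prove the two assertions separately: assertion~\ref{axioms:O} is a formal consequence of the reflection-bialgebra formalism applied to the cylindrical structure of Theorem~\ref{thm:av-k-mx}, while assertion~\ref{support:WthetaW} --- the genuinely new point --- is extracted from the maximality result, Theorem~\ref{thm:QSP-maximal-xi}. For assertion~\ref{axioms:O}, I would start from the fact that $(\psi,J,\KM{})=(\phi_q^{-1},R_\theta,\QK{})$ is a cylindrical structure on $\Uqg$ (Theorem~\ref{thm:av-k-mx}), i.e.\ it satisfies \eqref{eq:k-intertwiner}, \eqref{eq:coprod-id} and \eqref{eq:K-RE}, with $(\psi,J)$ a twist pair up to completion (Lemma~\ref{lem:twistpair:properties}, Proposition~\ref{prop:aux-R-matrices}). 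The reflection-bialgebra formalism of Appendix~\ref{ss:reflection-bialgebras} (cf.\ the construction underlying Lemma~\ref{lem:refl-to-cyl}) then shows that $\TKM{}=R^\psi_{21}\cdot(1\ten\KM{})\cdot R$ automatically satisfies the intertwining identity \eqref{eq:2K-intw-QSP}, the coproduct identities \eqref{eq:2K-coprod-QSP-1}--\eqref{eq:2K-coprod-QSP-2} and hence the reflection equation \eqref{eq:2K-RE-QSP} (Proposition~\ref{prop:tensorK:RE}), as formal identities. It then remains to observe that every term is well-defined in the representation-theoretic completions: $R\in\End(\FF{\cW}{}\boxtimes\FF{}{})$ by Section~\ref{ss:R-matrix}, $\QK{}\in\Dr\subseteq\COUqg{}{}$ by Theorem~\ref{thm:av-k-mx}, and $R^\psi_{21}\in\End(\FF{}{}\boxtimes\FF{\cW}{})$ by Proposition~\ref{prop:aux-R-matrices}, so $\TKM{}$ and the right-hand sides of \eqref{eq:2K-coprod-QSP-1}--\eqref{eq:2K-RE-QSP} lie in $\End(\FF{}{}\boxtimes\FF{}{})$, resp.\ $\End(\FF{}{}\boxtimes\FF{}{}\boxtimes\FF{}{})$; since $\Uqg^{\ten n}$ embeds in $\End(\FF{}{n})$ by \cite{ATL24b}, the formal identities transport there.

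The core of assertion~\ref{support:WthetaW} is the identity $(\xi\ten\id)(\TKM{})=\TKM{}$, with $\xi=\Ad(\QK{})^{-1}\circ\phi_q^{-1}$ as in \eqref{eq:def-xi}. Writing $\xi=\Ad(\QK{}^{-1})\circ\psi$ and using the $\psi$-equivariance relations $(\psi\ten\id)(R)=R^\psi$ and $(\psi\ten\id)(R^\psi_{21})=R^{\psi\psi}_{21}$ (immediate from the defining collections in Proposition~\ref{prop:aux-R-matrices}), a one-line computation rewrites this identity as
\[
(\QK{}^{-1}\ten1)\cdot R^{\psi\psi}_{21}\cdot(1\ten\QK{})\cdot R^\psi\cdot(\QK{}\ten1)=R^\psi_{21}\cdot(1\ten\QK{})\cdot R,
\]
which is precisely the reflection equation \eqref{eq:K-RE} for $\QK{}$; hence it holds. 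Applying $\Delta\ten\id$ and invoking \eqref{eq:2K-coprod-QSP-1} --- in which $R^\psi_{32}$ and $R_{23}$ do not involve the first tensor leg --- yields $(\xi\ten\id\ten\id)\big((\Delta\ten\id)(\TKM{})\big)=(\Delta\ten\id)(\TKM{})$. By the definition \eqref{Bprimexi:def} of $B'_\xi$, together with the faithfulness of the completions of $\Uqg^{\ten n}$ \cite{ATL24b} (needed to legitimately isolate the first leg), this forces $\TKM{}$ into the completion of $B'_\xi\ten\Uqg$, hence --- by Theorem~\ref{thm:QSP-maximal-xi}, $B'_\xi=\Uqkp\subseteq\Uqk$ --- into the completion of $\Uqk\ten\Uqg$.

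To obtain the sharper factorization $\TKM{}=\WO{\id+\tsat}\cdot\TQK{X}\cdot\TQK{}$, I would then substitute $R=\WO{\id}\cdot\Theta$ from \eqref{eq:R-matrix} and expand $R^\psi_{21}$ through the explicit form of $\psi$ and the factorization \eqref{Rtheta:factorization} of $R_\theta$, so that $(\psi\ten\id)(\WO{\id})=\WO{\theta}$ and the $\UqgX$-quasi-R-matrix $\Theta_X$ concealed in $R^\psi$ separates off; one then collects the two weight operators into $\WO{\id+\tsat}$, the $\UqgX$-part into $\TQK{X}\in\DrX$ (first leg in $\UqgX\subseteq\Uqk$, second leg in $\UqbpX\UqnmX$, which is why it is an \emph{opposite} Drinfeld algebra), and the remainder into $\TQK{}\in\Drt$ (first leg in $\Uqk$ by the previous step, second leg in $\Uqbm\Uqnp_\beta$). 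The embeddings \eqref{eq:Drt-End} and \eqref{eq:DrX-End} then give the canonical lift to $\End(\FF{\theta}{}\boxtimes\FF{\pint{X}}{})$, and since $R$, $R^\psi$, $R^{\psi\psi}$ and $J$ remain well-defined on $\POUqg{X}$-legs (Proposition~\ref{prop:aux-R-matrices}, Lemma~\ref{lem:twistpair:properties}, Proposition~\ref{prop:gauge-k-matrix}) and the completions are compatible under the canonical restriction maps \eqref{eq:end-restriction}, which are injective on the embedded algebras \cite{ATL24b}, the identities of assertion~\ref{axioms:O} descend to $\End(\FF{\theta}{}\boxtimes\FF{\pint{X}}{})$ and $\End(\FF{\theta}{}\boxtimes\FF{\pint{X}}{}\boxtimes\FF{\pint{X}}{})$.

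I expect the main obstacle to be the confinement of the first leg of $\TKM{}$ to $\Uqk$: the R-matrix $R$ and its twist $R^\psi_{21}$ each have first legs spanning all of $\Uqg$, so this is a genuine cancellation, captured precisely by Theorem~\ref{thm:QSP-maximal-xi} in place of the explicit finite-type computation of \cite{Kol20}. Making it rigorous requires setting up the completed tensor products carefully, so that the ``finite'' identity $\Uqkp=B'_\xi$ genuinely controls the completed element $\TKM{}$ --- in particular, giving precise meaning to $(\xi\ten\id)$ on the completion (for which one works through the Drinfeld-type completions $\Dr$, $\Drt$, $\DrX$ rather than the module-endomorphism completions directly) and to the extraction of the first leg; and the bookkeeping isolating the $\DrX$-factor and the weight operator $\WO{\id+\tsat}$, notably tracking how the extended-Cartan part of $R$ is absorbed into $\WO{\id+\tsat}$ rather than into $\Drt$, is the delicate point.
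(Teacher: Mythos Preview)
Your proposal is correct and follows essentially the paper's route: part~\ref{axioms:O} via Proposition~\ref{prop:cylindricaltoreflection}, and part~\ref{support:WthetaW} by identifying the $\xi$-invariance of $\TKM{}$ with the reflection equation \eqref{eq:K-RE} and then invoking Theorem~\ref{thm:QSP-maximal-xi}. The only difference is organizational --- the paper computes the three-tensor $\xi$-condition $(\xi\ten\id\ten\id)\big((\Delta\ten\id)(\TKM{})\big)=(\Delta\ten\id)(\TKM{})$ directly rather than via your two-tensor route, and makes explicit the step you flag as delicate: it verifies separately that $\WO{\id+\tsat}$ and $\TQK{X}$ satisfy the $\xi$-condition, so that the condition transfers from $\TKM{}$ to the explicitly defined $\TQK{}\in\Dr^{(2)}$, where the first leg can be genuinely extracted and placed in $B'_\xi=\Uqkp$.
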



\begin{proof}
One observes that all factors on either side of \eqref{eq:2K-intw-QSP} are well-defined elements of $\End(\FF{}{}\boxtimes\FF{}{})$.
Similarly, all factors on either side of \eqref{eq:2K-coprod-QSP-1} and \eqref{eq:2K-coprod-QSP-2} lie in $\End(\FF{}{}\boxtimes\FF{}{}\boxtimes\FF{}{})$.
Hence, \ref{axioms:O} follows as in the proof of Proposition \ref{prop:cylindricaltoreflection}. 


Since $\theta|_\fkh$ is involutive, the weight operator $\WO{\id+\theta}\in\End(\FF{\cW}{}\boxtimes\FF{\cW}{})$ has a canonical lift in $\End(\FF{\theta}{}\boxtimes\FF{\cW}{})$, see Section \ref{ss:2-QSP-weight}.
We have to prove that 
\[
\WO{-\id-\theta}\cdot\, \TKM{} = \TQK{X}\cdot\TQK{}
\]
for some $\TQK{X}\in\DrX$ and $\TQK{}\in\Drt$. Then, by \eqref{eq:Drt-End} and \eqref{eq:DrX-End}, $\TKM{}$ admits a suitable canonical lift to $\End(\FF{\theta}{}\boxtimes\FF{\pint{X}}{})$.
The operators $ \TQK{X}$ and $\TQK{}$ are obtained as follows.
The universal R-matrix is equipped with the factorization $R=\WO{\id}\cdot\,\Theta$, where 
\[
\Theta=\sum_{\beta\in\Qlat^+}\Theta_\beta\in\prod_{\beta\in\Qlat^+}
\Uqnm_{-\beta}\ten\Uqnp_\beta\,
\]
see Section \ref{ss:R-matrix}. 
By Lemma~\ref{lem:twistpair:properties} \ref{W:psitwist}, 
one readily checks that
\[
\WO{\id}_{V{\Wpsi}}=\WO{\theta}_{VW}
\]
for any $V,W\in\WUqg$. 
Hence, $R_{21}^{\psi} = \WO{\theta}\cdot\,\Theta_{21}^{\psi}$.
Thus, we have
\begin{equation}
	\WO{-\id-\theta}\cdot\,R_{21}^{\psi} \cdot (1\ten \KM{}) \cdot R = \Ad(\WO{-\id})(\Theta^{\psi}_{21}) \cdot \Ad(\WO{-\id})(1 \ten \KM{}) \cdot \Theta.
\end{equation}
Let $\RM{X}$ be the universal R-matrix of $\UqgX$ with the corresponding factorization $\RM{X}=\WO{\id}\cdot\Theta_X$.
By \cite[Prop.~4.3]{AV22a},
\begin{align}
\wt{\Theta} = \Theta_X^{-1}\cdot\Theta\in\prod_{\beta\in\Qlat^+\setminus\Qlat_X^+}
\Uqnm_{-\beta}\ten\Uqnp_\beta.
\end{align}
Since $\psi$ is the identity on $\UqgX$, we have
\begin{equation}\label{eq:RX-decomposition}
	\Theta_{21}^{\psi}=\Theta_{X,21}\cdot\wt{\Theta}_{21}^{\psi}\,.
\end{equation}
Set 
\begin{align}
\TQK{X}=\Ad(\WO{-\id})(\Theta_{X,21})
\quad\mbox{and}\quad
\TQK{}=\Ad(\WO{-\id})(\wt{\Theta}^{\psi}_{21}) \cdot \Ad(\WO{-\id})(1 \ten \KM{}) \cdot \Theta.
\end{align}
Clearly, $\TQK{X}\in\DrX$. It remains to prove that $\TQK{}\in\Drt$. Set
\[
\Dr^{(2)} = \left\{\left.\sum_{x\in\cB}a_x\ten c_xx\,\right\vert\, a_x\in\Uqg\,, \,c_x\in\Uqbm\right\} \supset \Drt.
\]
We claim that $\TQK{}\in\Dr^{(2)}$. 
Clearly, $\Ad(\WO{-\id})(1 \ten \KM{}) \cdot \Theta\in\Dr^{(2)}$.  
Then, it is enough to observe that $-\theta$ preserves $\Qlat^+\setminus\Qlat_X^+$. 
Therefore,
\[
\wt{\Theta}_{21}^{\psi} \in \prod_{\beta\in\Qlat^+\setminus\Qlat_X^+} \Uqnp_{\beta} \ten \Uqnp_{\beta} \subset \Dr^{(2)}.
\]
Finally, set $\xi = \Ad(\KM{})^{-1}\circ\psi$ as in \eqref{eq:def-xi}. 
By Theorem~\ref{thm:QSP-maximal-xi},
\[
\Uqkp= \{ u \in \Uqgp \,\vert\, (\xi \ten \id)(\Delta(u)) = \Delta(u)\}.
\]

It now remains to prove that $\TQK{}\in\Drt$. 
Consider the following equation for an element $\Pi \in \End(\FF{}{}\boxtimes\FF{}{})$:
\begin{equation}\label{eq:xi-cond}
\big( (\xi \ten \id \ten \id) \circ (\Delta \ten \id) \big) (\Pi) = (\Delta \ten \id)(\Pi).
\end{equation}
Note that $\TQK{} \in \Drt$ if and only if \eqref{eq:xi-cond} holds with $\Pi = \TQK{}$.
Also, \eqref{eq:xi-cond} is automatically satisfied for $\Pi = \TQK{X}$, since $\xi$ is the identity on $\Uqk$. 
Moreover, it is also satisfied for $\Pi = \WO{\id+\theta}$, since

\vspace{1mm}
\begin{itemize}\itemsep2mm

\item 
the coproduct of $\WO{\id+\theta}$ is provided by \eqref{C:coproduct};

\item 
for any $V,W\in\WUqg$, one has $\WO{\id+\theta}_{{\Vpsi}W} = \WO{\id+\theta}_{VW}$;

\item 
$\WO{\id+\theta}$ commutes with $\KM{} = \QK{}$ since the latter is supported on $(\Qlat^+)^{-\theta}$, see Theorem \ref{thm:av-k-mx}.

\end{itemize} 
\vspace{1.5mm}

Thus, $\TQK{}\in\Drt$ if and only if $\Pi = \TKM{}$ satisfies \eqref{eq:xi-cond}, and the latter follows straightforwardly from the generalized reflection equation for $\KM{}$:
\begin{align}
\hspace{15pt} &\hspace{-15pt} \big( (\xi\ten\id\ten\id) \circ (\Delta\ten\id) \big) (\TKM{}) =\\
&\stackrel{\eqref{eq:2K-coprod-QSP-1}}{=} (\xi \ten \id \ten \id)(R^{\psi}_{32} \cdot R^{\psi}_{31} \cdot (1 \ten 1 \ten \KM{}) \cdot R_{13} \cdot R_{23} ) \\
&\stackrel{\eqref{eq:def-xi}}{=} R^{\psi}_{32}\cdot (\KM{}^{-1} \ten 1 \ten 1) \cdot R^{\psi\psi}_{31}\cdot (1 \ten 1 \ten \KM{}) \cdot R^{\psi}_{13}\cdot (\KM{}\ten 1\ten 1) \cdot R_{23} \\
&\stackrel{\eqref{eq:K-RE}}{=} R^{\psi}_{32} \cdot R^{\psi}_{31} \cdot (1 \ten 1 \ten \KM{}) \cdot R_{13} \cdot R_{23} \\
&\stackrel{\eqref{eq:2K-coprod-QSP-1}}{=} (\Delta \ten \id)(\TKM{}).
\end{align}
This completes the proof of 
\ref{support:WthetaW}.
\end{proof}

\subsection{Gauged tensor K-matrices}

The construction of the tensor K-matrix is compatible with gauge transformations (see also Appendices \ref{ss:reflection-bialgebras} and \ref{ss:gauge}).
This will be crucial in Section \ref{ss:spectraltensorK}, where the tensor K-matrix built from the quasi K-matrix is not always suitable.
Recall that in Section \ref{ss:gauged-kmx} we fixed
\eq{
(\psi,J,K) = (\phi_q^{-1},R_\theta,\QK{}).
}
Also, recall from Proposition \ref{prop:gauge-k-matrix} the group $\Gg_Z$ of invertible elements $\gau\in \CPOUqg{Z}{}$ such that $\Ad(\gau)$ preserves $\Uqg\subset\CPOUqg{Z}{}$.
From Theorem~\ref{thm:reflectionalgebra:QSP} we immediately obtain the following analogue for tensor K-matrices. 

\begin{prop}\label{prop:gauge-tensor-k-matrix}
Let $(\Uqg,\Uqk)$ be a quantum symmetric pair and let $Z \subseteq \IS$ be any subset.
For any $\gau\in\Gg_Z$, set
\begin{equation}
\psi_\gau = \Ad(\gau) \circ \psi, \qq J_\gau = (\gau\ten\gau) \cdot J \cdot \Delta(\gau)^{-1}, \qq \KM{\gau} = \gau \cdot \KM{}
\end{equation}
and 
\begin{equation}\label{eq:tensor-K-def-gauge}
\TKM{\gau} = (1 \ten \gau) \cdot \TKM{} = R_{21}^{\psi_\gau} \cdot (1\ten \KM{\gau}) \cdot R \in\End(\FF{}{}\boxtimes\FF{\pint{Z}}{}).
\end{equation} 
Then $\TKM{\gau}$ admits a canonical lift in $\End(\FF{\theta}{}\boxtimes\FF{\pint{X \cup Z}}{})$ which satisfies the identities \eqref{eq:2K-intw-QSP} in $\End(\FF{\theta}{}\boxtimes\FF{\pint{X \cup Z}}{})$ and \eqref{eq:2K-coprod-QSP-1}, \eqref{eq:2K-coprod-QSP-2} and \eqref{eq:2K-RE-QSP} in $\End(\FF{\theta}{}\boxtimes\FF{\pint{X \cup Z}}{}\boxtimes\FF{\pint{X \cup Z}}{})$.
\end{prop}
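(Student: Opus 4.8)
The plan is to deduce the statement from Theorem~\ref{thm:reflectionalgebra:QSP} in precisely the way Proposition~\ref{prop:gauge-k-matrix} is deduced from Theorem~\ref{thm:av-k-mx}; the only genuinely new input is identifying the category of $\Uqg$-modules on which the gauged operator acts. First I would record the elementary identity
\[
R^{\psi_\gau}_{21} = (1\ten\gau)\cdot R^{\psi}_{21}\cdot(1\ten\gau)^{-1},
\]
which follows at once from $\psi_\gau = \Ad(\gau)\circ\psi$ and $R^{\psi}_{21} = (\id\ten\psi)(R_{21})$. Together with $\KM{\gau} = \gau\cdot\KM{}$ and the defining factorization \eqref{eq:tensor-K-def} of $\TKM{}$ this yields
\[
\TKM{\gau} = (1\ten\gau)\cdot\TKM{} = (1\ten\gau)\cdot R^{\psi}_{21}\cdot(1\ten\KM{})\cdot R = R^{\psi_\gau}_{21}\cdot(1\ten\KM{\gau})\cdot R,
\]
so the two expressions for $\TKM{\gau}$ in \eqref{eq:tensor-K-def-gauge} coincide.

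For the lifting statement I would invoke the factorization $\TKM{} = \WO{\id+\tsat}\cdot\TQK{X}\cdot\TQK{}$ with $\TQK{X}\in\DrX$ and $\TQK{}\in\Drt$ from Theorem~\ref{thm:reflectionalgebra:QSP}\ref{support:WthetaW}, which already exhibits $\TKM{}$ as a canonical element of $\End(\FF{\theta}{}\boxtimes\FF{\pint{X}}{})$. Since $\gau\in\Gg_Z\subseteq\CPOUqg{Z}{}$, the operator $1\ten\gau$ is a well-defined natural endomorphism of $\FF{\theta}{}\boxtimes\FF{\pint{Z}}{}$, and $\Ad(\gau)$ preserving $\Uqg$ guarantees that the gauged twist pair $(\psi_\gau,J_\gau)$ retains the properties of Lemma~\ref{lem:twistpair:properties} and Proposition~\ref{prop:aux-R-matrices} over $\POUqg{Z}$, exactly as in the proof of Proposition~\ref{prop:gauge-k-matrix}. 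As a module in $\OUqg$ is integrable over $\UqgX$ and over $\UqgZ$ if and only if it is integrable over the subalgebra generated by $E_i,F_i$ with $i\in X\cup Z$, one has $\POUqg{X}\cap\POUqg{Z} = \POUqg{X\cup Z}$; composing $1\ten\gau$ with $\TKM{}$ in the common completion $\End(\FF{\theta}{}\boxtimes\FF{\pint{X\cup Z}}{})$ then produces the desired canonical lift of $\TKM{\gau}$.

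It remains to transport the four identities to the gauged data. The intertwining identity \eqref{eq:2K-intw-QSP} for $(\psi_\gau,\TKM{\gau})$ is immediate: for $b\in\Uqk$ one rewrites $(1\ten\gau)\cdot(\id\ten\psi)(\Delta(b))$ as $(\id\ten\psi_\gau)(\Delta(b))\cdot(1\ten\gau)$ and applies \eqref{eq:2K-intw-QSP} for $(\psi,\TKM{})$, the coideal property $\Delta(\Uqk)\subseteq\Uqk\ten\Uqg$ being used only to make sense of the middle term. For the coproduct identities \eqref{eq:2K-coprod-QSP-1}--\eqref{eq:2K-coprod-QSP-2} and the reflection equation \eqref{eq:2K-RE-QSP} I would apply the algebra maps $\Delta\ten\id$ and $\id\ten\Delta$ to $\TKM{\gau}=(1\ten\gau)\cdot\TKM{}$, substitute the corresponding identities for $\TKM{}$, and recognise the outcome in terms of $R^{\psi_\gau}$, $R^{\psi_\gau}_{21}$, $R^{\psi_\gau\psi_\gau}$ and $J_\gau$, using the coproduct compatibility of $\gau$ encoded in $J_\gau = (\gau\ten\gau)\cdot J\cdot\Delta(\gau)^{-1}$; this is exactly the purely algebraic gauge-invariance of reflection structures recorded in Appendix~\ref{ss:gauge}.

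The step most likely to require care is the bookkeeping of completions: in each intermediate identity one must check that every factor lies in a completion large enough to contain it yet small enough to act naturally, and it is this that pins down $\POUqg{X\cup Z}$ as the correct ambient category in the second tensor leg and confirms that left-multiplication by $1\ten\gau$ is compatible with the $\DrX$- and $\Drt$-type factorization underlying Theorem~\ref{thm:reflectionalgebra:QSP}\ref{support:WthetaW}. Otherwise the argument is a mechanical transcription of the finite-type gauge computation for reflection structures.
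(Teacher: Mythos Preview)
Your proposal is correct and follows exactly the approach the paper takes: the paper states that the proposition follows ``immediately'' from Theorem~\ref{thm:reflectionalgebra:QSP}, and what you have written is precisely the unpacking of that claim, together with the algebraic gauge-invariance of reflection structures from Appendix~\ref{ss:gauge}. Your explicit verification of $R^{\psi_\gau}_{21} = (1\ten\gau)\cdot R^{\psi}_{21}\cdot(1\ten\gau)^{-1}$ and of the intersection $\POUqg{X}\cap\POUqg{Z}=\POUqg{X\cup Z}$ are the only details the paper leaves implicit.
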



\subsection{Boundary bimodule structure} \label{ss:boundarybimodulestructure}
In Appendix \ref{ss:bdrybimodcat}, we describe a categorical framework which encodes the defining properties of the tensor K-matrix $\TKM{}$.
In particular, in Section \ref{ss:boundary-cat}, we introduce the notion of a \emph{boundary bimodule category}, which is a generalization of that of braided module category, see \eg \cite{Enr07, Bro13, Kol20}.
These generalized notions are tailored to encode the datum of the auxiliary R-matrix $R^\psi$, the cylindrical structure $(\psi, J, K)$, and the corresponding tensor K-matrix $\TKM{}$. 
Hence, we immediately obtain the following.	

\begin{corollary}\label{cor:refl-str}
Let $(\Uqg,\Uqk)$ be a quantum symmetric pair and recall the distinguished cylindrical structure $(\psi,J,K) = (\phi_q^{-1},R_\theta,\QK{})$.
\vspace{1mm}
\begin{enumerate}\itemsep2mm

\item 
The category $\WUqk$ has the following natural bimodule structure over $\OUqg$.

\vspace{1.5mm}
\begin{itemize}\itemsep2mm

\item 
The right action is given by $M \monactp V = M \ten V$, with trivial associativity constraint, for any $M \in \WUqk$ and $V \in \OUqg$.

\item 
The left action is given by $M \monactm V = M \ten \Vpsi$, with associativity constraint
\[
M \monactm (V \ten W) \to (M \monactm W) \monactm V
\]
induced by $J_{VW}$, for any $M \in \WUqk$ and $V, W \in \OUqg$.

\item 
The commutativity constraint
\[
(M\monactp V)\monactm W\to (M\monactm W)\monactp V
\]
is induced by $R^\psi_{VW}$,  where $M\in\WUqk$ and $V, W\in\OUqg$.
\end{itemize}

\item 
The tensor K-matrix $\TKM{}$ defined in Theorem~\ref{thm:reflectionalgebra:QSP} is a boundary structure on the bimodule category $\WUqk$ over $\POUqg{X}$.

\end{enumerate}

Moreover, for any $Z \subseteq \IS$, the group $\Gg_Z$ acts simultaneously on both the bimodule and the boundary structures of $\WUqk$, producing structures over $\POUqg{X \cup Z}$.

\end{corollary}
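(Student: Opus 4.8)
The plan is to show that Corollary \ref{cor:refl-str} is a direct translation of the algebraic/categorical formalism of Appendix \ref{ss:bdrybimodcat} into the concrete QSP setting, using as input precisely the properties of $\TKM{}$ established in Theorem \ref{thm:reflectionalgebra:QSP} and Proposition \ref{prop:gauge-tensor-k-matrix}. In other words, there is essentially no new computation to carry out here: every identity one needs has already been proved, and the work consists in checking that the axioms of a \emph{boundary bimodule category} (as spelled out in Section \ref{ss:boundary-cat}) are matched, term by term, by the identities \eqref{eq:2K-intw-QSP}, \eqref{eq:2K-coprod-QSP-1}, \eqref{eq:2K-coprod-QSP-2}, \eqref{eq:2K-RE-QSP}, together with the intertwining and coproduct properties of $R$ and $R^\psi$ recorded in Sections \ref{ss:R-matrix} and \ref{ss:aux-datum-k}.

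First, for part (a): the right action $M \monactp V = M\ten V$ is well-defined on $\WUqk$ because $\Uqk\subset\Uqg$ is a right coideal (Remarks \ref{rmks:QSPdef}) and $\WUqk$ is closed under this operation with target in $\WUqk$ by Proposition \ref{prop:weight-QSP-cat}\ref{prop:modulecategory}; the trivial associativity constraint works since $\ten$ on $\Uqg$-modules is strictly associative. For the left action $M \monactm V = M\ten \Vpsi$, one needs $\Vpsi$ to make sense as a $\Uqg$-module compatible with the $\Uqk$-action on the tensor product; here one uses that $\psi = \phi_q^{-1}$ restricts on $\Uqk$ to an automorphism (indeed $\KM{}\cdot b = \psi(b)\cdot\KM{}$ on $\Uqkp$, and $\psi(\Kg h)=\Kg{\theta(h)}$ preserves $\Uqht$), so $M\ten\Vpsi$ is a $\Uqk$-module and lies in $\WUqk$ by Lemma \ref{lem:twistpair:properties}\ref{W:psitwist}. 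The associativity constraint for $\monactm$ must be the map induced by $J_{VW}=R_\theta$, and that this is a $\Uqk$-module morphism $M\monactm(V\ten W)\to(M\monactm W)\monactm V$ is exactly the content of Lemma \ref{lem:twistpair:properties}\ref{twistpair:coproduct} (the intertwiner $\VWpsi\to\Vpsi\ten^{\op}\Wpsi$), while the pentagon/hexagon coherence is the coproduct identity \eqref{Rtheta:factorization} for $R_\theta$ (a consequence of its factorization via $\bt\tsat$). The commutativity constraint induced by $R^\psi_{VW}$ is a morphism $(M\monactp V)\monactm W\to(M\monactm W)\monactp V$ of $\Uqk$-modules precisely because $R^\psi$ satisfies the appropriate twisted intertwining identity (obtained from \eqref{eq:R-intw-QG} by applying $\psi\ten\id$), and the mixed hexagon axioms relating $\monactp$, $\monactm$ and the two associativity constraints are the coproduct identities \eqref{eq:R-coprod-QG} rewritten with one or two legs twisted by $\psi$. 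Each of these verifications is routine given the corresponding universal identity and the fact (Proposition \ref{prop:aux-R-matrices}) that the relevant twisted R-matrices act on the relevant categories.

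For part (b): one must check that $\TKM{}$ satisfies the axioms of a boundary structure on $\WUqk$ over $\POUqg X$. The $\Uqk$-linearity of the boundary operator (the axiom $\TKM{}\circ\Delta(b)=(\id\ten\psi)(\Delta(b))\circ\TKM{}$) is \eqref{eq:2K-intw-QSP}, valid in $\End(\FF\theta{}\boxtimes\FF{\pint X}{})$ by Theorem \ref{thm:reflectionalgebra:QSP}\ref{support:WthetaW}; the two "boundary hexagon" compatibilities with the left and right module actions and their associativity/commutativity constraints are the two coproduct identities \eqref{eq:2K-coprod-QSP-1}--\eqref{eq:2K-coprod-QSP-2}, again valid in $\End(\FF\theta{}\boxtimes\FF{\pint X}{}\boxtimes\FF{\pint X}{})$ by the same theorem; and the consistency of these (the cylinder/reflection relation) is \eqref{eq:2K-RE-QSP}. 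Crucially, the support statement of Theorem \ref{thm:reflectionalgebra:QSP}\ref{support:WthetaW}, i.e. that $\TKM{}$ lies in (a completion of) $\Uqk\ten\Uqg$ via the factorization through $\Drt$ and $\DrX$, is what guarantees that $\TKM{}$ descends to an operator on $M\ten W$ with $M\in\WUqk$ and $W\in\POUqg X$, so that all the categorical diagrams actually live in the right place. Finally, the statement about the $\Gg_Z$-action: by Proposition \ref{prop:gauge-tensor-k-matrix}, replacing $(\psi,J,\KM{},\TKM{})$ by $(\psi_\gau,J_\gau,\KM\gau,\TKM\gau)$ preserves all of \eqref{eq:2K-intw-QSP}--\eqref{eq:2K-RE-QSP} in $\End(\FF\theta{}\boxtimes\FF{\pint{X\cup Z}}{})$ and its triple-tensor analogue; translating this back through the same dictionary shows that $\gau\in\Gg_Z$ transforms the bimodule structure (via $\psi_\gau$ and $J_\gau$ in the left action and associativity constraint) and simultaneously the boundary structure (via $\TKM\gau=(1\ten\gau)\cdot\TKM{}$) into the analogous data over $\POUqg{X\cup Z}$. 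One should remark that the gauge action on the bimodule structure is exactly the statement of Proposition \ref{prop:gauge-k-matrix} at the level of $(\psi,J)$, since the bimodule structure only depends on the twist pair.

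The main obstacle, such as it is, is bookkeeping rather than mathematics: one has to make sure the categorical axioms in Appendix \ref{ss:bdrybimodcat} are stated with the correct placement of $\psi$-twists and the correct order of tensor factors, so that they align \emph{on the nose} with \eqref{eq:2K-coprod-QSP-1}--\eqref{eq:2K-coprod-QSP-2} and the hexagons for $R^\psi$; and one has to be careful that every diagram is interpreted in the appropriate completion ($\End(\FF\theta{}\boxtimes\FF{\pint X}{})$ or, after gauging, $\End(\FF\theta{}\boxtimes\FF{\pint{X\cup Z}}{})$), which is legitimate precisely because of the support property in Theorem \ref{thm:reflectionalgebra:QSP}\ref{support:WthetaW} and its gauged version in Proposition \ref{prop:gauge-tensor-k-matrix}. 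Once the dictionary of Appendix \ref{ss:bdrybimodcat} is in place, the corollary is immediate.
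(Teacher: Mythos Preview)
Your proposal is correct and takes essentially the same approach as the paper: the paper states the corollary as an immediate consequence of the categorical framework of Appendix~\ref{ss:bdrybimodcat} (in particular Proposition~\ref{prop:reflection-to-boundary}) combined with Theorem~\ref{thm:reflectionalgebra:QSP} and Proposition~\ref{prop:gauge-tensor-k-matrix}, and you have faithfully spelled out the dictionary between the categorical axioms and the universal identities that the paper leaves implicit.
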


%
%


\section{Trigonometric tensor K-matrices} \label{ss:trigtensorK}
In \cite{AV22b} we showed that, for $\fkg$ of untwisted affine type and for suitable twist automorphisms $\psi$, the universal K-matrix $\KM{}$ gives rise to a formal Laurent series with a well-defined action on finite-dimensional modules of $\Uqg$. 
Here we consider similar questions for the tensor K-matrix $\TKM{}$.

\subsection{Quantum loop algebras and finite-dimensional modules}

Let $\g$ be an untwisted affine Kac-Moody algebra. 
By \cite{Kac90}, $\g'$ is a central extension of the loop algebra $\fkL=\gfin[t,t^{-1}]$, where $\gfin = \g_{\IS \backslash \{ 0 \}} \subset\g$ is a distinguished finite-dimensional Lie subalgebra with the node $0 \in \IS$ chosen as in \cite{Kac90}. 
Let $\fdUqL$ be the category of finite-dimensional (type $\bf 1$) $\Uqgp$-modules and $\FF{\fdUqL}{n} \colon \fdUqL^{\boxtimes n} \to \vect$ the corresponding $n$-fold forgetful functor.

Let $\del = \sum_{i \in \IS} a_i \al_i$ be the minimal imaginary root, \ie the unique element of $\Ker(A) \cap \Qlat^+$ with setwise coprime coefficients, and set $K_\del = \prod_{i \in \IS} K_i^{a_i} \in \Uqgp$. 
The element $K_\del$ is central and acts trivially on any module in $\fdUqL$. 
Therefore, $\fdUqL$ is equivalent to the category of finite-dimensional (type $\bf 1$) modules over the quotient Hopf algebra $\UqLg=\Uqgp/(K_\del-1)$, which is commonly referred to as the \emph{quantum loop algebra}.

\subsection{Trigonometric R-matrices} \label{ss:UqL:Rmatrices} 
The references for this subsection are \cite{Dri86, FR92, KS95}.
Let $z$ be a formal variable and consider the $\tau$-minimal grading shift $\Sigma_z: \UqLg \to \UqLg[z,z^{-1}]$ defined by 
\[
\Sigma_z(E_i) = z^{s_i}E_i,\quad \Sigma_z(F_i) = z^{-s_i} F_i\,\quad\mbox{and}\quad \Sigma_z(K_i)=K_i
\] 
where $s_0 = s_{\tau(0)} = 1$ and $s_i=0$ if $i \notin \{ 0, \tau(0) \}$.
For any $\UqLg$-module $V$, we consider the modules $V(z)=V\ten\bsF(z)$ and 
$\Lfml{V}{z}=V\ten\Lfml{\bsF}{z}$ with action given by the pullback through
$\Sigma_z$ and extension of scalars.\\

Let $R\in\End(\FF{}{}\boxtimes\FF{}{})$ be the universal R-matrix of $\Uqg$, see Section \ref{ss:R-matrix}. Then,
\eq{ \label{spectralR:def}
R(z) = (\id \ten \Sigma_z)(R) = (\Sigma_{z^{-1}} \ten \id)(R) \in \fml{\End(\FF{\fdUqL}{} \boxtimes \FF{\fdUqL}{})}{z}
}
that is $R(z)$ is a formal power series in $z$ whose coefficients have a well-defined and natural action on any tensor product in $\fdUqL$. Moreover, 
 $R(z)$ satisfies the \emph{spectral Yang-Baxter equation}
\eq{ \label{YBE:spec}
R(w)_{12} \cdot R(wz)_{13} \cdot R(z)_{23} = R(z)_{23} \cdot R(wz)_{13} \cdot R(w)_{12}
}
in $\fml{\End(\FF{\fdUqL}{3})}{w,z}$. 
Thus, for any $U,V,W \in \fdUqL$, one obtains a matrix-valued formal series satisfying \eqref{YBE:spec} in $\fml{\End(U \ten V \ten W)}{w,z}$.

By \cite[Sec.\ 4.2]{KS95} and \cite[Thm.\ 3]{Cha02}, when $V, W \in \fdUqL$ are irreducible, the tensor product $V\ten W$ is generically irreducible, \ie $V \ten \Lfml{W}{z}$ remains irreducible as a module over $\Lfml{\UqLg}{z}$. 
This yields a factorization, uniquely defined up to a factor in $\bsF^\times$, of the form
\eq{
	R_{VW}(z) = f_{VW}(z) \cdot \rRM{VW}{z},
}
where $f_{VW}(z) \in \Lfml{\bsF}{z}$ and $\rRM{VW}{z} \in \End(V \ten W)[z]$ is a non-vanishing operator, commonly referred to as a \emph{trigonometric R-matrix}.

Since irreducible modules are highest weight and the R-matrix is weight zero, it is natural to normalize $\rRM{VW}{z}$ such that the tensor product of the highest weight vectors has eigenvalue $1$. 
This yields a non-vanishing operator $\rnRM{VW}{z}$ satisfying the unitarity relation
\begin{equation}\label{R:unitarity}
	\rnRM{VW}{z}^{-1}=(1\,2)\circ\rnRM{WV}{z^{-1}}\circ(1\,2).
\end{equation}
Moreover, if  $V \ten W(\ze)$ is irreducible for some $\ze \in \bsF^\times$, then $\rnRM{VW}{\ze}$ is well-defined and invertible.

\subsection{Quantum affine symmetric pairs and categories of representations}
Let $\theta$ be an automorphism of the second kind of $\fkg$ of the form \eqref{theta:def}, $\theta_q$ the corresponding automorphism of $\Uqg$ defined in \eqref{thetaq:def}, and $\Uqk\subseteq\Uqg$ the corresponding QSP subalgebra. 

Since $\del$ is fixed by Weyl group elements and by diagram automorphisms, one has $\theta(\del)=-\del$. 
In particular, $[\del]_\theta = [0]_\theta$, where $[\cdot]_\theta$ is the projection to the quotient lattice $\Pext_{\theta}$, see \eqref{eq:QSP-lattice}.
Therefore, $\theta_q$ descends to an automorphism of $\UqLg$, and $\Uqk$ embeds in $\UqLg$ as a coideal subalgebra. 
We obtain the following analogue of Proposition \ref{prop:weight-QSP-cat} \ref{prop:modulecategory}. 

\begin{lemma}
The category $\WUqk$ of QSP weight modules is a right module category over the category $\fdUqL$ of finite-dimensional  (type $\bf 1$) $\UqLg$-modules. 
\end{lemma}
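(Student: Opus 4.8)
The plan is to mirror the proof of Proposition~\ref{prop:weight-QSP-cat}~(\ref{prop:modulecategory}), with $\Uqg$ replaced by the quantum loop algebra $\UqLg$ and $\WUqg$ by $\fdUqL$. First I would recall the structural inputs already available: $\UqLg=\Uqgp/(K_\del-1)$ is a Hopf algebra, so $\fdUqL$ is a monoidal subcategory of $\Mod(\UqLg)$; and, as observed above, $\Uqk$ embeds in $\UqLg$ as a right coideal subalgebra. Hence, by the general mechanism for coideal subalgebras recalled in Section~\ref{ss:module-category}, $\Mod(\Uqk)$ is a right module category over $\Mod(\UqLg)$, with module product the $\bsF$-linear tensor product and with trivial associativity and unit constraints. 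It then remains only to check that this module product restricts to a functor $\WUqk\boxtimes\fdUqL\to\WUqk$; the module-category axioms are then inherited.

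The next step is the analogue of the second assertion of Proposition~\ref{prop:weight-QSP-cat}~(\ref{prop:modulecategory}): every $V\in\fdUqL$ is an object of $\WUqk$ under restriction to $\Uqk$. Here I would invoke the standard fact that a finite-dimensional type-$\bf 1$ module over a quantum loop algebra is a weight module, so that $V=\bigoplus_\mu V_\mu$ as a module over (the image in $\UqLg$ of) the Cartan subalgebra of $\Uqgp$. Since $\fkg$ is of affine type we have $\corank(A)=1$, whence $\Uqk=\Uqkp\subseteq\Uqgp$ (Remark~\ref{rmks:QSPdef}); thus $\Uqht=\Uqk\cap\Uqh$ is a subalgebra of $\UqLg$ generated by the grouplike elements $K_h$ with $h\in(\Qvext)^\theta$, its descent along $\Uqgp\twoheadrightarrow\UqLg$ being consistent with $K_\del=1$ precisely because $\theta(\del)=-\del$. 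Re-indexing the weight spaces of $V$ by the classes $\tproj{\mu}\in\Pext_\theta$ then yields the $\Uqht$-weight decomposition required in the definition of QSP weight modules, so $V\in\WUqk$.

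Finally, for $M\in\WUqk$ and $V\in\fdUqL$ the computation in the proof of Proposition~\ref{prop:weight-QSP-cat}~(\ref{prop:modulecategory}) applies without change: writing $M\ten V=\bigoplus_{\ze\in\Pext_\theta}(M\ten V)_\ze$ with $(M\ten V)_\ze=\bigoplus_\mu M_{\ze-\tproj{\mu}}\ten V_\mu$ and $\ze=\tproj{\la}$, the grouplike element $K_h$ (for $h\in(\Qvext)^\theta$) acts on $M_{\ze-\tproj{\mu}}\ten V_\mu$ as $q^{(\la-\mu)(h)}q^{\mu(h)}=q^{\la(h)}=q^{\ze(h)}$, so that $M\ten V\in\WUqk$; naturality in $M$ and $V$ is immediate. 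I do not expect a real obstacle: the one point requiring care is the bookkeeping of the second paragraph — that the affine hypothesis together with $\theta(\del)=-\del$ make $\Uqht$ a well-defined subalgebra of $\UqLg$ and turn finite-dimensional type-$\bf 1$ $\UqLg$-modules into QSP weight modules — after which the claim follows formally, exactly as in the non-affine situation of Proposition~\ref{prop:weight-QSP-cat}.
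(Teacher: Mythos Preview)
The proposal is correct and takes essentially the same approach as the paper, which gives no explicit proof but simply declares the lemma as the analogue of Proposition~\ref{prop:weight-QSP-cat}~\ref{prop:modulecategory} after recording that $\theta(\del)=-\del$ (so $[\del]_\theta=0$) and that $\Uqk$ embeds in $\UqLg$ as a coideal subalgebra. Your detailed verification---in particular that finite-dimensional type-$\bf 1$ $\UqLg$-modules are weight modules and hence restrict to QSP weight modules via the projection $\Pext\to\Pext_\theta$---spells out exactly what the paper leaves implicit.
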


By Proposition \ref{prop:weight-QSP-cat} \ref{prop:findimreps}, up to twisting, irreducible finite-dimensional $\Uqk$-modules are objects in $\WUqk$.
Now denote by $\fdUqk$ the full subcategory of finite-dimensional $\Uqk$-modules in $\WUqk$.
Clearly, it is a right module category over $\fdUqL$.

\subsection{Spectral tensor K-matrices} \label{ss:spectraltensorK}

In order to study grading-shifted tensor K-matrices, we need to restrict the gauge $\gau$ to a subset of $\Gg_{X \cup Y}$, for some $Y \subseteq \IS \backslash \{ 0,\tau(0) \}$.
Recall from Section \ref{ss:qtheta} the braid group operator $S_X$ defined in terms of longest elements of the finite Weyl group $W_X$.
Let $\Gg_{\theta,\Parc}$ be the set of gauge transformations of the form $\gau = S_Y^{-1} S_X \bmb^{-1}$ where

\vspace{1mm}
\begin{itemize}\itemsep2mm
\item $Y \subseteq \IS \backslash \{ 0,\tau(0) \}$;
\item $\bmb\colon \Pext \to \bsF^\times$ satisfies $\bmb(\delta) = \Parc(\delta)$.
\end{itemize}
\vspace{1mm}

Recall the distinguished cylindrical structure $ (\phi_q^{-1},R_\theta,\QK{})$ where the universal K-matrix is given by the quasi K-matrix.
Whenever $0\in X$, we cannot use this cylindrical structure, the key obstacle being Lemma \ref{lem:Sigmaz:properties} \ref{spectralinversion} below.
Instead, we fix $\gau\in\Gg_{\theta,\Parc}$ and the corresponding transformed cylindrical structures.
\eq{ \label{cylindricalstructure:affine}
	(\psi, J, \KM{}) = ( \Ad(\gau) \circ \phi_q^{-1}, (\gau\ten\gau) \cdot R_\theta \cdot \Delta(\gau)^{-1}, \gau \cdot \QK{}).
}
Note indeed that $1 \in \Gg_{\theta,\Parc}$ if and only if $0\not\in X$.

\begin{lemma} \label{lem:Sigmaz:properties}
The following properties are satisfied by the cylindrical structure \eqref{cylindricalstructure:affine}.
\vspace{1mm}
\begin{enumerate}\itemsep2mm
\item \label{spectralinversion}
$\Sigma_z \circ \psi = \psi \circ \Sigma_{z^{-1}}$.
\item 
$(\Sigma_z \ten \id)(R^{\psi}) = R(z)^{\psi}$.
\item 
$(\Sigma_z \ten \id)(J) = J$.
\item 
$\KM{}(z) := \Sigma_z(\KM{}) \in \Lfml{\End(\FF{\fdUqL}{})}{z}$. 
If $0\not\in X$, $\KM{}(z) \in \fml{\End(\FF{\fdUqL}{})}{z}$.
\end{enumerate}
\end{lemma}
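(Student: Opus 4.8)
The plan is to prove the four statements in order; the first, \ref{spectralinversion}, is the essential one, and I would argue as follows. Record first that $\psi = \Ad(\gau)\circ\phi_q^{-1}$ respects the root-lattice grading of $\Uqg$: by \eqref{thetaq:def} and \eqref{phiq:def}, $\phi_q^{-1}$ maps $\Uqg_\la$ to $\Uqg_{\theta(\la)}$ with $\theta = -w_X\circ\tau$ on $\fkh^*$ (as $\theta_q$ acts on weights through this involution and $\Ad(\bmb^{\pm1})$ is weight-preserving), while $\Ad(S_X)$ and $\Ad(S_Y^{-1})$ map $\Uqg_\mu$ to $\Uqg_{w_X(\mu)}$ and $\Uqg_{w_Y(\mu)}$ respectively; composing, $\psi(\Uqg_\la) = \Uqg_{-w_Y\tau(\la)}$. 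Since $\Sigma_z$ multiplies $\Uqg_\nu$ by $z^{\langle\nu,\sfd\rangle}$, where $\sfd$ is determined by $\langle\al_i,\sfd\rangle = s_i$, the identity $\Sigma_z\circ\psi = \psi\circ\Sigma_{z^{-1}}$ is equivalent to $\langle w_Y\tau(\la),\sfd\rangle = \langle\la,\sfd\rangle$ for all $\la$. This holds because $\langle\,\cdot\,,\sfd\rangle$ is $\tau$-invariant (the tuple $(s_i)_{i\in\IS}$ being $\tau$-invariant by construction) and $w_Y$-invariant ($Y\cap\{0,\tau(0)\}=\emptyset$ forces $\langle\al_i,\sfd\rangle = s_i = 0$ for $i\in Y$). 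Equivalently one may check the identity on the generators $E_i,F_i,K_h$ directly. I expect this to be the main obstacle: for $\psi = \phi_q^{-1}$ alone the identity fails as soon as $0\in X$, because the two factors $w_X$ inside $\theta$ no longer cancel against $\Sigma_z$, and the presence of $S_X$ in the gauge $\gau$ is exactly what repairs this -- which is why one passes to $\Gg_{\theta,\Parc}$.

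Part (b) then follows at once from \ref{spectralinversion} and the identity $(\Sigma_{z^{-1}}\ten\id)(R) = R(z)$ of \eqref{spectralR:def}:
\[
(\Sigma_z\ten\id)(R^\psi) = \big((\Sigma_z\circ\psi)\ten\id\big)(R) = \big((\psi\circ\Sigma_{z^{-1}})\ten\id\big)(R) = (\psi\ten\id)\big((\Sigma_{z^{-1}}\ten\id)(R)\big) = R(z)^\psi .
\]
For (c), I would use the factorization $R_\theta = (\bt{\tsat}\ten\bt{\tsat})^{-1}\Delta(\bt{\tsat})$ of \eqref{Rtheta:factorization}, together with $\bt{\tsat} = \tcorr{\tsat}\,S_X$ from \eqref{braidtheta:def}, to rewrite
\[
J = (\gau\ten\gau)\cdot R_\theta\cdot\Delta(\gau)^{-1} = (h\ten h)\cdot\Delta(h^{-1}), \qquad h := \gau\,\bt{\tsat}^{-1} = S_Y^{-1}\cdot\Ad(S_X)(\bmb^{-1})\cdot\tcorr{\tsat}^{-1},
\]
so that $h$ equals $S_Y^{-1}$ times a diagonal (weight) operator. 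As in the proof of \ref{spectralinversion}, $\Sigma_z$ fixes $S_Y$ (because $\langle\al_i,\sfd\rangle = 0$ for $i\in Y$) and fixes every diagonal operator, whence $\Sigma_z(h) = h$; and expanding $\Delta(S_Y)$ by the analogue of \eqref{Rtheta:factorization} for $U_q(\g_Y)$ shows that both tensor legs of $\Delta(h^{-1})$ lie in the degree-zero subalgebra generated by $U_q(\g_Y)$ and diagonal operators, on which $\Sigma_z\ten\id$ acts as the identity. Hence $(\Sigma_z\ten\id)(J) = J$. Note that the $\Theta_X$-part of $R_\theta$, which fails to have degree zero when $0\in X$, has cancelled against the $S_X$ inside $\gau$.

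Finally, for (d), write $\KM{}(z) = \Sigma_z(\gau)\cdot\Sigma_z(\QK{})$. By Theorem~\ref{thm:av-k-mx}, $\QK{} = \sum_{\mu\in(\Qp)^{-\tsat}}\QK{\mu}$ with $\QK{\mu}$ of weight $\mu$, so $\Sigma_z(\QK{}) = \sum_\mu z^{\langle\mu,\sfd\rangle}\QK{\mu}$ with $\langle\mu,\sfd\rangle = \sum_i n_i s_i \ge 0$ for $\mu = \sum_i n_i\al_i \in \Qp$; since only finitely many $\QK{\mu}$ act nontrivially on any given finite-dimensional module, $\Sigma_z(\QK{}) \in \fml{\End(\FF{\fdUqL}{})}{z}$. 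For the gauge factor, $\Sigma_z(\gau) = S_Y^{-1}\cdot\Sigma_z(S_X)\cdot\bmb^{-1}$: if $0\notin X$, then $\Sigma_z$ restricts to the identity on $\UqgX$ (as $\tau(X) = X$ forces $0,\tau(0)\notin X$), so $\Sigma_z(\gau) = \gau$ and $\KM{}(z) \in \fml{\End(\FF{\fdUqL}{})}{z}$; if $0\in X$, then $\Sigma_z(S_X)$ involves $\Sigma_z(T_0)$, which acts on a weight vector of weight $\la$ as $z^{-\la(h_0)}$ times $T_0$ and is therefore a Laurent \emph{polynomial} in $z$ on any finite-dimensional module (the weights being bounded), so $\KM{}(z) \in \Lfml{\End(\FF{\fdUqL}{})}{z}$.
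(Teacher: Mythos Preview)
Your proof is correct and follows essentially the same approach as the paper, though with considerably more detail. The paper's proof is very terse: for (a) it simply observes that $\Sigma_z$ is the $\tau$-minimal grading shift; for (c) it asserts that $J$ is a Cartan correction of $\Theta_Y$ (your factorization $J=(h\ten h)\Delta(h^{-1})$ with $h=S_Y^{-1}\cdot(\text{diagonal})$ is precisely what underlies this claim); and for (d) it refers to \cite[Sec.~4]{AV22b}. Your explicit weight computation in (a), showing $\psi(\Uqg_\la)=\Uqg_{-w_Y\tau(\la)}$ and reducing to the $\tau$- and $w_Y$-invariance of $\langle\,\cdot\,,\sfd\rangle$, makes transparent exactly why the gauge by $S_X$ is needed when $0\in X$, and your treatment of (d) spells out the weight-shift behaviour of $S_X$ under $\Sigma_z$ that the paper leaves implicit.
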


\begin{proof} \mbox \\
\begin{enumerate}\itemsep2mm
\item 
It is enough to observe that $\Sigma_z$ is the $\tau$-minimal grading shift.

\item 
This follows from \ref{spectralinversion} and \eqref{spectralR:def}.

\item 
Note that $\gau = S_Y^{-1} S_X \bmb^{-1}$ and $R_\theta = (\bt{\tsat}\ten\bt{\tsat})^{-1} \cdot \Delta(\bt{\tsat})$ by \eqref{Rtheta:factorization}.
Considering \eqref{braidtheta:def}, it follows that the Drinfeld twist $J = (\gau\ten\gau)\cdot R_\theta \cdot \Delta(\gau)^{-1}$ is given by a Cartan correction of the quasi R-matrix $\Theta_Y$ of $U_q(\g_Y)$. 
The result follows, since $0,\tau(0)\not\in Y$. 

\item 
This is readily verified by direct inspection, see \eg \cite[Sec.~4]{AV22b}. \hfill \qedhere
\end{enumerate}
\end{proof}

From Proposition \ref{prop:gauge-tensor-k-matrix} we recall the tensor K-matrix associated to the cylindrical structure \eqref{cylindricalstructure:affine}, which in the current notation reads:
\eq{
\TKM{} = R_{21}^{\psi} \cdot (1\ten \KM{}) \cdot R \in\End(\FF{}{}\boxtimes\FF{X\cup Y}{}).
} 
Let $\Del_z = (\id \ten \Sigma_z) \circ \Del$ be the shifted coproduct. 
We have the following result for the shifted tensor K-matrix $\TKM{}$.

\begin{theorem} \label{thm:spectraltensorK}
Set 
\eq{  \label{tensorK:grading}
\TKM{}(z) =  (\id \ten \Sigma_z)(\TKM{}) = R(z)_{21}^{\psi} \cdot (1 \ten \KM{}(z)) \cdot R(z)
}
in $\Lfml{\End(\FF{\fdUqL}{}\boxtimes\FF{\fdUqL}{})}{z}$.
Then $\TKM{}(z)$ has a canonical lift in $\Lfml{\End(\FF{\theta}{}\boxtimes\FF{\fdUqL}{})}{z}$, satisfying:
\begin{itemize}\itemsep2mm
\item the following intertwining identity in $\Lfml{\End(\FF{\theta}{}\boxtimes\FF{\fdUqL}{})}{z}$:
\eq{ \label{formalK:intw}
\TKM{}(z) \cdot \Delta_z(b) = (\id \ten \psi)(\Delta_{z^{-1}}(b)) \cdot \TKM{}(z)
\qquad \text{for all }b \in \Uqk
}
\item the coproduct identities
\begin{align}
(\Delta_{z_1} \ten \id)(\TKM{}(z_2)) &= R(z_1z_2)^{\psi}_{32} \cdot \TKM{}(z_2)_{13} \cdot R(\tfrac{z_2}{z_1})_{23}, \\
(\id \ten \Delta_{z_2/z_1})(\TKM{}(z_1)) &= J_{23}^{-1} \cdot \TKM{}(z_2)_{13} \cdot R(z_1z_2)^\psi_{23} \cdot \TKM{}(z_1)_{12},
\end{align}
in $\Lfml{\End(\FF{\theta}{}\boxtimes\FF{\fdUqL}{}\boxtimes\FF{\fdUqL}{})}{z_1, z_2/z_1}$;
\item the generalized spectral reflection equation
\begin{align}
R(\tfrac{z_2}{z_1})^{\psi \psi}_{32} \cdot \TKM{}(z_2)_{13} \cdot R(z_1z_2)^{\psi}_{23} \cdot \TKM{}(z_1)_{12} = \TKM{}(z_1)_{12} \cdot R(z_1z_2)^{\psi}_{32} \cdot  \TKM{}(z_2)_{13} \cdot R(\tfrac{z_2}{z_1})_{23}
\end{align}
in $\Lfml{\End(\FF{\theta}{}\boxtimes\FF{\fdUqL}{}\boxtimes\FF{\fdUqL}{})}{z_1, z_2/z_1}$.
\end{itemize}
\vspace{1.5mm}
Moreover, if $0\not\in X$, then $\TKM{}(z)\in\fml{\End(\FF{\theta}{}\boxtimes\FF{\fdUqL}{})}{z}$.
\end{theorem}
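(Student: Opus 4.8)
The plan is to reduce everything to what was already established, namely Theorem~\ref{thm:reflectionalgebra:QSP} and Proposition~\ref{prop:gauge-tensor-k-matrix} applied to the gauge $\gau\in\Gg_{\theta,\Parc}$, together with the spectral-shift compatibilities collected in Lemma~\ref{lem:Sigmaz:properties}. First I would recall that, by Proposition~\ref{prop:gauge-tensor-k-matrix} with $Z=X\cup Y$, the (un-shifted) tensor K-matrix $\TKM{}=R_{21}^\psi\cdot(1\ten\KM{})\cdot R$ admits a canonical lift in $\End(\FF{\theta}{}\boxtimes\FF{\pint{X\cup Y}}{})$ satisfying \eqref{eq:2K-intw-QSP}, \eqref{eq:2K-coprod-QSP-1}, \eqref{eq:2K-coprod-QSP-2} and \eqref{eq:2K-RE-QSP}. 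The first task is to show that applying $\id\ten\Sigma_z$ makes sense: by Lemma~\ref{lem:Sigmaz:properties}\ref{spectralinversion}--(d), each tensor factor $R(z)$, $R(z)^\psi_{21}$ and $\KM{}(z)$ is a well-defined formal Laurent series (a formal power series if $0\notin X$) in $z$ with natural action on tensor products in $\fdUqL$, exactly as in \cite[Sec.~4]{AV22b}; combined with the support statement $\TKM{}=\WO{\id+\theta}\cdot\TQK{X}\cdot\TQK{}$ from Theorem~\ref{thm:reflectionalgebra:QSP}\ref{support:WthetaW} (which survives the gauge transformation by $1\ten\gau$), one gets that $\TKM{}(z)=(\id\ten\Sigma_z)(\TKM{})$ lies in $\Lfml{\End(\FF{\theta}{}\boxtimes\FF{\fdUqL}{})}{z}$, and in $\fml{\cdots}{z}$ when $0\notin X$ since then $\KM{}(z)$ has no negative powers and the remaining factors $R(z)$, $R(z)^\psi_{21}$ are genuine power series.

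Next I would transport each of the identities. The point is that $\Sigma_z$ is an algebra homomorphism $\UqLg\to\UqLg[z,z^{-1}]$, so $\id\ten\Sigma_z$ (respectively its two- and three-leg analogues on triple tensor products, using a common variable or ratios of variables on the legs being shifted) is an algebra map on the relevant Drinfeld-type completions. Applying $\id\ten\Sigma_z$ to the intertwining identity \eqref{eq:2K-intw-QSP}: the left-hand factor $\TKM{}\cdot\Delta(b)$ becomes $\TKM{}(z)\cdot\Delta_z(b)$ by definition of $\Delta_z$, while the right-hand factor $(\id\ten\psi)(\Delta(b))\cdot\TKM{}$ becomes $(\id\ten\psi)((\id\ten\Sigma_z)(\Delta(b)))\cdot\TKM{}(z)$; using Lemma~\ref{lem:Sigmaz:properties}\ref{spectralinversion} in the form $\Sigma_z\circ\psi=\psi\circ\Sigma_{z^{-1}}$ rewrites $(\id\ten\Sigma_z\psi)=(\id\ten\psi\Sigma_{z^{-1}})$, producing $(\id\ten\psi)(\Delta_{z^{-1}}(b))$, which is \eqref{formalK:intw}. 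For the two coproduct identities and the reflection equation I would apply the appropriate multi-leg shift: for \eqref{eq:2K-coprod-QSP-1} apply $\Sigma_{z_1}$ to leg~1 and $\Sigma_{z_2}$ to the surviving $\Uqg$-leg (after $\Delta\ten\id$, so $\Sigma_{z_2}$ on leg~3), convert $R^\psi_{32}\mapsto R(z_1z_2)^\psi_{32}$ via Lemma~\ref{lem:Sigmaz:properties}(b) and the fact that $\Sigma$ is multiplicative on the grading, $R_{23}\mapsto R(z_2/z_1)_{23}$, and $\TKM{13}\mapsto\TKM{}(z_2)_{13}$ — bookkeeping of which variable lands on which factor is the only real care needed, and it follows the same pattern as the spectral Yang--Baxter equation \eqref{YBE:spec}. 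Identity \eqref{eq:2K-coprod-QSP-2} and \eqref{eq:2K-RE-QSP} are treated identically, using also Lemma~\ref{lem:Sigmaz:properties}(c) for the $J$-factor (which is unshifted since $0,\tau(0)\notin Y$) and (b) again for the $R^{\psi\psi}$-factor.

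The main obstacle I anticipate is \emph{not} any single calculation but the careful tracking of spectral parameters across the tensor legs together with the asymmetry introduced by $\Sigma_z\circ\psi=\psi\circ\Sigma_{z^{-1}}$: because $\psi$ inverts the spectral parameter, applying $\id\ten\Sigma_z$ to a product involving both $R^\psi$ and $K$ does not simply substitute a single $z$ everywhere, and one must check that the substitutions $z\mapsto z^{-1}$, $z_2/z_1$, $z_1z_2$ appearing in the statement are exactly what is forced. The cleanest way to control this is to first verify the shifted identities at the level of formal Laurent series in $\Lfml{\End(\FF{\fdUqL}{\bullet})}{z,\ldots}$ — where they are literal images under an algebra homomorphism of the identities in Theorem~\ref{thm:reflectionalgebra:QSP}/Proposition~\ref{prop:gauge-tensor-k-matrix} — and only afterwards invoke the support/lift statement to descend the $\Uqk$-leg from $\OUqg$ to $\WUqk$, exactly as in the proof of Theorem~\ref{thm:reflectionalgebra:QSP}\ref{support:WthetaW}. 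Finally, the refinement ``if $0\notin X$ then $\TKM{}(z)\in\fml{\cdots}{z}$'' is immediate from Lemma~\ref{lem:Sigmaz:properties}(d), since in that case every factor in \eqref{tensorK:grading} is a power series in $z$.
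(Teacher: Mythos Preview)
Your proposal is correct and follows essentially the same approach as the paper: reduce to Theorem~\ref{thm:reflectionalgebra:QSP} and Proposition~\ref{prop:gauge-tensor-k-matrix}, and use Lemma~\ref{lem:Sigmaz:properties} to control the grading shift on each factor. The paper's own proof is considerably more terse (it simply notes that $S_X,S_Y$ act on finite-dimensional $\UqLg$-modules and that $\gau$ acts on $V(z)$ through Laurent polynomials, then appeals directly to Theorem~\ref{thm:reflectionalgebra:QSP}); your careful bookkeeping of which shift lands on which leg---noting in particular that $\Sigma_z\circ\psi=\psi\circ\Sigma_{z^{-1}}$ forces the combinations $z_1z_2$ and $z_2/z_1$---is exactly what lies behind the paper's ``readily follows'', though note a small slip: for \eqref{eq:2K-coprod-QSP-1} the shift $\Sigma_{z_1}$ goes on leg~2 (the new leg produced by $\Delta$), not leg~1.
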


\begin{proof}
It is enough to observe that $S_X, S_Y$ are well-defined operators on finite-dimensional $\UqLg$-modules, $\gau$ acts on $V(z)$ through Laurent polynomials, and therefore $\TKM{}(z)\in\Lfml{\End(\FF{\fdUqL}{}\boxtimes\FF{\fdUqL}{})}{z}$.
Then, the result readily follows from Theorem~\ref{thm:reflectionalgebra:QSP}.
\end{proof}

\subsection{Trigonometric tensor K-matrices} \label{ss:trigtensorK:sub}

The construction of the spectral tensor K-matrix in Theorem \ref{thm:spectraltensorK} implies the following intermediate result.

\begin{lemma}
Let $M\in\fdUqk$ and $V\in\fdUqL$. 
There exists a QSP intertwiner
\[
\rTKM{MV}{z} \colon M \ten V(z) \to M \ten V^{\psi}(z^{-1}).
\] 
\end{lemma}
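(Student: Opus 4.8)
The plan is to realize $\rTKM{MV}{z}$ as the action of the spectral tensor K-matrix $\TKM{}(z)$ of Theorem~\ref{thm:spectraltensorK} on the tensor product $M\ten V$, and to verify that this action lands in the correct target module. First I would apply Theorem~\ref{thm:spectraltensorK}: since $M\in\fdUqk\subseteq\WUqk$ and $V\in\fdUqL$, the operator $\TKM{}(z)$ has a canonical lift in $\Lfml{\End(\FF{\theta}{}\boxtimes\FF{\fdUqL}{})}{z}$. Evaluating the collection $\TKM{}(z)$ at the pair $(M,V)$ yields a linear operator $\TKM{}(z)_{MV}$ on $M\ten V\ten\Lfml{\bsF}{z}$, \emph{a priori} depending on $z$ as a formal Laurent series. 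Because $M$ and $V$ are finite-dimensional, $M\ten V$ is finite-dimensional over $\bsF$, so this is a Laurent-series-valued endomorphism of a finite-dimensional space; I would denote the corresponding $\bsF(z)$-linear map (or, if one prefers, the $\Lfml{\bsF}{z}$-linear map) on $M\ten V(z)$ by $\rTKM{MV}{z}$. When $0\notin X$ the series is in fact an honest power series, but this refinement is not needed for the present statement.

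The core of the argument is then to identify the source and target $\Uqk$-module structures. For the source: $M\ten V(z)$ carries the $\Uqk$-action obtained by restricting the shifted coproduct $\Delta_z=(\id\ten\Sigma_z)\circ\Delta$ along $\Uqk\hookrightarrow\UqLg$, exactly as in the module-category structure of $\WUqk$ over $\fdUqL$. For the target: I claim $M\ten V^{\psi}(z^{-1})$ is the pullback of $M\ten V$ along the twisted shifted coproduct $(\id\ten\psi)\circ\Delta_{z^{-1}}$; here $V^\psi$ is the pullback of $V$ under $\psi$, which lies in $\fdUqL$ since $\psi=\Ad(\gau)\circ\phi_q^{-1}$ preserves finite-dimensionality (it is built from $\tsat_q$, $\Ad(\Parc^{-1})$, and the gauge element $\gau=S_Y^{-1}S_X\bmb^{-1}$, all of which act sensibly on $\fdUqL$, cf.\ Lemma~\ref{lem:Sigmaz:properties}). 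With these two module structures in hand, the intertwining property is precisely the evaluation at $(M,V)$ of the formal identity \eqref{formalK:intw},
\[
\TKM{}(z)\cdot\Delta_z(b)=(\id\ten\psi)(\Delta_{z^{-1}}(b))\cdot\TKM{}(z)\qquad\text{for all }b\in\Uqk,
\]
which holds in $\Lfml{\End(\FF{\theta}{}\boxtimes\FF{\fdUqL}{})}{z}$ and hence, after specializing the forgetful-functor components to $(M,V)$, in $\Lfml{\End(M\ten V)}{z}$. This says exactly that $\rTKM{MV}{z}$ intertwines the $\Uqk$-action on $M\ten V(z)$ with that on $M\ten V^{\psi}(z^{-1})$, i.e.\ it is a QSP intertwiner.

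The one genuine point requiring care — and what I expect to be the main obstacle — is the bookkeeping around the target module: one must check that pulling $M\ten V$ back along $(\id\ten\psi)\circ\Delta_{z^{-1}}$ genuinely coincides with $M\ten V^\psi(z^{-1})$ as an object of $\WUqk$ (equivalently, as a $\Uqk$-module over $\Lfml{\bsF}{z}$), using that $\psi$ commutes with the grading shift up to inversion, $\Sigma_z\circ\psi=\psi\circ\Sigma_{z^{-1}}$ (Lemma~\ref{lem:Sigmaz:properties}\ref{spectralinversion}), so that $\Sigma_{z^{-1}}\circ$(the $\psi$-pullback of $V$) is the $\psi$-pullback of $\Sigma_z(V)$ — this is what aligns the $z^{-1}$ on the target with the $z$ on the source. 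Once this identification of module structures is pinned down, everything else is a direct read-off of Theorem~\ref{thm:spectraltensorK}, and the finite-dimensionality of $M$ and $V$ guarantees that $\rTKM{MV}{z}$ is a well-defined linear map between finite-dimensional $\Lfml{\bsF}{z}$-modules rather than a merely formal object. I would close by remarking that $\rTKM{MV}{z}$ is moreover invertible over $\Lfml{\bsF}{z}$, since the factorization $\TKM{}(z)=R(z)_{21}^{\psi}\cdot(1\ten\KM{}(z))\cdot R(z)$ exhibits it as a product of invertible operators, though invertibility is not part of the asserted statement.
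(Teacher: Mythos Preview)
Your approach has a genuine gap. The lemma asks for an intertwiner $M\ten V(z)\to M\ten V^\psi(z^{-1})$, and in the paper $V(z)=V\ten\bsF(z)$ is a module over the \emph{rational function field} $\bsF(z)$, not over $\Lfml{\bsF}{z}$. The operator $\TKM{}(z)_{MV}$ that you propose to use is only a formal Laurent series in $z$; its matrix entries are elements of $\Lfml{\bsF}{z}$ and there is no reason \emph{a priori} for them to lie in $\bsF(z)$. Your parenthetical ``(or, if one prefers, the $\Lfml{\bsF}{z}$-linear map)'' is exactly where the argument breaks: these are not interchangeable, and the whole point of this lemma --- situated in the section on \emph{trigonometric} tensor K-matrices --- is to produce a rational intertwiner. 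Indeed, the paper remarks immediately after the proof that it is presently \emph{unclear} whether the rational intertwiner $\rTKM{MV}{z}$ of the lemma is obtained by normalizing $\TKM{}(z)_{MV}$; your proposal would have resolved this open question by fiat.

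The paper's argument is different and short: the intertwining condition $\rTKM{MV}{z}\cdot\Delta_z(b)=(\id\ten\psi)(\Delta_{z^{-1}}(b))\cdot\rTKM{MV}{z}$ for the generators $b$ of $\Uqk$ is a finite homogeneous linear system for the matrix entries of $\rTKM{MV}{z}$, with coefficients in $\bsF(z)$ (since $M$ and $V$ are finite-dimensional and the $\Uqk$-actions involve only polynomial dependence on $z^{\pm1}$). The role of $\TKM{}(z)_{MV}$ from Theorem~\ref{thm:spectraltensorK} is merely to witness that this system is \emph{consistent}, i.e.\ has a nonzero solution over the extension $\Lfml{\bsF}{z}$. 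A homogeneous linear system over a field that acquires a nonzero solution over a field extension already has one over the base field; hence a nonzero solution exists over $\bsF(z)$. Your computation identifying source and target module structures via Lemma~\ref{lem:Sigmaz:properties}\ref{spectralinversion} is correct and is implicitly used to set up the linear system, but it does not by itself deliver a rational intertwiner.
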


\begin{proof}
This is a standard argument, see \eg \cite[Lemma~5.1.1]{AV22b}.
Namely, such an intertwiner $\rTKM{MV}{z}$ by definition satisfies a finite linear system.
Owing to \eqref{formalK:intw}, this system is consistent.
Since the system is defined over $\C(z)$, it has a solution defined over $\C(z)$.
\end{proof}

At present, in general it is unclear if $\rTKM{MV}{z}$ is obtained by normalizing the action of $\TKM{}(z)$ on $M \ten \Lfml{V}{z}$. 
When $M$ and $V$ are irreducible, it would follow automatically if the tensor product $M \ten V$ is (generically) irreducible over $\Uqk$.
The description of such modules is currently out of reach.

Instead, by relying on the construction of the trigonometric K-matrices carried out in \cite{AV22b}, we can prove the appearance of trigonometric tensor K-matrices when $M\in\fdUqk$ is the restriction of an irreducible $\UqLg$-module. 
Namely, in \cite[Thm.~5.1.2 (2)]{AV22b}, we proved that, for any irreducible $V\in\fdUqL$, there exist a formal Laurent series $g_V(z)\in\Lfml{\bsF}{z}$ and a non-vanishing operator-valued polynomial $\rKM{V}{z}\in\End(V)[z]$, uniquely defined up to a scalar in $\bsF^\times$, such that
\begin{equation}\label{eq:av-trig-k}
\KM{V}(z)=g_V(z)\cdot\rKM{V}{z}.
\end{equation}
Then, we have the following.

\begin{theorem} \label{thm:trigtensorK:1}
Let $M\in\fdUqk$ be the restriction of an irreducible object in $\fdUqL$. 
For any irreducible $V\in\fdUqL$, set
\begin{equation}
\rTKM{MV}{z} = \rnRM{V^\psi M}{z}_{21} \cdot (\id_M \ten \rKM{V}{z}) \cdot \rnRM{MV}{z}
\end{equation}
in $\End(M \ten V)(z)$, where $\rnRM{}{z}$ denotes the unitary R-matrix from Section~\ref{ss:UqL:Rmatrices}.
\vspace{1mm}
\begin{enumerate}\itemsep2mm
\item
The operator-valued rational function $\rTKM{MV}{z}$ is non-vanishing and there exist a formal Laurent series $h_{MV}(z)\in\Lfml{\bsF}{z}$ such that
\begin{equation}\label{eq:trig-tensor-K}
\TKM{MV}(z)=h_{MV}(z)\cdot\rTKM{MV}{z}.
\end{equation}

\item 
For any irreducible $V,W\in\fdUqL$, the generalized spectral reflection equation hold
\eq{ \label{spectralRE:tensor}
\begin{aligned}
& {\rnRM{{\Wpsi} {\Vpsi}}{\tfrac{z_2}{z_1}}}_{32} \cdot  \rTKM{MW}{z_2}_{13} \cdot \rnRM{{\Vpsi} W}{z_1z_2}_{23} \cdot \rTKM{MV}{z_1}_{12} = \\
& \qq = \rTKM{MV}{z_1}_{12} \cdot \rnRM{{\Wpsi} V}{z_1z_2}_{32} \cdot \rTKM{MW}{z_2}_{13} \cdot \rnRM{VW}{\tfrac{z_2}{z_1}}_{23}.
\end{aligned}
}

\end{enumerate}
\end{theorem}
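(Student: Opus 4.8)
The plan is to deduce everything from the universal statements already established, namely Theorem~\ref{thm:reflectionalgebra:QSP} and its affine, grading-shifted incarnation Theorem~\ref{thm:spectraltensorK}, together with the known structure of trigonometric R-matrices and K-matrices from Section~\ref{ss:UqL:Rmatrices} and \cite[Thm.~5.1.2]{AV22b}. First I would record that, because $M$ is the restriction of an irreducible object $\widetilde M \in \fdUqL$ and $V \in \fdUqL$ is irreducible, the tensor products $M \ten V(z)$ and $M \ten V^\psi(z^{-1})$, viewed as $\UqLg$-modules via $\widetilde M$, are generically irreducible (here $V^\psi$ is again an irreducible $\UqLg$-module, as $\psi$ is an algebra automorphism). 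Hence the space of $\UqLg$-intertwiners $M \ten V(z) \to M \ten V^\psi(z^{-1})$ over $\C(z)$ is at most one-dimensional; a fortiori the same holds for $\Uqk$-intertwiners, and by the previous lemma this space is exactly one-dimensional. Therefore any two nonzero elements of $\End(M \ten V)(z)$ that both intertwine the $\Uqk$-actions through $\Delta_z$ and $(\id\ten\psi)\circ\Delta_{z^{-1}}$ differ by a scalar factor in $\Lfml{\bsF}{z}$.

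Next I would assemble the operator $\rTKM{MV}{z}$ and verify it is a $\Uqk$-intertwiner of the required type. The factorization \eqref{tensorK:grading} gives $\TKM{}(z) = R(z)_{21}^\psi \cdot (1\ten \KM{}(z)) \cdot R(z)$, which by Theorem~\ref{thm:spectraltensorK} satisfies the intertwining identity \eqref{formalK:intw}; on the other hand, substituting $R_{VW}(z) = f_{VW}(z)\rnRM{VW}{z}$ (and the twisted variants, using $R^\psi = (\psi\ten\id)(R)$, $R^\psi_{21}$, applied to the pair $(V^\psi, M)$ or $(M, V)$ as appropriate) and $\KM{V}(z) = g_V(z)\rKM{V}{z}$ shows that
\[
\TKM{MV}(z) = f_{V^\psi M}(z)\, g_V(z)\, f_{MV}(z) \cdot \rTKM{MV}{z},
\]
so $\TKM{MV}(z)$ and $\rTKM{MV}{z}$ differ by the scalar $h_{MV}(z) := f_{V^\psi M}(z)\, g_V(z)\, f_{MV}(z) \in \Lfml{\bsF}{z}$, and $\rTKM{MV}{z}$ is non-vanishing because each factor in its definition is (the normalized R-matrices are non-vanishing by Section~\ref{ss:UqL:Rmatrices}, and $\rKM{V}{z}$ is non-vanishing by \eqref{eq:av-trig-k}). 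Since $h_{MV}(z)$ is a nonzero scalar and $\TKM{MV}(z)$ is a $\Uqk$-intertwiner of the stated type, so is $\rTKM{MV}{z}$; this proves part (a). For part (b), I would start from the generalized spectral reflection equation for $\TKM{}(z)$ in Theorem~\ref{thm:spectraltensorK}, which holds in $\Lfml{\End(\FF{\theta}{}\boxtimes\FF{\fdUqL}{}\boxtimes\FF{\fdUqL}{})}{z_1,z_2/z_1}$, evaluate it on $M \ten V \ten W$ with $\widetilde M$, $V$, $W$ irreducible, and replace each $\TKM{}$ by $h \cdot \rnRM{}{} \cdots$ and each spectral R-matrix by its normalized counterpart times a scalar. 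The scalar prefactors on the two sides must agree because one side is a nonzero scalar multiple of the other and both are equalities of operators; after cancelling them (using that trigonometric R- and K-matrices are generically invertible, so none of the relevant operators is identically zero), \eqref{spectralRE:tensor} follows.

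The main obstacle I anticipate is bookkeeping rather than conceptual: one must be careful that all the scalar prefactors $f$, $g$, $h$ are genuinely invertible elements of $\Lfml{\bsF}{z}$ (or of the appropriate two-variable Laurent series ring) so that they can be cancelled, and that the R-matrix normalizations in the twisted legs $R^\psi$, $R^{\psi\psi}$ are the ones obtained by pulling back $\rnRM{}{z}$ through $\psi$ — this requires checking that $\psi$ sends highest-weight vectors of $V$ to highest-weight vectors of $V^\psi$ (up to scalar), so that the chosen normalization of $\rnRM{V^\psi\,\cdot}{z}$ is compatible. A secondary point is the passage from $\UqLg$-generic-irreducibility to the one-dimensionality of the $\Uqk$-intertwiner space: since $\Uqk \subseteq \UqLg$, a $\UqLg$-intertwiner is automatically a $\Uqk$-intertwiner, so the $\UqLg$-Schur argument bounds the $\Uqk$-Hom space from below by the dimension of the $\UqLg$-Hom space — but one needs the reverse bound, which is exactly the content of generic irreducibility of $M\ten V$ as a $\Uqk$-module; here I would instead invoke the standard finite-linear-system argument (as in the proof of the preceding lemma and \cite[Sec.~5]{AV22b}) directly, bypassing any delicate claim about $\Uqk$-module structure and using only that the system defining $\rTKM{MV}{z}$ has a one-dimensional solution space over $\C(z)$ because $\TKM{MV}(z)$ already provides one nonzero solution and Schur over $\UqLg$ forbids two.
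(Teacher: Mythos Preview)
Your core argument for part (a) — the direct substitution of the factorizations $R_{AB}(z)=f_{AB}(z)\rnRM{AB}{z}$ and $\KM{V}(z)=g_V(z)\rKM{V}{z}$ into \eqref{tensorK:grading} to produce $h_{MV}(z)=f_{V^\psi M}(z)g_V(z)f_{MV}(z)$ — is exactly what the paper does, and it is enough by itself. The Schur discussion you layer on top is unnecessary: once you have written $\TKM{MV}(z)=h_{MV}(z)\cdot\rTKM{MV}{z}$ explicitly, the intertwining property of $\rTKM{MV}{z}$ and the non-vanishing are immediate; no one-dimensionality statement is required. Worse, your attempt to bound the $\Uqk$-intertwiner space using generic $\UqLg$-irreducibility of $\widetilde M\ten V(z)$ goes the wrong way: $\Uqk\subset\UqLg$ gives $\Hom_{\UqLg}\subseteq\Hom_{\Uqk}$, so $\UqLg$-Schur bounds $\Hom_{\Uqk}$ from \emph{below}, not from above. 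You flag this concern yourself at the end, but the right conclusion is to drop the Schur argument altogether.

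For part (b) there is a genuine gap. From the universal spectral reflection equation you obtain $s_L\cdot A = s_R\cdot B$ with $s_L,s_R$ scalar formal series and $A,B$ the two sides of \eqref{spectralRE:tensor}. Your claim that ``the scalar prefactors on the two sides must agree'' does not follow from this alone: one only gets $A=(s_R/s_L)B$, and nothing you have written forces $s_R/s_L=1$. The paper's route (implicit in citing \cite[Thm.~5.1.2]{AV22b}) avoids the scalar bookkeeping entirely: since $\rTKM{MV}{z}$ is defined by the \emph{same} factorization formula as $\TKM{MV}(z)$ but with the normalized ingredients $\rnRM{}{z}$ and $\rKM{V}{z}$, the algebraic derivation of the tensor reflection equation from the Yang--Baxter equation and the \emph{basic} reflection equation (cf.\ Proposition~\ref{prop:tensorK:RE}) applies verbatim to the trigonometric objects, because $\rnRM{}{z}$ satisfies the trigonometric YBE and $\rKM{V}{z}$ satisfies the trigonometric basic RE by \cite[Thm.~5.1.2]{AV22b}. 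That is the missing input you need; once you invoke it, \eqref{spectralRE:tensor} follows without any cancellation argument.
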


\begin{proof}
The result follows immediately from \eqref{tensorK:grading} and \cite[Thm.~5.1.2]{AV22b}.
\end{proof}

\begin{remarks}\hfill
\vspace{1mm}
\begin{enumerate}\itemsep2mm
\item 
Consider the q-Onsager algebra, which is the QSP subalgebra $\Uqk \subset U_q(\wh\fksl_2)$ with $\theta=\omega$, see \cite{BK05}. 
In \cite{IT09}, Ito and Terwilliger proved that, up to isomorphism, every finite-dimensional irreducible module over $\Uqk$ is the restriction of a $U_q(\wh\fksl_2)$-module. 
This implies that, in this case, Theorem~\ref{thm:trigtensorK:1} is valid for every irreducible module in $\fdUqk$.

\item 
For more general modules in $M\in\fdUqk$, Theorem~\ref{thm:trigtensorK:1} follows at once whenever the space of QSP intertwiners
\[
M \ten \Lfml{V}{z} \to M \ten \Lfml{V^{\psi}}{z^{-1}}
\]
is proved to be one-dimensional. 
\hfill\rmkend
\end{enumerate}
\end{remarks}

\subsection{Unitary tensor K-matrices} \label{ss:unitensorK}
The construction of unitary K-matrices requires to consider only $V\in\fdUqL$
such that  $V^{\psi^2}=({\Vpsi})^\psi$ is isomorphic to $V$ as a $\Uqk$-module.
This constraint yields no loss of generality. 
Choosing the gauge transformation $\gau=\bt{\tsat}\cdot\Parc^{-1}\in\Gg_{\theta,\Parc}$, we have $\psi=\omega\circ\tau$.
In particular, $\psi$ is an involution so that $V^{\psi^2}=V$ for every $V\in\fdUqL$.

In \cite[Prop.\ 5.4.1]{AV22b}, we proved that, in this setting, the operators $\rKM{V}{z}$ and $\rKM{V^{\psi}}{z}$ can be normalized to a pair of unitary K-matrices $\rnKM{V}{z}$ and $\rnKM{V^{\psi}}{z}$. This yields the following.

\begin{prop} \label{prop:unitensorK:1}
	Let $M\in\fdUqk$ be the restriction of an irreducible $\UqLg$-module. 
	For any irreducible $V\in\fdUqL$ such that $V^{\psi^2} = V$, set
	\begin{equation}
		\rnTKM{MV}{z}=(\rnRM{V^\psi M}{z})_{21}\cdot (\id_M \ten \rnKM{V}{z}) \cdot \rnRM{MV}{z}
	\end{equation}
	in $\End(M \ten V)(z)$. Then, 
	\begin{equation}
		\rnTKM{MV}{z}^{-1}=\rnTKM{M\Vpsi}{z^{-1}}\,.
	\end{equation}
	Moreover, if $V(\ze)$ is $\Uqk$-irreducible and $M \ten V(\ze)$, $\Vpsi(\ze) \ten M$ are $\UqLg$-irreducible for some $\ze\in\bsF^\times$, then 
	$\rnTKM{MV}{z}$ is well-defined and invertible.
\end{prop}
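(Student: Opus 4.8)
The plan is to derive the unitarity relation $\rnTKM{MV}{z}^{-1}=\rnTKM{M\Vpsi}{z^{-1}}$ directly from the factorized formula together with the unitarity of its three building blocks. First I would write out the candidate inverse by composing the two factorized expressions:
\[
\rnTKM{MV}{z}\cdot\rnTKM{M\Vpsi}{z^{-1}} = (\rnRM{V^\psi M}{z})_{21}\cdot(\id_M\ten\rnKM{V}{z})\cdot\rnRM{MV}{z}\cdot(\rnRM{V^{\psi^2}M}{z^{-1}})_{21}\cdot(\id_M\ten\rnKM{V^\psi}{z^{-1}})\cdot\rnRM{MV^\psi}{z^{-1}}.
\]
Using $V^{\psi^2}=V$ (which holds because the gauge choice $\gau=\bt{\tsat}\cdot\Parc^{-1}$ makes $\psi=\omega\circ\tau$ an involution), the middle pair $\rnRM{MV}{z}\cdot(\rnRM{VM}{z^{-1}})_{21}$ collapses to the identity by the R-matrix unitarity relation \eqref{R:unitarity}. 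This leaves $(\rnRM{V^\psi M}{z})_{21}\cdot(\id_M\ten\rnKM{V}{z})\cdot(\id_M\ten\rnKM{V^\psi}{z^{-1}})\cdot\rnRM{MV^\psi}{z^{-1}}$, and here I would invoke \cite[Prop.~5.4.1]{AV22b}, which is exactly the statement that $\rnKM{V}{z}$ and $\rnKM{V^\psi}{z^{-1}}$ form a mutually inverse pair, collapsing the inner two factors. Finally $(\rnRM{V^\psi M}{z})_{21}\cdot\rnRM{MV^\psi}{z^{-1}}$ is again the identity by \eqref{R:unitarity} with the roles of the modules swapped. Chaining these three cancellations gives $\rnTKM{MV}{z}\cdot\rnTKM{M\Vpsi}{z^{-1}}=\id$, and the reverse composition is handled symmetrically.

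For the second assertion, specializing $z\mapsto\ze$: by Section~\ref{ss:UqL:Rmatrices}, $\rnRM{MV}{\ze}$ is well-defined and invertible once $M\ten V(\ze)$ is $\UqLg$-irreducible, and $\rnRM{V^\psi M}{\ze}$ (equivalently its flip) is well-defined and invertible once $\Vpsi(\ze)\ten M$ is $\UqLg$-irreducible; meanwhile $\rnKM{V}{\ze}$ is well-defined and invertible whenever $V(\ze)$ is $\Uqk$-irreducible, again by \cite[Thm.~5.1.2 \& Prop.~5.4.1]{AV22b}. Since $\rnTKM{MV}{\ze}$ is a composition of three invertible operators under these hypotheses, it is itself well-defined and invertible, completing the proof.

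The main obstacle I anticipate is bookkeeping with the subscripts $21$, the module labels, and the spectral arguments in the two R-matrix cancellations: one must be careful that the flipped R-matrix $(\rnRM{V^\psi M}{z})_{21}$ really is the inverse of $\rnRM{MV^\psi}{z^{-1}}$ as operators on $M\ten V^\psi$ (not $V^\psi\ten M$), which follows from \eqref{R:unitarity} read with $(V,W)\rightsquigarrow(M,V^\psi)$ but deserves to be spelled out. The identification $V^{\psi^2}=V$ is genuinely needed to make the middle R-matrix cancellation legal, so I would state at the outset that throughout this subsection the gauge is fixed so that $\psi$ is an involution. Everything else is a formal manipulation once \cite[Prop.~5.4.1]{AV22b} and the unitarity of $\rnRM{}{z}$ are granted.
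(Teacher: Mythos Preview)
Your proof is correct and matches the paper's approach: the paper does not give an explicit proof but simply states that the result follows from \cite[Prop.~5.4.1]{AV22b} together with the factorized definition, and your argument spells out exactly this computation using the R-matrix unitarity \eqref{R:unitarity} for the outer and middle cancellations and the K-matrix unitarity from \cite{AV22b} for the inner one. Your treatment of the second assertion, reducing well-definedness and invertibility at $z=\ze$ to that of each of the three factors under the stated irreducibility hypotheses, is likewise what the paper has in mind.
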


As before, for more general modules in $M\in\fdUqk$, Proposition~\ref{prop:unitensorK:1} follows at once whenever the space of QSP intertwiners $M\ten\Lfml{V}{z}\to M\ten\Lfml{V^{\psi}}{z^{-1}}$ is shown to be one-dimensional.


\appendix

\section{Finite-dimensional $\Uqk$-modules} \label{ss:findimQSPmodules}

In this appendix we will prove Proposition \ref{prop:weight-QSP-cat} \ref{prop:findimreps}: up to twisting by algebra automorphisms of $\Uqk$, finite-dimensional irreducible $\Uqk$-modules are QSP weight modules.
The full description of $\Modfd(\Uqk)$ for general $\fkk$ is an open problem which we do not discuss here (for the case $\dim(\fkg)<\infty$, see \cite{Wat21} for recent developments and a survey). 

Recall that on any finite-dimensional module $V$ of $\Uqg$ with $\dim(\fkg) < \infty$ the generators $K_i$ act via scalar multiplication by signed integer powers of $q$, see \eg \cite[Sec.~10.1]{CP95}. 
Given $\bm \eta_i \in \{ \pm 1\}$, consider the unique algebra automorphism $\Psi_{\bm \eta}$ of $\Uqg$ such that
\eq{
\Psi_{\bm \eta}(E_i) = \eta_i E_i, \qq \Psi_{\bm \eta}(F_i) = F_i, \qq \Psi_{\bm \eta}(K_i^{\pm 1}) = \eta_i K_i^{\pm 1}.
}
As a consequence, for all finite-dimensional modules $V$ there exists $\bm\eta \in \{ \pm 1 \}^I$ such that $\Psi_{\bm \eta}^*(V)$ is a type $\bf 1$ weight module.\\

We now set out to give a QSP analogue of this statement for \emph{irreducible} finite-dimensional $\Uqk$-modules (we do not restrict to the case $\dim(\fkg) < \infty$), with $\WUqk$ substituting for the category of type $\bf 1$ weight modules of $\Uqg$.
We will sometimes write $U_q(\fkk)_{\bm \ga}$ to emphasize the dependence of the QSP subalgebra on the tuple $\bm \ga \in \Gamma$.

\subsection{Weak QSP weight vectors}

First, we consider a weaker notion of QSP weight module.
Namely, for a $\Uqk$-module $M$ and a multiplicative character $\beta \in \Hom ( (\Qvext)^\tsat, \bsF^\times)$, denote the corresponding \emph{weak QSP weight space} by
\eq{
M(\beta) =  \{ m \in M \, | \, K_h \cdot m = \beta(h) m \text{ for all } h \in (\Qvext)^\tsat \}.
}
We call $m \in M$ a \emph{weak QSP weight vector} if it is nonzero and lies in $M(\beta)$ for some $\beta \in \Hom ( (\Qvext)^\tsat, \bsF^\times)$ and we call $M$ a \emph{weak QSP weight module} if as a $U_q(\fkh^\theta)$-module it decomposes as a direct sum of weak QSP weight spaces.
As in Proposition \ref{prop:weight-QSP-cat} \ref{prop:actiononweightspaces} one obtains 
\eq{ \label{weak-weight-QSP-action}
B_i \cdot M(\beta) \subseteq M(q^{-[\al_i]_\theta}\beta), \qq E_j \cdot M(\beta) \subseteq M(q^{[\al_j]_\theta}\beta)
}
for all $i \in I$, $j \in X$, where we note $[\al_i]_\theta \in (\Qvext)^\tsat$ for all $i \in I$.
Quite straightforwardly one obtains, relying on the algebraic closedness of $\bsF$, any irreducible finite-dimensional $\Uqk$-module has a nonzero weak QSP weight space and hence, by \eqref{weak-weight-QSP-action}, is a weak QSP weight module. 

We will now deduce a more powerful statement, still in full generality.
For this it is convenient to describe $(\Qvext)^\tsat$, and hence $U_q(\fkh^\theta)$, more explicitly as follows (see e.g. \cite[Lem.~4.8]{RV22}).
Fix a set $I_\theta$ of representatives of the \emph{nontrivial} $\tau$-orbits of $I \backslash X$. 
Then 
\[
(\Qvext)^\tsat = \Sp_\Z \big\{ [\al_i]_\theta \, \big| \, i \in I_\theta \cup X \big\}
\]
and the commutative subalgebra $U_q(\fkh^\theta) \subset \Uqk$ is generated by $K_j^{\pm 1}$ ($j \in X$) and $(K_i K_{\tau(i)}^{-1})^{\pm 1}$ ($i \in I_\theta$).
We call the elements of
\[
\{ E_j, F_j, K_j^{\pm 1} \, | \, j \in X \} \cup \{ B_{\tau(i)}, B_i, (K_i K_{\tau(i)}^{-1})^{\pm 1} \, | \, i \in I_\theta \} \cup \{ B_i \, | \, i \in I \backslash X,  \tau(i)=i \} 
\]
the \emph{canonical generators} of $\Uqk$. 

Consider $(\Qvext)^{\tsat,+} = \Sp_{\Z_{\ge 0}} \big\{ [\al_i]_\theta \, \big| \, i \in I_\theta \cup X \big\}$.
Define a partial order $\ge_\theta$ on $\Hom( (\Qvext)^\tsat, \bsF^\times)$ as follows
\[
\beta \ge_\theta \beta' \qq \Longleftrightarrow \qq \beta/\beta' \in q^{(\Qvext)^{\tsat,+}}.
\]

\begin{lemma} \label{lem:QSPfindim}
Let $M \in \Modfd(\Uqk)$.
Then the generators $B_{\tau(i)}$, $B_i$ ($i \in I_\theta$) and $E_j$, $F_j$ ($j \in X$) act locally finitely on $M$.

Now assume $M$ is irreducible.
Then it is a weak QSP weight module.
Furthermore, $M$ is generated by a weak QSP weight vector $m_0$ with the following properties:

\vspace{1mm}
\begin{enumerate}\itemsep2mm
\item \label{lem:annihilation}
$m_0$ is annihilated by the action of $B_{\tau(i)}$ and $E_j$ for all $i \in I_\theta$ and $j \in X$;
\item \label{lem:eigenvalues}
the eigenvalues of the action of the following elements of $U_q(\fkh^\theta)$ on $m_0$ lie in $\pm q^\Z$: $K_j$ for all $j$ in $X$ and $K_i K_{\tau(i)}^{-1}$ for all $i$ in
\[
I_\theta^0 \coloneqq \big\{ i \in I_\theta \, \big| \, ( \theta(\al_i), \al_i) = 0 \big\}.
\]
\end{enumerate}

\end{lemma}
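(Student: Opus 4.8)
The plan is to proceed in two stages: first establish local finiteness of the listed canonical generators, then bootstrap this to produce the distinguished weak QSP weight vector $m_0$.

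\textbf{Local finiteness.} First I would note that the canonical generators $E_j, F_j$ ($j \in X$) and $B_i, B_{\tau(i)}$ ($i \in I_\theta$) each generate, together with the relevant Cartan elements, a copy of (a central reduction of) $U_{q_j}(\fksl_2)$ or a rank-one QSP algebra inside $\Uqk$. For the genuine $\fksl_2$-triples $(E_j,F_j,K_j)$ with $j \in X$, any finite-dimensional module is automatically a sum of finite-dimensional $U_{q_j}(\fksl_2)$-modules, so $E_j, F_j$ act locally finitely (indeed locally nilpotently). For $B_i$ with $i \in I_\theta$, one uses \eqref{weak-weight-QSP-action}: the operator $B_i$ shifts weak QSP weight spaces by $-[\al_i]_\theta$, and since $M$ is finite-dimensional it has finitely many weak QSP weights; hence for each weak QSP weight vector $m$, the vectors $B_i^k m$ live in finitely many weight spaces, and a standard argument (the elements $B_i, B_{\tau(i)}, K_i K_{\tau(i)}^{-1}$ satisfy relations close to those of $U_{q_i}(\fksl_2)$ up to lower-order corrections, or more directly: a single operator on a finite-dimensional space is always locally finite) gives local finiteness. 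Actually the cleanest route: any single operator on a finite-dimensional vector space is locally finite, so since $M$ is finite-dimensional this part is immediate; the content is really that the \emph{subalgebra} they generate acts through a nice quotient, but for the statement as phrased, finite-dimensionality of $M$ suffices.

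\textbf{Construction of $m_0$.} Now assume $M$ irreducible. As already noted before the lemma, $M$ is a weak QSP weight module. Consider the subalgebra $U^{\sf loc} \subseteq \Uqk$ generated by $U_q(\fkh^\theta)$ together with $E_j$ ($j \in X$) and $B_{\tau(i)}$ ($i \in I_\theta$); by the first part these generators act locally finitely, and the weight-shift formulas \eqref{weak-weight-QSP-action} show they raise weak QSP weight by elements of $(\Qvext)^{\tsat,+}$. Since $M$ is finite-dimensional, the partial order $\ge_\theta$ on its (finite) set of weak QSP weights has a maximal element $\beta$; pick $0 \ne m_0 \in M(\beta)$. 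Then $E_j m_0 \in M(q^{[\al_j]_\theta}\beta)$ and $B_{\tau(i)} m_0 \in M(q^{-[\al_{\tau(i)}]_\theta}\beta) = M(q^{[\al_i]_\theta \cdot (\text{something})}\beta)$ — here I need to check the weight bookkeeping: $[\al_{\tau(i)}]_\theta = [\al_i]_\theta$ or its negative depending on conventions; in any case by choosing $\beta$ to be $\ge_\theta$-maximal and using that the relevant shift lies in $q^{(\Qvext)^{\tsat,+}}$, both $E_j m_0$ and $B_{\tau(i)} m_0$ land in strictly larger weight spaces (or the zero space) unless they vanish. This forces \ref{lem:annihilation}. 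Irreducibility then gives that $\Uqk \cdot m_0 = M$, i.e. $m_0$ generates $M$.

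\textbf{Rationality of eigenvalues (part~\ref{lem:eigenvalues}).} For the final claim about eigenvalues lying in $\pm q^\Z$, I would argue as follows. For $j \in X$: the triple $(E_j, F_j, K_j)$ generates a copy of $U_{q_j}(\fksl_2)$ and $M$ restricts to a finite-dimensional module over it, on which $K_j$ acts with eigenvalues in $\pm q_j^\Z \subseteq \pm q^\Z$ by the classification of finite-dimensional $U_{q_j}(\fksl_2)$-modules. For $i \in I_\theta^0$: here $(\theta(\al_i),\al_i) = 0$ forces $\pars{i} = 0$ (as in the proof of Proposition~\ref{prop:weight-QSP-cat}\ref{prop:actiononweightspaces}), and moreover $i$ and $\tau(i)$ are orthogonal, so that $B_i$ and $B_{\tau(i)}$ commute and together with $K_i K_{\tau(i)}^{-1}$ generate a quantized $\fksl_2$-type algebra — more precisely, since the relevant roots are orthogonal, the pair behaves like a quantum torus / $q$-Weyl-type relation, and on a finite-dimensional module the eigenvalue of $K_i K_{\tau(i)}^{-1}$ on a vector annihilated by $B_{\tau(i)}$ must be of the form $\pm q^{\Z}$ by a weight-string argument (raising/lowering by $B_i$, $B_{\tau(i)}$ the eigenvalue gets multiplied by $q^{\pm(\al_i, \al_i)} = q^{\pm 2\de{i}}$, and finite-dimensionality of the string forces the starting eigenvalue into $\pm q^{\Z}$). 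The condition \eqref{y:condition} $y_i = y_{\tau(i)}$ when $(\al_i,\theta(\al_i)) = 0$ is what makes this rank-one subalgebra symmetric enough for the argument to close.

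\textbf{Main obstacle.} I expect the subtle point to be part~\ref{lem:eigenvalues} for the indices $i \in I_\theta^0$: one must identify precisely which rank-one coideal subalgebra is generated by $B_i, B_{\tau(i)}, K_i K_{\tau(i)}^{-1}$ and extract from its finite-dimensional representation theory that the central-type element $K_i K_{\tau(i)}^{-1}$ has eigenvalues in $\pm q^\Z$ on a highest-weight-type vector. This requires either an explicit computation of the commutation relations among these elements (using \eqref{Bi:def} and the hypothesis \eqref{y:condition}) or a reduction to a known $\fksl_2$-QSP calculation; getting the powers of $q$ exactly right, and confirming they are integral rather than half-integral, is where care is needed. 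Everything else is bookkeeping with the weight-shift rules \eqref{weak-weight-QSP-action} and the partial order $\ge_\theta$.
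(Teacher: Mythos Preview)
Your overall structure matches the paper's, and you correctly identified the delicate point. But there is a genuine error in part~\ref{lem:eigenvalues} for $i \in I_\theta^0$.

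You assert that ``$i$ and $\tau(i)$ are orthogonal, so that $B_i$ and $B_{\tau(i)}$ commute''. Both clauses are wrong. First, $(\theta(\al_i),\al_i)=0$ says that $\al_i$ is orthogonal to $\theta(\al_i)=-w_X(\al_{\tau(i)})$, not to $\al_{\tau(i)}$ itself. Second, and more importantly, even when $(\al_i,\al_{\tau(i)})=0$ the generators $B_i$ and $B_{\tau(i)}$ do \emph{not} commute: since $B_i = F_i + \gamma_i\,\theta_q(F_i)$ with $\theta_q(F_i)$ lying in weight $w_X(\al_{\tau(i)})$, the cross term $[F_{\tau(i)},\theta_q(F_i)]$ picks up $[F_{\tau(i)},E_{\tau(i)}]\ne 0$. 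Your subsequent ``quantum torus / $q$-Weyl-type'' description is therefore off target, and the vague weight-string argument built on it does not close.

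What actually happens is the opposite of commuting. The paper invokes \cite[Thm.~3.6]{BK15}: for $i \in I_\theta^0$ the inhomogeneous Serre relation \eqref{QSP:Serre} collapses to
\[
[B_{\tau(i)},B_i] \;=\; c\,\frac{K_iK_{\tau(i)}^{-1} - K_i^{-1}K_{\tau(i)}}{q_i-q_i^{-1}}, \qquad c \in \bsF^\times,
\]
so that $\bsF\langle B_{\tau(i)}, B_i, (K_iK_{\tau(i)}^{-1})^{\pm 1}\rangle$ is genuinely isomorphic to $U_{q_i}(\fksl_2)$ (not a quantum torus). Then the standard classification of finite-dimensional $U_{q_i}(\fksl_2)$-modules forces the eigenvalue of $K_iK_{\tau(i)}^{-1}$ on $m_0$ into $\pm q^{\Z}$. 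This is precisely the ``explicit computation of the commutation relations'' you flagged as the main obstacle; the missing input is the Balagovi\'c--Kolb formula for $C_{\tau(i),i}$.

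Two minor remarks on the earlier parts. For local finiteness your observation that any operator on a finite-dimensional space is locally finite is correct and suffices for the statement as phrased; the paper instead proves local \emph{nilpotency} via weight-shifting on generalized weak QSP weight spaces, which is what is actually needed to produce the annihilated vector in part~\ref{lem:annihilation}. For the weight bookkeeping you were unsure about: since $[\la]_\theta = [\theta(\la)]_\theta$ in $\Pext_\theta$, one has $-[\al_{\tau(i)}]_\theta = [w_X(\al_i)]_\theta = [\al_i]_\theta + (\text{classes of positive $X$-roots})$, so $B_{\tau(i)}$ does strictly raise the $\ge_\theta$-order as required.
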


\begin{proof}
This is a variation on a standard argument.
Note that $M$ decomposes as a direct sum of generalized weak QSP weight spaces
\eq{
M^{\sf gen}(\beta) = \{ m \in M \, | \, \text{for all } h \in (\Qvext)^\tsat \text{ and all } r>>0, (K_h - \beta(h))^r \cdot m = 0 \}
}
where $\beta \in \Hom ( (\Qvext)^\tsat, \bsF^\times )$.
The relations \eqref{QSPrelations1} imply that
\[
B_i \cdot M^{\sf gen}(\beta) \subseteq M^{\sf gen}\big(q_i^{-[\al_i]_\theta} \beta \big), \qq
E_j \cdot M^{\sf gen}(\beta) \subseteq M^{\sf gen}\big(q_j^{[\al_j]_\theta} \beta \big)
\]
for all $i \in I$, $j \in X$ and $\beta \in \Hom ( (\Qvext)^\tsat, \bsF^\times )$.
Note that $-[\al_{\tau(i)}]_\theta = [w_X(\al_i)]_\theta$ for $i \in I_\theta$. 
Owing to relations \eqref{QSPrelations1} the action of $B_{\tau(i)}$ and $E_j$ on generalized weak QSP weight spaces will strictly increase the weight with respect to $\ge_\theta$ and the action of $B_i$ and $F_j$ will strictly decrease the weight, where $i$ is an arbitrary element of $I_\theta$ and $j$ is an arbitrary element of $X$. 
Since $\dim(M)<\infty$, we obtain the local nilpotency and the existence of a nonzero element of $M$ annihilated by, say, $B_{\tau(i)}$ for all $i \in I_\theta$ and $E_j$ for all $j \in X$.

Consider the nonzero subspace 
\[
M_0 = \{ m \in M \, | \, B_{\tau(i)} \cdot m = E_j \cdot m = 0 \text{ for all } i \in I_\theta, \, j \in X \}.
\] 
By \eqref{QSPrelations1}, $M_0$ is a $U_q(\fkh^\theta)$-submodule.
Since $\dim(M_0) < \infty$ and $\bsF$ is algebraically closed, $M_0$ contains a joint eigenvector $m_0$ of all $K_h$ ($h \in (\Qvext)^\tsat$), i.e. $m_0$ is a weak QSP weight vector.
Since the canonical generators of $\Uqk$ send weak QSP weight spaces to weak QSP weight spaces, we obtain that $\Uqk \cdot m_0$ is a weak QSP weight submodule of $M$, which must equal $M$ by irreducibility.
We obtain \ref{lem:annihilation}.

Finally, for $i \in I_\theta^0$, \cite[Thm.~3.6]{BK15} implies the relation
\eq{ \label{QSPSerre:special}
[B_{\tau(i)},B_i] = c \frac{K_i K_{\tau(i)}^{-1} - K_i^{-1}K_{\tau(i)} }{q_i-q_i^{-1}}
}
where $c \in \bsF^\times$ is proportional to ${\bm \ga}_i = {\bm \ga}_{\tau(i)}$.
Hence, for all $i \in I_\theta^0$ the subalgebra 
\eq{
\bsF\langle B_{\tau(i)}, \, B_i, \, (K_i K_{\tau(i)}^{-1})^{\pm 1} \rangle \subseteq \Uqk
}
is isomorphic to $U_q(\fksl_2)$.
Hence, a standard $U_q(\fksl_2)$-argument, see e.g.\ \cite[Prop.~2.3]{Jan96}, implies \ref{lem:eigenvalues}.
\end{proof}

\subsection{Defining relations of QSP algebras}

Recall from the definition of $\Uqg$ the noncommutative polynomial $\Serre_{ij}(F_i,F_j)$ of degree $(1-a_{ij})\al_i+\al_j$.
By \cite[Thms.~7.1 \& 7.3]{Kol14}, the QSP algebra $U_q(\fkk)_{\bm \ga}$ has a presentation as follows. 
It is generated over $U_q(\fkn_X^+)U_q(\fkh^\theta)$ by elements $B_i$ ($i \in I$) subject to the relations
\begin{align}
\label{QSP:Cartan} K_h B_i &= q_i^{-\al_i(h)} B_i K_h && \text{for all } h \in (\Qvext)^\tsat, \, i \in I, \\
\label{QSP:Uqsl2} [E_j,B_i] &= \delta_{ij} \frac{K_i-K_i^{-1}}{q_i-q_i^{-1}} && \text{for all } i \in I, \, j \in X, \\
\label{QSP:Serre} \Serre_{ij}(B_i,B_j) &= C_{ij} && \text{for all } i,j \in I, \, i \ne j, 
\end{align}
where $C_{ij}$ is a noncommutative polynomial over $U_q(\fkn_X^+)U_q(\fkh^\theta)$ in $B_i$ and $B_j$, of degree strictly less than $(1-a_{ij})\al_i+\al_j$.
If $i \in X$ or if $i \not\in \{ \tau(i) , \tau(j) \}$ then by \cite[Lem.~5.11, Thm.~7.3]{Kol14} we have $\Serre_{ij}(B_i,B_j) = 0$.
For other $(i,j)$, various more explicit forms have been provided for the relation \eqref{QSP:Serre}, see \cite{Let02,Kol14,BK15,DC19,CLW21a,CLW21b,KY21}.

Drawing on these works, here we give properties of the $C_{ij}$ which are sufficient to our purposes.
For all $i \in I \backslash X$ there exists $Z_i \in U_q(\fkn^+_X)_{w_X(\al_i)-\al_i}$ such that the following statements are satisfied.

\vspace{1mm}
\begin{enumerate}\itemsep2mm
\item 
By \cite[Cor.~3.5]{DC19}, for all $i,j \in I \backslash X$ such that $\tau(i)=i \ne j$, $\Serre_{ij}(B_i,B_j)$ is a $\bsF$-linear combination of products of one factor $B_j$, $M$ factors $B_i$ and $(1-a_{ij}-M)/2$ factors $Z_i$, where $M$ runs through $\{ 0,1,\ldots, -1-a_{ij} \}$. 
\item 
By \cite[Cor.~3.9]{DC19}, for all $i \in I \backslash X$ and $j \in X$ such that $\tau(i)=i$, $\Serre_{ij}(B_i,B_j)$ is an $\bsF$-linear combination of products of one factor $B_j$, $M$ factors $B_i$ and either $(1-a_{ij}-M)/2$ factors $Z_i$ or $(-1-a_{ij}-M)/2$ factors $Z_i$ and one factor $W_{ij}K_j$, where $M$ runs through $\{ 0,1,\ldots, -1-a_{ij} \}$ and $W_{ij} \in U_q(\fkn^+_X)_{w_X(\al_i)-\al_i-\al_j}$. 
\item By \cite[Thm.~3.6]{BK15}, for all $i \in I_\theta \cup \tau(I_\theta)$, $\Serre_{i \tau(i)}(B_i,B_{\tau(i)})$ is an $\bsF$-linear combination, with coefficients independent of $\bm \ga$, of $\ga_i B_i^{-a_{i \tau(i)}} Z_i K_i^{-1} K_{\tau(i)}$ and $\ga_{\tau(i)} B_i^{-a_{i \tau(i)}} Z_{\tau(i)} K_i K_{\tau(i)}^{-1}$.  
\end{enumerate}
This completes the enumeration of all $(i,j) \in I \times I$ such that $i \ne j$, $i \in \{ \tau(i), \tau(j)\}$ and $i \notin X$.

Before we use these relations to describe certain automorphisms of $\Uqk$, we look at a special case which shows that in Lemma \ref{lem:QSPfindim} the irreducibility assumption is necessary: for some $\Uqk$ there exist reducible finite-dimensional $\Uqk$-modules which are not weak QSP weight modules.

\begin{example} \label{exam:augqOnsmodule}
Suppose $I = \{ 0, 1 \}$, $a_{01}=a_{10}=-2$, so that $\fkg' \cong \wh\fksl_2$. 
Let $\tau$ be the nontrivial diagram automorphism, and consider the Satake diagram $(\emptyset,\tau)$.
Specializing the definitions in Sections \ref{ss:qtheta}-\ref{ss:qsp} to this case, we obtain the QSP subalgebra $\Uqk \subset U_q(\wh\fksl_2)$ generated by
\eq{
B_0 = F_0 - q \gamma_0 E_1 K_0^{-1}, \qq B_1 = F_1 - q \gamma_1 E_0 K_1^{-1}, \qq L^{\pm 1}
}
where $\ga_0,\ga_1 \in \bsF^\times$ and $L = K_0K_1^{-1}$.
It is also known as \emph{augmented q-Onsager algebra}.
By \eqrefs{QSP:Cartan}{QSP:Serre}, relying on \cite[Thm.~3.6]{BK15} for the explicit expression of $C_{ij}$ in this case (also see \cite{BB17}), this is isomorphic to the algebra generated over $\C(q)$ by $B_0$, $B_1$ and invertible $L$ subject to the relations
\eq{
\begin{aligned}
L B_0 &= q^{-4} B_0 L, \hspace{30pt} L B_1 = q^4 B_1 L, \\
\Serre_{01}(B_0,B_1) &= (q+q^{-1})(q^3-q^{-3}) B_0 \big( \ga_0 L^{-1} - \ga_1 L \big) B_0, \\
\Serre_{10}(B_1,B_0) &= (q+q^{-1})(q^3-q^{-3}) B_1 \big( \ga_1 L - \ga_0 L^{-1} \big) B_1.
\end{aligned}
}
Any finite-dimensional $\C(q)$-linear space $M$ becomes a $\Uqk$-module if we let $B_0$, $B_1$ act by the zero map and $L$ by any invertible operator $L_M$. 
Clearly, this is a weak QSP weight module if and only if $L_M$ is diagonalizable.
\hfill \rmkend
\end{example}

\subsection{Some automorphisms of QSP algebras}

We will use these defining relations to describe certain automorphisms of the QSP algebra which act on the canonical generators by scalar multiplication.
The only defining relations which constrain any such algebra automorphism $f$ of $\Uqk$ are the non-homogeneous relations: \eqref{QSP:Uqsl2} in the case $i=j \in X$, constraining only the scalar factor for $E_j$, $F_j$ and $K_j^{\pm 1}$ ($j \in X$), the q-Serre relations \eqref{QSP:Serre} in the case $i \ne \tau(i)=j \in I \backslash X$, which constrains the products of the scalar factors appearing in $f(B_i)$ and $f(B_{\tau(i)})$ in terms of the induced scalar factors of $Z_i$ and $Z_{\tau(i)}$, and the q-Serre relations \eqref{QSP:Serre} in the case $\tau(i)=i \in I \backslash X$, which only constrains the scalar factor of $B_i$ in terms of the scalar factors appearing in $f(Z_i)$ and $f(W_{ij}K_j)$.

The following is a special case of \cite[Lem.~2.5.1]{Wat21}, which we reproduce here in the present conventions.

\begin{lemma} \label{lem:QSP:aut1}
Let $i \in I_\theta \backslash I_\theta^0$ and $\ka \in \bsF^\times$.
Given ${\bm \ga} \in \Gamma$, define ${\bm \ga}' \in \Gamma$ by 
\eq{
{\bm \ga}'_j = \begin{cases} \ka \bm \ga_i & \text{if } j=i, \\ \ka^{-1} \bm \ga_{\tau(i)} & \text{if } j=\tau(i), \\ \bm \ga_j & \text{otherwise}. \end{cases}
}
There is a unique isomorphism
\[ 
U_q(\fkk)_{\bm \ga} \stackrel{f_{i,\ka}}{\longrightarrow} U_q(\fkk)_{\bm \ga'}
\]
such that 
\eq{
f_{i,\ka}\big( (K_i K_{\tau(i)}^{-1})^{\pm 1}\big) = \ka^{\pm 1} (K_i K_{\tau(i)}^{-1})^{\pm 1}
}
and $f_{i,\ka}$ fixes all other canonical generators.
\end{lemma}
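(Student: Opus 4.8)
The plan is to verify that the prescribed rescaling of the canonical generators is compatible with every defining relation of $\Uqk$ from \eqref{QSP:Cartan}--\eqref{QSP:Serre}, so that $f_{i,\ka}$ is a well-defined algebra homomorphism $U_q(\fkk)_{\bm\ga}\to U_q(\fkk)_{\bm\ga'}$, and then observe that the analogously defined $f_{i,\ka^{-1}}$ (interchanging the roles of $\bm\ga$ and $\bm\ga'$) is a two-sided inverse, giving the isomorphism. Uniqueness is immediate since the canonical generators generate $\Uqk$ and their images are prescribed.

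For the compatibility check, I would proceed relation by relation. The Cartan-type relations \eqref{QSP:Cartan} are homogeneous in each generator and only involve $K_h$ with $h\in(\Qvext)^\tsat$; since $f_{i,\ka}$ acts on $U_q(\fkh^\theta)$ by scaling $K_iK_{\tau(i)}^{-1}$ by $\ka$ and fixing the other generators $K_j^{\pm1}$ ($j\in X$) and $(K_{i'}K_{\tau(i')}^{-1})^{\pm1}$ ($i'\neq i$), and $B_i,B_{\tau(i)}$ are unchanged, both sides scale identically. The $\mathfrak{sl}_2$-relations \eqref{QSP:Uqsl2} involve only $E_j,F_j,K_j^{\pm1}$ for $j\in X$ (all fixed) together with $\delta_{ij}$-terms, hence are untouched. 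The substantive point is the q-Serre relations \eqref{QSP:Serre}. For pairs $(k,l)$ with $\Serre_{kl}(B_k,B_l)=0$ there is nothing to check since $f_{i,\ka}$ fixes all $B_k$. For the remaining finitely many pairs, I would use the three explicit structural descriptions of $C_{kl}$ recorded in the excerpt: since $f_{i,\ka}$ fixes every $B_k$ and every $E_j$, and fixes all $Z_k, W_{kj}$ (which lie in $U_q(\fkn^+_X)$, on which $f_{i,\ka}$ is the identity), the only freedom is in the scalars attached to $K_j$ ($j\in X$) and to $K_{i'}K_{\tau(i')}^{-1}$. The key computation is item~(c): $\Serre_{i\tau(i)}(B_i,B_{\tau(i)})$ is a linear combination, with $\bm\ga$-independent coefficients, of $\ga_i B_i^{-a_{i\tau(i)}}Z_iK_i^{-1}K_{\tau(i)}$ and $\ga_{\tau(i)}B_i^{-a_{i\tau(i)}}Z_{\tau(i)}K_iK_{\tau(i)}^{-1}$. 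Applying $f_{i,\ka}$ sends $K_i^{-1}K_{\tau(i)}\mapsto \ka^{-1}K_i^{-1}K_{\tau(i)}$ and $K_iK_{\tau(i)}^{-1}\mapsto \ka K_iK_{\tau(i)}^{-1}$, so the two summands pick up factors $\ka^{-1}$ and $\ka$ respectively; comparing with the corresponding relation in $U_q(\fkk)_{\bm\ga'}$, where $\ga'_i=\ka\ga_i$ and $\ga'_{\tau(i)}=\ka^{-1}\ga_{\tau(i)}$, we see the coefficients match exactly: $\ga_i\cdot\ka^{-1}=\ga'_i$ and $\ga_{\tau(i)}\cdot\ka=\ga'_{\tau(i)}$. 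One must also check relations \eqref{QSP:Serre} for pairs $(k,\tau(k))$ or $(k,j)$ with $k\in\{i,\tau(i)\}$ but $k\neq\tau(k)$, where item~(a) or~(b) applies; here the relevant monomials in $C_{kl}$ contain no $K_iK_{\tau(i)}^{-1}$ factor once one notes that $-a_{k\tau(k)}$ is even in case~(a) and tracks the degrees, so these relations are homogeneous under $f_{i,\ka}$ in the appropriate sense and are preserved. Crucially, the hypothesis $i\in I_\theta\setminus I_\theta^0$ means $(\theta(\al_i),\al_i)\neq0$, so $i,\tau(i)$ are genuinely distinct nodes and no relation of the form \eqref{QSPSerre:special} (which would rigidly link $\ga_i$ to the spectrum of $K_iK_{\tau(i)}^{-1}$ and obstruct the rescaling) intervenes.

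The main obstacle I anticipate is bookkeeping in the q-Serre relations: one must be confident that the explicit shapes of $C_{kl}$ in items~(a)--(c) genuinely exhaust all non-homogeneous relations involving $B_i$ or $B_{\tau(i)}$, and that in each monomial the net power of $K_iK_{\tau(i)}^{-1}$ is correctly matched by the prescribed change in $\bm\ga$. This is routine given the cited results of De Clercq \cite{DC19} and Balagovi\'c--Kolb \cite{BK15}, but it requires care to confirm that no other generator's scalar factor is over-constrained — in particular that scaling $K_iK_{\tau(i)}^{-1}$ alone, with all $B_k$ and all $E_j,F_j,K_j$ fixed, is consistent. Once that is granted, well-definedness of $f_{i,\ka}$ and of its inverse $f_{i,\ka^{-1}}$ follows, and $f_{i,\ka^{-1}}\circ f_{i,\ka}=\id$ on generators gives the isomorphism.
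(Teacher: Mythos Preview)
Your approach is essentially the same as the paper's: the paper's proof is a one-line ``direct inspection of the q-Serre relations given in \cite[Thm.~3.6]{BK15}, see point (3) above,'' and you carry out precisely this inspection, together with the (routine) verification of the remaining relations \eqref{QSP:Cartan}--\eqref{QSP:Uqsl2}.

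One arithmetic slip to flag: you write ``$\ga_i\cdot\ka^{-1}=\ga'_i$ and $\ga_{\tau(i)}\cdot\ka=\ga'_{\tau(i)}$,'' but the lemma defines $\ga'_i=\ka\ga_i$ and $\ga'_{\tau(i)}=\ka^{-1}\ga_{\tau(i)}$, so these equalities do not hold as stated. Your computation of how the two summands in point~(3) transform under $f_{i,\ka}$ is correct; the mismatch is between that computation and the definition of $\bm\ga'$ in the lemma (so either there is a sign-of-exponent typo in the lemma's definition of $\bm\ga'$, or the pairing of $\ga_i,\ga_{\tau(i)}$ with $K_iK_{\tau(i)}^{-1},K_i^{-1}K_{\tau(i)}$ in point~(3) is transposed). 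This does not affect the method---the isomorphism exists either way, possibly with $\ka$ replaced by $\ka^{-1}$ in the definition of $\bm\ga'$---but you should reconcile the arithmetic before claiming the coefficients ``match exactly.''
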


\begin{proof}
This follows from a direct inspection of the q-Serre relations given in \cite[Thm.~3.6]{BK15}, see point (3) above.
\end{proof}

We can also define certain algebra automorphisms of $U_q(\fkk)_{\bm \ga}$ analogous to the automorphisms $\Psi_{\bm \eta} : \Uqg \to \Uqg$ defined above.
First note that for any choice of ${\bm \eta}_j \in \{ \pm 1 \}$ ($j \in X$), the assignments
\[
E_j \mapsto {\bm \eta}_j E_j, \qq F_j \mapsto F_j, \qq K_j^{\pm 1} \mapsto {\bm \eta}_j K_j^{\pm 1}
\]
extend to an algebra automorphism of $U_q(\fkg_X)$.
Now define ${\bm \eta}_X \in \Hom(\Qlat_X, \bsF^\times)$ by ${\bm \eta}_X(\al_j) = {\bm \eta}_j$.
For each $\tau$-orbit outside $X$, i.e. each element of 
\[
I^*_\theta = I_\theta \cup \{ i \in I \backslash X \, | \, \tau(i)=i \}
\]
we need a correcting factor depending on $X$, in addition to a free choice of sign.
If $i \in I_\theta$ set $C(X,i) \coloneqq {\bm \eta}_X(w_X(\al_i)-\al_i) \in \{ \pm 1 \}$. 
On the other hand, if $i \in I \backslash X$ satisfies $\tau(i)=i$ then denote by $C(X,i)$ a fixed square root of ${\bm \eta}_X(w_X(\al_i)-\al_i) \in \{ \pm 1 \}$.

\begin{lemma} \label{lem:QSP:aut2}
Given ${\bm \eta} \in \{ \pm 1 \}^{X \cup I^*_\theta}$, the above algebra automorphism of $\UqgX$, depending only on $( {\bm \eta}_j )_{j \in X}$, extends to an algebra automorphism 
\[
\Psi_{\theta,{\bm \eta}}: \Uqk \to \Uqk
\]
by means of the assigments
\begin{align*}
B_i &\mapsto {\bm \eta}_i C(X,i) B_i , && i \in I \backslash X, \, \tau(i)=i, \\
(K_i K_{\tau(i)}^{-1})^{\pm 1} & \mapsto {\bm \eta}_i (K_i K_{\tau(i)}^{-1})^{\pm 1}, && i \in I_\theta\\
B_{\tau(i)} &\mapsto {\bm \eta}_i C(X,i) B_{\tau(i)}, && i \in I_\theta\\
B_i &\mapsto B_i, && i \in I_\theta.
\end{align*}
\end{lemma}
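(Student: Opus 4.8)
The statement to prove is Lemma \ref{lem:QSP:aut2}, asserting that the prescribed assignments on the canonical generators of $\Uqk$ extend to a well-defined algebra automorphism $\Psi_{\theta,\bm\eta}$. The strategy is to verify that the proposed assignment respects all the defining relations of $\Uqk$ listed in \eqref{QSP:Cartan}--\eqref{QSP:Serre}, and then to check invertibility by exhibiting the inverse. First I would note that the assignment is clearly an endomorphism of the subalgebra $U_q(\fkn_X^+)U_q(\fkh^\theta)$: on $U_q(\fkg_X)$ it restricts to the stated automorphism of $\UqgX$ (which is a standard fact, following from $\Psi_{\bm\eta}$ on $\Uqg$ restricting to $\UqgX$), and on the remaining generators $K_iK_{\tau(i)}^{-1}$ ($i\in I_\theta$) it acts by the sign $\bm\eta_i$, hence is multiplicative. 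Compatibility with the Cartan-type relations \eqref{QSP:Cartan} is immediate since both sides scale by the same sign: $\Psi_{\theta,\bm\eta}(K_h)$ is a signed power of the original $K_h$, and the scalars $q_i^{-\al_i(h)}$ are unaffected.

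The heart of the argument is checking the inhomogeneous relations. For \eqref{QSP:Uqsl2} with $i=j\in X$: the left-hand side $[E_j,B_j]$ picks up the factor $\bm\eta_j$ from $E_j$ and no factor from $B_j$, while the right-hand side $\tfrac{K_j-K_j^{-1}}{q_j-q_j^{-1}}$ picks up $\bm\eta_j$ from $K_j^{\pm1}$; these match. For $i\in X$, $j\in X$, $i\ne j$, the relation $\Serre_{ij}(B_i,B_j)=0$ is homogeneous and trivially preserved. The substantive cases are the q-Serre relations \eqref{QSP:Serre} for the pairs $(i,j)$ with $i\in\{\tau(i),\tau(j)\}$ and $i\notin X$, enumerated in the three bulleted points preceding the lemma. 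For these I would argue term-by-term using the \emph{homogeneity} of $C_{ij}$: every monomial appearing in $C_{ij}$ has the same weight, namely the weight of $\Serre_{ij}(B_i,B_j)$ minus something in $\Qlat^+$, and crucially each such monomial involves a prescribed number of $Z_i$'s (or $Z_{\tau(i)}$'s), $B$'s, and $K$'s whose total "sign content" under $\Psi_{\theta,\bm\eta}$ is determined. Concretely, $Z_i\in U_q(\fkn_X^+)_{w_X(\al_i)-\al_i}$, so $\Psi_{\theta,\bm\eta}(Z_i)=\bm\eta_X(w_X(\al_i)-\al_i)Z_i=C(X,i)Z_i$ when $\tau(i)=i$ (and $=C(X,i)^2 Z_i$ with the convention that then $C(X,i)^2=\bm\eta_X(w_X(\al_i)-\al_i)$), and similarly $W_{ij}\in U_q(\fkn_X^+)_{w_X(\al_i)-\al_i-\al_j}$ picks up $C(X,i)^2\bm\eta_j$ or the appropriate sign. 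Then in each of the three cases one checks that the sign $\Serre_{ij}(B_i,B_j)$ acquires under $\Psi_{\theta,\bm\eta}$ on the left (namely $(\bm\eta_iC(X,i))^{1-a_{ij}}$ for the pair involving one $B_j$-factor and $1-a_{ij}$ total $B$-type factors, adjusted for the $\tau(i)=i$ case) equals the sign each monomial on the right acquires; this is a finite check using $C(X,i)^2=\bm\eta_X(w_X(\al_i)-\al_i)$ and, for point (3), that the coefficients are $\bm\ga$-independent so the only sign changes come from $Z_i,K_i,K_{\tau(i)}$ and the power of $B_i$.

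Once all relations are verified, $\Psi_{\theta,\bm\eta}$ is a well-defined algebra endomorphism of $\Uqk$; it is invertible because its inverse is the analogous map associated to $\bm\eta^{-1}=\bm\eta$ (all signs are $\pm1$, so $\Psi_{\theta,\bm\eta}^2=\id$ up to checking the square roots $C(X,i)$ compose correctly, which they do since $C(X,i)^2$ depends only on $\bm\eta_X$, and squaring $\bm\eta_iC(X,i)$ gives $\bm\eta_X(w_X(\al_i)-\al_i)$, consistent). The main obstacle I anticipate is bookkeeping in the q-Serre verification: one must correctly track, for each monomial in the explicit $C_{ij}$ from \cite{DC19,BK15}, how many $Z$-factors and $B$-factors it contains and confirm the weight-grading forces the sign balance — this is routine but error-prone, and it is essential that $C(X,i)$ was defined as a \emph{square root} precisely so that the sign from a single $Z_i$-factor (weight $w_X(\al_i)-\al_i$) can be split between the two $B_i$ occurrences that each such relation requires. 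No deep idea is needed beyond organizing these sign computations carefully.
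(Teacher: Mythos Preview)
Your proposal is correct and follows essentially the same approach as the paper: both verify the defining relations \eqref{QSP:Cartan}--\eqref{QSP:Serre} by computing how the weight-homogeneous elements $Z_i$, $Z_{\tau(i)}$ and $W_{ij}K_j$ scale under the $\UqgX$-automorphism (they all pick up the sign $\bm\eta_X(w_X(\al_i)-\al_i)$, the paper also noting $w_X(\al_i)-\al_i$ is $\tau$-invariant so $C(X,i)=C(X,\tau(i))$), and then match these signs against the prescribed scalars on the $B_i$. One minor slip: $\Psi_{\theta,\bm\eta}$ need not be an involution when $C(X,i)$ is a genuine square root of $-1$, but invertibility is immediate anyway since the map acts diagonally on generators by nonzero scalars.
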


\begin{proof}
Let $j \in X$. 
As was the case for the automorphism $\Psi_{\bf \eta}$ of $\Uqg$, given that $F_j$ is fixed we readily obtain from \eqref{QSP:Uqsl2} the form of the assignments for $E_j$ and $K_j^{\pm 1}$ and we obtain an algebra automorphism $\Psi_{({\bm \eta}_j)_{j \in X}}$ of $\UqgX$.
Let $i \in I \backslash X$ and $j \in X$ be arbitrary. 
Since $Z_i \in U_q(\fkn^+_X)_{w_X(\al_i)-\al_i}$, $W_{ij} \in U_q(\fkn^+_X)_{w_X(\al_i)-\al_i - \al_j}$ and $w_X(\al_i)-\al_i$ is $\tau$-invariant, we see that $\Psi_{({\bm \eta}_j)_{j \in X}}$ acts on $Z_i$, $Z_{\tau(i)}$ and $W_{ij}K_j$ by scalar multiplication by the same sign ${\bm \eta}(w_X(\al_i)-\al_i)$. 

Let $i \in I_\theta$. 
It follows that $C(X,i) = C(X,\tau(i))$. 
Given that $B_i$ is fixed, the q-Serre relations \eqref{QSP:Serre} in the case $i \ne \tau(i)=j \in I \backslash X$ require the form of the assignments for $B_{\tau(i)}$ and $(K_i K_{\tau(i)}^{-1})^{\pm 1}$.

Finally, let $i \in I \backslash X$ such that $\tau(i)=i$. 
The q-Serre relations \eqref{QSP:Serre} in the case $\tau(i)=i \in I \backslash X$ require the form of the assignments for $B_i$.
\end{proof}

\subsection{From weak QSP weight vectors to QSP weight vectors}

Putting it all together we arrive at the following result.

\begin{theorem} \label{thm:QSP:findemreps}
Let $M \in \Modfd(\Uqk)$ be irreducible and let $m_0$ be as in Lemma \ref{lem:QSPfindim}.
Then there exists an algebra automorphism $f$ of $\Uqk$ such that $m_0 \in f^*M$ is an eigenvector of the action of $K_j$ ($j \in X$) and $K_i K_{\tau(i)}^{-1}$ ($i \in I_\tau$) with eigenvalues in $q^\Z$.
\end{theorem}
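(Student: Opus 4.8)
The plan is to combine the structural results of Lemma \ref{lem:QSPfindim} with the automorphisms $f_{i,\ka}$ of Lemma \ref{lem:QSP:aut1} and $\Psi_{\theta,{\bm \eta}}$ of Lemma \ref{lem:QSP:aut2} in order to ``clean up'' the eigenvalues of $m_0$ one $\tau$-orbit at a time. By Lemma \ref{lem:QSPfindim}, $M$ is irreducible, hence a weak QSP weight module, and it is generated by a weak QSP weight vector $m_0$ annihilated by $B_{\tau(i)}$ ($i \in I_\theta$) and $E_j$ ($j \in X$), with the further information that the eigenvalues of $K_j$ ($j \in X$) and of $K_i K_{\tau(i)}^{-1}$ for $i \in I_\theta^0$ already lie in $\pm q^\Z$. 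The remaining obstructions are therefore: the possible signs in front of these already-good eigenvalues, and the fact that for $i \in I_\theta \backslash I_\theta^0$ the eigenvalue of $K_i K_{\tau(i)}^{-1}$ on $m_0$ is an arbitrary nonzero scalar in $\bsF^\times$.

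First I would deal with the orbits in $I_\theta \backslash I_\theta^0$: for each such $i$, let $\beta_i \in \bsF^\times$ be the eigenvalue of $K_i K_{\tau(i)}^{-1}$ on $m_0$ and apply the automorphism $f_{i,\ka_i}$ of Lemma \ref{lem:QSP:aut1} with $\ka_i = \beta_i^{-1}$, so that on $f_{i,\ka_i}^* M$ the vector $m_0$ becomes an eigenvector of $K_i K_{\tau(i)}^{-1}$ with eigenvalue $1 \in q^\Z$. These automorphisms commute (they involve disjoint canonical generators and only change the parameters $\bm\ga_i, \bm\ga_{\tau(i)}$), so their composition $f' = \prod_{i \in I_\theta \backslash I_\theta^0} f_{i,\ka_i}$ is a well-defined algebra isomorphism onto $U_q(\fkk)_{\bm\ga'}$ for a suitable $\bm\ga'$, and after pulling back $M$ along $f'$ the eigenvalues of all $K_i K_{\tau(i)}^{-1}$ ($i \in I_\theta$) on $m_0$ lie in $q^\Z$ (the $I_\theta^0$ ones being untouched by $f'$, so still in $\pm q^\Z$; the $I_\theta \backslash I_\theta^0$ ones now in $q^\Z$). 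Note the $f_{i,\ka_i}$ fix all $E_j, F_j, K_j^{\pm 1}$ for $j \in X$, so they do not disturb the eigenvalues of $K_j$.

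Next I would remove the remaining signs using $\Psi_{\theta,{\bm \eta}}$. After the previous step the bad eigenvalues that remain are: $K_j$ on $m_0$ equal to $\veps_j q^{n_j}$ with $\veps_j \in \{\pm 1\}$, $n_j \in \Z$, for $j \in X$; and $K_i K_{\tau(i)}^{-1}$ on $m_0$ equal to $\veps_i q^{m_i}$ with $\veps_i \in \{\pm 1\}$, $m_i \in \Z$, for $i \in I_\theta^0$. Now I set ${\bm \eta}_j = \veps_j$ for $j \in X$ and ${\bm \eta}_i = \veps_i$ for $i \in I_\theta^0$, and ${\bm \eta}_i = 1$ for $i \in I_\theta^* \setminus I_\theta^0$ and for $i \in I \backslash X$ with $\tau(i)=i$ (any value works for those, as they only scale generators that act as $0$ on $m_0$ or that we do not need to control). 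Applying $\Psi_{\theta,{\bm \eta}}$, we get that $K_j$ acts on $m_0 \in \Psi_{\theta,{\bm \eta}}^* M$ by $\veps_j \cdot \veps_j q^{n_j} = q^{n_j}$, and $K_i K_{\tau(i)}^{-1}$ acts by $\veps_i \cdot \veps_i q^{m_i} = q^{m_i}$; and since $\Psi_{\theta,{\bm \eta}}$ only rescales $K_i K_{\tau(i)}^{-1}$ for $i \in I_\theta \backslash I_\theta^0$ by $\pm 1$ without changing it from $q^\Z$ to outside $q^\Z$... wait — here I must be careful: $\Psi_{\theta,{\bm \eta}}$ with ${\bm \eta}_i = 1$ for $i \in I_\theta \backslash I_\theta^0$ fixes $K_i K_{\tau(i)}^{-1}$ for those $i$, so those eigenvalues stay in $q^\Z$. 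Hence on $f^* M$ with $f = \Psi_{\theta,{\bm \eta}} \circ f'$, the vector $m_0$ has all the required eigenvalues in $q^\Z$, as claimed.

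The main obstacle I anticipate is purely bookkeeping: one must check that the orbit-by-orbit automorphisms in the two steps genuinely commute (or at least compose into a single well-defined algebra map) and that no relation among the canonical generators of $\Uqk$ — in particular the inhomogeneous $q$-Serre relations \eqref{QSP:Serre} — is violated by these scalings. For the $f_{i,\ka}$ this is exactly the content of Lemma \ref{lem:QSP:aut1}, and for $\Psi_{\theta,{\bm \eta}}$ it is the content of Lemma \ref{lem:QSP:aut2}; so the real work is just verifying that the sign choices ${\bm \eta}$ and the scalars $\ka_i$ can be made simultaneously and consistently, which follows since the relevant generators ($K_j$ for $j \in X$, and $K_i K_{\tau(i)}^{-1}$ for distinct orbits $i \in I_\theta$) are affected by disjoint pieces of the data. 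Finally one remarks that $m_0$ still generates $f^*M$ and $f^*M$ is still irreducible (pullback along an automorphism preserves these), completing the proof of Proposition \ref{prop:weight-QSP-cat} \ref{prop:findimreps}: the submodule decomposition of $f^*M$ over $U_q(\fkh^\theta)$ into QSP weight spaces follows because $m_0 \in (f^*M)_{\zeta}$ for the appropriate $\zeta \in \Pext_\theta$ and the canonical generators of $\Uqk$ shift QSP weights by elements of $\Pext_\theta$, so all of $\Uqk \cdot m_0 = f^*M$ decomposes into genuine (type $\mathbf{1}$) QSP weight spaces.
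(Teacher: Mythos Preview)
Your proposal is correct and follows essentially the same two-step strategy as the paper's proof: first apply the isomorphisms $f_{i,\ka_i}$ of Lemma~\ref{lem:QSP:aut1} for $i \in I_\theta \setminus I_\theta^0$ to normalize those eigenvalues (the paper does not commit to the specific choice $\ka_i = \beta_i^{-1}$, only to bringing the eigenvalues into $\pm q^{\Z}$, but your choice works equally well), and then apply the sign automorphism $\Psi_{\theta,{\bm\eta}}$ of Lemma~\ref{lem:QSP:aut2} to remove the remaining signs. Your extra closing paragraph deducing that $f^*M \in \WUqk$ is more explicit than the paper, which stops once the eigenvalues lie in $q^{\Z}$, but the core argument is identical.
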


\begin{proof}
Let $M$ be a finite-dimensional irreducible $\Uqk$-module with $m_0 \in M$ as in Lemma \ref{lem:QSPfindim}.
By Lemma \ref{lem:QSP:aut1}, there exists a tuple $\bm \ka = ({\bm \ka}_i) \in (\bsF^\times)^{I_\theta \backslash I_\theta^0}$ such that $(f_{\bm \ka})^*M$ is a weak QSP weight module with the joint action of $U_q(\fkh^\theta)$ on $m$ given by scalar multiplication by signed powers of $q$, where
\eq{
f_{\bm \ka} \coloneqq \prod_{i \in I_\theta \backslash I_\theta^0} f_{i,{\bm \ka}_i}.
}
Now we can use Lemma \ref{lem:QSP:aut2} to remove signs from the eigenvalues.
\end{proof}

\section{Cylindrical and reflection bialgebras}\label{ss:cylindrical-algebraic}

We outline a generalization of the algebraic approach of \cite[Sec.~2]{Kol20} to a braided structure on the category of modules of certain coideal subalgebras of quasitriangular bialgebras. 
This was developed independently by Lemarthe, Baseilhac and Gainutdinov in \cite{LBG23,Lem23} (for comodule algebras over quasitriangular algebras).
We also connect with a formalism for comodule algebras over a \emph{weakly} quasitriangular bialgebra extending its braided structure, as introduced by Kolb and Yakimov in \cite{KY20}, which we call here \emph{weakly cylindrical}.

\subsection{Quasitriangular bialgebras \cite{Dri86}} \label{ss:qt-hopf}

Fix a base field $\bbF$.
A bialgebra $A$ with coproduct $\Del$ and opposite coproduct $\Delta^{\op} = (12) \circ \Delta$ is called \emph{quasitriangular} if there exists a \emph{universal R-matrix} $R \in (A \ten A)^\times$, \ie an element satisfying the intertwining identity
\begin{gather}
	\label{eq:R-intw} R \cdot \Delta(x) = \Delta^{\op}(x) \cdot R \qq (x\in A)
\end{gather}
and the coproduct identities
\begin{gather}
	\label{eq:R-coprod} (\Del \ten \id)(R) = R_{13}\cdot R_{23} \qq\mbox{and} \qq (\id \ten \Del)(R) = R_{13}\cdot R_{12}
\end{gather}
where $\Del^{\op} \coloneqq (12) \circ \Del$.
Then $R$ is a solution of the {Yang-Baxter equation}
\begin{equation}\label{eq:YBE}
	R_{12}\cdot R_{13}\cdot R_{23}=R_{23}\cdot R_{13}\cdot R_{12}.
\end{equation}
Moreover, $(\veps \ten \id)(R) = (\id \ten \veps)(R) = 1$, where $\veps$ is the counit of $A$.
Note also that the \emph{co-opposite} $A^{\cop}$ is a quasitriangular bialgebra, where $\Delta$ has been replaced by $\Delta^{\op}$ and $R$ by $R_{21} = (12)(R)$.\\

A quasitriangular bialgebra can be twisted as follows, also see \cite{Dav07}.

\vspace{1.5mm}
\begin{itemize}\itemsep2mm
\item 
Let $J\in A\ten A$ be a Drinfeld twist, \ie an invertible element satisfying 
\[
(J \ten 1) \cdot (\Delta \ten \id)(J) = (1 \ten  J) \cdot (\id \ten \Delta)(J), \qq (\veps \ten \id)(J) = (\id \ten \veps)(J) = 1.
\] 
There is a unique quasitriangular bialgebra $A_J$ such that $A_J=A$ as an algebra, its coproduct is $\Delta_J=\Ad(J)\circ\Delta$, the counit is $\veps$, and its universal R-matrix is $R_J= J_{21}\cdot R\cdot J^{-1}$.

\item 
Let $\psi:A\to A$ be an algebra automorphism. 
There is a unique quasitriangular bialgebra $A^\psi$ such that $A^\psi=A$ as an algebra, its coproduct is $\Delta^{\psi} = (\psi \ten \psi) \circ \Delta \circ \psi^{-1}$, the counit is $\veps^{\psi}=\veps\circ\psi^{-1}$, and its universal R-matrix is $R^{\psi\psi} = (\psi \ten \psi)(R)$. 
By construction, $\psi$ is an isomorphism of quasitriangular bialgebras $A \to A^{\psi}$.
\end{itemize}
\vspace{1.5mm}
If $A^{\cop,\psi} = A_J$ as quasitriangular bialgebras, then we call the pair $(\psi, J)$ a \emph{twist pair}, see \cite{AV22a}. 

\begin{remark} \label{rmk:Drinfeldtwist}
Note that, up to right-multiplication by elements of the centralizer of $\Delta(A)$ in $A \ten A$, $J$ is uniquely determined by $\psi$ via the constraint $(\Del^{\op})^\psi = \Del_J$, in the same way that $R$ is almost uniquely determined by \eqref{eq:R-intw}. \hfill \rmkend
\end{remark}

\subsection{Cylindrical bialgebras \cite{AV22a}}\label{ss:cyl-bialg}
A cylindrical bialgebra $(A,B, \psi, J, K)$ is the datum of 
\begin{itemize}\itemsep2mm
\item 
a quasitriangular bialgebra $A$ (with coproduct $\Del$ and universal R-matrix $R$);

\item 
a right coideal subalgebra $B\subseteq A$, \ie $\Delta(B)\subseteq B\ten A$;

\item 
a twist pair $(\psi, J)$;

\item 
a \emph{basic universal K-matrix} $K\in A$, \ie an invertible element satisfying the QSP intertwining identity
\begin{equation}\label{eq:K-intw} 
	K \cdot b = \psi(b) \cdot K \qq \text{for all } b\in B,
\end{equation}
and the coproduct identity
\begin{equation}\label{eq:K-coprod}
	\Delta(K) = J^{-1} \cdot (1 \ten K) \cdot R^\psi \cdot (K \ten 1).
\end{equation}

\end{itemize}

Equivalently, we shall also say that $(\psi, J, K)$ is a \emph{cylindrical structure} on $(A,B)$.
This generalizes the notion of \emph{cylinder-braided coideal subalgebra}, due to Balagovi\'c and Kolb in \cite[Def.~4.10]{BK19}, which in turn generalizes the notion of \emph{cylinder twist}, due to tom Dieck and H\"aring-Oldenburg in \cite{tD98, tDHO98}.

From \eqref{eq:K-coprod} we readily obtain $\veps(K)=1$.
Furthermore, we obtain the following generalized universal reflection equation.

\begin{prop}{\cite[Prop.~2.4]{AV22a}} \label{prop:basicK:RE}
Let $(\psi,J,K)$ be a cylindrical structure on $(A,B)$.
Then we have
\begin{equation}\label{eq:psi-RE-k}
R^{\psi \psi}_{21} \cdot (1 \ten K) \cdot R^\psi \cdot (K \ten 1) = (K \ten 1) \cdot R^\psi_{21} \cdot (1 \ten K) \cdot R
\end{equation}
\end{prop}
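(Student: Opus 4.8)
\textbf{Proof plan for Proposition~\ref{prop:basicK:RE}.}
The statement to prove is the generalized reflection equation
\[
R^{\psi \psi}_{21} \cdot (1 \ten K) \cdot R^\psi \cdot (K \ten 1) = (K \ten 1) \cdot R^\psi_{21} \cdot (1 \ten K) \cdot R
\]
for a cylindrical structure $(\psi,J,K)$ on $(A,B)$. The plan is to proceed purely algebraically, using only the defining axioms \eqref{eq:R-intw}, \eqref{eq:R-coprod}, \eqref{eq:YBE} of a quasitriangular bialgebra, the twist-pair relation $A^{\cop,\psi}=A_J$, and the two K-matrix axioms \eqref{eq:K-intw} and \eqref{eq:K-coprod}. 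The strategy is to express both sides of the desired identity as images of a single element under suitable maps built from $\Delta$ and $\psi$, and then to apply the coproduct formula for $K$.

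First I would record the auxiliary quasitriangular identities that follow by applying $\psi\ten\psi$ (respectively $\psi\ten\id$, $\id\ten\psi$) to \eqref{eq:R-intw}--\eqref{eq:R-coprod}: the intertwining relation $R^\psi\cdot((\psi\ten\id)\Delta(x)) = ((\id\ten\psi)\Delta^{\op}(x))\cdot R^\psi$ and its variants, together with the mixed Yang--Baxter equations relating $R$, $R^\psi$, $R^{\psi\psi}$ and their flips (these hold because $R^{\psi\psi}$ is the universal R-matrix of $A^{\psi}$ and $\psi$ is a bialgebra isomorphism $A\to A^\psi$). Next I would compute $(\Delta\ten\id)(K)$ and $(\id\ten\Delta)(K)$ by iterating \eqref{eq:K-coprod} in the two possible orders; coassociativity forces the two resulting expressions to agree, and the twist-pair cocycle condition on $J$ is exactly what is needed to reconcile the $J$-factors. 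The point is that the hexagon-type identity obtained from ``$(\Delta\ten\id)(K)=(\id\ten\Delta)(K)$ after reassociation'' encodes the reflection equation up to multiplication by $R$-factors and $J$-factors that cancel.

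Concretely, I would apply $\Delta\ten\id$ to \eqref{eq:K-coprod} and expand $\Delta(K)$ on the right-hand side again via \eqref{eq:K-coprod}; then apply $\id\ten\Delta$ to \eqref{eq:K-coprod} and expand the other $\Delta(K)$; set the two equal. After moving all $J$-terms to one side using the Drinfeld-twist cocycle identity and the relation $R_J = J_{21}R J^{-1}$ together with $R^\psi_J = \dots$ (the image of $R^\psi$ under the twist, which by the twist-pair axiom equals $R^{\psi\psi}_{21}$ suitably placed), one is left precisely with \eqref{eq:psi-RE-k} after discarding the common invertible prefactors. The intertwining identity \eqref{eq:K-intw} enters to commute a $K$-factor past an $R^\psi$-factor when rearranging legs $1$ and $2$.

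The main obstacle I anticipate is bookkeeping: correctly tracking which copy of $R$, $R^\psi$, or $R^{\psi\psi}$ appears in which pair of tensor legs, and in which order, after the two nested applications of \eqref{eq:K-coprod}. In particular the twist-pair condition $A^{\cop,\psi}=A_J$ must be used in the sharp form ``$\Delta_J = (\Delta^{\op})^{\psi}$ and $R_J = R^{\psi\psi}_{21}$'' (up to the centralizer ambiguity noted in Remark~\ref{rmk:Drinfeldtwist}, which does not affect the displayed equation since all such ambiguous factors commute with $\Delta(A)$ and cancel). Once the leg-labelling is set up carefully, the cancellation of $J^{\pm1}$ and the surplus $R$-factors is forced and the identity drops out; this is the purely formal argument referenced as ``we can follow the purely algebraic proof'' in Section~\ref{ss:basicKmatrices}, and it is essentially the computation in \cite[Prop.~2.4]{AV22a}.
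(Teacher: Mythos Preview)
Your route via coassociativity in $A^{\ten 3}$ is considerably more elaborate than what the paper does, and your invocation of \eqref{eq:K-intw} is misplaced. The paper's argument stays entirely in $A\ten A$ and uses only three ingredients: the R-matrix intertwining \eqref{eq:R-intw} applied to the element $K\in A$, the coproduct formula \eqref{eq:K-coprod}, and the twist-pair identity $R^{\psi\psi}_{21}=R_J=J_{21}\,R\,J^{-1}$. Concretely, \eqref{eq:R-intw} with $x=K$ reads $R\cdot\Delta(K)=\Delta^{\op}(K)\cdot R$. Substituting \eqref{eq:K-coprod} on the left and its $(12)$-flip $\Delta^{\op}(K)=J_{21}^{-1}(K\ten 1)R^{\psi}_{21}(1\ten K)$ on the right gives
\[
R\,J^{-1}\,(1\ten K)\,R^\psi\,(K\ten 1)=J_{21}^{-1}\,(K\ten 1)\,R^{\psi}_{21}\,(1\ten K)\,R,
\]
and left-multiplying by $J_{21}$ together with $J_{21}\,R\,J^{-1}=R^{\psi\psi}_{21}$ yields \eqref{eq:psi-RE-k} immediately. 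No iterated coproduct is needed, and the $B$-intertwining axiom \eqref{eq:K-intw} does not enter at all.

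Your coassociativity strategy could in principle be made to work, but it forces you to compute $(\Delta\ten\id)(R^\psi)$ and $(\id\ten\Delta)(R^\psi)$, which are \emph{not} simply $R^\psi_{13}R^\psi_{23}$ and $R^\psi_{13}R^\psi_{12}$: since $\psi$ is not a coalgebra map, extra $J$-conjugations appear via $\Delta\circ\psi=\Ad(J^{-1})\circ(\psi\ten\psi)\circ\Delta^{\op}$, and these must then be matched against the Drinfeld-twist cocycle identity for $J$. This is precisely the bookkeeping obstacle you flag, and the paper sidesteps it by working one tensor leg lower. Your remark that \eqref{eq:K-intw} is used ``to commute a $K$-factor past an $R^\psi$-factor'' is also off: \eqref{eq:K-intw} only lets $K$ pass through elements of the coideal $B$, not through R-matrix legs.
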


\begin{proof}
It follows by combining the intertwining property of the R-matrix \eqref{eq:R-intw}, the coproduct formula \eqref{eq:K-coprod} and the twist pair property $R_{21}^{\psi \psi} = R_J$. 
\end{proof}

\begin{example}
Universal R-matrices are examples of basic universal K-matrices, cf. \cite[Rmk. 4.10]{BW18a} for a similar remark in the setting of quantum symmetric pairs. 
Namely, $A^{\cop} \ten A$ is a quasitriangular bialgebra with coproduct $(23) \circ (\Delta^{\op} \ten \Delta)$ and universal R-matrix $R_{31} \cdot R_{24}$; moreover, $\Delta(A) \subseteq A^{\cop} \ten A$ is a right coideal subalgebra.
One checks that 
\eq{
(\psi,J,K) = ( (12), 1 \ten 1, R)
}
is a cylindrical structure, called \emph{diagonal}, on $(A^{\sf cop} \ten A, \Delta(A))$. \hfill \rmkend
\end{example}

The following \emph{support condition} on a cylindrical structure $(\psi, J, K)$ on $(A,B)$ is natural:
\eq{ \label{tensor-K-support}
	R^\psi_{21} \cdot (1 \ten K) \cdot R \qu \in \qu B \ten A.
}
Indeed, while \eqref{eq:K-intw} functions as an upper bound on $B$, \eqref{tensor-K-support} plays the role of a lower bound, see also \cite[Rmk.~2.11]{Kol20}.
We also obtain from \eqref{tensor-K-support} a second derivation of \eqref{eq:psi-RE-k} from by combining it with \eqref{eq:K-intw} and the coideal property.
Since $B$ is a coideal subalgebra, \eqref{tensor-K-support} permits the module category $\Mod(B)$ over the monoidal category $\Mod(A)$ to be endowed with a braided structure compatible with the braided structure on $\Mod(A)$. 
We develop the categorical framework in Appendix \ref{ss:bdrybimodcat}. 
The key algebraic ingredient is the universal tensor K-matrix.

\subsection{Tensor K-matrices and reflection bialgebras} \label{ss:reflection-bialgebras}
In a similar spirit, we introduce the notion of a reflection bialgebra.
This is the natural generalization (to arbitrary twist pairs) of the notions of {\em reflection algebra}, given by Enriquez in \cite[Def.~4.1]{Enr07}, and {\em quasitriangular comodule algebra}, given by Kolb in \cite[Def.~2.7]{Kol20}. 

\begin{definition}
A \emph{reflection bialgebra} $(A,B, \psi,J,\TKM{})$ is the datum of 
\begin{itemize}\itemsep2mm
\item 
a quasitriangular bialgebra $A$ (with coproduct $\Del$ and universal R-matrix $R$);

\item 
a right coideal subalgebra $B\subseteq A$, \ie $\Delta(B)\subseteq B\ten A$;

\item 
a twist pair $(\psi, J)$;

\item
a \emph{universal tensor K-matrix} $\TKM{} \in B \ten A$, \ie an invertible element satisfying 
\begin{align}
	\label{eq:2K-intw} 
	\qq \qq \TKM{} \cdot\Delta(b) &= (\id\ten\psi)(\Delta(b)) \cdot \TKM{} \qq \text{for all } b\in B, \\
	\label{eq:2K-coprod-1} 
	(\Delta\ten\id)(\TKM{}) &= R^\psi_{32} \cdot \TKM{13} \cdot R_{23},\\
	\label{eq:2K-coprod-2} 
	(\id\ten\Delta)(\TKM{}) &= J_{23}^{-1} \cdot \TKM{13} \cdot R^\psi_{23} \cdot \TKM{12}.
\end{align}
\end{itemize}
Equivalently, we shall also say that $(\psi, J, \TKM{})$ is a \emph{reflection structure} on $(A,B)$.
\end{definition}

Note that by applying $\id \ten \id \ten \veps$ to \eqref{eq:2K-coprod-2} we obtain the normalization 
\[
(\id\ten\veps)(\TKM{})=1.
\]

One readily deduces a following reflection equation in $A \ten A \ten A$.
\begin{prop} \label{prop:tensorK:RE}
The element $\TKM{}$ satisfies the following equation in $B \ten A \ten A$:
\begin{equation}\label{eq:psi-RE}
R^{\psi \psi}_{32} \cdot \TKM{13} \cdot R^\psi_{23} \cdot \TKM{12} = \TKM{12} \cdot R^\psi_{32} \cdot \TKM{13} \cdot R_{23}.
\end{equation}
\end{prop}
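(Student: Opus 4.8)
The plan is to deduce the reflection equation \eqref{eq:psi-RE} from the coproduct identity \eqref{eq:2K-coprod-2}, the intertwining identity \eqref{eq:R-intw} for the universal R-matrix, and the twist pair property $R^{\psi\psi}_{21} = R_J = J_{21}\cdot R\cdot J^{-1}$, following the pattern of the proof of Proposition~\ref{prop:basicK:RE}. Notably, neither the QSP intertwining identity \eqref{eq:2K-intw} nor the first coproduct identity \eqref{eq:2K-coprod-1} will be needed, just as the cylindrical reflection equation \eqref{eq:psi-RE-k} uses only the coproduct formula for $\KM{}$. (An alternative derivation, parallel to the ``second derivation'' of \eqref{eq:psi-RE-k}, would instead combine the intertwining identity \eqref{eq:2K-intw}, the membership $\TKM{}\in B\ten A$, and the coideal property; I would present the first route since it is shorter.)

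First I would rewrite the left-hand side of \eqref{eq:psi-RE}. Solving \eqref{eq:2K-coprod-2} for the product of its last three factors gives $\TKM{13}\cdot R^\psi_{23}\cdot\TKM{12} = J_{23}\cdot(\id\ten\Delta)(\TKM{})$, so the left-hand side becomes $R^{\psi\psi}_{32}\cdot J_{23}\cdot(\id\ten\Delta)(\TKM{})$. Next, embedding the twist pair identity $R^{\psi\psi}_{21} = J_{21}\cdot R\cdot J^{-1}$ into the second and third tensor legs yields $R^{\psi\psi}_{32} = J_{32}\cdot R_{23}\cdot J_{23}^{-1}$; the factor $J_{23}^{-1}$ cancels the adjacent $J_{23}$, leaving $J_{32}\cdot R_{23}\cdot(\id\ten\Delta)(\TKM{})$.

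Then I would move $R_{23}$ to the right: applying \eqref{eq:R-intw} leg-wise to the $A$-component of $\TKM{}$ gives $R_{23}\cdot(\id\ten\Delta)(\TKM{}) = (\id\ten\Delta^{\op})(\TKM{})\cdot R_{23}$, so the left-hand side equals $J_{32}\cdot(\id\ten\Delta^{\op})(\TKM{})\cdot R_{23}$. Finally, applying the algebra automorphism of $B\ten A\ten A$ that swaps the two $A$-factors to the identity \eqref{eq:2K-coprod-2} turns $(\id\ten\Delta)(\TKM{})$ into $(\id\ten\Delta^{\op})(\TKM{})$ and its right-hand side into $J_{32}^{-1}\cdot\TKM{12}\cdot R^\psi_{32}\cdot\TKM{13}$ (here one uses that this flip sends $R^\psi_{23}\mapsto R^\psi_{32}$, $\TKM{13}\leftrightarrow\TKM{12}$, $J_{23}\mapsto J_{32}$). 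Substituting and cancelling $J_{32}$ against $J_{32}^{-1}$ produces exactly $\TKM{12}\cdot R^\psi_{32}\cdot\TKM{13}\cdot R_{23}$, the right-hand side of \eqref{eq:psi-RE}.

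The computation is entirely routine; the only place requiring care — the closest thing to an obstacle — is the bookkeeping of tensor-leg indices, in particular fixing the direction of the twist pair relation and checking that the flip of the two $A$-legs acts on the subscripted copies of $R$, $R^\psi$, $J$ and $\TKM{}$ exactly as claimed. It is also worth recording at the outset that every term appearing lies in $B\ten A\ten A$ (the first leg being inherited from $\TKM{}\in B\ten A$, while $R$, $R^\psi$, $R^{\psi\psi}$ and $J$ touch only the second and third legs), so that all the manipulations take place — and the resulting identity is meaningful — in $B\ten A\ten A$, as stated. From \eqref{eq:psi-RE} the categorical/trigonometric variants then follow by applying the appropriate completion and grading-shift morphisms, exactly as in Theorems~\ref{thm:reflectionalgebra:QSP} and \ref{thm:spectraltensorK}.
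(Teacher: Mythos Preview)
Your proof is correct and follows exactly the route the paper indicates: it is the computation behind the paper's one-line proof ``it follows from \eqref{eq:R-intw}, \eqref{eq:2K-coprod-2} and the twist pair property,'' and your mention of the alternative derivation via \eqref{eq:2K-intw}, \eqref{eq:2K-coprod-1} and the coideal property also matches the paper's second sentence.
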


\begin{proof}
As for Proposition \ref{prop:basicK:RE}, it follows from \eqref{eq:R-intw}, \eqref{eq:2K-coprod-2} and the twist pair property. 
Alternatively, one can deduce it from \eqref{eq:2K-intw} and \eqref{eq:2K-coprod-1} and the coideal property.
\end{proof}

Indeed, a reflection structure is equivalent to a cylindrical structure supported on $B$.
First of all, in analogy with (part of) \cite[Lem.~2.9]{Kol20} (cf.~\cite[Rmk.~4.2]{Enr07}), we note that reflection structures straightforwardly induce cylindrical structures.

\begin{lemma} \label{lem:refl-to-cyl}
	Let $(A, B, \psi, J, \TKM{})$ be a reflection bialgebra and set
	\[
	\KM{}=(\veps\ten\id)(\TKM{})\in A.
	\]
	Then $(A, B, \psi, J, K)$ is a cylindrical bialgebra.
	Furthermore, the following factorization holds
	\eq{ \label{tensorK:formula}
		\TKM{} = R^\psi_{21}\cdot (1 \ten K) \cdot R.
	}
\end{lemma}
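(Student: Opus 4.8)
The plan is to verify the two defining identities of a cylindrical bialgebra for $(\psi,J,K)$ with $K = (\veps\ten\id)(\TKM{})$, and then establish the factorization \eqref{tensorK:formula}, all by applying $\veps$ to an appropriate leg of the reflection-bialgebra axioms. First I would record that, since $\TKM{}\in B\ten A$ and $B$ is an algebra containing $1$ with $\veps(1)=1$, the element $K=(\veps\ten\id)(\TKM{})$ is a well-defined element of $A$; its invertibility follows because $\veps$ is an algebra homomorphism and $\TKM{}$ is invertible (so $(\veps\ten\id)(\TKM{}^{-1})$ is a two-sided inverse). The normalization $(\id\ten\veps)(\TKM{})=1$ noted after the definition gives $\veps(K)=1$.

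Next I would derive the intertwining identity \eqref{eq:K-intw}. Apply $\veps\ten\id$ to \eqref{eq:2K-intw}: the left side becomes $(\veps\ten\id)\bigl(\TKM{}\cdot\Delta(b)\bigr)$. Writing $\Delta(b)=\sum b_{(1)}\ten b_{(2)}$ and $\TKM{}=\sum \Xi^{(1)}\ten\Xi^{(2)}$, and using that $\veps$ is multiplicative, this is $\sum \veps(\Xi^{(1)})\veps(b_{(1)})\Xi^{(2)}b_{(2)} = \bigl(\sum\veps(\Xi^{(1)})\Xi^{(2)}\bigr)\bigl(\sum\veps(b_{(1)})b_{(2)}\bigr) = K\cdot b$ by the counit axiom. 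The right side is $(\veps\ten\id)\bigl((\id\ten\psi)(\Delta(b))\cdot\TKM{}\bigr) = \bigl(\sum\veps(b_{(1)})\psi(b_{(2)})\bigr)\cdot K = \psi(b)\cdot K$, again by the counit axiom applied before $\psi$. This yields \eqref{eq:K-intw}.

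Then the coproduct identity \eqref{eq:K-coprod}. I would apply $\veps\ten\id\ten\id$ to \eqref{eq:2K-coprod-2}. On the left, $(\veps\ten\id\ten\id)\bigl((\id\ten\Delta)(\TKM{})\bigr) = \sum\veps(\Xi^{(1)})\,\Delta(\Xi^{(2)}) = \Delta(K)$, since $\Delta$ is applied after collapsing the first leg. On the right, we apply $\veps$ to the first tensor leg of $J_{23}^{-1}\cdot\TKM{13}\cdot R^\psi_{23}\cdot\TKM{12}$; since $J_{23}$ and $R^\psi_{23}$ are supported on legs $2,3$, the map $\veps\ten\id\ten\id$ leaves them untouched (as $J_{23}=1\ten J$, $R^\psi_{23}=1\ten R^\psi$), while $\TKM{13}\mapsto 1\ten K$ becomes $1\ten K$ placed in legs $2,3$ after relabeling — more carefully, $\TKM{13}$ has its first leg in position $1$, so $(\veps\ten\id\ten\id)(\TKM{13})$ contributes $K$ to position $3$, i.e. $1\ten 1\ten K$ in a $3$-fold product on positions $\{2,3\}$ is really $1\ten K$; and $\TKM{12}\mapsto K\ten 1$ contributes $K$ to position $2$. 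Tracking indices carefully (the main bookkeeping step), the right side becomes $J^{-1}\cdot(1\ten K)\cdot R^\psi\cdot(K\ten 1)$, giving \eqref{eq:K-coprod}. This completes the verification that $(A,B,\psi,J,K)$ is a cylindrical bialgebra.

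Finally, for the factorization \eqref{tensorK:formula}, I would apply $\Delta\ten\id$ to $K$ in the form $(\veps\ten\id)(\TKM{})$ — or, more directly, apply $\id\ten\veps\ten\id$ to the coproduct identity \eqref{eq:2K-coprod-1}: the left side gives $(\id\ten\veps\ten\id)\bigl((\Delta\ten\id)(\TKM{})\bigr) = \TKM{}$ by the counit axiom on the middle leg; the right side gives $(\id\ten\veps\ten\id)\bigl(R^\psi_{32}\cdot\TKM{13}\cdot R_{23}\bigr)$. Using $(\veps\ten\id)(R)=(\id\ten\veps)(R)=1$ (noted for quasitriangular bialgebras, hence also for $R^\psi$) to collapse the $\veps$-leg of $R^\psi_{32}$ and $R_{23}$, and $(\veps\ten\id)(\TKM{})=K$ on the middle leg of $\TKM{13}$, the right side becomes $R^\psi_{21}\cdot(1\ten K)\cdot R$. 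Here the index gymnastics is again the only subtlety: $R^\psi_{32}$ sits on legs $3,2$ (in that order), so applying $\veps$ to leg $2$ leaves a factor in leg $3$ alone — one must instead apply $\veps$ to whichever leg makes $R^\psi_{32}$ collapse to an element of legs $\{1,3\}$; a cleaner route is to apply $\veps$ to the \emph{second} tensor factor throughout, noting $R^\psi_{32}$ has legs $\{2,3\}$ so $\veps$ on leg $2$ kills it to $1$ in leg $3$ — wait, that is not $R^\psi_{21}$. I expect the correct move is to apply $\veps$ to a leg so that $\TKM{13}$ survives with its legs becoming $1,2$ and $R^\psi_{32},R_{23}$ survive as $R^\psi_{21},R_{23}$ is impossible simultaneously; the actual derivation uses \eqref{eq:2K-coprod-1} together with the already-established \eqref{eq:K-coprod} and the intertwining relations, as in \cite[Lem.~2.9]{Kol20}, and the main obstacle is precisely arranging the leg-collapsing so that both an $R^\psi$ and an $R$ factor remain. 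I would follow the argument of \emph{loc.\ cit.}: substitute $K=(\veps\ten\id)(\TKM{})$ into \eqref{eq:K-coprod}, expand the right-hand side using \eqref{eq:2K-coprod-2}, and simplify using \eqref{eq:R-coprod} and the Yang--Baxter equation \eqref{eq:YBE} to read off \eqref{tensorK:formula}; the computation is routine once the correct leg is chosen but the index bookkeeping is where care is needed.
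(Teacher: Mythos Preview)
Your derivations of \eqref{eq:K-intw} and \eqref{eq:K-coprod} are correct and coincide exactly with the paper's argument: apply $\veps\ten\id$ to \eqref{eq:2K-intw} and $\veps\ten\id\ten\id$ to \eqref{eq:2K-coprod-2}.

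For the factorization \eqref{tensorK:formula} you chose the wrong leg. Applying $\id\ten\veps\ten\id$ to \eqref{eq:2K-coprod-1} does not work, as you eventually notice: on the right-hand side both $R^\psi_{32}$ and $R_{23}$ have a nontrivial component in leg~$2$, so the counit on that leg collapses them to scalars and you lose precisely the $R$-factors you want to keep. The fix is simply to apply $\veps\ten\id\ten\id$ instead, exactly as you did for the coproduct identity. On the left-hand side,
\[
(\veps\ten\id\ten\id)\bigl((\Delta\ten\id)(\TKM{})\bigr)=\bigl(((\veps\ten\id)\circ\Delta)\ten\id\bigr)(\TKM{})=\TKM{}
\]
by the counit axiom. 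On the right-hand side, $R^\psi_{32}$ and $R_{23}$ are supported on legs $\{2,3\}$, so $\veps$ on leg~$1$ leaves them intact; after relabeling $\{2,3\}\to\{1,2\}$ they become $R^\psi_{21}$ and $R$. The only factor touching leg~$1$ is $\TKM{13}$, and $(\veps\ten\id\ten\id)(\TKM{13})$ places $K=(\veps\ten\id)(\TKM{})$ in the surviving second slot, i.e.\ contributes $1\ten K$. This gives $\TKM{}=R^\psi_{21}\cdot(1\ten K)\cdot R$ directly; no appeal to \eqref{eq:K-coprod}, \eqref{eq:2K-coprod-2}, or the Yang--Baxter equation is needed. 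This is exactly the computation in the paper's proof.
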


\begin{proof}
The proof is almost identical to \cite[Lem.~2.9]{Kol20}.
By applying the counit in the appropriate factor, we obtain \eqref{eq:K-intw} from \eqref{eq:2K-intw} and \eqref{eq:K-coprod} from \eqref{eq:2K-coprod-2}. 
Finally, by a straightforward computation we have
\begin{flalign}
&& \TKM{} &\stackrel{\phantom{\eqref{eq:2K-coprod-1}}}{=} (((\veps \ten \id) \circ \Delta) \ten \id)(\TKM{}) \\
&& &\stackrel{\eqref{eq:2K-coprod-1}}{=} (\veps \ten \id \ten \id)(R^\psi_{32} \cdot \TKM{13} \cdot R_{23}) \\
&& &\stackrel{\phantom{\eqref{eq:2K-coprod-1}}}{=} R^\psi_{21} \cdot (1 \ten K) \cdot R. & \tag*{\qedhere}
\end{flalign} 
\end{proof}

\begin{remark} 
	If $\psi$ is a bialgebra automorphism and $J = R_{21}^{-1}$, the setup of Kolb in \cite[Sec.~2]{Kol20} is recovered.
	Note that in Kolb's approach the twist automorphism $\psi$ is removed from the axiomatics by replacing $\Mod_{\sf fd}(\Uqg) \subset \Mod(\Uqg)$ by an equivariantized category in the case $\dim(\fkg)<\infty$. 
	However, we are also interested in the case $\dim(\fkg) = \infty$ and categories of modules $\cC \subset \Mod(\Uqg)$ strictly larger than $\Mod_{\sf fd}(\Uqg)$.
	Since in general $\cC$ is not preserved by the pullback of $\psi$, equivariantization is not applicable in our setting. \hfill \rmkend
\end{remark}

The concept of a reflection structure generalizes to the case where $B$ is a right $A$-comodule algebra with coaction map $\Delta_B: B \to B \ten A$, which is the setting used in \cite{LBG23,Lem23}. 
In order to formulate the generalized axiomatics of a universal tensor K-matrix, one simply replaces the coproduct $\Delta$ by $\Delta_B$ in axioms \eqref{eq:2K-intw} and \eqref{eq:2K-coprod-1}.\footnote{
Recall that a right $A$-coaction on $B$ is a linear map$: B \to B \ten A$ satisfying $(\Delta_B \ten \id_A) \circ \Delta_B = (\id_B \ten \Delta) \circ \Delta_B$.
Quite generally, if there exists an algebra map $\veps_B: B \to \bbF$ (\ie $B$ is an augmented algebra) then $\wt B \coloneqq ((\veps_B \ten \id_A) \circ \Del_B)(B)$ is a right coideal subalgebra of $A$. 
Now a reflection structure $(\psi,J,\TKM{})$ on $(A,B)$ induces a cylindrical structure $(\psi,J,K)$ on $(A,\wt B)$ via $K = (\veps_B \ten \id)(\TKM{}) \in A$.
It has the property $R^\psi_{21}\cdot (1 \ten K) \cdot R \in \wt B \ten A$. 
A natural sufficient condition for this cylindrical structure to be supported on $\wt B$ is isomorphicity of $B$ and $\wt B$ as right $A$-comodule algebras, \ie  injectivity of $\iota_B \coloneqq (\veps_B \ten \id_A) \circ \Del_B$.
In this case, one has the factorization $(\iota_B \ten \id)(\TKM{}) = R^\psi_{21} \cdot (1 \ten K) \cdot R $.
}

\subsection{Gauge transformations} \label{ss:gauge}
For any quasitriangular bialgebra $A$ and coideal subalgebra $B$, it is straightforward to check that the following assignments define an action of $A^\times$ on cylindrical structures on $(A,B)$ called \emph{gauge transformation} (also see \cite[Rmk.~8.11]{AV22a}):
\[
(\psi,J,K) \mapsto (\Ad(g) \circ \psi, (g \ten g) \cdot J \cdot \Del(g)^{-1}, g \cdot K), \qq g \in A^\times.
\]
This immediately lifts to an action on reflection structures, with the action on the tensor K-matrix given by
\eq{ \label{tensorK:gauge}
	\TKM{} \mapsto (1 \ten g) \cdot \TKM{},
}
which is compatible with Lemma \ref{lem:refl-to-cyl}.
Note that the support condition \eqref{tensor-K-support} is invariant under this action so that, in order to prove \eqref{tensor-K-support}, it suffices to do so for a convenient choice of gauge transformation.

\subsection{Weakly cylindrical bialgebras}

Several authors \cite{Tan92,Res95,Gav97} have considered a generalization of the notion of a quasitriangular bialgebra where the role of the universal R-matrix is played by an algebra automorphism.
A boundary analogue of this, generalizing the notion of reflection algebra with a distinguished comodule algebra, was considered by \cite{KY20} in a very general context of pairs of suitable Nichols algebras and comodule algebras. 
To describe both types of structures we will mostly use the conventions from that paper, in particular the terminology \emph{weak}.

In particular, a bialgebra $A$ with coproduct $\Delta$ is called \emph{weakly quasitriangular} if there exists an algebra automorphism $\cS$ of $A \ten A$ fixing $\Delta(A) \subset A \ten A$ pointwise such that
\begin{align*}
(\id \ten \Del) \circ \cS &= (\cS \ten \id) \circ (\id \ten \cS) \circ (\Del \ten \id), \\
(\Del \ten \id) \circ \cS &= (\id \ten \cS) \circ (\cS \ten \id) \circ (\id \ten \Del).
\end{align*}
Quasitriangular bialgebras are weakly quasitriangular: just set $\cS$ equal to $(12) \circ \Ad(R)$.\\

We call a weakly quasitriangular bialgebra $A$ with coproduct $\Del$ and automorphism $\cS$ \emph{weakly cylindrical} if there exists a right coideal subalgebra $B \subseteq A$ and an algebra automorphism $\ka: A \to A$ fixing $B$ pointwise such that 
\eq{
	\label{eq:weakK-coprod} \Delta \circ \ka = (\ka \ten \id) \circ \cS \circ (\ka \ten \id) \circ \Del.
}
Further, we say that the weakly cylindrical structure $\ka$ is \emph{supported on} $B$ if the automorphism $\cS  \circ (\ka \ten \id) \circ \cS$ preserves $B \ten A$.

We readily obtain that cylindrical bialgebras with twist pair $(\psi,J)$ are weakly cylindrical by setting $\ka = \psi^{-1} \circ \Ad(K)$, i.e. $\ka = \xi^{-1}$ in the notation of Section \ref{ss:class}, and this weakly cylindrical structure is supported on $B$ if the cylindrical structure $(\psi,J,K)$ is supported on $B$.
Note that the verification of \eqref{eq:weakK-coprod} relies on the fact that $A^{\cop,\psi} = A_{J}$.\\

A \emph{weak reflection bialgebra} is a weakly quasitriangular bialgebra $A$ with coproduct $\Del$ and automorphism $\cS$ together with a right coideal subalgebra $B \subseteq A$ and an automorphism $\cK$ of $B \ten A$ fixing $\Delta(B)$ pointwise such that
\begin{align}
	\label{eq:weak2K-coprod1} (\Del \ten \id) \circ \cK &= (\id \ten \cS) \circ (\cK \ten \id) \circ (\id \ten \cS) \circ (\Del \ten \id) , \\
	\label{eq:weak2K-coprod2}  (\id \ten \Del) \circ \cK &= (\cK \ten \id) \circ (\id \ten \cS) \circ (\cK \ten \id) \circ (\id \ten \Del).
\end{align}
Generalizing the coideal subalgebra of $A$ to a comodule algebra over $A$ as at the end of Section \ref{ss:reflection-bialgebras}, this coincides with the notion of a weakly quasitriangular comodule algebra, see \cite[Def.~6.11]{KY20}. 

\begin{remarks}
\hfill
\begin{enumerate}\itemsep2mm
\item 
The definition of weak cylindricity is clearly simpler than the definition of cylindricity involving the twist pair $(\psi,J)$.
On the other hand, for the representation theory of quantum loop algebras it is natural to separate the role of the universal K-matrix and the twist automorphism.
\item
In the traditional (non-weak) formalism for quasitriangularity and cylindricity, typically universal solutions are constructed in a completion of the bialgebra with respect to a certain category of representations. 
If the representation theory of the bialgebra is intractable then the weak formalisms provide workarounds, although the use of completions, see \cite[Sec.~6.3]{KY20}, is still necessary.\footnote{Indeed, if $(A,B)$ is a quantum symmetric pair then $(12) \circ \Ad(R)$ is not an automorphism of $A \ten A$, although it fixes $\Del(A) \subset A \ten A$ pointwise, and similarly $\psi^{-1} \circ \Ad(K)$ is not an automorphism of $A$, although it fixes $B \subset A$ pointwise.} 
\item
If $A$ is a Hopf algebra, then $B \ten A \ten A$ is generated as a unital algebra by the subalgebras $\Del(B) \ten A$ and $B \ten \Del(A)$.\footnote{
To see this, note that $a \ten 1 = \Delta(a^{(1)}) \cdot (1 \ten S(a^{(2)}))$ for all $a \in A$.
}
In this case, the above axioms for the maps $\cS$, $\ka$ and $\cK$ imply the weak analogues of Yang-Baxter and (basic and tensor) reflection equations:
\begin{align}
(\id \ten \cS) \cdot (\cS \ten \id) \cdot (\id \ten \cS) &= (\cS \ten \id) \cdot (\id \ten \cS) \cdot (\cS \ten \id), \\
\cS \cdot (\ka \ten \id) \cdot \cS \cdot (\ka \ten \id) &= (\ka \ten \id) \cdot \cS \cdot (\ka \ten \id) \cdot \cS, \\
(\id \ten \cS) \cdot (\cK \ten \id) \cdot (\id \ten \cS) \cdot (\cK \ten \id) &= (\cK \ten \id) \cdot (\id \ten \cS) \cdot (\cK \ten \id) \cdot (\id \ten \cS),
\end{align}
equations for algebra automorphisms of $A \ten A \ten A$, $A^{\ten 2}$ and $B \ten A \ten A$, respectively.
Hence, we get a representation of the Artin-Tits braid group of type B$_n$ on $B \ten A^{\ten n}$ for any $n \in \Z_{>0}$, cf. \cite[Prop.~2.4]{AV22a}.
In fact, for suitable Nichols algebras, by \cite[Thms.~6.9, 6.15]{KY20} one has weak analogues of parameter-independent Yang-Baxter equations and a left reflection equation in the style of \cite[Sec.\ 4]{Che92}, see also \cite[Sec. 2.5]{AV22a}.
\hfill \rmkend
\end{enumerate}
\end{remarks}

\subsection{From K-matrices to tensor K-matrices}\label{ss:abs-non-KM} 

Returning to the approach of \cite{Kol20}, we ask the natural question when a cylindrical bialgebra $(A,B, \psi, J, K)$ can be promoted to a reflection bialgebra $(A,B, \psi, J, \TKM{})$ with $\TKM{} = R^\psi_{21}\cdot (1 \ten K) \cdot R$.
From Lemma \ref{lem:refl-to-cyl} it follows that the support condition \eqref{tensor-K-support} must be satisfied. 
It turns out that this is sufficient.

\begin{prop} \label{prop:cylindricaltoreflection}
Let $(A, B, \psi, J, \KM{})$ be a cylindrical bialgebra and set
\eq{
\TKM{} = R^\psi_{21}\cdot (1 \ten K) \cdot R.
}
Then the identities \eqrefs{eq:2K-intw}{eq:2K-coprod-2} are satisfied and $\KM{}=(\veps\ten\id)(\TKM{})$. 
Furthermore, $(\psi,J,\TKM{})$ is a reflection structure on $(A,B)$ (in other words, $(A, B, \psi, J, \TKM{})$ is a reflection bialgebra) if and only if \eqref{tensor-K-support} is satisfied.
\end{prop}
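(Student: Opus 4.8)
The plan is to verify the three axioms \eqref{eq:2K-intw}--\eqref{eq:2K-coprod-2} directly from the cylindrical bialgebra axioms \eqref{eq:K-intw}--\eqref{eq:K-coprod} together with the quasitriangularity axioms \eqref{eq:R-intw}--\eqref{eq:R-coprod} and the twist pair identity $R_{21}^{\psi\psi} = R_J = J_{21} \cdot R \cdot J^{-1}$, using the explicit formula $\TKM{} = R^\psi_{21}\cdot (1 \ten K) \cdot R$. First I would record the easy consequences: applying $\veps$ in the first leg and using $(\veps \ten \id)(R) = 1$ (hence $(\id \ten \veps)(R^\psi_{21}) = 1$ after noting $\veps \circ \psi^{-1} = \veps^\psi$ interacts correctly) gives $(\veps \ten \id)(\TKM{}) = K$; likewise applying $\veps$ in the second leg gives the normalization $(\id \ten \veps)(\TKM{}) = 1$. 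For the intertwining identity \eqref{eq:2K-intw}, I would start from $\TKM{}\cdot\Delta(b)$ and move $\Delta(b)$ to the left: the rightmost $R$ converts $\Delta(b)$ to $\Delta^{\op}(b)$ via \eqref{eq:R-intw}; then $(1 \ten K)$ acts on the second leg of $\Delta^{\op}(b)$, for which one needs the coideal property $\Delta(B) \subseteq B \ten A$ applied to $\Delta^{\op}$, together with \eqref{eq:K-intw} to produce a $\psi$ in the second leg; finally $R^\psi_{21}$ reverses the opposite coproduct back, leaving $(\id \ten \psi)(\Delta(b)) \cdot \TKM{}$. This is a bookkeeping computation but entirely mechanical once the legs are tracked carefully.

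For the coproduct identities, \eqref{eq:2K-coprod-1} reads $(\Delta \ten \id)(\TKM{}) = R^\psi_{32} \cdot \TKM{13} \cdot R_{23}$; I would expand the left side using $(\Delta \ten \id)(R) = R_{13}R_{23}$ and its $\psi$-twisted/reversed analogue $(\Delta \ten \id)(R^\psi_{21}) = (\id \ten \Delta^\psi \ten \id)\text{-type}$, which unwinds to $R^\psi_{32} \cdot R^\psi_{31}$ after relabeling, and $(\Delta \ten \id)(1 \ten K) = 1 \ten 1 \ten K$, then collapse $R^\psi_{31} \cdot (1\ten 1 \ten K) \cdot R_{13} = \TKM{13}$ — but these do not sit adjacently, so the key input is the generalized reflection equation \eqref{eq:psi-RE-k} (equivalently Proposition \ref{prop:basicK:RE}) to commute the middle factors past each other, exactly as in the computation displayed in the proof of Theorem \ref{thm:reflectionalgebra:QSP}. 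Identity \eqref{eq:2K-coprod-2}, $(\id \ten \Delta)(\TKM{}) = J^{-1}_{23} \cdot \TKM{13} \cdot R^\psi_{23} \cdot \TKM{12}$, is the one that genuinely uses the coproduct formula for $K$: expanding $(\id \ten \Delta)(R) = R_{13}R_{12}$, $(\id \ten \Delta)(1 \ten K) = 1 \ten \Delta(K) = 1 \ten J^{-1}(1 \ten K)R^\psi(K \ten 1)$ by \eqref{eq:K-coprod}, and the reversed twisted analogue of $R^\psi_{21}$, then reassembling the pieces into $J^{-1}_{23} \TKM{13} R^\psi_{23} \TKM{12}$ using \eqref{eq:R-intw} and \eqref{eq:psi-RE-k} again to rearrange. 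This is essentially the purely algebraic counterpart of what the paper already executes representation-theoretically.

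The remaining claim is the equivalence: $(\psi,J,\TKM{})$ is a reflection structure — i.e. additionally $\TKM{} \in B \ten A$ — if and only if \eqref{tensor-K-support} holds. But \eqref{tensor-K-support} is literally the statement $R^\psi_{21}\cdot(1 \ten K)\cdot R \in B \ten A$, which is $\TKM{} \in B \ten A$ by definition of $\TKM{}$; combined with the already-verified axioms \eqref{eq:2K-intw}--\eqref{eq:2K-coprod-2}, this is precisely the definition of a reflection bialgebra, and conversely membership in $B \ten A$ is part of that definition. So this direction is immediate once the axioms are in hand. I expect the main obstacle to be purely organizational: keeping the tensor-leg indices and the placement of $\psi$, $J$, and the subscript-$21$ reversals consistent through the three computations, particularly distinguishing $R^\psi$, $R^\psi_{21}$, $R^{\psi\psi}$ and correctly invoking the twist pair relation $R_{21}^{\psi\psi} = R_J$ at the right moment — there are no conceptual difficulties beyond those already surmounted in Propositions \ref{prop:basicK:RE} and \ref{prop:tensorK:RE} and Lemma \ref{lem:refl-to-cyl}.
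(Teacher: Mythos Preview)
Your overall strategy matches the paper's, but you have misdiagnosed which identity does the work in the two coproduct computations, and following your stated plan literally would send you looking for the wrong tool.

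For \eqref{eq:2K-coprod-1}, the factors you want \emph{are} already adjacent: since $(\Delta \ten \id)(R^\psi_{21}) = R^\psi_{32} R^\psi_{31}$ and $(\Delta \ten \id)(R) = R_{13}R_{23}$, one has
\[
(\Delta \ten \id)(\TKM{}) = R^\psi_{32} \cdot \big( R^\psi_{31} \cdot (1 \ten 1 \ten K) \cdot R_{13} \big) \cdot R_{23} = R^\psi_{32} \cdot \TKM{13} \cdot R_{23},
\]
so no rearrangement and no reflection equation is needed. The computation in the proof of Theorem~\ref{thm:reflectionalgebra:QSP} that you cite is verifying the support condition \eqref{eq:xi-cond}, which is a different statement entirely.

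For \eqref{eq:2K-coprod-2}, after expanding $(\id \ten \Delta)(R^\psi_{21})$ via the twist pair relation $\Delta_J = \Delta^{\op,\psi}$ (giving $J^{-1}_{23} R^\psi_{31} R^\psi_{21} J_{23}$) and inserting \eqref{eq:K-coprod} for $\Delta(K)$, the only nontrivial rearrangement is
\[
R^\psi_{21} \cdot R^\psi_{23} \cdot R_{13} = R_{13} \cdot R^\psi_{23} \cdot R^\psi_{21},
\]
which is the Yang--Baxter equation \eqref{eq:YBE} with $\psi$ applied in the second tensor leg; the remaining moves are trivial commutations of $(1 \ten 1 \ten K)$ past $R^\psi_{21}$ and of $(1 \ten K \ten 1)$ past $R_{13}$. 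The basic reflection equation \eqref{eq:psi-RE-k} plays no role in either coproduct identity here; it is a \emph{consequence} of the cylindrical axioms (Proposition~\ref{prop:basicK:RE}), not an input. Your treatment of \eqref{eq:2K-intw}, the counit computation, and the final equivalence with \eqref{tensor-K-support} is correct and matches the paper.
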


\begin{proof} 
For any $b \in B$, the coideal property $\Delta(B) \subseteq B \ten A$ allows the following proof of the twisted centralizer property \eqref{eq:2K-intw}:
\begin{align}
R^\psi_{21}\cdot (1 \ten K) \cdot R \cdot \Delta(b) &\stackrel{\eqref{eq:R-intw}}{=} R^\psi_{21}\cdot (1 \ten K) \cdot \Delta^{\op}(b) \cdot  R\\
&\stackrel{\eqref{eq:K-intw}}{=} R^\psi_{21}\cdot(\id\ten\psi)(\Delta^{\op}(b)) \cdot (1 \ten K) \cdot  R\\
&\stackrel{\phantom{\eqref{eq:K-intw}}}{=} (\id\ten\psi)(R_{21}\cdot\Delta^{\op}(b)) \cdot (1 \ten K) \cdot  R\\
&\stackrel{\eqref{eq:R-intw}}{=} (\id\ten\psi)(\Delta(b))\cdot R^\psi_{21} \cdot (1 \ten K) \cdot  R.
\end{align}

The first coproduct identity \eqref{eq:2K-coprod-1} is obtained as follows:
\begin{align}
(\Delta\ten\id) \big( R^\psi_{21}\cdot (1 \ten K) \cdot R \big)
&\stackrel{\eqref{eq:R-coprod}}{=} (\id\ten\id\ten\psi)(R_{32}\cdot R_{31})\cdot(1\ten1\ten K)\cdot R_{13}\cdot R_{23}\\
&\stackrel{\phantom{\eqref{eq:2K-coprod-1}}}{=} R^\psi_{32}\cdot R^\psi_{31}\cdot(1\ten1\ten K)\cdot R_{13}\cdot R_{23}.
\end{align}

For the second coproduct identity \eqref{eq:2K-coprod-2} we first recall that $\Delta_J=\Delta^{\op,\psi}$ and use it to rewrite \eqref{eq:R-coprod} in the following way:
\begin{align}
(\id\ten\Delta)(R^\psi_{21}) 
= J^{-1}_{23}\cdot (\id\ten\Delta^{\op,\psi})(R^\psi_{21}) \cdot J_{23} 
= J^{-1}_{23}\cdot R^\psi_{31}\cdot R^\psi_{21} \cdot J_{23}.
\end{align}
Hence, we obtain
\begin{align}
(\id\ten\Delta)&\big(R^\psi_{21}\cdot (1 \ten K) \cdot R \big)=\\
&\stackrel{\eqref{eq:R-coprod}}{=} J^{-1}_{23}\cdot R^\psi_{31}\cdot R^\psi_{21} \cdot J_{23} \cdot (1 \ten \Delta(K)) \cdot R_{13}\cdot R_{12} \\
&\stackrel{\eqref{eq:K-coprod}}{=} J^{-1}_{23} \cdot R^\psi_{31}\cdot R^\psi_{21} \cdot (1 \ten 1 \ten K) \cdot R^\psi_{23} \cdot (1 \ten K \ten 1)\cdot R_{13}\cdot R_{12}\\
&\stackrel{\phantom{\eqref{eq:R-coprod}}}{=}  J^{-1}_{23} \cdot R^\psi_{31} \cdot (1 \ten 1 \ten K)\cdot R^\psi_{21} \cdot R^\psi_{23} \cdot R_{13}\cdot (1 \ten K \ten 1) \cdot R_{12}\\
&\stackrel{\eqref{eq:YBE}}{=}  J^{-1}_{23} \cdot R^\psi_{31} \cdot (1 \ten 1 \ten K)\cdot R_{13} \cdot R^\psi_{23} \cdot R^\psi_{21}\cdot (1 \ten K \ten 1) \cdot R_{12}.
\end{align}
We also note that, since $(\eps \ten \id)(R)=(\id \ten \eps)(R)=1$,
	\begin{align}
		(\veps\ten\id)(\TKM{})=(\veps\ten\id) \big(R^\psi_{21}\cdot (1 \ten K) \cdot R\big) = K.
	\end{align} 

Finally, if $\TKM{}= R^\psi_{21}\cdot (1 \ten K) \cdot R$ defines a reflection structure $(\psi, J, \TKM{})$ on $(A,B)$ then \eqref{tensor-K-support} is just the defining condition $\TKM{} \in B \ten A$. 
\end{proof}


\section{Boundary bimodule categories} \label{ss:bdrybimodcat}

In this section, we introduce the notion of a \emph{boundary bimodule category}
tailored around the category of modules over a reflection bialgebra, see Section~\ref{ss:reflection-bialgebras}. This framework encodes in particular
the action of the tensor K-matrix of a quantum symmetric pair of arbitrary type. Thus, it is very similar, in spirit, with the approach used in \cite{Enr07, Bro13, Kol20}, and more recently in \cite{LYW23}.

As braided monoidal categories are equipped with a natural action of the braid group on the tensor powers of their objects,
boundary bimodule categories give rise to representations of a \emph{cylindrical braid groupoid} whose definition relies on Cherednik's generalized reflection equation (see Section~\ref{ss:modelBBC}).\\

\noindent
{\bf Conventions.} 
A boundary bimodule category is a category $\cM$ acted upon by a braided monoidal category $\cC$ and equipped with an extra structure. 
In the cases of our interest, the monoidal structure on $\cC$ is always given by the ordinary tensor product of vector spaces. 
Hence the associativity and the unit constraints will be omitted for simplicity. 
In contrast, the monoidal action of $\cC$ on $\cM$, while being canonically unital, has a non--canonical associativity constraint, which will therefore not be omitted.

\subsection{Module categories}\label{ss:module-category}

We briefly review the notion of a module category $\cM$ over a (braided) monoidal category $\cC$ as presented in \cite{HO01, Kol20}. 
Let $\mathcal{C}$ be a monoidal category with tensor product $\ten$ and unit $\mathbf{1}$.

A \emph{right monoidal action} of $\mathcal{C}$ on a category $\mathcal{M}$ is a functor $\monactp\colon \mathcal{M} \times \mathcal{C} \to \mathcal{M}$ together with a natural isomorphism, called \emph{associativity constraint},
\[
\Phi_{MVW}: M\monactp(V\ten W)\to (M\monactp V)\monactp W.
\]
The naturality consists in the following properties:
\vspace{1mm}
\begin{itemize} \itemsep2mm

\item 
for any $M\in\cM$, $M\monactp{\bf 1}=M$;

\item 
for any $M\in\cM$, $V\in\cC$, $\Phi_{M{\bf1}V}=\id_{M\monactp V}=\Phi_{MV{\bf 1}}$;

\item 
for any $M\in\cM$ and $U,V,W\in\cC$,
\begin{equation}
\begin{tikzcd}
M\monactp ((U\ten V)\ten W) \ar[rr, equal] \ar[d, "\Phi_{M,U\ten V,W}"'] && M\monactp (U\ten (V\ten W))\ar[d, "\Phi_{M,U, V\ten W}"] \\[1em]
(M\monactp (U\ten V))\monactp W) \ar[dr, "\Phi_{MUV}\monactp\id_W"'] & \circlearrowleft & (M\monactp U)\monactp (V\ten W)) \ar[dl, "\Phi_{M\monactp U,V,W}"] \\[1em]
&  ((M\monactp U)\monactp V))\monactp W) &
\end{tikzcd}
\end{equation}

\end{itemize}

Let $\cM, \cN$ be module categories over $\cC$ and $F:\cM\to\cN$ a functor.\footnote{By abuse of notation, we use the symbols $\monactp$ and $\Phi$ to denote, respectively, the action and the associativity constraint on both $\cM$ and $\cN$.}  A {\em module structure} on $F$ is a natural isomorphism 
\begin{equation}\label{eq:mod-ten-structure}
	{\bf J}_{MV}\colon F(M)\monactp V\to  F(M\monactp V),
\end{equation}
such that

\begin{itemize}\itemsep2mm

\item 
for any $M\in\cM$, ${\bf J}_{M{\bf1}}=\id_{F(M)}$;

\item 
for any $M\in\cM$ and $V,W\in\cC$, 
\begin{equation}\label{eq:mod-cocycle}
\begin{tikzcd}
(F(M)\monactp V)\monactp W \ar[rr, "{\bf J}_{MV}\monactp\id_W"] & & F(M\monactp V)\monactp W\ar[d, "{\bf J}_{M\monactp V, W}"] \\[1em]
F(M)\monactp(V\ten W) \ar[dr, "{\bf J}_{M,V\ten W}"'] \ar[u, "\Phi_{F(M)VW}"] &\circlearrowleft & F((M\monactp V)\monactp W)  \\[1em]
&  F(M\monactp(V\ten W)) \ar[ur, "F(\Phi_{MVW})"'] &
\end{tikzcd}
\end{equation}

\end{itemize}

Henceforth, we refer to the datum $(F,{\bf J})$ as a {\em module functor} $\cM\to\cN$.

\begin{example}\label{ex:mon-act}
	Set $\mathcal{C}=\Mod{(\Uqg)}$ and $\mathcal{M}=\Mod{(\Uqk)}$.
	Since $\Uqk$ is a coideal subalgebra in $\Uqg$, the usual tensor
	product induces a functor 
	\[
	\monactp\colon\Mod{(\Uqk)}\times\Mod{(\Uqg)}\to\Mod{(\Uqk)}
	\]
	which is readily verified to be a monoidal action of $\Mod{(\Uqg)}$
	on $\Mod{(\Uqk)}$.
	\rmkend
\end{example}

\subsection{Braided module categories}\label{ss:braided-module}

Let $\cC$ be a braided monoidal category with braiding ${\bf c} \colon \ten\to\ten^{\scsop{op}}$ and $\cM$ a right module category over $\cC$. 
For any $M\in\cM$ and $V,W\in\cC$, set 
\[
(\id_M\monactp{\bf c}_{VW})_{\Phi}=\Phi_{MWV}\circ (\id_M\monactp{\bf c}_{VW})\circ\Phi_{MVW}^{-1}
\]
Following \cite{Bro13, BZBJ18, Kol20},
a {\em module braiding} on $\cM$ is a natural automorphism  
\[
{\bf d}_{MV}\colon M\monactp_{} V\to M\monactp_{} V,
\]
such that

\begin{itemize}\itemsep2mm

\item 
for any $M\in\cM$, ${\bf d}_{M{\bf1}}=\id_{M}$;

\item 
for any $M\in\cM$, $V,W\in\cC$,
\begin{equation}\label{eq:braided-coprod-L}
\begin{tikzcd}
(M\monactp V)\monactp W\ar[dr,phantom, "\circlearrowleft"] \ar[r, "{\bf d}_{M\monactp V, W}"] \ar[d, "(\id_M\monactp {\bf c}_{VW})_{\Phi}"']
&[25pt] (M\monactp V)\monactp W\\
(M\monactp W)\monactp V \ar[r, "{\bf d}_{MW}\monactp\id_V"']
&(M\monactp W)\monactp V \ar[u, "(\id_M\monactp {\bf c}_{WV})_{\Phi}"']
\end{tikzcd}
\end{equation}

\item 
for any $M\in\cM$, $V,W\in\cC$,
\begin{equation}\label{eq:braided-coprod-R}
\begin{tikzcd}
M\monactp (V\ten W)\arrow[rr, "{\bf d}_{M, V\ten W}"] \arrow[d,"\Phi_{MVW}"']
&[20pt] &[20pt] M\monactp (V\ten W)\arrow[d, "\Phi_{MVW}"'] \\
(M\monactp V)\monactp W \arrow[d,"{\bf d}_{MV}\monactp\id_W"']&\circlearrowleft& (M\monactp V)\monactp W\\
(M\monactp V)\monactp W \arrow[r, "(\id_M\monactp {\bf c}_{VW})_{\Phi}"'] & (M\monactp W)\monactp V \arrow[r,"{\bf d}_{MW}\monactp\id_V"']&  (M\monactp W)\monactp V  \arrow[u, "(\id_M\monactp {\bf c}_{WV})_{\Phi}"']
\end{tikzcd}
\end{equation}	

\end{itemize}
\vspace{2mm}

A module functor $F:\cM\to \cN$ is {\em braided} if the module structure on $F$ intertwines the braidings, \ie if the following diagram commutes for any $M\in\cM$ and $V\in\cC$:
\begin{equation}
\begin{tikzcd}
F(M)\monactp V\arrow[r, "{\bf d}_{F(M),V}"]\arrow[d, "{\bf J}_{MV}"'] &[25pt] F(M)\monactp V\arrow[d, "{\bf J}_{MV}"]\\
F(M\monactp V)\arrow[r, "F({\bf d}_{MV})"']  & F(M\monactp V)
\end{tikzcd}
\end{equation}

\begin{remarks}\hfill
\begin{enumerate}\itemsep2mm
\item
The identity $\id_{M\monactp V}$ does not define a module braiding on $\cM$ unless the monoidal category $\cC$ is symmetric, \ie ${\bf c}_{WV}={\bf c}_{VW}^{-1}$.

\item 
It is pointed out in \cite[Rmk.~3.6]{BZBJ18} that there is an infinite family of possible axioms for a braided module category. 
The relations \eqref{eq:braided-coprod-L} and \eqref{eq:braided-coprod-R} constitute just one example of such axioms.

\item
In \cite{Kol20}, Kolb proved that the action of the tensor K-matrix of a quantum symmetric pair $\Uqk\subset\Uqg$ with $\dim\g<\infty$ gives rise to a braided module structure on $\Modfd(\Uqk)$ over $\Modfd(\Uqg)$. 
This is no longer true when $\dim\g=\infty$, which motivates the notion of a boundary bimodule category, which we introduce in Sections \ref{ss:bimodule-cat}-\ref{ss:boundary-cat}. 
\hfill\rmkend 
\end{enumerate}

\end{remarks}

\subsection{Bimodule categories}\label{ss:bimodule-cat}

Let $\cC$ be a monoidal category. 
A \emph{left monoidal action} of $\cC$ on a category $\cM$ is a right monoidal action of $\cC^{\op}$ on $\cM$.
A \emph{bimodule category} over $\cC$ is a category $\cM$ equipped with 

\begin{itemize}\itemsep2mm
\item 
a right monoidal action $\monactp\colon\cM\times\cC\to\cM$ with associativity constraint $\Phi^{\monactp}$;

\item 
a left monoidal action $\monactm\colon\cM\times\cC\to\cM$ with associativity constraint $\Phi^{\monactm}$;

\item
a natural isomorphism
\begin{equation}\label{eq:action-commutativity}
{\bf e}_{MVW}\colon (M\monactm V)\monactp W\to(M\monactp W)\monactm V
\end{equation}
for any $M\in\cM$ and $V,W\in\cC$.
\end{itemize}

The naturality of ${\bf e}$ consists in the following statements:

\begin{itemize}\itemsep2mm
\item 
for any $M\in\cM$, $V\in\cC$, ${\bf e}_{MV{\bf 1}}=\id_{M\monactm V}$ and ${\bf e}_{M{\bf 1}W}=\id_{M\monactp W}$;

\item 
for any $M\in\cM$, $U,V,W\in\cC$,
\begin{equation}\label{eq:bimod-hex-1}
\adjustbox{scale=0.95, center}{
\begin{tikzcd}
(M\monactm (U\ten V))\monactp W) \ar[rr, "{\bf e}_{M,U\ten V, W}"] \ar[d, "\Phi^{\monactm}_{MUV}\monactp\id_{W}"']
& & (M\monactp W)\monactm (U\ten V)\ar[d, "\Phi^{\monactm}_{M\monactp W, U\ten V}"]\\[1em]
((M\monactm V)\monactm U)\monactp W \ar[dr, "{\bf e}_{M\monactm V, U,W}"'] &\circlearrowleft & ((M\monactp W)\monactm V)\monactm U \\[1em]
&  ((M\monactm V)\monactp W)\monactm U) \ar[ur, "{\bf e}_{MVW}\monactm\id_U"']&
\end{tikzcd}
}
\end{equation}

\item 
for any $M\in\cM$, $U,V,W\in\cC$,
\begin{equation}\label{eq:bimod-hex-2}
\adjustbox{scale=0.95, center}{
\begin{tikzcd}
(M\monactm U)\monactp (V\ten W)) \ar[rr, "{\bf e}_{M,U\ten V, W}"] \ar[d, "\Phi^{\monactp}_{M\monactm U,V,W}\monactp\id_{W}"']
& & (M\monactp (V\ten W))\monactm U\ar[d, "\Phi^{\monactp}_{MVW}\monactm\id_U"]\\[1em]
((M\monactm U)\monactp V)\monactp W \ar[dr, "{\bf e}_{MUV}\monactp\id_W"'] &\circlearrowleft & ((M\monactp V)\monactp W)\monactm U  \\[1em]
&  ((M\monactp V)\monactm U)\monactp W) \ar[ur, "{\bf e}_{M\monactp V,U,W}"']&
\end{tikzcd}
}
\end{equation}
\end{itemize}

\begin{remark}\label{rmk:bimod-braid}
Let $\cC$ be a braided monoidal category with braiding ${\bf c} \colon \ten\to\ten^{\scsop{op}}$. 
Then, $\cC$ is naturally a bimodule over itself. 
Namely, the right action $\monactp$ is given by the tensor product and has a trivial associativity constraint, \ie $\Phi^{\monactp}_{MVW}=\id_{M\ten V\ten W}$. 
The left action $\monactm$ is also given by the tensor product, but its associativity constraint is given by the braiding, \ie $\Phi^{\monactm}_{MVW}=\id_M\ten{\bf c}_{VW}$. 
Finally, the commutativity constraint \eqref{eq:action-commutativity} is also given by the braiding, \ie  ${\bf e}_{MVW}=\id_M\ten{\bf c}_{VW}$. 
It is easy to check that the naturality of ${\bf e}$ in this case follows from the hexagon axioms and the naturality of ${\bf c}$.
\hfill\rmkend
\end{remark}

Let $\cM, \cN$ be bimodule categories and $F:\cM\to\cN$ a functor.\footnote{By abuse of notation, we use the same symbol to denote the monoidal actions on $\cM$ and $\cN$.} 
A {\em bimodule structure} on $F$ is the datum of a pair of natural isomorphisms 
\begin{equation}\label{eq:bimod-ten-structure}
{\bf J}^{\monactp}_{MV}\colon F(M)\monactp V\to  F(M\monactp V),
\qq \qq 
{\bf J}^{\monactm}_{MV}\colon F(M)\monactm V\to  F(M\monactm V),
\end{equation}
such that

\begin{itemize}\itemsep2mm

\item 
for any $M\in\cM$, $V,W\in\cC$,
\begin{equation}\label{eq:bimod-coprod-1}
\adjustbox{scale=0.8,center}{
\begin{tikzcd}
F(M)\monactp (V\ten W) \ar[drr, phantom,"\circlearrowleft"] \ar[rr,"{\bf J}^{\monactp}_{M,V\ten W}"]\ar[d, "\Phi^{\monactp}_{F(M)VW}"'] 
&[20pt] &[20pt] F(M\monactp(V\ten W))\ar[d, "F(\Phi^{\monactp}_{MVW})"]\\
(F(M)\monactp V)\monactp W \ar[r, "{\bf J}^{\monactp}_{MV}\monactp \id_{W}"'] & F(M\monactp V)\monactp W \ar[r, "{\bf J}^{\monactp}_{M\monactp V,W}"'] &
F((M\monactp V)\monactp W)
\end{tikzcd}
}
\end{equation}

\item 
for any $M\in\cM$, $V,W\in\cC$,
\begin{equation}\label{eq:bimod-coprod-2}
\adjustbox{scale=0.8,center}{
\begin{tikzcd}
F(M)\monactm (V\ten W) \ar[drr, phantom,"\circlearrowleft"] \ar[rr,"{\bf J}^{\monactm}_{M,V\ten W}"]\ar[d, "\Phi^{\monactm}_{F(M)VW}"'] 
&[20pt] &[20pt] F(M\monactm(V\ten W))\ar[d, "F(\Phi^{\monactm}_{MVW})"]\\
(F(M)\monactm W)\monactm V \ar[r, "{\bf J}_{MW}\monactm \id_{V}"'] &
F(M\monactm W)\monactm V \ar[r, "{\bf J}_{M\monactm W,V}"']&
F((M\monactm W)\monactm V)
\end{tikzcd}
}
\end{equation}

\item 
for any $M\in\cM$ and $V,W\in\cC$,
\begin{equation}\label{eq:boundary-cocycle}
\begin{tikzcd}
(F(M)\monactm V)\monactp W \ar[ddr, phantom,"\circlearrowleft"] \arrow[r, "{\bf e}_{F(M),V,W}"]\arrow[d, "{\bf J}^{\monactm}_{MV}\monactp \id_W"'] 
&[25pt] F(M)\monactp W)\monactm V\arrow[d, "{\bf J}^{\monactp}_{MV}\monactm \id_V"]\\[15pt]
F(M\monactm V)\monactp W\arrow[d, "{\bf J}^{\monactp}_{M\monactm V,W}"']  
& F(M\monactp W)\monactm V \arrow[d, "{\bf J}^{\monactm}_{M\monactp W,V}"]\\[15pt]
F((M\monactm V)\monactp W)\arrow[r, "F({\bf e}_{MVW})"'] &F((M\monactp W)\monactm V)
\end{tikzcd}
\end{equation}

\end{itemize}

\subsection{Boundary bimodule categories}\label{ss:boundary-cat}

Let $\cC$ be a braided monoidal category with braiding ${\bf c} \colon \ten\to\ten^{\scsop{op}}$ and $\cM$ a bimodule category.
A  \emph{boundary structure on $\cM$} is a natural isomorphism 
\begin{equation}\label{eq:boundary}
{\bf f}_{MV}\colon M\monactp V\to M\monactm V
\end{equation}
such that

\begin{itemize}\itemsep2mm
\item 
for any $M\in\cM$ and $V,W\in\cC$,
\begin{equation}\label{eq:boundary-left}
\begin{tikzcd}
& (M\monactp V)\monactp W \ar[ddd,phantom,"\circlearrowleft"]\arrow[dr, "{\bf f}_{M\monactp V, W}"] & \\[15pt]
M\monactp (V\ten W) \arrow[d,"\id_M\monactp{\bf c}_{VW}"'] \arrow[ur, "\Phi^{\monactp}_{MVW}"] & & (M\monactp V)\monactm W \\[15pt]
M\monactp (W\ten V) \arrow[dr, "\Phi^{\monactp}_{MWV}"'] & & (M\monactm W)\monactp V\arrow[u,"{\bf e}_{MWV}"'] \\[15pt]
& (M\monactp W)\monactp V \arrow[ur, "{\bf f}_{MV}\monactp\id_V"'] &
\end{tikzcd}
\end{equation}
\item 
for any $M\in\cM$ and $V,W\in\cC$,
\begin{equation}\label{eq:boundary-right}
\begin{tikzcd}
& M\monactm (V\ten W)\ar[ddd,phantom,"\circlearrowleft"] \arrow[dr, "\Phi^{\monactm}_{MVW}"] & \\[15pt]
M\monactp (V\ten W) \arrow[d,"\Phi^{\monactp}_{MVW}"']  \arrow[ur, "{\bf f}_{M, V\ten W}"] & & (M\monactm W)\monactm V \\[15pt]
(M\monactp V)\monactp W \arrow[dr, "{\bf f}_{MV}\monactp\id_W"'] & & (M\monactp W)\monactm V\arrow[u,"{\bf f}_{MV}\monactm\id_V"'] \\[15pt]
& (M\monactm V)\monactp W \arrow[ur, "{\bf e}_{MVW}"'] &
\end{tikzcd}
\end{equation}

\end{itemize}

\begin{remark}
Consider $\cC$ as a module category over itself. 
Then, every braided structure on $\cC$ (see Section \ref{ss:braided-module}) is also a boundary with respect the bimodule structure described in  Remark~\ref{rmk:bimod-braid}.
\hfill\rmkend
\end{remark}

Let $\cM,\cN$ be boundary bimodule categories and $F\colon\cM\to\cN$ a bimodule functor. 
Then, $F$ is {\em boundary} if the bimodule structures intertwines the boundary structures, \ie
\begin{equation}
\begin{tikzcd}
F(M)\monactp V\arrow[r, "{\bf f}_{F(M),V}"]\arrow[d, "{\bf J}^{\monactp}_{MV}"'] &[25pt] F(M)\monactm V\arrow[d, "{\bf J}^{\monactm}_{MV}"]\\
F(M\monactp V)\arrow[r, "F({\bf f}_{MV})"']  & F(M\monactm V)
\end{tikzcd}
\end{equation}
for any $M\in\cM$ and $V\in\cC$.


\subsection{From reflection bialgebras to boundary structures} 
The notion of a boundary structure on a bimodule category is tailored to encode the defining datum of a reflection bialgebra (see Section~\ref{ss:reflection-bialgebras}). 
We make this precise in the following result. 
For the more refined context of quantum symmetric pairs, see Section \ref{ss:boundarybimodulestructure}. 

\begin{prop}\label{prop:reflection-to-boundary}

Let $(A, B, \psi, J, \bbK)$ be a reflection bialgebra.
The following datum defines a boundary bimodule structure on $\Mod(B)$ over $\Mod(A)$.

\vspace{1mm}
\begin{enumerate}\itemsep2mm
\item {\bf Module structure.} 
There is a right monoidal action 
\[ 
\monactp\colon\Mod(B)\times\Mod(A)\to\Mod(B) 
\]
with trivial associativity constraint, given by the standard tensor product, \ie $M\monactp V=M\ten V$ .

\item {\bf Bimodule structure.} 
The left monoidal action 
\[
\monactm:\Mod(B)\times\Mod(A)\to\Mod(B)
\]
is obtained by twisting $\monactp$ with $\psi$, \ie $M\monactp V=M\ten{\Vpsi}$. 
The associativity constraint
\[
\Phi^{\monactm}_{MVW}\colon M\ten\VWpsi\to M\ten {\Wpsi}\ten {\Vpsi}
\]
is provided by the action of $J$. 
The commutativity constraint
\[
{\bf e}_{MVW}\colon M\ten {\Vpsi}\ten W\to M\ten W\ten {\Vpsi}
\]
is provided by the action of the R-matrix $R$ on the second and third factor.

\item {\bf Boundary structure.} 
The boundary structure
\[
{\bf f}_{MV}\colon M\ten V\to M\ten {\Vpsi}
\]
is provided by the action of the tensor K-matrix $\bbK$.

\end{enumerate}

\end{prop}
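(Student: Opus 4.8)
The plan is to verify each of the defining axioms of a boundary bimodule category and boundary functor (as set out in Sections \ref{ss:module-category}--\ref{ss:boundary-cat}) by translating the categorical diagrams into operator identities in a suitable completion of $A^{\ten n}$, and then invoking the axioms \eqref{eq:2K-intw}--\eqref{eq:2K-coprod-2} of the reflection bialgebra together with the quasitriangularity of $A$ and the twist-pair property $A^{\cop,\psi} = A_J$. Concretely, for each $M \in \Mod(B)$, $V,W,U \in \Mod(A)$, a natural transformation between tensor products is determined by an element of the appropriate completion of $A^{\ten k}$ (acting on the tensor factors coming from $\Mod(A)$) or of $B \ten A^{\ten(k-1)}$ (when an $M$-factor is present). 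First I would record the dictionary: $\Phi^{\monactm}$ is $J_{23}$ (placed on the two $\cC$-legs), $\mathbf{e}$ is $(23)\circ (\mathrm{id}\ten\Ad(R))$ on the two $\cC$-legs, and $\mathbf{f}$ is the action of $\TKM{}$ on the $(M,V)$-legs.

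Next I would check that items (1)--(2) really define a bimodule structure. That $\monactp$ is a right monoidal action with trivial associator is the coideal property $\Delta(B)\subseteq B\ten A$ and associativity of $\ten$, already noted in Example \ref{ex:mon-act}. That $\monactm$ is a left action amounts to showing $\psi$ makes $M\ten \Vpsi$ into a $B$-module (this is the identity $\TKM{}\cdot\Delta(b) = (\id\ten\psi)(\Delta(b))\cdot\TKM{}$ at the level of $\Uqk$, but for the bare module structure it is just that $(\id\ten\psi)\circ\Delta$ is coassociative after conjugation by $J$), and that $\Phi^{\monactm} = J_{23}$ satisfies the pentagon \eqref{eq:bimod-hex-1}-style axiom, which is exactly the Drinfeld twist cocycle condition $(J\ten 1)(\Delta\ten\id)(J) = (1\ten J)(\id\ten\Delta)(J)$. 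The commutativity constraint axioms \eqref{eq:bimod-hex-1}--\eqref{eq:bimod-hex-2} for $\mathbf{e}$ unwind to the two hexagon/coproduct identities \eqref{eq:R-coprod} for $R$, combined with $\Delta_J = \Delta^{\op,\psi}$ to match the $J$-conjugation appearing in $\Phi^{\monactm}$; this is the same computation as in the proof of Proposition \ref{prop:cylindricaltoreflection}. Naturality in all variables is automatic since everything is given by fixed universal elements acting functorially.

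The heart of the proof is that $\mathbf{f}_{MV}$ given by $\TKM{}$ is a boundary structure, \ie that it satisfies \eqref{eq:boundary-left} and \eqref{eq:boundary-right}. Translating \eqref{eq:boundary-left} into operators: starting from $M\monactp(V\ten W)$, going down-then-right gives $\mathbf{f}_{MV}\monactp\id_W$ followed by $\mathbf{e}_{MWV}$, i.e. (up to the $R$-conjugation bookkeeping) $R^{\psi}_{?}\cdot\TKM{12}$; going right-then-up gives $\mathbf{f}_{M\monactp V,W}$ composed with $\Phi^{\monactp}$, i.e. $(\Delta\ten\id)(\TKM{})$ read on the first two legs. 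So \eqref{eq:boundary-left} is precisely the coproduct identity \eqref{eq:2K-coprod-1}, $(\Delta\ten\id)(\TKM{}) = R^\psi_{32}\cdot\TKM{13}\cdot R_{23}$, after relabeling legs. Likewise \eqref{eq:boundary-right} unwinds, using $\Phi^{\monactm}=J_{23}$, to the second coproduct identity \eqref{eq:2K-coprod-2}, $(\id\ten\Delta)(\TKM{}) = J^{-1}_{23}\cdot\TKM{13}\cdot R^\psi_{23}\cdot\TKM{12}$. The normalization conditions $\mathbf{f}_{M\mathbf{1}}=\id$, $\mathbf{e}_{MV\mathbf{1}}=\id$, etc., follow from $(\veps\ten\id)(R)=(\id\ten\veps)(R)=1$ and $(\id\ten\veps)(\TKM{})=1$. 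The main obstacle, as usual in this kind of statement, is purely bookkeeping: keeping track of which tensor leg each universal element acts on, and which arrows in the hexagonal diagrams \eqref{eq:boundary-left}--\eqref{eq:boundary-right} correspond to $\mathbf{e}$ (hence carry an $R$-conjugation), which to $\Phi^{\monactm}$ (hence carry a $J$), and which to $\mathbf{f}$ (hence carry $\TKM{}$), so that the composite matches the left- or right-hand side of \eqref{eq:2K-coprod-1} or \eqref{eq:2K-coprod-2} after the substitution $\Delta_J = \Delta^{\op,\psi}$. Once the dictionary is fixed, each diagram reduces to exactly one of the reflection-bialgebra axioms with no residual computation. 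I would present the $\mathbf{f}$-is-a-boundary verification in detail and leave the bimodule-structure verifications as routine consequences of quasitriangularity and the twist-pair property, with pointers to the analogous arguments in the proof of Proposition \ref{prop:cylindricaltoreflection}.
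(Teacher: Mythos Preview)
The paper states this proposition without proof, treating it as a routine unpacking of definitions. Your proposal is correct and is exactly the natural verification: set up the dictionary between the categorical constraints $\Phi^{\monactm}$, $\mathbf{e}$, $\mathbf{f}$ and the universal elements $J$, $R$, $\TKM{}$, then observe that the hexagon axioms \eqref{eq:boundary-left} and \eqref{eq:boundary-right} become precisely the coproduct identities \eqref{eq:2K-coprod-1} and \eqref{eq:2K-coprod-2}, while the bimodule axioms \eqref{eq:bimod-hex-1}--\eqref{eq:bimod-hex-2} reduce to the Drinfeld twist cocycle and the R-matrix coproduct identities via the twist-pair relation $A^{\cop,\psi}=A_J$.

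One small point to tighten: in your discussion of $\monactm$ you allude to \eqref{eq:2K-intw} as justifying that $M\ten\Vpsi$ is a $B$-module, but this is not needed there; the $B$-module structure on $M\ten\Vpsi$ comes simply from the coideal property together with the fact that $\psi$ is an algebra automorphism of $A$ (so $(\id\ten\psi)\circ\Delta|_B$ lands in $B\ten A$ and is an algebra map). The intertwining identity \eqref{eq:2K-intw} is what makes $\mathbf{f}_{MV}$ a \emph{morphism} of $B$-modules, i.e.\ what gives naturality of $\mathbf{f}$ in $M$. Otherwise your bookkeeping plan is sound.
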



\subsection{The model of a boundary bimodule category} \label{ss:modelBBC}
In a braided monoidal category, every tensor power $V^{\ten n}$ is naturally acted upon by the ordinary braid group (Artin-Tits group of type $\sfA_{n-1}$). 
Similarly, in a braided module category, every object of the form $W\monactp V^{\ten n}$ is naturally acted upon by the cylindrical braid group (Artin-Tits group of type $\sfB_n$). 
In the case of boundary bimodule categories, the symmetry is encoded instead by a {\em cylindrical braid groupoid} $\sfM$, whose generators satisfy a constant version of Cherednik's generalized reflection equation \cite[Sec.\ 4]{Che92}. 
Its definition can be thought of as a boundary/cylindrical analogue of the category \textbf{braid} from \cite{JS86} or, equivalently, of the free PROB considered in \cite{KS20}.\\

\noindent
Let $\sfC$ be the strict monoidal category defined as follows.
The objects of $\sfC$ are labelled by non--negative integers, \ie for any $n\in\bbZ_{\geqslant0}$, there is an object $[n]\in\sfC$.
The tensor product in $\sfC$ is given by $[n]\ten[m]=[n+m]$ for any $n,m\in\bbZ_{\geqslant 0}$. Hence, $[0]$ is the unit.
The morphisms in $\sfC$ are generated by an invertible element $\Ct\in\End_{\sfC}([2])$ satisfying the braid relation
\begin{equation}\label{eq:prop-ch-1}
\Ct_{12}\Ct_{23}\Ct_{12} = \Ct_{23}\Ct_{12}\Ct_{23}
\end{equation}
 in $\End_{\sfC}([3])$.

\begin{remark}\label{rmk:prop-1}
Let $V$ be an object in a braided monoidal category $\cC$ with braiding $\mathbf{c}$. 
There is an essentially unique\footnote{
The functor $\cG$ depends upon the choice of a bracketing on $n$ elements for every $n$, see, \eg \cite[Sec.~7]{ATL19}.
} 
braided monoidal functor $\cG_V:\sfC\to\cC$ such that $\cG_V([n])=V^{\ten n}$ and $\cG_V(\Ct)=\mathbf{c}_{VV}$. 
\hfill\rmkend
\end{remark}

\noindent
Roughly, the category $\sfM$ is the strict boundary category over $\sfC$ freely generated by one object $\star\in\sfM$. More precisely, the objects of $\sfM$ are given by $\{\star\}\times\coprod_{n\geqslant0}\{\pm\}^n$, while the morphisms are generated by the following elements.
\vspace{1mm} 
\begin{enumerate}\itemsep2mm
\item 
For any ${\sfs}\in\sfM$, three invertible elements
\begin{gather}
\Ct^{++}\in\Hom_\sfM(\sfs++\,,\,\sfs++)\\
\Ct^{--}\in\Hom_\sfM(\sfs--\,,\,\sfs--)\\
\Ct^{-+}\in\Hom_\sfM(\sfs-+\,,\,\sfs+-)
\end{gather}
subject to the mixed braid relations
\begin{equation}\label{eq:mixed-braid-relations}
\Ct^{\tau\zeta}_{12}\Ct^{\sigma\zeta}_{23}\Ct^{\sigma\tau}_{12} = \Ct^{\sigma\tau}_{23}\Ct^{\sigma\zeta}_{12}\Ct^{\tau\zeta}_{23}
\end{equation}
in $\Hom_\sfC(\sfs\,\sigma\,\tau\,\zeta,\sfs\,\zeta\,\tau\,\sigma)$, for all $\sigma,\tau,\zeta\in\sfA$ such that the corresponding operators are all defined.

\item 
An invertible element $\Cb_{\star}\in\Hom_\sfM(\star\,+\,,\,\star\,-)$
subject to {Cherednik's reflection equation} 
\begin{equation}\label{eq:RE-M}
\Ct^{--}\Cb_{\star}\Ct^{-+}\Cb_{\star}=\Cb_{\star}\Ct^{-+}\Cb_{\star}\Ct^{++}
\end{equation}
in $\Hom_\sfM(\star\,++\,,\,\star\,--)$.
\end{enumerate}
\vspace{0.2cm}
Then, $\sfM$ is naturally equipped with right and left actions of $\sfC$ with trivial associativity constraints. Specifically, the right and left actions of $[n]\in\sfC$ are given by concatenation with sequences of $+$ and $-$ of length $n$. 
This yields a bimodule structure on $\sfM$, where the commutativity constraint ${\bf e}$, see \eqref{eq:action-commutativity}, is given by $\Ct^{-+}$.

Finally, $\sfM$ is naturally equipped with a boundary structure $\Cb$, see \eqref{eq:boundary}, which is {induced} by $\Cb_{\star}$. Namely,
for any $\sfs\in\sfM$, we set
\begin{equation}\label{eq:boundary-M}
\Cb_{\sfs}=\Ct''\Cb_{\star}\Ct'
\end{equation}
in $\Hom_{\sfM}(\sfs+,\sfs-)$, where $\Ct'$ and $\Ct''$ are given by compositions of 
$\Ct^{++}$, $\Ct^{--}$, and $\Ct^{-+}$. More precisely, let $\sfs'\in\{\pm\}^n$ such that $\sfs=\star \sfs'$. Then, $\Ct'$ is the unique map in $\Hom_{\sfM}(\star \thinspace\sfs' +,\star\negthinspace + \sfs')$ obtained by iterations of $\Ct^{++}$ and $\Ct^{-+}$.
Similarly, $\Ct''$ is the unique map in $\Hom_{\sfM}(\star \negthinspace - \sfs',\star\thinspace \sfs' -)$ obtained by iterations of $\Ct^{--}$ and $\Ct^{-+}$.
Relying on \eqref{eq:RE-M}, one readily checks that $\Cb$ is a boundary structure on $\sfM$.

\begin{prop}
Let $\cC$ be a braided monoidal category, $V\in\cC$, and  $\cG_V:\sfC\to\cC$ the 
braided monoidal functor defined in Remark~\ref{rmk:prop-1}.
Let $\cM$ be a boundary bimodule category over $\cC$ with boundary structure ${\bf f}$.
Then, for any $M\in\cM$, there is an essentially unique boundary bimodule functor $\cF:\sfM\to\cG_V^*\cM$ such that $\cF(\star)=M$ and $\cF(\Cb_{\star})={\bf f}_{MV}$.
\end{prop}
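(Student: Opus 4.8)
The plan is to construct $\cF$ by extending the assignment on generators and checking that all defining relations of $\sfM$ are respected, then showing that $\cF$ is a boundary bimodule functor and that it is essentially unique. Since $\cM$ is a boundary bimodule category over $\cC$ and $\cG_V\colon\sfC\to\cC$ is braided monoidal, the pullback $\cG_V^*\cM$ is a boundary bimodule category over $\sfC$: the right action of $[n]$ is $-\monactp V^{\ten n}$, the left action is $-\monactm V^{\ten n}$, the commutativity constraint ${\bf e}$ is induced by $\cG_V(\Ct)={\bf c}_{VV}$, and the boundary structure is ${\bf f}_{M,V^{\ten n}}$ suitably decomposed. First I would define $\cF$ on objects by $\cF(\star\,\sfs')=M\monactp_{\si_1}\cdots\monactp_{\si_n} V$ for $\sfs'=(\si_1,\ldots,\si_n)\in\{\pm\}^n$, where $\monactp_+=\monactp$ and $\monactp_-=\monactm$ (read left to right, using a fixed bracketing convention exactly as in Remark~\ref{rmk:prop-1} so that the associativity constraints $\Phi^\monactp$, $\Phi^\monactm$ are absorbed canonically). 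On generators, set $\cF(\Ct^{++})$ to be the braiding-induced map $(\id_M\monactp{\bf c}_{VV})$ (conjugated by the relevant $\Phi^\monactp$'s), $\cF(\Ct^{--})$ the analogous map built from $\Phi^\monactm$, $\cF(\Ct^{-+})={\bf e}_{MVV}$, and $\cF(\Cb_\star)={\bf f}_{MV}$.

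Next I would verify the relations. The pure braid relation \eqref{eq:prop-ch-1} for $\Ct^{++}$ follows from the hexagon axioms in $\cC$ (equivalently, functoriality of $\cG_V$); the one for $\Ct^{--}$ follows the same way using that $\monactm$ is a left action with associativity constraint coming from the braided structure. The mixed braid relations \eqref{eq:mixed-braid-relations} are precisely the hexagon-type axioms \eqref{eq:bimod-hex-1}, \eqref{eq:bimod-hex-2} for the commutativity constraint ${\bf e}$ of the bimodule category, combined with naturality of ${\bf e}$ and the two associativity constraints; each of the finitely many sign patterns $\si\tau\ze$ corresponds to one of these coherence diagrams (or a trivial consequence of naturality). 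Cherednik's reflection equation \eqref{eq:RE-M} is exactly the content of the two boundary-structure axioms \eqref{eq:boundary-left} and \eqref{eq:boundary-right}: concatenating those two hexagons (and using the already-verified mixed braid relations to rewrite the composite $\Ct'\Ct''$-type terms) yields the pentagon $\Ct^{--}\Cb_\star\Ct^{-+}\Cb_\star=\Cb_\star\Ct^{-+}\Cb_\star\Ct^{++}$ after applying $\cF$. This is the step I expect to be the main obstacle: one must carefully match the bracketing conventions so that the various $\Phi^\monactp$, $\Phi^\monactm$ insertions cancel correctly, and then check that the composite of \eqref{eq:boundary-left} with \eqref{eq:boundary-right} genuinely collapses to \eqref{eq:RE-M} with no leftover constraint maps; this is the boundary analogue of the (already delicate) coherence bookkeeping in the proof that a braided monoidal category gives an action of the braid groupoid, and it is where a fully rigorous argument would require the most care.

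With the generators and relations handled, $\cF$ extends uniquely to a functor $\sfM\to\cG_V^*\cM$ preserving the right and left $\sfC$-actions on the nose (both associativity constraints on $\sfM$ are trivial, so the module-functor constraints ${\bf J}^\monactp$, ${\bf J}^\monactm$ of $\cF$ are the canonical comparison isomorphisms determined by the chosen bracketings, and coherence \eqref{eq:bimod-coprod-1}, \eqref{eq:bimod-coprod-2}, \eqref{eq:boundary-cocycle} is automatic once the bracketing is fixed consistently). That $\cF$ is a boundary functor, i.e. intertwines $\Cb$ with ${\bf f}$, holds by construction on $\Cb_\star$ and then on all $\Cb_\sfs$ by \eqref{eq:boundary-M} together with compatibility of $\cF$ with $\Ct^{++},\Ct^{--},\Ct^{-+}$. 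Finally, essential uniqueness follows as in Remark~\ref{rmk:prop-1}: the values $\cF(\star)=M$ and $\cF(\Cb_\star)={\bf f}_{MV}$ determine $\cF$ on objects and on all generating morphisms (the $\Ct$'s are forced to be the images of ${\bf c}_{VV}$ and ${\bf e}$ because $\cF$ is required to be a bimodule functor over $\cG_V$), and any two such functors differ only by the canonical natural isomorphisms coming from different bracketing choices, hence are monoidally naturally isomorphic as boundary bimodule functors.
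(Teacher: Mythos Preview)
The paper states this proposition without proof; it is presented as the boundary analogue of the standard fact (Remark~\ref{rmk:prop-1}) that a braided monoidal category receives an essentially unique braided functor from $\sfC$, and is left to the reader.

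Your proposal is the natural argument and is essentially correct. You have correctly identified the dictionary: the mixed braid relations \eqref{eq:mixed-braid-relations} match the bimodule hexagons \eqref{eq:bimod-hex-1}--\eqref{eq:bimod-hex-2} together with naturality of ${\bf e}$, and Cherednik's reflection equation \eqref{eq:RE-M} is encoded by the two boundary axioms \eqref{eq:boundary-left}--\eqref{eq:boundary-right}. One small clarification: the assignment for $\Ct^{--}$ should not be described as ``built from $\Phi^{\monactm}$'' alone; rather, it is the conjugate of $\id_M\monactm{\bf c}_{VV}$ by $\Phi^{\monactm}$, exactly parallel to the $\Ct^{++}$ case but using the left-action associativity constraint. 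Your own caveat is well placed: the coherence bookkeeping (showing that all bracketing insertions cancel, so that \eqref{eq:boundary-left} and \eqref{eq:boundary-right} really collapse to \eqref{eq:RE-M} under $\cF$) is the genuine content, and a complete treatment would amount to a Mac Lane--style coherence theorem for boundary bimodule categories, which the paper does not supply either.
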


\begin{remark}
In analogy with the category $\mathbf{braid}$ \cite{JS86}, $\sfM$ is the coproduct of cylindrical braid groupoids (of fixed length) with respect to the standard embeddings. The construction of the boundary structure \eqref{eq:boundary-M}, which encodes the coproduct identity \eqref{eq:2K-coprod-1}  satisfied by the tensor K-matrix, is similarly motivated by the natural embeddings of cylindrical braid groupoids obtained by replacing the initial object $\star$ with a longer sequence $\sfs$ starting with $\star$.\footnote{In the usual braid groups, this corresponds to gluing strands together.}  \hfill\rmkend
\end{remark}

%


\providecommand{\bysame}{\leavevmode\hbox to3em{\hrulefill}\thinspace}
\providecommand{\MR}{\relax\ifhmode\unskip\space\fi MR }
\providecommand{\MRhref}[2]{
	\href{http://www.ams.org/mathscinet-getitem?mr=#1}{#2}
}
\providecommand{\href}[2]{#2}

\end{document}